\documentclass[a4paper,12pt]{article}
\usepackage{amsmath}
\usepackage{amssymb}
\usepackage{amsthm}
\usepackage[english]{babel}
\usepackage{hyperref}
\usepackage[all]{xy}
\usepackage{tikz}
\usepackage{mathdots}
\usetikzlibrary{decorations.pathreplacing}

\usepackage{paralist}
\usepackage{enumerate}
\setdefaultleftmargin{15pt}{}{}{}{}{}

\newtheorem{theor}{Theorem}[section]
\newtheorem{lemma}[theor]{Lemma}
\newtheorem{cor}[theor]{Corollary}
\newtheorem{rem}[theor]{Remark}

\newcommand{\hd}{\mathrm{hd}}
\newcommand{\soc}{\mathrm{soc}}
\newcommand{\Hom}{\mathrm{Hom}}
\newcommand{\End}{\mathrm{End}}

\newcommand{\GL}{\mathrm{GL}}
\newcommand{\s}{\Sigma}

\newcommand{\md}{\mbox{-}\mathrm{mod}}
\newcommand{\Md}{\!\mod}

\renewcommand{\Im}{\mathrm{Im}}
\renewcommand{\epsilon}{\varepsilon}
\renewcommand{\phi}{\varphi}
\newcommand{\xymat}{\xymatrix@R=6pt@C=10pt}

\begin{document}

\begin{center}
{\Large 
Irreducible tensor products for symmetric groups in characteristic 2}

\vspace{12pt}

Lucia Morotti

{\small
Institut f\"{u}r Algebra, Zahlentheorie und Diskrete Mathematik

Leibniz Universit\"{a}t Hannover

Welfengarten 1

30167 Hannover

Germany

\tt morotti@math.uni-hannover.de
}
\end{center}

\begin{abstract}
We consider non-trivial irreducible tensor products of modular representations of a symmetric group $\s_n$ in characteristic 2 for even $n$ completing the proof of a classification conjecture of Gow and Kleshchev about such products.
\end{abstract}

\noindent
Mathematics Subject Classification: 20C30, 20C20.

\section{Introduction}

Let $D_1$ and $D_2$ be irreducible representations of $\s_n$ of dimension greater than 1. We would like to know when the tensor product $D_1\!\otimes\! D_2$ is irreducible. We say that $D_1\!\otimes\! D_2$ is a non-trivial irreducible tensor product if $D_1\!\otimes\! D_2$ is irreducible and neither $D_1$ nor $D_2$ has dimension 1. In \cite{z1} Zisser proved that there are no non-trivial irreducible tensor products of ordinary representations of symmetric group. In \cite{gk} Gow and Kleshchev conjectured that the same holds also for modular representations, unless $p=2$ and $n=2m$ with $m$ odd. In this case they also conjectured which tensor products are irreducible. In \cite{gj} Graham and James proved that the tensor products appearing in the conjecture of Gow and Kleshchev are irreducible. Further in \cite{ck} Bessenrodt and Kleshchev proved that non-trivial irreducible tensor products are only possible when $p=2$, $n$ is even and one of the modules is indexed by a JS-partition.

In this paper we will prove the following two theorems which will prove the conjecture from \cite{gk} for $p=2$ and $n=2m$ even in the cases $m$ odd and $m$ even respectively.

\begin{theor}\label{t3}
If $p=2$ and $n=2m$ with $m$ odd then the only irreducible tensor products of 2 representations of $\s_n$ of dimension greater than 1 are those of the form
\[D^{(m+1,m-1)}\otimes D^{(2m-2j-1,2j+1)}\cong D^{(m-j,m-j-1,j+1,j)}\]
with $0\leq j<(m-1)/2$.
\end{theor}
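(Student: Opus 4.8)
The plan is to start from the reduction of Bessenrodt and Kleshchev in \cite{ck}. Since $D_1\otimes D_2$ is a non-trivial irreducible tensor product we may assume $D_1=D^\lambda$ with $\lambda$ a JS-partition of $n=2m$ and write $D_2=D^\mu$, so that $D^\lambda\otimes D^\mu\cong D^\nu$ for some $2$-regular $\nu\vdash n$ with $\dim D^\lambda,\dim D^\mu>1$. Recall that the $2$-JS partitions of $n$ are exactly the $2$-regular partitions all of whose parts have the same parity (either all parts even, that is twice a strict partition of $m$, or all parts odd, which forces an even number of parts); in particular $(m+1,m-1)$ and each $(2m-2j-1,2j+1)$ are of this form. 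It thus suffices to prove: for $\lambda$ ranging over the JS-partitions of $n$, the only $\mu$ for which $D^\lambda\otimes D^\mu$ is irreducible are $\mu=(2m-2j-1,2j+1)$ when $\lambda=(m+1,m-1)$, and $\mu=(m+1,m-1)$ when $\lambda=(2m-2j-1,2j+1)$, and that every other JS-partition $\lambda$ admits no partner $\mu$ at all.

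The engine is restriction to $\s_{n-1}$ together with Kleshchev's modular branching rules. Since $\lambda$ is JS we have $D^\lambda{\downarrow}_{\s_{n-1}}\cong D^{\lambda^-}$ irreducible, where $\lambda^-=\tilde e_i\lambda$ for the unique residue $i$ with $\epsilon_i(\lambda)\neq 0$, and hence $D^\nu{\downarrow}_{\s_{n-1}}\cong D^{\lambda^-}\otimes D^\mu{\downarrow}_{\s_{n-1}}$. By the branching rules the socle $\soc(D^\nu{\downarrow}_{\s_{n-1}})=\bigoplus_{r:\,\epsilon_r(\nu)\neq 0}D^{\tilde e_r\nu}$ has at most two pairwise non-isomorphic constituents, and dually so does the head. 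On the other hand $D^{\lambda^-}\otimes\soc(D^\mu{\downarrow}_{\s_{n-1}})=\bigoplus_{s:\,\epsilon_s(\mu)\neq 0}\bigl(D^{\lambda^-}\otimes D^{\tilde e_s\mu}\bigr)$ is a submodule of $D^\nu{\downarrow}_{\s_{n-1}}$, so each $D^{\lambda^-}\otimes D^{\tilde e_s\mu}$ has socle, and dually head, of length at most two, and simple socle and head whenever $\mu$ has normal nodes of both residues. Now $\s_{n-1}$ has odd degree, so by \cite{ck} it has no non-trivial irreducible tensor products; together with the self-duality of $D^{\lambda^-}\otimes D^{\tilde e_s\mu}$ this means that such a module can be irreducible only if $D^{\lambda^-}$ or $D^{\tilde e_s\mu}$ is trivial, and more generally the shortness of its socle and head severely restricts the two factors. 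Iterating this comparison down the branching graph, and using throughout the dimension identity $\dim D^\lambda\cdot\dim D^\mu=\dim D^\nu$ and the standard lower bounds for dimensions of irreducible $\s_n$-modules, one reduces $\lambda$ and $\mu$ to finitely many explicit shapes for each $m$: essentially $\lambda$ must be a two-row partition (apart from boundedly many exceptions to be treated separately) and $\mu$ is constrained likewise, of small $2$-weight.

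It then remains to go through the surviving candidates. For $\lambda=(m+1,m-1)$ the only $\mu$ passing the above tests are the $(2m-2j-1,2j+1)$ with $0\leq j<(m-1)/2$; that for these the tensor product is irreducible and equals $D^{(m-j,m-j-1,j+1,j)}$ is precisely the result of Graham and James \cite{gj}. For every other candidate -- a JS-partition $\lambda$ with three or more (equal-parity) parts, a two-row all-even $\lambda\neq(m+1,m-1)$, a two-row all-odd $\lambda$ tensored with any $D^\mu\not\cong D^{(m+1,m-1)}$, or a configuration with both factors in the same parity family such as $D^{(m+1,m-1)}\otimes D^{(m+1,m-1)}$ -- one shows reducibility directly, typically by producing a proper submodule of the restriction of $D^\lambda\otimes D^\mu$ to a Young subgroup $\s_a\times\s_{n-a}$, by a dimension obstruction, or by locating the only possible constituent in the correct $2$-block and checking its dimension is too small. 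The main obstacle is the middle step: making the branching-graph induction rigorous is delicate because $D^{\lambda^-}$ is usually not a JS-partition, so $D^{\lambda^-}\otimes D^{\tilde e_s\mu}$ is genuinely reducible and only socle/head-constrained; controlling its socle, head and composition factors through several steps of restriction -- which needs a substantial supply of decomposition numbers, homomorphism spaces between Specht modules, and descriptions of the restrictions $D^\sigma{\downarrow}_{\s_{n-1}}$ -- uniformly in $m$ is where essentially all the work lies.
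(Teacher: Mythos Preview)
Your proposal is a strategy outline rather than a proof, and you say so yourself: the middle step --- iterating the socle/head constraints down the branching graph to force $\lambda$ and $\mu$ into finitely many explicit two-row shapes uniformly in $m$ --- is where ``essentially all the work lies'', and you do not carry it out. The constraint you extract (that each $D^{\lambda^-}\otimes D^{\tilde e_s\mu}$ has socle and head of length at most two inside $D^\nu{\downarrow}_{\s_{n-1}}$) is correct but weak; once you pass to $\s_{n-1}$, $\lambda^-$ is no longer JS, so neither tensor factor is under control, and you give no mechanism for propagating the constraint further. The assertion that $\lambda$ must be two-row ``apart from boundedly many exceptions'' is unsupported, as is the reduction of $\mu$ to ``small $2$-weight''. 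The final case analysis is likewise only gestured at.

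The paper's argument is structurally different and avoids any inductive descent. It works directly with the $\s_n$-module $\End_F(D^\lambda)$, via the identity $\dim\End_{\s_n}(D^\lambda\otimes D^\mu)=\dim\Hom_{\s_n}(\End_F(D^\lambda),\End_F(D^\mu))$. The bulk of the paper (Lemmas~\ref{l17}, \ref{l15}, \ref{l19}, \ref{l16}) shows that for every $2$-regular $\lambda\neq(n),(m+1,m-1)$ --- whether JS or not --- the module $\End_F(D^\lambda)$ contains an explicit uniserial quotient of $D^{(n-2,1)}{\uparrow}^{\s_n}$ involving $D^{(n-2,2)}$. These submodules pair against each other to give $\dim\Hom_{\s_n}(\End_F(D^\lambda),\End_F(D^\mu))\geq 2$ whenever both $\lambda,\mu\notin\{(n),(m+1,m-1)\}$, so one factor is forced to be $(m+1,m-1)$ in a single stroke. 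The remaining step --- that $D^{(m+1,m-1)}$ has exactly the $(m-1)/2$ listed partners --- is then a short counting argument combining the Gow--Kleshchev bound on the number of partners with the Graham--James construction of the irreducible products. No restriction below $\s_{n-2}$ is needed, and there is no case distinction on the shape of the JS-partition beyond separating $(m+1,m-1)$ from the rest.
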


\begin{theor}\label{t4}
Let $p=2$ and $n=2m$ with $m$ even. If $D_1$ and $D_2$ are irreducible representations of $\s_n$ of dimension greater than 1 then $D_1\otimes D_2$ is not irreducible.
\end{theor}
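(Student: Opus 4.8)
The plan is to argue by contradiction, reducing everything to the structural results quoted in the introduction and then exploiting restriction to $\s_{2m-1}$. Suppose $D_1\otimes D_2$ is an irreducible tensor product of $\s_{2m}$-modules of dimension greater than $1$ with $m$ even; by the result of Bessenrodt and Kleshchev recalled in the introduction we may take $D_1=D^\lambda$ with $\lambda$ a $2$-regular JS-partition of $2m$, and we write $D_2=D^\mu$ and $D^\nu:=D_1\otimes D_2$. Since $\lambda$ is JS, $D^\lambda\!\downarrow_{\s_{2m-1}}\cong D^{\bar\lambda}$ is irreducible, so
\[
D^\nu\!\downarrow_{\s_{2m-1}}\ \cong\ D^{\bar\lambda}\otimes\bigl(D^\mu\!\downarrow_{\s_{2m-1}}\bigr).
\]
As $2m-1$ is odd, that same result forbids non-trivial irreducible tensor products for $\s_{2m-1}$; since $D^{\bar\lambda}$ has dimension greater than $1$ and $-\otimes D^{\bar\lambda}$ is exact (a proper non-zero submodule of $D^\mu\!\downarrow$ gives one of $D^\nu\!\downarrow$), it follows that $D^\nu\!\downarrow_{\s_{2m-1}}$ is reducible, i.e.\ $\nu$ is not a JS-partition. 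Moreover its socle (of length at most $2$, by Kleshchev's branching rule applied to $\nu$) must equal that of $D^{\bar\lambda}\otimes(D^\mu\!\downarrow)$, which is a strong constraint on $\bar\lambda$ and $\mu$.

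Iterating this — restricting further to $\s_{2m-2}=\s_{2(m-1)}$, to $\s_{2m-3}$, and so on, at each odd degree re-using the absence of non-trivial irreducible products — and combining it with the classification of $2$-regular JS-partitions in characteristic $2$, with the known lower bounds for dimensions of non-trivial irreducible $\s_n$-modules, and with the numerical identity $\dim D^\lambda\cdot\dim D^\mu=\dim D^\nu$, one whittles the pair $\{\lambda,\mu\}$ down: the normal-node data of $\lambda$, $\mu$, $\nu$ are forced into a rigid pattern and almost all shapes are excluded outright. This analysis, which is valid for every $m$ and is the bulk of the work, should leave only $\{\lambda,\mu\}=\{(m+1,m-1),(2m-2j-1,2j+1)\}$ with $D^\nu$ necessarily $D^{(m-j,m-j-1,j+1,j)}$, exactly the family of Theorem~\ref{t3}.

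It then remains to show that for $m$ even no member of this family actually yields an irreducible product. In the relevant range of $j$, either $(m-j,m-j-1,j+1,j)$ fails to be $2$-regular — so it indexes no irreducible module — or, comparing with the dimension formulas of Graham and James, $\dim D^{(m+1,m-1)}\cdot\dim D^{(2m-2j-1,2j+1)}\neq\dim D^{(m-j,m-j-1,j+1,j)}$, the required equality holding precisely when $m$ is odd; either way $D^{(m+1,m-1)}\otimes D^{(2m-2j-1,2j+1)}$ is reducible, and the theorem follows. Equivalently, one may organise the whole argument as a single induction on $m$, the even step using Theorem~\ref{t3} for $m-1$ and the odd step the even case for $m-1$.

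The main obstacle is the middle step: converting ``$\lambda$ JS, $\nu$ not JS, matching socles'' into the exact list of candidate pairs. One must handle JS-partitions with more than two rows, keep precise track of the composition factors of the \emph{reducible} module $D^\mu\!\downarrow_{\s_{2m-1}}$ after tensoring with $D^{\bar\lambda}$, and deal with the delicate sub-case where $\mu$ is itself JS, in which $D^\mu\!\downarrow$ is irreducible and the reducibility leverage vanishes, so that finer invariants — dimensions, or a further restriction to $\s_{2(m-1)}$ together with Theorem~\ref{t3} — are needed to conclude.
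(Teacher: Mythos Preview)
Your proposal has a genuine gap: the entire weight of the argument rests on the ``middle step'' you yourself flag as the main obstacle, and no mechanism is given for carrying it out. Iterated restriction does not behave as cleanly as you suggest. After one step you know $\nu$ is not JS, but $\bar\lambda$ need not be JS (indeed by Lemma~\ref{l11} it has two normal nodes), and $D^\mu\!\downarrow$ is typically reducible, so the product $D^{\bar\lambda}\otimes(D^\mu\!\downarrow)$ is a tensor of two reducible modules over $\s_{2m-1}$ and the odd-degree prohibition on irreducible products gives no further leverage. There is no evident way to ``iterate'' this into a finite list of candidate pairs, and the assertion that the survivors are exactly $\{(m+1,m-1),(2m-2j-1,2j+1)\}$ is unsupported. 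Your endgame is also unproved: for $m$ even the partition $(m-j,m-j-1,j+1,j)$ is $2$-regular whenever $j<(m-2)/2$, and no dimension formula is cited that would rule out the product; the Graham--James identity is established only for $m$ odd and says nothing about inequality for $m$ even.

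The paper's proof is structurally quite different and does not pass through any candidate list. It works entirely inside $\End_F(D^\lambda)$ and $\End_F(D^\mu)$ as $\s_n$-modules: Sections~\ref{s2}--\ref{s4} show that for every $2$-regular $\lambda\neq(n),(m+1,m-1)$ one can locate, inside $\End_F(D^\lambda)$, a specific small module built from $D^{(n)},D^{(n-1,1)},D^{(n-2,2)}$ (a suitable quotient of $M^{(n-2,2)}$), and then a direct $\Hom$ computation gives $\dim\End_{\s_n}(D^\lambda\otimes D^\mu)\geq 2$. The residual case $\lambda=(m+1,m-1)$ is handled separately: first $\mu$ is forced to be JS (via $D^{(n-1,1)}\subseteq\End_F(D^\lambda)$ and Lemmas~\ref{l18},~\ref{l20}), then a Brauer-character argument together with Benson's splitting criterion over $A_n$ pins $\nu$ down to $(m-s,m-s-1,s+1,s)$, and finally a comparison of $\End_F(D^{(m,m-1)})$ and $\End_F(D^\pi)$ over $\s_{n-1}$ (Lemmas~\ref{l28},~\ref{l29},~\ref{l32}) yields $\dim\End_{\s_{n-1}}(D^\nu\!\downarrow)\geq 3$, contradicting the computed value $2$. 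None of this is recoverable from restriction-and-socle bookkeeping alone.
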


Together with the results from \cite{ck} and \cite{gj}, Theorems \ref{t3} and \ref{t4} prove the conjecture which Gow and Kleshchev made in \cite{gk}.

The classification of non-trivial irreducible tensor products is relevant to the description of maximal subgroups in finite groups of Lie type, see \cite{a} and \cite{as}. 
For alternating groups, non-trivial irreducible tensor products have been classified in most characteristics in \cite{bk2} and \cite{bk3}. Differently than for symmetric groups, there exist non-trivial irreducible tensor products in arbitrary characteristic. For covering groups of symmetric and alternating groups a partial classification of non-trivial irreducible tensor products can be found in \cite{b2}, \cite{bk4} and \cite{kt}. When considering groups of Lie type in defining characteristic, non-trivial irreducible tensor products are not unusual, due to Steinberg tensor product theorem. In not defining characteristic however it has been proved that in almost all cases non non-trivial irreducible tensor products exist, see \cite{kt2} and \cite{mt}.

In Sections \ref{s6} to \ref{s4} we will prove preliminary lemmas on the structure of the endomorphism rings of restrictions of the modules $D^\lambda$ and on the structure of certain permutation modules. Some of these results are of independent interest. Using these lemmas we will then prove Theorems \ref{t3} and \ref{t4} in Section \ref{s1}.

\section{Notations and basic results}

Let $F$ be an algebraically closed field  of characteristic $p$. In most of the paper we will assume that $p=2$. Some of the results in Sections \ref{s6} and \ref{s7} however hold for arbitrary primes.

For a partition $\lambda\vdash n$ let $S^\lambda$ be the corresponding Specht module. If $\lambda$ is a $p$-regular partition (that is a partition where no part is repeated $p$ or more times) we define $D^\lambda$ to be the irreducible $F\s_n$-module indexed by $\lambda$. Further for a composition $\alpha=(\alpha_1,\alpha_2,\ldots)\vdash n$ let $\s_\alpha\cong \s_{\alpha_1}\times \s_{\alpha_2}\times \ldots$ be the Young subgroup corresponding to $\alpha$ and define $M^\alpha:=1\uparrow_{\s_\alpha}^{\s_n}$ to be the permutation module induced from $\s_\alpha$. The modules $D^\lambda$ and $M^\alpha$ are known to be self-dual. From their definition we have that $D^{(n)}\cong S^{(n)}\cong M^{(n)}\cong 1_{\s_n}$. For more informations on such modules see \cite{j1} and \cite{jk}. For any partition $\lambda$ let $h(\lambda)$ be the number of parts of $\lambda$.

Let $M$ be a $F\s_n$-module corresponding to a unique block $B$ with content $(b_0,\ldots,b_{p-1})$ (see \cite{k1}). For $0\leq i\leq p-1$, we can define $e_iM$ as the restriction of $M\downarrow_{\s_{n-1}}$ to the block with content $(b_0,\ldots,b_{i-1},b_i-1,b_{i+1},\ldots,b_{p-1})$. Similarly, for $0\leq i\leq p-1$, we can define $f_iM$ as the restriction of $M\uparrow^{\s_{n+1}}$ to the block with content $(b_0,\ldots,b_{i-1},b_i+1,b_{i+1},\ldots,b_{p-1})$. We can then extend the definition of $e_iM$ and $f_iM$ to arbitrary $F\s_n$-modules additively. The following result holds by Theorems 11.2.7 and 11.2.8 of \cite{k1}.

\begin{lemma}\label{l45}
For $M$ a $F\s_n$-module we have that
\[M\downarrow_{\s_{n-1}}\cong e_0M\oplus\ldots\oplus e_{p-1}M\hspace{24pt}\mbox{and}\hspace{24pt}M\uparrow^{\s_{n+1}}\cong f_0M\oplus\ldots\oplus f_{p-1}M.\]
\end{lemma}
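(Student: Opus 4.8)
The plan is to reduce everything to a statement about blocks and then invoke the branching rules for Specht modules. Since the functors $e_i$, $f_i$, restriction and induction are additive and commute with direct sums, it suffices to treat the case in which $M$ lies in a single block $B$ of $F\s_n$, with content $(b_0,\ldots,b_{p-1})$. Recall here that the content of a block is well defined: any two partitions with the same $p$-core have the same multiset of cell-residues, because each $p$-rim-hook added to the core contributes exactly one cell of every residue, so $b_i(\lambda)=b_i(\kappa)+w$ for the core $\kappa$ and weight $w$. I will describe the argument for $M\!\downarrow_{\s_{n-1}}$; the statement for $M\!\uparrow^{\s_{n+1}}$ follows by the entirely analogous argument, sketched at the end.

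Write $M\!\downarrow_{\s_{n-1}}=\bigoplus_{B'} M_{B'}$ for the decomposition of the restriction into its components $M_{B'}$ in the various blocks $B'$ of $F\s_{n-1}$. The heart of the matter is to show that if $M_{B'}\neq 0$ then the content of $B'$ equals $(b_0,\ldots,b_{i-1},b_i-1,b_{i+1},\ldots,b_{p-1})$ for some $i$. To see this, pick an irreducible composition factor $D^\nu$ of $M_{B'}$. Then $D^\nu$ is a composition factor of $D^\lambda\!\downarrow_{\s_{n-1}}$ for some composition factor $D^\lambda$ of $M$, and $\lambda$ lies in $B$. Since $S^\lambda$ surjects onto $D^\lambda$ and restriction is exact, $D^\lambda\!\downarrow_{\s_{n-1}}$ is a quotient of $S^\lambda\!\downarrow_{\s_{n-1}}$, so $D^\nu$ is a composition factor of $S^\lambda\!\downarrow_{\s_{n-1}}$. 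By the branching rule, $S^\lambda\!\downarrow_{\s_{n-1}}$ has a filtration whose subquotients are the Specht modules $S^\mu$ with $\mu$ obtained from $\lambda$ by removing a removable node; hence $D^\nu$ is a composition factor of some such $S^\mu$, and therefore $\nu$ lies in the block of $\mu$. If the removed node has residue $i$, then this block has content $(b_0,\ldots,b_i-1,\ldots,b_{p-1})$, and since all composition factors of $M_{B'}$ lie in $B'$ this must be the content of $B'$ (so in particular $b_i\geq 1$).

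For distinct residues $i\neq j$ the tuples $(b_0,\ldots,b_i-1,\ldots,b_{p-1})$ and $(b_0,\ldots,b_j-1,\ldots,b_{p-1})$ differ, so each block $B'$ occurring in $M\!\downarrow_{\s_{n-1}}$ is labelled by a well-defined residue $i=i(B')$, and the blocks with label $i$ are exactly those of content $(b_0,\ldots,b_i-1,\ldots,b_{p-1})$. Grouping the block decomposition by this label gives $M\!\downarrow_{\s_{n-1}}=\bigoplus_{i=0}^{p-1}\bigl(\bigoplus_{i(B')=i} M_{B'}\bigr)$, and by definition the inner sum is precisely $e_iM$ (which is $0$ if no block of the relevant content contributes, in particular whenever $b_i=0$). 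This is the first isomorphism.

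The second isomorphism is proved the same way: $S^\lambda$ still surjects onto $D^\lambda$, induction is exact, and by the branching rule $S^\lambda\!\uparrow^{\s_{n+1}}$ has a filtration by the Specht modules $S^\mu$ with $\mu$ obtained from $\lambda$ by adding an addable node; adding a node of residue $i$ increases the number of $i$-cells by one, so every composition factor of $M\!\uparrow^{\s_{n+1}}$ lies in a block of content $(b_0,\ldots,b_i+1,\ldots,b_{p-1})$, and grouping by this label yields $M\!\uparrow^{\s_{n+1}}=\bigoplus_i f_iM$. The only non-formal inputs are the branching rules for Specht modules and the classification of blocks by $p$-cores (the Nakayama conjecture), together with the elementary fact that the block-content changes by $\pm1$ in exactly one coordinate when a node is removed or added; everything else is bookkeeping with block decompositions. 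The point that makes the one-sided comparison "composition factors of $D^\lambda\!\downarrow_{\s_{n-1}}$ among those of $S^\lambda\!\downarrow_{\s_{n-1}}$" sufficient is that the lemma asserts only a direct-sum decomposition, so individual summands $e_iM$, $f_iM$ are permitted to vanish; this is the one place where I would be slightly careful.
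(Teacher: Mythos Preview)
Your proof is correct. The paper does not actually prove this lemma; it simply cites Theorems 11.2.7 and 11.2.8 of Kleshchev's book \cite{k1}. You have supplied a direct argument from first principles---reducing to a single block, using the Specht branching rule to identify the possible contents of blocks appearing in the restriction/induction, and then regrouping the block decomposition by residue---which is essentially the standard proof underlying those cited theorems. Your caveat at the end about vanishing summands is harmless but unnecessary: the one-sided inclusion of composition factors is all you need, since the definition of $e_iM$ already makes it the full block component (possibly zero), and your argument shows every block component is one of the $e_iM$.
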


For $r\geq 1$ let $e_i^{(r)}:F\s_n\md\rightarrow F\s_{n-r}\md$ and $f_i^{(r)}:F\s_n\md\rightarrow F\s_{n+r}\md$ denote the divided power functors (see Section 11.2 of \cite{k1} for the definitions). For $r=0$ define $e_i^{(0)}D^\lambda$ and $f_i^{(0)}D^\lambda$ to be equal to $D^\lambda$. The modules $e_i^rD^\lambda$ and $e_i^{(r)}D^\lambda$ (and similarly $f_i^rD^\lambda$ and $f_i^{(r)}D^\lambda$) are quite closely connected as we will see in the next two lemmas. The following notation will be used in the lemmas. For a partition $\lambda$ and $0\leq i\leq 1$ let $\epsilon_i(\lambda)$ be the number of normal nodes of $\lambda$ of residue $i$ and $\phi_i(\lambda)$ be the number of conormal nodes of $\lambda$ of residue $i$ (see Section 11.1 of \cite{k1} or Section 2 of \cite{bk2} for two different but equivalent definitions of normal and conormal nodes). Normal and conormal nodes of partitions will play a crucial role throughout the paper.

\begin{lemma}\label{l39}
Let $\lambda\vdash n$ be a $p$-regular partition. Also let $0\leq i\leq p-1$ and $r\geq 0$. Then $e_i^rD^\lambda\cong(e_i^{(r)}D^\lambda)^{\oplus r!}$. Further $e_i^{(r)}D^\lambda\not=0$ if and only if $\epsilon_i(\lambda)\geq r$. In this case, if $\nu$ is obtained by $\lambda$ by removing the $r$ bottom $i$-normal nodes, then
\begin{enumerate}
\item\label{l39a}
$e_i^{(r)}D^\lambda$ is a self-dual indecomposable module with head and socle isomorphic to $D^\nu$,

\item\label{l39b}
$[e_i^{(r)}D^\lambda:D^\nu]=\binom{\epsilon_i(\lambda)}{r}=\dim\End_{\s_{n-1}}(e_i^{(r)}D^\lambda)$,

\item\label{l39c}
if $D^\psi$ is a composition factor of $e_i^{(r)}D^\lambda$ then $\epsilon_i(\psi)\leq \epsilon_i(\lambda)-r$, with equality holding if and only if $\psi=\nu$.
\end{enumerate}
\end{lemma}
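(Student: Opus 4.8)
The plan is to reduce everything to known facts about the divided power functors $e_i^{(r)}$ from Chapter 11 of Kleshchev's book. The isomorphism $e_i^r D^\lambda\cong (e_i^{(r)}D^\lambda)^{\oplus r!}$ and the statement that $e_i^{(r)}D^\lambda\neq 0$ if and only if $\epsilon_i(\lambda)\geq r$ are essentially definitional consequences of how the divided powers decompose $e_i^r$, together with the crystal-graph combinatorics identifying when $e_i^{(r)}D^\lambda$ is nonzero; I would cite the relevant theorems (11.2.7, 11.2.8 and the surrounding results) directly rather than reprove them. So the real content is parts (i)--(iii), and I would treat them in that order.

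For (i), self-duality of $e_i^{(r)}D^\lambda$ follows because $D^\lambda$ is self-dual and the functors $e_i^{(r)}$ commute with taking duals (again from \cite{k1}); indecomposability and the identification of the head and socle with $D^\nu$ come from the fact that $e_i^{(r)}$, when nonzero on an irreducible, has simple head and simple socle, both equal to the irreducible $D^\nu$ attached to the end of the relevant $i$-string in the crystal --- here $\nu$ is exactly $\lambda$ with its $r$ lowest $i$-normal nodes removed. Since head and socle of a self-dual module are dual to each other and $D^\nu$ is self-dual, the two statements are compatible, and I would assemble them into the claim. For (ii), the equality $[e_i^{(r)}D^\lambda:D^\nu]=\binom{\epsilon_i(\lambda)}{r}$ is read off from the composition series of $e_i^r D^\lambda$: on one hand $[e_i^r D^\lambda:D^\nu]$ can be computed via the branching rule governing $i$-restriction (the multiplicity of $D^\nu$ in $e_i^r D^\lambda$ is controlled by $\epsilon_i(\lambda)$ and the length $r$ of the string), and on the other hand $e_i^r D^\lambda\cong (e_i^{(r)}D^\lambda)^{\oplus r!}$ forces $[e_i^r D^\lambda:D^\nu]=r!\cdot[e_i^{(r)}D^\lambda:D^\nu]$. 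Comparing gives the stated binomial coefficient. The identification with $\dim\End_{\s_{n-1}}(e_i^{(r)}D^\lambda)$ then follows from (i): a self-dual indecomposable module with simple head $D^\nu$ has endomorphism ring whose dimension equals the multiplicity of $D^\nu$ as a composition factor, because every endomorphism is determined by the image of the head, which must land in the socle copies of $D^\nu$, and conversely.

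For (iii), I would argue using the crystal/branching combinatorics: any composition factor $D^\psi$ of $e_i^{(r)}D^\lambda$ is a composition factor of $e_i^r D^\lambda$, hence $D^\psi$ occurs in $(\cdots((D^\lambda\downarrow)_i\downarrow)_i\cdots)_i$ after $r$ steps of $i$-restriction; each application of $e_i$ to an irreducible produces composition factors $D^\psi$ with $\epsilon_i(\psi)\le\epsilon_i(\text{previous})-1$ except that the top of the string has $\epsilon_i$ dropping by exactly $1$, so after $r$ steps one gets $\epsilon_i(\psi)\le\epsilon_i(\lambda)-r$, with equality precisely along the distinguished string, i.e. precisely for $\psi=\nu$. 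The main obstacle is marshalling the exact statements from \cite{k1} about how $\epsilon_i$ behaves under $e_i$ and making sure the "equality iff $\psi=\nu$" clause is pinned down --- this is where one must be careful that $\nu$ really is the unique partition obtained by removing the $r$ bottom $i$-normal nodes and that no other branch can maintain equality; everything else is bookkeeping with the branching rules already available in the literature.
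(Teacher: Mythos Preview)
Your proposal is correct and takes essentially the same approach as the paper: the paper does not prove this lemma at all but simply refers to Theorems 11.2.10 and 11.2.11 of \cite{k1}, noting that the case $r=0$ is trivial. Your more detailed sketch of how to extract parts (i)--(iii) from those results is sound, but for the purposes of this paper a bare citation suffices.
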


\begin{lemma}\label{l40}
Let $\lambda\vdash n$ be a $p$-regular partition. Also let $0\leq i\leq p-1$ and $r\geq 0$. Then $f_i^rD^\lambda\cong(f_i^{(r)}D^\lambda)^{\oplus r!}$. Further $f_i^{(r)}D^\lambda\not=0$ if and only if $\phi_i(\lambda)\geq r$. In this case, if $\pi$ is obtained by $\lambda$ by adding the $r$ top $i$-conormal nodes, then
\begin{enumerate}
\item\label{l40a}
$f_i^{(r)}D^\lambda$ is a self-dual indecomposable module with head and socle isomorphic to $D^\pi$,

\item\label{l40b}
$[f_i^{(r)}D^\lambda:D^\pi]=\binom{\phi_i(\lambda)}{r}=\dim\End_{\s_{n+1}}(f_i^{(r)}D^\lambda)$,

\item\label{l40c}
if $D^\psi$ is a composition factor of $f_i^{(r)}D^\lambda$ then $\phi_i(\psi)\leq \phi_i(\lambda)-r$, with equality holding if and only if $\psi=\pi$.
\end{enumerate}
\end{lemma}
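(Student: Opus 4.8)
The statement is the ``$f$-side'' analogue of Lemma~\ref{l39}, so the natural approach is to deduce it from Lemma~\ref{l39} by a duality/adjunction argument rather than redoing the induction theory from scratch. First I would recall that the functors $e_i^{(r)}$ and $f_i^{(r)}$ are biadjoint (up to a twist), so that for a $p$-regular $\lambda$ one has $f_i^{(r)} D^\lambda \cong (e_i^{(r)} D^{\lambda'})^{\circledast}$ or, more usefully here, that $f_i^{(r)}$ commutes with the contravariant duality $M \mapsto M^\star$ on the relevant module categories (both $D^\lambda$ and the functors $f_i, e_i$ are compatible with duality, by the self-duality statements already quoted in the excerpt and Kleshchev's book). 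Using Lemma~\ref{l45} applied to $D^\lambda\uparrow^{\s_{n+1}}$ together with the standard fact that $\phi_i(\lambda)=\epsilon_i(\lambda')$ under conjugation paired with the $p'$-twist, one transports each of the three itemized assertions of Lemma~\ref{l39} across. Concretely: the divided-power identity $f_i^r D^\lambda \cong (f_i^{(r)} D^\lambda)^{\oplus r!}$ and the nonvanishing criterion $f_i^{(r)}D^\lambda\neq 0 \iff \phi_i(\lambda)\geq r$ come directly from Theorems~11.2.7--11.2.8 of \cite{k1}; that $\pi$ (obtained by adding the top $r$ $i$-conormal nodes) is the right partition is the conormal-node analogue of the normal-node computation, which is again in Section~11.1 of \cite{k1}.

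Having set this up, the three parts follow in order. For (i), indecomposability and self-duality: $f_i^{(r)}D^\lambda$ is self-dual because $D^\lambda$ is self-dual and $f_i^{(r)}$ commutes with $\star$; its head is $D^\pi$ by the theory of $f_i^{(r)}$ applied to an irreducible (Theorem~11.2.10 or thereabouts of \cite{k1}), hence by self-duality its socle is also $D^\pi$, and a module with simple head is automatically indecomposable. For (ii), I would use that $\Hom_{\s_{n+1}}(f_i^{(r)}D^\lambda, f_i^{(r)}D^\lambda) \cong \Hom_{\s_n}(D^\lambda, e_i^{(r)}f_i^{(r)}D^\lambda)$ by adjunction, and then read off the multiplicity of $D^\lambda$ there; alternatively, invoke the shuffling/$\mathfrak{sl}_2$-categorification structure which gives $\dim\End(f_i^{(r)}D^\lambda) = \binom{\phi_i(\lambda)}{r}$ and simultaneously $[f_i^{(r)}D^\lambda : D^\pi] = \binom{\phi_i(\lambda)}{r}$ (the composition multiplicity of the head equals the endomorphism dimension precisely because the module is self-dual with simple head $D^\pi$, so every endomorphism is determined by where a fixed copy of $D^\pi$ in the socle maps, and these land in the $D^\pi$-isotypic part of the head). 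For (iii), the bound $\phi_i(\psi)\leq \phi_i(\lambda)-r$ with equality iff $\psi=\pi$ is the conormal mirror of Lemma~\ref{l39}\eqref{l39c}, proved by the same weight/string-length argument: adding conormal nodes can only decrease $\phi_i$, and the extremal case is exactly the ``most added'' partition $\pi$.

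The main obstacle I anticipate is bookkeeping rather than conceptual: making the conjugation-plus-twist dictionary between normal and conormal nodes (equivalently between $e_i$ and $f_i$) completely precise, including the correct residue relabelling $i \leftrightarrow p-1-i$ or $i\leftrightarrow -i$ that appears when one conjugates partitions in characteristic $p$, and checking that this relabelling does not disturb the binomial coefficients $\binom{\phi_i(\lambda)}{r}$. One must also be slightly careful that ``$\pi$ obtained by adding the $r$ top $i$-conormal nodes'' is well-defined and $p$-regular, which is where the hypothesis $\phi_i(\lambda)\geq r$ is used. Once that dictionary is pinned down, each of (i)--(iii) is a one-line transcription of the corresponding part of Lemma~\ref{l39}; if one instead prefers a self-contained argument, all three parts are covered by Theorems~11.2.7, 11.2.8 and the structure theorems for divided-power functors in Chapter~11 of \cite{k1}, and the proof reduces to citing these and spelling out the identification of the head.
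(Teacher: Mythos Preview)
The paper does not prove this lemma at all: immediately after the statements of Lemmas~\ref{l39} and~\ref{l40} it simply writes ``For proofs see Theorems 11.2.10 and 11.2.11 of \cite{k1} (the case $r=0$ holds trivially).'' So the intended ``proof'' is a bare citation to Kleshchev's book, and your closing remark --- that all three parts are covered by the structure theorems for divided-power functors in Chapter~11 of \cite{k1} --- already matches the paper exactly.

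Your primary proposed route, however, has a genuine problem. You suggest transporting Lemma~\ref{l39} to Lemma~\ref{l40} via a ``conjugation-plus-twist'' dictionary, writing things like $\phi_i(\lambda)=\epsilon_i(\lambda')$ and $f_i^{(r)} D^\lambda \cong (e_i^{(r)} D^{\lambda'})^{\circledast}$. In positive characteristic this does not work as stated: $D^{\lambda'}$ is not in general defined (the conjugate of a $p$-regular partition need not be $p$-regular), and tensoring with sign sends $D^\lambda$ to $D^{\mathrm{M}(\lambda)}$ where $\mathrm{M}$ is the Mullineux involution, not to $D^{\lambda'}$. The correct relationship between the $e$-side and the $f$-side is not partition conjugation but the biadjunction of $e_i$ and $f_i$ combined with the combinatorics of the crystal (normal/conormal nodes of the \emph{same} partition), which is precisely what Kleshchev proves directly in Theorems~11.2.10--11.2.11. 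So your adjunction-based sketch for parts (i) and (ii) is fine, but the conjugation dictionary should be dropped; the cleanest thing is simply to cite \cite{k1} as the paper does.
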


For proofs see Theorems 11.2.10 and 11.2.11 of \cite{k1} (the case $r=0$ holds trivially).
In particular, for $r=1$, we have that $e_i=e_i^{(1)}$ and $f_i=f_i^{(1)}$. In this case there are other compositions factors of $e_iD^\lambda$ and $f_iD^\lambda$ which are known (see Remark 11.2.9 of \cite{k1}).

\begin{lemma}\label{l56}
Let $\lambda$ be a $p$-regular partition. If $\alpha$ is $p$-regular and is obtained from $\lambda$ by removing a normal node of residue $i$ then $D^\alpha$ is a composition factor of $e_iD^\lambda$.

Similarly if $\beta$ is $p$-regular and is obtained from $\lambda$ by adding a conormal node of residue $i$ then $D^\beta$ is a composition factor of $f_iD^\lambda$.
\end{lemma}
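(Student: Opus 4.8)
The plan is to reduce everything to the statement about $e_i$; the statement about $f_i$ is its mirror image, obtained from the analogous modular branching rule for the induction functors, or deduced from the $e_i$-statement by combining self-duality of the $D^\mu$ with the combinatorial fact that adding an $i$-conormal node $B$ to $\lambda$ and then deleting it returns $\lambda$, with $B$ now an $i$-normal node of $\lambda\cup\{B\}$ --- though, since this addition/deletion identity is clean only for the extremal such node, this second route still needs the full branching rule rather than a bare induction. So I would put all the work into $e_i$.

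By Lemma~\ref{l45}, $e_iD^\lambda$ is a direct summand of $D^\lambda\!\downarrow_{\s_{n-1}}$ and $e_i$ is exact, so the composition factors of $e_iD^\lambda$ are among those of the restriction; the precise list, with multiplicities, is Kleshchev's modular branching rule. The efficient way to obtain it is to lift the situation to the affine Hecke algebra (or its degenerate analogue) of Chapter~11 of \cite{k1}, where the functors lifting $e_i$ and $f_i$ categorify the Chevalley generators of $\widehat{\mathfrak{sl}}_p$ acting on the basic representation $L(\Lambda_0)$, and to read the action on a simple module off the crystal graph of $L(\Lambda_0)$: projecting back to $F\s_n\md$, every $i$-normal node $A$ of $\lambda$ then contributes the composition factor $D^{\lambda\setminus A}$ to $e_iD^\lambda$ whenever $\lambda\setminus A$ is $p$-regular (for the lowest $i$-normal node this is nothing but the copy of $D^\nu$ in the head and socle of $e_iD^\lambda=e_i^{(1)}D^\lambda$ furnished by Lemma~\ref{l39}\ref{l39a}). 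This is exactly Remark~11.2.9 of \cite{k1}, which I would simply cite.

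The main obstacle --- and the reason this is a genuine consequence of the branching-rule machinery and not just of Lemmas~\ref{l45}, \ref{l39} and \ref{l40} --- is that there is no short self-contained induction available inside $F\s_{n-1}\md$: deleting a non-lowest $i$-normal node of $\lambda$ need not drop $\epsilon_i$ by only $1$ (as one sees from the bracket description of normal nodes), and, more seriously, the resulting partition of $n-1$ need not be $p$-regular, so in general there is no symmetric-group module $D^{\lambda\setminus A}$ on which to run an induction. Passing to the Hecke algebra, where all the relevant simple modules do exist, is precisely what circumvents this.
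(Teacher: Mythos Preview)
Your proposal is correct and takes essentially the same approach as the paper: the paper gives no proof at all for this lemma, simply introducing it with the parenthetical ``see Remark 11.2.9 of \cite{k1}'', which is exactly the citation you arrive at. Your surrounding discussion of the crystal/Hecke-algebra machinery and of why no elementary induction is available is accurate and adds helpful context, but the core argument is identical.
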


The following properties of $e_i$ and $f_i$ are just a special cases of Lemma 8.2.2(ii) and Theorem 8.3.2(i) of \cite{k1}.

\begin{lemma}\label{l57}
If $M$ is self dual then so are $e_iM$ and $f_i(M)$.
\end{lemma}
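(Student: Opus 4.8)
The final statement in the excerpt is Lemma~\ref{l57}, which asserts that if $M$ is self-dual then so are $e_iM$ and $f_iM$.

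\textbf{Proof plan.}
This is essentially the content of Lemma~8.2.2(ii) and Theorem~8.3.2(i) of \cite{k1}, so one legitimate route is simply to quote these. For the sake of a self-contained argument I would proceed as follows. First I would record that the contragredient duality functor $N\mapsto N^*$ on finite-dimensional modules commutes both with restriction and with induction along $\s_{m-1}\leq\s_m\leq\s_{m+1}$. For restriction this is immediate, since $(N\!\downarrow_{\s_{m-1}})^*$ and $(N^*)\!\downarrow_{\s_{m-1}}$ are literally the same module. For induction, write $N\!\uparrow^{\s_{m+1}}=F\s_{m+1}\otimes_{F\s_m}N$; dualizing gives $\Hom_F(F\s_{m+1}\otimes_{F\s_m}N,F)\cong\Hom_{F\s_m}(F\s_{m+1},N^*)$, the coinduced module, which coincides with $N^*\!\uparrow^{\s_{m+1}}$ because $[\s_{m+1}:\s_m]$ is finite. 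Hence, if $M$ is self-dual, both $M\!\downarrow_{\s_{n-1}}$ and $M\!\uparrow^{\s_{n+1}}$ are self-dual.

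Next I would observe that every block $B$ of $F\s_m$ is self-dual, in the sense that the block whose simple modules are the duals of those of $B$ is $B$ itself: indeed $B$ contains some $D^\lambda$, and $D^\lambda$ is self-dual (as already noted in the excerpt), so $(D^\lambda)^*=D^\lambda$ forces the dual block to coincide with $B$. Consequently the projection onto a block commutes with duality: if $N=\bigoplus_B e_BN$ is the block decomposition and $N$ is self-dual, then applying $*$ and using $(e_BN)^*\cong e_B(N^*)=e_BN$ shows each summand $e_BN$ is self-dual.

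Finally, for $M$ lying in a single block, $e_iM$ and $f_iM$ are by definition the components of the self-dual modules $M\!\downarrow_{\s_{n-1}}$ and $M\!\uparrow^{\s_{n+1}}$ in a prescribed block, hence self-dual by the previous paragraph; the general case follows by additivity over the block decomposition of $M$, each summand of which is self-dual when $M$ is. I do not expect any real obstacle here: the only points requiring a moment's care are the left/right module bookkeeping in the identity $(N\!\uparrow)^*\cong N^*\!\uparrow$, and the standard self-duality of the $D^\lambda$, which the excerpt has already recorded.
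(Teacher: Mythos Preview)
Your proposal is correct. The paper itself gives no proof at all: it simply records before the lemma that this is a special case of Lemma~8.2.2(ii) and Theorem~8.3.2(i) of \cite{k1}, which you also note at the outset. Your additional self-contained argument---duality commutes with restriction and induction, every block of $F\s_m$ is self-dual because each $D^\lambda$ is, hence block projection commutes with duality, and $e_iM$, $f_iM$ are block components of self-dual modules---is sound and goes beyond what the paper provides.
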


\begin{lemma}\label{l48}
The functors $e_i$ and $f_i$ are left and right adjoint of each others.
\end{lemma}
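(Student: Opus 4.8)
\emph{Proof idea.} The plan is to obtain the biadjointness of $e_i$ and $f_i$ from the classical biadjointness of ordinary induction and restriction. Recall first that for finite groups $H\leq G$ the functor $\mathrm{Ind}_H^G$ is simultaneously a left and a right adjoint of $\mathrm{Res}^G_H$ (Frobenius reciprocity, together with $\mathrm{Ind}_H^G\cong\mathrm{Coind}_H^G$). So, writing $\mathrm{Ind}$ and $\mathrm{Res}$ for induction and restriction between $\s_n$ and $\s_{n+1}$, for $M$ an $F\s_n$-module and $N$ an $F\s_{n+1}$-module we have natural isomorphisms
\[\Hom_{\s_{n+1}}(\mathrm{Ind}\, M,N)\cong\Hom_{\s_n}(M,\mathrm{Res}\, N)\qquad\mbox{and}\qquad\Hom_{\s_{n+1}}(N,\mathrm{Ind}\, M)\cong\Hom_{\s_n}(\mathrm{Res}\, N,M),\]
together with the analogous isomorphisms one degree lower.

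Next I would reduce to a single pair of blocks: by the additivity of $e_i$ and $f_i$ and the block decomposition of modules it is enough to check the adjunctions when $M$ lies in a single block of $F\s_n$ of content $c$ and $N$ in a single block of $F\s_{n+1}$ of content $d$; write $c^{+j}$ for the tuple obtained from $c$ by adding $1$ in position $j$, and $d^{-j}$ for the tuple obtained from $d$ by subtracting $1$ in position $j$. By Lemma \ref{l45} we have $\mathrm{Ind}\, M\cong\bigoplus_j f_jM$ and $\mathrm{Res}\, N\cong\bigoplus_j e_jN$, where, directly from the definitions, $f_jM$ lies (when non-zero) in the block of content $c^{+j}$ and $e_jN$ lies (when non-zero) in the block of content $d^{-j}$. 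Since homomorphisms between modules in distinct blocks vanish, the first displayed isomorphism reduces to
\[\bigoplus_j\Hom_{\s_{n+1}}(f_jM,N)\cong\bigoplus_j\Hom_{\s_n}(M,e_jN),\]
in which, on the left, only the index $j$ with $c^{+j}=d$ can contribute, and, on the right, only the index $j$ with $d^{-j}=c$ can contribute; these two conditions determine the same $j$. Hence either $d=c^{+i}$ for a (then unique) $i$, giving $\Hom_{\s_{n+1}}(f_iM,N)\cong\Hom_{\s_n}(M,e_iN)$, or no such $i$ exists and both sides vanish. Reassembling over all blocks shows that $e_i$ is right adjoint to $f_i$; feeding the second displayed isomorphism (one degree down) through the same computation shows that $e_i$ is also left adjoint to $f_i$.

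I do not expect a serious obstacle here: the content bookkeeping is routine, and the only point genuinely worth checking is that the bijections produced above are \emph{natural}. This is automatic, however, since Frobenius reciprocity is a natural isomorphism and, by Lemma \ref{l45}, $e_i$ and $f_i$ are natural direct summands of $\mathrm{Res}$ and $\mathrm{Ind}$; restricting a natural isomorphism to such summands of source and target yields a natural isomorphism again. (One also uses the elementary fact, immediate from the definitions recalled before Lemma \ref{l45}, that $f_j$ and $e_j$ send a module concentrated in a block of content $c$ to a module concentrated in the block of content $c^{+j}$, respectively $c^{-j}$.)
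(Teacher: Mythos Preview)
Your argument is correct. The paper itself does not give a proof of this lemma: it simply records that the statement is a special case of Lemma 8.2.2(ii) and Theorem 8.3.2(i) of Kleshchev's book \cite{k1}. Your approach is the standard direct one---start from the biadjointness of ordinary induction and restriction for finite groups and then project to block components using Lemma \ref{l45} and the vanishing of homomorphisms between distinct blocks. This is precisely the mechanism underlying the cited results, so you are in effect unpacking the reference rather than taking a genuinely different route. The gain is that your version is self-contained within the paper's framework; the cost is only a few lines of bookkeeping, which you have handled correctly (including the naturality point).
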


Using notations from Lemma \ref{l39} define $\tilde{e}^r_iD^\lambda:=D^\nu$ if $\epsilon_i(\lambda)\geq r$, otherwise define $\tilde{e}^r_iD^\lambda:=0$. Similarly, with notations from Lemma \ref{l40}, define $\tilde{f}^r_iD^\lambda:=D^\pi$ if $\phi_i(\lambda)\geq r$, otherwise define $\tilde{f}_i^rD^\lambda:=0$. The first part of the next lemma follows from Lemma 5.2.3 of \cite{k1}. The second part follows by the definition of $\tilde{e}^r_i$ and $\tilde{f}^r_i$ and from Lemmas \hyperref[l39c]{\ref*{l39}\ref*{l39c}} and \hyperref[l40c]{\ref*{l40}\ref*{l40c}}.

\begin{lemma}\label{l47}
For $r\geq 0$ and $p$-regular partitions $\lambda,\nu$ we have that $\tilde{e}^r_i(D^\lambda)=D^\nu$ if and only if $D^\lambda=\tilde{f}^r_i(D^\nu)$. Further in this case $\epsilon_i(\nu)=\epsilon_i(\lambda)-r$ and $\phi_i(\nu)=\phi_i(\lambda)+r$.
\end{lemma}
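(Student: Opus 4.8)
\emph{Proof proposal.} The plan is to treat the two assertions separately. For the ``if and only if'' statement the key point is that the operators $\tilde e_i$ and $\tilde f_i$ (the case $r=1$) are the Kashiwara crystal operators on the crystal of irreducible $F\s_n$-modules, and as such are mutually inverse on the locus where they are non-zero; this is Lemma 5.2.3 of \cite{k1}. So first I would record that $\tilde e_i(D^\lambda)=D^\nu\not=0$ if and only if $\tilde f_i(D^\nu)=D^\lambda$, together with the accompanying equalities $\epsilon_i(\nu)=\epsilon_i(\lambda)-1$ and $\phi_i(\nu)=\phi_i(\lambda)+1$ (either as part of that statement, or via Lemmas \hyperref[l39c]{\ref*{l39}\ref*{l39c}} and \hyperref[l40c]{\ref*{l40}\ref*{l40c}} with $r=1$).

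To pass from $r=1$ to general $r$ I would verify that the ``bulk'' operator $\tilde e^r_i$ defined before the lemma agrees with the $r$-fold composite $\tilde e_i\circ\cdots\circ\tilde e_i$, and dually for $\tilde f^r_i$. This is the combinatorial part of the argument: using the signature (bracketing) description of normal nodes recalled in Section 11.1 of \cite{k1} or Section 2 of \cite{bk2}, one checks that if $\mu$ is obtained from $\lambda$ by deleting the bottom $i$-normal node, then $\epsilon_i(\mu)=\epsilon_i(\lambda)-1$ and the bottom $i$-normal node of $\mu$ is exactly the second-from-bottom $i$-normal node of $\lambda$. Iterating, deleting the $r$ bottom $i$-normal nodes of $\lambda$ is the same as deleting a bottom $i$-normal node $r$ times, so $\tilde e^r_i=(\tilde e_i)^r$; the statement for $\tilde f^r_i$ is obtained by the symmetric bookkeeping with conormal nodes. (If Lemma 5.2.3 of \cite{k1} is already phrased for the iterated crystal operators, this step can simply be quoted.)

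With these two ingredients the first assertion is immediate: $\tilde e^r_i(D^\lambda)=D^\nu\not=0$ means $\epsilon_i(\lambda)\geq r$ and $(\tilde e_i)^rD^\lambda=D^\nu$, and applying the $r=1$ equivalence at each of the $r$ steps shows this is equivalent to $(\tilde f_i)^rD^\nu=D^\lambda$, i.e.\ to $\tilde f^r_i(D^\nu)=D^\lambda$. For the ``Further'' part I would not iterate but argue directly from the divided power functors: by Lemma \hyperref[l39a]{\ref*{l39}\ref*{l39a}} the module $D^\nu$ is a composition factor of $e^{(r)}_iD^\lambda$, so Lemma \hyperref[l39c]{\ref*{l39}\ref*{l39c}} applied in the equality case $\psi=\nu$ gives $\epsilon_i(\nu)=\epsilon_i(\lambda)-r$; symmetrically, $D^\lambda=\tilde f^r_i(D^\nu)$ is a composition factor of $f^{(r)}_iD^\nu$ and equals the partition $\pi$ of Lemma \ref{l40}, so Lemma \hyperref[l40c]{\ref*{l40}\ref*{l40c}} gives $\phi_i(\lambda)=\phi_i(\nu)-r$, that is $\phi_i(\nu)=\phi_i(\lambda)+r$.

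The step I expect to be the main obstacle is the combinatorial identification $\tilde e^r_i=(\tilde e_i)^r$ and its dual: one has to control how the cancellation pattern in the signature sequence of $\lambda$ changes when a single good node is removed, and check that no previously non-normal node of residue $i$ becomes normal in a way that would disturb the bottom-up ordering of the normal nodes. This is routine but needs care if it is not taken directly from \cite{k1}; everything else reduces to quoting Lemma 5.2.3 of \cite{k1} and the equality cases of Lemmas \ref{l39} and \ref{l40}.
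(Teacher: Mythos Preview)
Your proposal is correct and follows essentially the same approach as the paper: the first assertion is obtained from Lemma~5.2.3 of \cite{k1}, and the second from the equality cases in Lemmas~\hyperref[l39c]{\ref*{l39}\ref*{l39c}} and \hyperref[l40c]{\ref*{l40}\ref*{l40c}}. Your extra care in verifying $\tilde e^r_i=(\tilde e_i)^r$ combinatorially is not needed here, since the paper simply quotes the cited lemma (and you already anticipated this possibility).
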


In particular, if $\epsilon_i(\lambda)\geq r$ and notation is as above, then $\phi_i(\nu)\geq r$ and the $r$ bottom normal $i$-nodes of $\lambda$ are the $r$ top conormal $i$-nodes of $\nu$. Similarly, if $\phi_i(\lambda)\geq r$, then $\epsilon_i(\pi)\geq r$ and the $r$ top conormal $i$-nodes of $\lambda$ are the $r$ bottom normal $i$-nodes of $\pi$.

When considering the number of normal and conormal nodes of a partition we have the following result (which for $p$-regular partitions follows from Lemmas \ref{l45}, \ref{l39}, \ref{l40} and Corollary 4.2 of \cite{k3}):

\begin{lemma}\label{l52}
Any partition has 1 more conormal node than it has normal nodes.
\end{lemma}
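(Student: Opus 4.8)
The plan is to deduce Lemma~\ref{l52} from a purely combinatorial fact, splitting the argument into two independent pieces: an addable/removable node count for an arbitrary Young diagram, and the observation that, residue by residue, the passage from removable and addable nodes to normal and conormal nodes preserves the difference of the two counts.

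First I would prove the auxiliary claim that every partition $\mu$, including the empty one, has exactly one more addable node than removable nodes. Writing $\mu=(\mu_1\geq\mu_2\geq\cdots\geq\mu_k>0)$ and grouping the rows into maximal blocks of equal parts, say with distinct values $v_1>v_2>\cdots>v_m$ of multiplicities $c_1,\dots,c_m$, one checks directly that $\mu$ has a removable node precisely at the bottom row of each block, hence $m$ in total, whereas it has an addable node in row $1$, an addable node in the first row of each of the blocks $2,\dots,m$ (there $\mu_{i-1}>\mu_i$), and an addable node in row $k+1$, hence $m+1$ in total; when $\mu=\emptyset$ the two counts are $0$ and $1$. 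Thus $(\#\text{addable nodes of }\mu)-(\#\text{removable nodes of }\mu)=1$ for every partition $\mu$.

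Next I would recall the definition of normal and conormal nodes (Section 11.1 of \cite{k1}, or Section 2 of \cite{bk2}): for a fixed residue $i$ one lists the addable and removable $i$-nodes of $\lambda$ in the prescribed order and then repeatedly deletes an adjacent pair made up of one removable and one addable $i$-node; the removable $i$-nodes that survive are by definition the normal $i$-nodes, counted by $\epsilon_i(\lambda)$, and the surviving addable $i$-nodes are the conormal $i$-nodes, counted by $\phi_i(\lambda)$. Since every deletion step removes one removable and one addable $i$-node, the difference of the two counts is unaffected (even though the individual counts depend on the convention), so
\[\phi_i(\lambda)-\epsilon_i(\lambda)=(\#\text{ addable }i\text{-nodes of }\lambda)-(\#\text{ removable }i\text{-nodes of }\lambda).\]
Summing this identity over all residues $0\leq i\leq p-1$ and applying the first step with $\mu=\lambda$ gives $\sum_i\phi_i(\lambda)-\sum_i\epsilon_i(\lambda)=1$, which is the assertion.

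There is no genuinely hard step. The only points requiring attention are the bookkeeping in the first step — in particular the addable node in row $k+1$ and the degenerate case $\lambda=\emptyset$ — and invoking the definition of normal and conormal nodes in the form where each cancellation step pairs a removable node with an addable node of the same residue, which is exactly the definition of \cite{k1} (equivalently \cite{bk2}). Note that this argument uses no $p$-regularity hypothesis, so it covers all partitions, not just the $p$-regular ones for which the result already follows from Lemmas~\ref{l45}, \ref{l39}, \ref{l40} and Corollary 4.2 of \cite{k3}.
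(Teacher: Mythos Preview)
Your proof is correct and follows essentially the same approach as the paper's own proof: the paper also argues that the reduced $i$-signature is obtained by cancelling addable/removable pairs, so the difference is preserved, and then invokes the fact that any partition has one more addable node than removable nodes. Your version simply spells out both steps in more detail.
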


\begin{proof}
For each residue $i$ the reduced $i$-signature is obtained from the $i$-signature by recursively removing pairs corresponding to an addable and a removable node. The lemma then follows from the definition of normal and conormal nodes, as any partition has 1 more addable node than it has removable nodes.
\end{proof}

From Lemmas \ref{l45}, \ref{l39} and \ref{l40} and as the modules $e_iD^\lambda$ (or the modules $f_iD^\lambda$) correspond to pairwise distinct blocks, we have the following result.

\begin{lemma}\label{l53}
For a $p$-regular partition $\lambda\vdash n$ we have that
\[\dim\End_{\s_{n-1}}(D^\lambda\downarrow_{\s_{n-1}})=\epsilon_0(\lambda)+\ldots+\epsilon_{p-1}(\lambda)\]
and
\[\dim\End_{\s_{n+1}}(D^\lambda\uparrow^{\s_{n+1}})=\phi_0(\lambda)+\ldots+\phi_{p-1}(\lambda).\]
\end{lemma}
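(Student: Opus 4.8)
The plan is to compute each of the two endomorphism dimensions by decomposing the restricted/induced module along the blocks given by Lemma \ref{l45} and then counting endomorphisms block by block, using Lemma \ref{l39} (respectively Lemma \ref{l40}) to handle each summand. I will write out only the first statement; the second follows by the symmetric argument with $f_i$ in place of $e_i$.

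First I would apply Lemma \ref{l45} to get the direct sum decomposition $D^\lambda\downarrow_{\s_{n-1}}\cong e_0D^\lambda\oplus\ldots\oplus e_{p-1}D^\lambda$. Since the summands $e_iD^\lambda$ lie in pairwise distinct blocks of $F\s_{n-1}$, there are no nonzero homomorphisms between $e_iD^\lambda$ and $e_jD^\lambda$ for $i\neq j$, so $\End_{\s_{n-1}}(D^\lambda\downarrow_{\s_{n-1}})\cong\bigoplus_{i=0}^{p-1}\End_{\s_{n-1}}(e_iD^\lambda)$ and hence $\dim\End_{\s_{n-1}}(D^\lambda\downarrow_{\s_{n-1}})=\sum_{i=0}^{p-1}\dim\End_{\s_{n-1}}(e_iD^\lambda)$. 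Now for each $i$ I invoke Lemma \ref{l39} with $r=1$: if $\epsilon_i(\lambda)\geq 1$ then $e_iD^\lambda=e_i^{(1)}D^\lambda\neq 0$ and part \ref{l39b} gives $\dim\End_{\s_{n-1}}(e_iD^\lambda)=\binom{\epsilon_i(\lambda)}{1}=\epsilon_i(\lambda)$; if $\epsilon_i(\lambda)=0$ then $e_iD^\lambda=0$ and the endomorphism ring is zero-dimensional, which again equals $\epsilon_i(\lambda)$. Summing over $i$ yields the first formula. The second formula is obtained identically, replacing $e_i$ by $f_i$, Lemma \ref{l45}'s induction statement for the decomposition, the fact that the $f_iD^\lambda$ lie in distinct blocks of $F\s_{n+1}$, and Lemma \ref{l40}\ref{l40b} with $r=1$ for each summand.

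There is no serious obstacle here: the only point requiring care is the observation that the summands $e_iD^\lambda$ (respectively $f_iD^\lambda$) belong to pairwise distinct blocks, which is immediate from their definition via block contents, so that the endomorphism ring of the whole restriction splits as the direct sum of the endomorphism rings of the pieces; everything else is a direct citation of Lemmas \ref{l45}, \ref{l39} and \ref{l40} with $r=1$, together with the convention $e_i^{(0)}$ / the trivial case $r=1$ with the relevant $\epsilon_i$ or $\phi_i$ vanishing.
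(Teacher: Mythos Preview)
Your proposal is correct and follows exactly the argument the paper indicates: the paper states Lemma~\ref{l53} as an immediate consequence of Lemmas~\ref{l45}, \ref{l39} and \ref{l40} together with the fact that the $e_iD^\lambda$ (respectively $f_iD^\lambda$) lie in pairwise distinct blocks, which is precisely what you have written out. There is nothing to add.
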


A $p$-regular partition $\lambda\vdash n$ for which $D^\lambda\downarrow_{\s_{n-1}}$ is irreducible is called a JS-partition. When $p=2$, such partitions are easily classified, as can be seen in the next lemma, which follows from Theorem D of \cite{k2}, Lemma \ref{l53} and from $D^\lambda\downarrow_{\s_{n-1}}$ being self-dual.

\begin{lemma}\label{l55}
For $p=2$ and $\lambda$ a 2-regular partition the following are equivalent:
\begin{enumerate}
\item
$\lambda$ is a JS-partition,

\item
the parts of $\lambda$ are all congruent modulo 2,

\item
$\lambda$ has only one normal node.
\end{enumerate}
\end{lemma}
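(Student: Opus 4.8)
The statement to prove is Lemma~\ref{l55}, characterizing JS-partitions when $p=2$. The strategy is to argue a cycle of implications, using the $2$-modular branching combinatorics recalled above. First I would unpack the definition of normal and conormal nodes for $p=2$: fix a $2$-regular partition $\lambda$ and consider, for each residue $i\in\{0,1\}$, the $i$-signature obtained by reading the addable and removable nodes of $\lambda$ from bottom to top, recording a ``$+$'' for each addable $i$-node and a ``$-$'' for each removable $i$-node; the reduced $i$-signature deletes adjacent $-+$ pairs, and the surviving $-$'s mark the normal $i$-nodes while the surviving $+$'s mark the conormal $i$-nodes. By Lemma~\ref{l53}, $D^\lambda\!\downarrow_{\s_{n-1}}$ is irreducible exactly when $\epsilon_0(\lambda)+\epsilon_1(\lambda)=1$, which (since $D^\lambda\!\downarrow_{\s_{n-1}}$ is self-dual and nonzero) forces it to be a single irreducible $D^\nu$; this already gives the equivalence of (i) and (iii) almost for free. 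So the real content is the equivalence of (ii) and (iii): $\lambda$ has exactly one normal node if and only if all parts of $\lambda$ are congruent mod $2$.

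For the direction (ii)$\Rightarrow$(iii): suppose all parts of $\lambda=(\lambda_1>\lambda_2>\cdots>\lambda_h)$ (parts are distinct since $\lambda$ is $2$-regular) have the same parity. I would compute the residues of the addable and removable nodes directly. The removable node at the end of row $k$ has residue $\lambda_k-k \pmod 2$; the addable node at the end of row $k$ has residue $\lambda_k-k+1\pmod 2$, and there is an addable node below row $h$ of residue $-h\pmod 2$ (or in column... more carefully, of residue $\equiv \lambda_{h}-h-1$ if one thinks of it as continuing, but the cleanest is: the addable node in row $h+1$ has residue $-h\pmod 2$). Under the hypothesis that all $\lambda_k$ are congruent mod $2$, say $\lambda_k\equiv c$, the removable node in row $k$ has residue $c-k$ and the addable node in row $k$ has residue $c-k+1$; since consecutive rows then alternate residues, one checks that in each of the two $i$-signatures the pattern of $+$'s and $-$'s is such that everything cancels in the reduction except a single surviving $-$ coming from the bottom removable node (row $h$) — all higher removable nodes get paired with the addable node just above them. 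This leaves $\epsilon_0(\lambda)+\epsilon_1(\lambda)=1$. For the converse (iii)$\Rightarrow$(ii), equivalently $\neg$(ii)$\Rightarrow\neg$(iii): if two consecutive parts $\lambda_k,\lambda_{k+1}$ have different parity (which, since all gaps $\lambda_k-\lambda_{k+1}\geq 1$, happens precisely when some gap $\lambda_k-\lambda_{k+1}$ is even, i.e. $\geq 2$), then there is an ``extra'' addable node strictly between rows $k$ and $k+1$, and tracing through the signature one sees this produces a second uncancelled removable node, so $\lambda$ has at least two normal nodes. The bookkeeping is cleanest if one processes the rows from the bottom up and tracks the running reduced signature as a stack.

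The main obstacle I anticipate is the residue bookkeeping in the signature reduction — making sure the pairing of removable nodes with addable nodes (and the treatment of the bottom-most addable node below the diagram, and of the addable node at the top) is handled without off-by-one errors in the residues mod $2$. A clean way to sidestep a long case analysis is to invoke known results: the equivalence (i)$\Leftrightarrow$(ii) for $p=2$ is exactly Theorem~D of \cite{k2}, and the reference to that theorem together with Lemma~\ref{l53} (giving $\dim\End_{\s_{n-1}}(D^\lambda\!\downarrow_{\s_{n-1}})=\epsilon_0(\lambda)+\epsilon_1(\lambda)$) and the self-duality of $D^\lambda\!\downarrow_{\s_{n-1}}$ closes the loop: (i) says the restriction is irreducible; self-duality plus Lemma~\ref{l53} show this is equivalent to $\epsilon_0(\lambda)+\epsilon_1(\lambda)=1$, i.e. (iii); and Theorem~D of \cite{k2} identifies the JS-partitions in characteristic $2$ as exactly those in (ii). Indeed the excerpt already signals this is the intended proof (``which follows from Theorem D of \cite{k2}, Lemma \ref{l53} and from $D^\lambda\downarrow_{\s_{n-1}}$ being self-dual''), so I would write the proof as a short deduction assembling these three ingredients rather than redo the branching combinatorics by hand.
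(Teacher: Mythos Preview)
Your final proposed proof is exactly the paper's: assemble Theorem~D of \cite{k2} for (i)$\Leftrightarrow$(ii), and use Lemma~\ref{l53} together with self-duality of $D^\lambda\!\downarrow_{\s_{n-1}}$ for (i)$\Leftrightarrow$(iii). One small caution about the combinatorial sketch you chose not to rely on: in the paper's conventions it is the removable node on the \emph{first} row that is always normal, not the bottom one, so your ``single surviving $-$ coming from the bottom removable node'' is off; but since you ultimately route the argument through the citations, this does not affect the proof.
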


As we will also consider restrictions of $\s_n$-modules to $\s_{(n-2,2)}$ we define some notation for the restriction of modules to certain blocks of $\s_{(n-2,2)}$. For $M$ an $\s_n\md$ corresponding to the block with content $(b_0,\ldots,b_{p-1})$ we define $\overline{e}_i^2M$ as the restriction of $M\downarrow_{\s_{n-2,2}}$ to the block with content $(b_0,\ldots,b_{i-1},b_i-2,b_{i+1},\ldots,b_{p-1})$ for the $\s_{n-2}$ factor of $\s_{n-2,2}$. Notice that from the definition $(\overline{e}_i^2M)\downarrow_{\s_{n-2}}=e_i^2M$.

For arbitrary modules $M_1,\ldots,M_h$ we will write $M\sim M_1|\ldots|M_h$ if $M$ has a filtration with factors $M_1,\ldots,M_h$ counted from the bottom. For irreducible modules $D_1,\ldots,D_h$ we will write $M=D_1|\dots|D_h$ if $M$ is a uniserial module with composition factors $D_1,\ldots,D_h$ counted from the bottom. For irreducible modules $D_1,D_2,D_3$ we will also write $M=(D_1\oplus D_2)|D_3$ for a module $M$ with socle $D_1\oplus D_2$ and head $D_3$ and no other composition factor. Similarly we will write $M=D_1|(D_2\oplus D_3)$ for a module with socle $D_1$ and head $D_2\oplus D_3$ and no other composition factor.

Let $M\sim D_1|\ldots|D_h$ with $D_i$ irreducible. When writing
\[\xymatrix@R=1pt@C=10pt{
&&&\\
&&&\\
&&D_k\ar@{-}[ddd]\ar@{.}[dl]\ar@{.}[uul]\ar@{.}[uu]\ar@{.}[uur]\ar@{.}[dr]\\
&&&\\
M=&&&&,\\
&&&\\
&&D_j\ar@{.}[ddl]\ar@{.}[ul]\ar@{.}[dd]\ar@{.}[ur]\ar@{.}[ddr]\\
&&&\\
&&&
}\]
edges will correspond to uniserial subquotients. For example uniserial modules can be written as
\[\xymatrix@R=3pt@C=10pt{
&D_h\ar@{-}[dd]\\
\\
M=D_1|D_2|\ldots|D_h=&\vdots\ar@{-}[d]\\
&D_2\ar@{-}[dd]\\
\\
&D_1
}\]
and modules of the form $(D_1\oplus D_2)|D_3$ or $D_1|(D_2\oplus D_3)$ can be written as
\[\xymatrix@R=1pt@C=0pt{
&&D_3\ar@{-}[ddr]\ar@{-}[ddl]&&&&D_2\ar@{-}[ddr]&&D_3\ar@{-}[ddl]\\
M=(D_1\oplus D_2)|D_3=&&&&\mbox{ or }&M=D_1|(D_2\oplus D_3)=&&&&\!.\\
&D_1&&D_2&&&&D_1
}\]
Information on socle, head and direct summands can be obtained from the diagrams, for example if
\[\xymat{
&D_4\ar@{-}[dd]&D_5\ar@{-}[d]&D_6\ar@{-}[dl]&&D_4\ar@{-}[dd]&&D_5\ar@{-}[d]&D_6\ar@{-}[dl]\\
M=&&D_3\ar@{-}[d]&&=&&\oplus&D_3\ar@{-}[d]\\
&D_1&D_2&&&D_1&&D_2
}\]
then $\soc(M)=D_1\oplus D_2$ and $\hd(M)=D_4\oplus D_5\oplus D_6$.

\section{Modules structure}\label{s6}

Before being able to prove Theorems \ref{t3} and \ref{t4} we need some lemmas. We start by showing that $f_i^{(a)}D^\pi\cong e_i^{(\phi_i(\pi)-a)}\tilde{f}_i^{\phi_i(\pi)}D^\pi$ if $\epsilon_i(\pi)=0$.

\begin{lemma}\label{l34}
Let $M$, $A$ and $B$ be $G$-modules with $M\sim A|B$. Then for any $G$-module $N$ we have that
\[\dim\Hom_G(M,N)\leq\dim\Hom_G(A,N)+\dim\Hom_G(B,N).\]
\end{lemma}

\begin{proof}
This follows from $\Hom_G(\,\cdot\,,N)$ being left exact.
\end{proof}

\begin{lemma}\label{ll1}\label{t1}
Let $A$ and $B$ be finite dimensional modules with $\soc(B)\cong C$ simple. Then $\dim\Hom_G(A,B)\leq[A:C]$.

If $\soc(A)\cong C$ and $\dim\Hom_G(A,B)=[A:C]$ then $A$ is isomorphic to a submodule of $B$.
\end{lemma}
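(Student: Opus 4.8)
The plan is to prove both statements by analyzing how a homomorphism $A \to B$ interacts with the composition series of $B$, using the hypothesis that $\soc(B) \cong C$ is simple.

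For the first inequality, I would argue by induction on $\dim B$. If $\dim\Hom_G(A,B) = 0$ there is nothing to prove, so assume there is a nonzero $\varphi \colon A \to B$. Since $\soc(B) \cong C$ is simple, every nonzero submodule of $B$ contains $C$; in particular $\Im\varphi \supseteq C$, so $[A:C] \geq [\Im\varphi : C] \geq 1$. Now consider the quotient $\overline{B} := B/C$ and the composite maps. The key point is to relate $\Hom_G(A,B)$ to $\Hom_G(A,\overline{B})$: from the short exact sequence $0 \to C \to B \to \overline{B} \to 0$ and left exactness of $\Hom_G(A,-)$ we get
\[
\dim\Hom_G(A,B) \leq \dim\Hom_G(A,C) + \dim\Hom_G(A,\overline{B}).
\]
Here $\dim\Hom_G(A,C) \leq [A : C]$ is not quite what I want directly, so instead I would be more careful: the issue is that $\overline{B}$ need not have simple socle, so the induction hypothesis does not apply to $\overline{B}$ verbatim. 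The cleaner approach is to instead peel off the \emph{head} of $B$ or to filter $B$ by a composition series $0 = B_0 \subset B_1 \subset \cdots \subset B_k = B$ with $B_1 = C = \soc(B)$, and track the ``level'' at which the image of $\varphi$ first becomes nonzero modulo $B_j$. Concretely, for each $\varphi \in \Hom_G(A,B)$ let $\ell(\varphi)$ be the largest $j$ with $\Im\varphi \subseteq B_{j-1}$ being false at... — more precisely, map $\Hom_G(A,B) \to \prod_j \Hom_G(A, B_j/B_{j-1})$ is not injective in general, so that is too crude. The honest route: induct on composition length of $B$, and in the inductive step choose a simple submodule — but $B$ has a \emph{unique} simple submodule $C$. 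So set $B' = B/C$. We have $\dim\Hom_G(A,B) \leq \dim\Hom_G(A,C) + \dim\Hom_G(A,B')$ and $\dim\Hom_G(A,C) \leq [A:C]$ only if $C$ occurs; but actually $\dim\Hom_G(A,C) = $ multiplicity of $C$ in $\hd(A)$, which can exceed... no: $\dim\Hom_G(A,C) = \dim\Hom_G(\hd(A), C) = [\hd(A):C] \leq [A:C]$. Meanwhile decompose $B' = \bigoplus_i B'_i$ into indecomposables, each with simple socle (a quotient of $B$ by its socle need not have this property, so this fails too). The resolution is to \emph{not} quotient: instead, pick a composition factor at the \emph{top}. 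Let $T \subseteq B$ be a submodule with $B/T$ simple. Hmm, but then $T$ need not have simple socle either — wait, $\soc(T) \subseteq \soc(B) = C$, so $\soc(T) = C$, and $T$ does satisfy the hypothesis. So induct downward: $\dim\Hom_G(A,B) \leq \dim\Hom_G(A,T) + \dim\Hom_G(A, B/T)$. Wait — left exactness gives $\Hom_G(A,T) \hookrightarrow \Hom_G(A,B)$, not an upper bound on $\Hom_G(A,B)$; I need the sequence $0 \to \Hom_G(A,T) \to \Hom_G(A,B) \to \Hom_G(A,B/T)$, which \emph{does} give $\dim\Hom_G(A,B) \leq \dim\Hom_G(A,T) + \dim\Hom_G(A,B/T)$. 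Since $B/T$ is simple, $\dim\Hom_G(A,B/T) \leq [A : B/T] \cdot [\text{does } B/T \cong C?]$ — if $B/T \cong C$ then this is $\leq [A:C]$, but I want the total to be $[A:C]$, so I need $B/T \not\cong C$ in the generic step, which I cannot guarantee. So instead induct and bound $\dim\Hom_G(A,B/T) \leq [\hd(A) : B/T]$, and by induction $\dim\Hom_G(A,T) \leq [A:C]$; that gives $\dim\Hom_G(A,B) \leq [A:C] + [\hd(A):B/T]$, which is too weak.

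The correct and standard argument, which I will write out, is the following: let $J = \Hom_G(A,B)$ and consider the evaluation. Pick a composition series of $B$ refining $0 \subset C \subset B$, say $0 = B_0 \subset B_1 = C \subset B_2 \subset \cdots \subset B_k = B$. For $\varphi \in J$, $\varphi \neq 0$, since $\soc B = C$ we have $C \subseteq \Im \varphi$. Define a filtration of $J$ by $J_j = \{\varphi : \Im\varphi \subseteq B_j\}$; then $J_0 = 0$, $J_k = J$, and the map $J_j / J_{j-1} \to \Hom_G(A, B_j/B_{j-1})$ sending $\varphi \mapsto (\pi_{j-1}\circ\varphi)$ need not be injective because a map with image in $B_j$ but not in $B_{j-1}$ could still... no wait: if $\varphi \in J_j$ then $\pi_{j-1}\varphi \in \Hom_G(A, B_j/B_{j-1})$, and $\pi_{j-1}\varphi = 0$ iff $\Im\varphi \subseteq B_{j-1}$ iff $\varphi \in J_{j-1}$. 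So $J_j/J_{j-1} \hookrightarrow \Hom_G(A, B_j/B_{j-1})$, hence $\dim J = \sum_j \dim(J_j/J_{j-1}) \leq \sum_j \dim\Hom_G(A, B_j/B_{j-1})$. Now $B_j/B_{j-1}$ is simple, so $\dim\Hom_G(A, B_j/B_{j-1}) = [\hd(A) : B_j/B_{j-1}]$, and summing over all $j$ gives $\sum_j [\hd(A):B_j/B_{j-1}] \leq \sum_{D}[\hd(A):D]\cdot(\text{number of }j\text{ with }B_j/B_{j-1}\cong D)$. This is $[A:C]$ only if... no, this overcounts badly. The real fix: for \emph{each} simple $D$, the maps $\varphi \in J$ with image landing appropriately are constrained by $\soc B = C$. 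The actual bound uses: $\dim J_j/J_{j-1} \leq \dim\Hom_G(A, B_j/B_{j-1})$, but also a \emph{nonzero} $\varphi \in J_j \setminus J_{j-1}$ forces $B_j/B_{j-1}$ to connect down to $C$; and crucially, the sum telescopes against $[A:C]$ via a dimension-shifting / adjunction argument on $\Im\varphi$. I will present it as: $\dim\Hom_G(A,B) \le \dim\Hom_G(A,\Im\varphi)$ summed compatibly — rather than belabor this here, in the actual proof I will run the clean induction on $\mathrm{len}(B)$ where the inductive step quotients out $C = \soc(B)$, uses that $B/C$ splits as a sum of modules each still ``controlled'' by noting every composition factor of $B/C$ other than at the bottom...

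I will structure the final proof as two short parts. \textbf{Part 1} ($\dim\Hom_G(A,B)\le[A:C]$): induction on the composition length of $B$. Base case $B$ simple: $B\cong C$ and $\dim\Hom_G(A,C)=[\hd(A):C]\le[A:C]$. Inductive step: let $\overline{B}=B/C$ where $C=\soc(B)$; the exact sequence $0\to\Hom_G(A,C)\to\Hom_G(A,B)\to\Hom_G(A,\overline B)$ would give the wrong bound, so instead I take $T\subset B$ \emph{maximal proper}, so $B/T$ is simple and $\soc(T)=\soc(B)=C$ is simple (as $\soc(T)\subseteq\soc(B)$ and $T\supseteq C$), hence by induction $\dim\Hom_G(A,T)\le[T:C]=[B:C]-[B/T:C]$; combined with $\dim\Hom_G(A,B)\le\dim\Hom_G(A,T)+\dim\Hom_G(A,B/T)$ and $\dim\Hom_G(A,B/T)=[\hd(A):B/T]$, I get $\dim\Hom_G(A,B)\le[B:C]-[B/T:C]+[\hd(A):B/T]$. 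This equals $[A:C]$ only when $[\hd(A):B/T]\le[A:C]-[B:C]+[B/T:C]$, which I cannot see in general — so I accept that I must think harder here: \textbf{the main obstacle is exactly this bookkeeping}, and the right tool is to observe that since $\soc(B)$ is simple, $B$ embeds in its injective hull $I(C)$, and more usefully, $\dim\Hom_G(A,B)=\dim\Hom_G(A,B)$ can be computed as the dimension of the image of $\Hom_G(A,B)\to\Hom_G(A,I(C))$, reducing to the case $B=I(C)$, where $\dim\Hom_G(A,I(C))=[A:C]$ by self-duality/injectivity (this is the classical identity $\dim\Hom_G(A,I(C))=[A:C]$). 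So the clean proof: embed $B\hookrightarrow I(C)$ (possible since $\soc(B)=C$), get $\Hom_G(A,B)\hookrightarrow\Hom_G(A,I(C))$ by left exactness, and $\dim\Hom_G(A,I(C))=[A:C]$. \textbf{Part 2}: if additionally $\soc(A)\cong C$ and equality holds, then $\Hom_G(A,B)\to\Hom_G(A,I(C))$ is an \emph{isomorphism}; in particular the inclusion $\iota\colon A\hookrightarrow I(C)$ (which exists since $\soc(A)=C$ and is injective as $\ker\iota\cap\soc(A)=0$ forces $\ker\iota=0$) lies in the image, i.e.\ $\iota$ factors through $B\hookrightarrow I(C)$; since $\iota$ is injective, the factoring map $A\to B$ is injective, so $A$ is isomorphic to a submodule of $B$. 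The only genuine work is justifying $\dim\Hom_G(A,I(C))=[A:C]$, which follows from exactness of $\Hom_G(-,I(C))$ and $\dim\Hom_G(D,I(C))=\delta_{D,C}$ for simple $D$ (using that $G$ is a finite group, or more generally that $FG$ is a finite-dimensional algebra so injective hulls of simples exist and this multiplicity formula holds). I would write this up crisply, flagging the injective-hull input as the one external fact.
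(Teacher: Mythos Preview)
Your final argument via injective hulls is correct: since $\soc(B)\cong C$ is simple, $B$ embeds in the injective hull $I(C)$, so $\Hom_G(A,B)\hookrightarrow\Hom_G(A,I(C))$, and the standard identity $\dim\Hom_G(A,I(C))=[A:C]$ gives the bound; in the equality case the embedding $A\hookrightarrow I(C)$ then factors through $B$. This works.

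However, the paper's proof is shorter and more elementary, and it filters the \emph{other} variable. It takes a composition series $A\sim D_1|\cdots|D_h$ of $A$ and uses Lemma~\ref{l34} (subadditivity of $\dim\Hom_G(-,B)$ along filtrations) to get $\dim\Hom_G(A,B)\le\sum_k\dim\Hom_G(D_k,B)$. The point you missed in your many attempts to filter $B$ is that filtering $A$ is painless here: any nonzero map from a simple module $D_k$ into $B$ must land in $\soc(B)=C$, so $\dim\Hom_G(D_k,B)=\dim\Hom_G(D_k,C)=\delta_{D_k\cong C}$, and the sum is exactly $[A:C]$. For the second part, the paper writes $A\sim C|\overline{A}$, notes $\dim\Hom_G(\overline{A},B)\le[\overline{A}:C]<[A:C]=\dim\Hom_G(A,B)$, so some $f\in\Hom_G(A,B)$ does not kill $\soc(A)$, hence $\ker f=0$. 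Your injective-hull route is perfectly valid and perhaps more conceptual, but it invokes existence and properties of injective hulls; the paper's argument needs nothing beyond the definition of socle and composition series.
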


\begin{proof}
Let $A\sim D_1|\ldots|D_h$ with $D_i$ simple. Then, as $C\cong\soc(B)$ is simple, we have from Lemma \ref{l34}
\begin{align*}
\dim\Hom_G(A,B)&\leq\dim\Hom_G(D_1,B)+\ldots+\dim\Hom_G(D_h,B)\\
&=\dim\Hom_G(D_1,C)+\ldots+\dim\Hom_G(D_h,C)\\
&=[A:C].
\end{align*}

Assume now that $\soc(A)\cong C$ and $\dim\Hom_G(A,B)=[A:C]$. Then $A\sim C|\overline{A}$ for a certain module $\overline{A}$. From the previous part and by assumption
\[\dim\Hom_G(A,B)=[A:C]>[\overline{A}:C]\geq \dim\Hom_G(\overline{A},B).\]
In particular there exists $f\in\Hom_G(A,B)$ with $C\cong\soc(A)\not\subseteq\ker(f)$. As the socle of $A$ is simple we have that $\soc(A)\cap\ker(f)=0$ and so $\ker(f)=0$, from which the lemma follows.
\end{proof}

\begin{lemma}\label{l9'}
If $\pi$ is a $p$-regular partition with $\epsilon_i(\pi)=0$ and we let $\nu$ with $D^\nu=\tilde{f}_i^aD^\pi$ for some $0\leq a\leq\phi_i(\pi)$, then for $0\leq b\leq a$ we have that $e_i^{(b)}D^\nu\subseteq f_i^{(a-b)}D^\pi$.

Similarly if $\psi$ is a $p$-regular partition with $\phi_i(\psi)=0$ and we let $\rho$ with $D^\rho=\tilde{e}_i^cD^\rho$ for some $0\leq c\leq\epsilon_i(\psi)$, then for $0\leq d\leq c$ we have that $f_i^{(d)}D^\rho\subseteq e_i^{(c-d)}D^\psi$.
\end{lemma}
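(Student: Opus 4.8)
The two statements are dual (apply $\otimes\mathrm{sgn}$ or use contragredient duality, which swaps $e_i$ with $f_i$ and $\epsilon_i$ with $\phi_i$, and reverses the orderings of normal/conormal nodes appropriately), so it suffices to prove the first. The plan is to induct on $b$. For $b=0$ the claim is that $D^\nu\subseteq f_i^{(a)}D^\pi$, which is immediate from Lemma~\ref{l40}\ref{l40a}: $f_i^{(a)}D^\pi$ is self-dual with socle $D^\pi$-wait, with socle $D^\pi{}'$ where $\pi'$ is obtained by adding the $a$ top $i$-conormal nodes, i.e.\ socle $D^\nu$ since $D^\nu=\tilde f_i^aD^\pi$. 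So $D^\nu=\soc(f_i^{(a)}D^\pi)\subseteq f_i^{(a)}D^\pi$.

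For the inductive step, suppose $e_i^{(b-1)}D^\nu\subseteq f_i^{(a-b+1)}D^\pi$. Applying the exact functor $e_i$ (more precisely, extracting the relevant block summand of restriction to $\s_{n-1}$) preserves inclusions, so $e_i\bigl(e_i^{(b-1)}D^\nu\bigr)\subseteq e_i\bigl(f_i^{(a-b+1)}D^\pi\bigr)$. By Lemma~\ref{l39} one has $e_i\bigl(e_i^{(b-1)}D^\nu\bigr)=e_i^bD^\nu\cong(e_i^{(b)}D^\nu)^{\oplus b}$, and in particular $e_i^{(b)}D^\nu$ embeds in $e_i\bigl(f_i^{(a-b+1)}D^\pi\bigr)$. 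It therefore remains to show $e_i\bigl(f_i^{(a-b+1)}D^\pi\bigr)$ embeds into $f_i^{(a-b)}D^\pi$ — then composing embeddings finishes the induction. Equivalently, writing $c=a-b+1\ge 1$, the key claim is:
\[
e_i\bigl(f_i^{(c)}D^\pi\bigr)\ \hookrightarrow\ f_i^{(c-1)}D^\pi\qquad\text{when }\epsilon_i(\pi)=0.
\]

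To prove this key claim I would combine two ingredients. First, $\soc\bigl(f_i^{(c-1)}D^\pi\bigr)$ is simple (it is $D^{\pi_{c-1}}$ where $\pi_{c-1}$ adds the top $c-1$ conormal $i$-nodes, by Lemma~\ref{l40}\ref{l40a}), so by Lemma~\ref{ll1} it is enough to show that $e_i\bigl(f_i^{(c)}D^\pi\bigr)$ has simple socle isomorphic to that same $D^{\pi_{c-1}}$ and that $\dim\Hom_G\bigl(e_i(f_i^{(c)}D^\pi),\,f_i^{(c-1)}D^\pi\bigr)$ equals the multiplicity $[e_i(f_i^{(c)}D^\pi):D^{\pi_{c-1}}]$. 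Since all modules in sight are self-dual (Lemma~\ref{l57} for $e_i$ of a self-dual module, and $f_i^{(c)}D^\pi$ is self-dual by Lemma~\ref{l40}\ref{l40a}), having simple socle is the same as having simple head, and one computes $\hd\bigl(e_i f_i^{(c)}D^\pi\bigr)$ via adjunction (Lemma~\ref{l48}): $\Hom_G\bigl(e_i f_i^{(c)}D^\pi, D^\mu\bigr)\cong\Hom_G\bigl(f_i^{(c)}D^\pi, f_i D^\mu\bigr)$, and using that $\hd\bigl(f_i^{(c)}D^\pi\bigr)=D^{\pi_c}$ together with Lemmas~\ref{l56}, \ref{l47} and the hypothesis $\epsilon_i(\pi)=0$ (which forces $\epsilon_i$ of every conormal-added partition to be controlled, so that $D^{\pi_c}$ appears in $f_iD^\mu$ only for $\mu=\pi_{c-1}$) one pins down the head. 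Second, for the multiplicity/Hom-dimension match: $\dim\Hom_G\bigl(e_i f_i^{(c)}D^\pi, f_i^{(c-1)}D^\pi\bigr)=\dim\Hom_G\bigl(f_i^{(c)}D^\pi, f_i f_i^{(c-1)}D^\pi\bigr)$ by adjunction, and $f_i f_i^{(c-1)}D^\pi\cong(f_i^{(c)}D^\pi)^{\oplus c}$ by Lemma~\ref{l40}, so this is $c\cdot\dim\End_G\bigl(f_i^{(c)}D^\pi\bigr)=c\binom{\phi_i(\pi)}{c}$ by Lemma~\ref{l40}\ref{l40b}; on the other side one shows $[e_i f_i^{(c)}D^\pi:D^{\pi_{c-1}}]$ equals the same number, e.g.\ by a Mackey/branching count of how many composition factors $D^{\pi_{c-1}}$ arise, using $\epsilon_i(\pi)=0$ to exclude the $e_i$-removal of the original $\pi$ and Lemma~\ref{l39}\ref{l39b}.

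The main obstacle I anticipate is precisely this last bookkeeping: verifying that the socle of $e_i f_i^{(c)}D^\pi$ is simple equal to $D^{\pi_{c-1}}$ and that the Hom-dimension hits the multiplicity exactly on the nose, rather than merely being bounded by it. The hypothesis $\epsilon_i(\pi)=0$ is doing essential work here — without it, $e_i$ applied to the $f_i$-tower could produce extra copies of lower factors and the socle need not be simple — so the delicate part is isolating exactly where $\epsilon_i(\pi)=0$ is used (it should guarantee, via Lemma~\ref{l47}, that each $\pi_k$ has $\epsilon_i(\pi_k)=k$ and $\phi_i(\pi_k)=\phi_i(\pi)-k$, making the $e_i/f_i$ combinatorics on this chain completely rigid). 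Once those two numerical facts are in hand, Lemma~\ref{ll1} delivers the embedding $e_i f_i^{(c)}D^\pi\hookrightarrow f_i^{(c-1)}D^\pi$ and the induction closes.
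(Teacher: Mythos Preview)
Your inductive reduction has a genuine gap: the key claim
\[
e_i\bigl(f_i^{(c)}D^\pi\bigr)\hookrightarrow f_i^{(c-1)}D^\pi\qquad(\epsilon_i(\pi)=0)
\]
is false. Take $p=2$, $\pi=(1)$, $i=1$, $c=1$. Then $\epsilon_1(\pi)=0$ and $\phi_1(\pi)=2$; one has $f_1D^{(1)}=D^{(1)}\!\uparrow^{\s_2}=D^{(2)}|D^{(2)}$, and restricting back gives $e_1f_1D^{(1)}\cong D^{(1)}\oplus D^{(1)}$, which is two-dimensional and cannot embed in the one-dimensional $f_1^{(0)}D^{(1)}=D^{(1)}$. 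The error lies in the assertion that $e_if_i^{(c)}D^\pi$ has simple socle. Your own adjunction computation shows why: for $\mu=\pi_{c-1}$ one gets $\dim\Hom\bigl(e_if_i^{(c)}D^\pi,D^{\pi_{c-1}}\bigr)=\dim\Hom\bigl(f_i^{(c)}D^\pi,f_iD^{\pi_{c-1}}\bigr)$, and the right-hand side is in general strictly larger than~$1$ (in the example above it equals $\dim\End_{\s_2}(f_1D^{(1)})=\phi_1(\pi)=2$). So the head, and by self-duality the socle, of $e_if_i^{(c)}D^\pi$ is not simple, and Lemma~\ref{ll1} does not apply to it.

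The paper's proof avoids induction entirely and applies Lemma~\ref{ll1} directly to $A=e_i^{(b)}D^\nu$ and $B=f_i^{(a-b)}D^\pi$. Both have simple socle $D^\gamma$ with $D^\gamma=\tilde f_i^{a-b}D^\pi=\tilde e_i^{\,b}D^\nu$ (Lemmas~\ref{l39}, \ref{l40}, \ref{l47}), and by adjunction (Lemma~\ref{l48}) together with $e_i^r\cong(e_i^{(r)})^{\oplus r!}$ and $f_i^r\cong(f_i^{(r)})^{\oplus r!}$ one computes
\[
\dim\Hom\bigl(e_i^{(b)}D^\nu,\,f_i^{(a-b)}D^\pi\bigr)
=\frac{1}{b!(a-b)!}\dim\Hom\bigl(e_i^{a}D^\nu,\,D^\pi\bigr)
=\frac{a!}{b!(a-b)!}=\binom{a}{b},
\]
using that $\epsilon_i(\nu)=a$ so $\hd(e_i^{(a)}D^\nu)=D^\pi$. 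Since $[e_i^{(b)}D^\nu:D^\gamma]=\binom{\epsilon_i(\nu)}{b}=\binom{a}{b}$ as well, Lemma~\ref{ll1} delivers the embedding in one stroke. The crucial difference from your approach is that $e_i^{(b)}D^\nu$ is already known to have simple socle by Lemma~\ref{l39}, whereas $e_if_i^{(c)}D^\pi$ does not.
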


\begin{proof}
We will prove only the first part of the lemma, as the second part can be proved similarly.

First notice that $\epsilon_i(\nu)=a$ from Lemma \ref{l47}. Also, if $\gamma$ is such that $D^\gamma=\tilde{f}_i^{a-b}D^\pi$, then, again by Lemma \ref{l47},
\[\tilde{e}_i^{a-b}\tilde{e}_i^bD^\nu=\tilde{e}_i^aD^\nu=D^\pi=\tilde{e}_i^{a-b}D^\gamma\]
and then $\tilde{e}_i^bD^\nu=D^\gamma$. So $\soc(f_i^{(a-b)}D^\pi)\cong D^\gamma\cong\soc(e_i^{(b)}D^\nu)$ from Lemmas \ref{l39} and \ref{l40}. We also have that $[e_i^{(b)}D^\nu:D^\gamma]=\binom{a}{b}$.

Further, if $\pi\vdash n$, from Lemmas \ref{l39}, \ref{l40} and \ref{l48}
\begin{align*}
&\dim\Hom_{\s_{n+a-b}}(e_i^{(b)}D^\nu,f_i^{(a-b)}D^\pi)\\
&\hspace{12pt}=\frac{1}{b!(a-b)!}\dim\Hom_{\s_{n+a-b}}(e_i^bD^\nu,f_i^{a-b}D^\pi)\\
&\hspace{12pt}=\frac{1}{b!(a-b)!}\dim\Hom_{\s_n}(e_i^aD^\nu,D^\pi)\\
&\hspace{12pt}=\binom{a}{b}.
\end{align*}
The result now follows from Lemma \ref{t1}.
\end{proof}

\begin{lemma}\label{l9}
Let $\pi$ be a $p$-regular partition with $\epsilon_i(\pi)=0$ and let $\psi$ with $D^\psi=\tilde{f}_i^{\phi_i(\pi)}D^\pi$. For $0\leq a\leq \phi_i(\pi)$ we have $f_i^{(a)}D^\pi\cong e_i^{(\phi_i(\pi)-a)}D^\psi$.
\end{lemma}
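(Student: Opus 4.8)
The plan is to show that the two modules $f_i^{(a)}D^\pi$ and $e_i^{(\phi_i(\pi)-a)}D^\psi$ are isomorphic by first establishing that each is a submodule of the other, and then comparing dimensions (or, more precisely, composition multiplicities). Write $\phi:=\phi_i(\pi)$ and $b:=\phi-a$. Since $\epsilon_i(\pi)=0$, Lemma~\ref{l52} gives $\phi_i(\pi)=1+\epsilon_i(\pi)$... no: rather, the relevant fact is simply that $D^\psi=\tilde{f}_i^\phi D^\pi$ is defined because $\phi_i(\pi)=\phi\geq\phi$, so $\psi$ exists; and by Lemma~\ref{l47} applied with $r=\phi$ we get $\epsilon_i(\psi)=\epsilon_i(\pi)+\phi=\phi$ and $\phi_i(\psi)=\phi_i(\pi)-\phi=0$. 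Thus $\psi$ is exactly a partition of the type handled by the second half of Lemma~\ref{l9'}.

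First I would apply Lemma~\ref{l9'} (first part) with $\nu$ replaced by $\psi$ is not directly possible since there $\epsilon_i$ of the starting partition is $0$; instead I apply the \emph{second} part of Lemma~\ref{l9'} to $\psi$: since $\phi_i(\psi)=0$, and $\tilde{e}_i^\phi D^\psi = D^\pi$ (so $\rho=\pi$ with $c=\phi$, $d=a$), the second part yields $f_i^{(a)}D^\pi\subseteq e_i^{(\phi-a)}D^\psi$. Conversely, I apply the \emph{first} part of Lemma~\ref{l9'} to $\pi$ itself: with $\epsilon_i(\pi)=0$, taking $a\mapsto\phi$, $\nu\mapsto\psi$ (since $D^\psi=\tilde{f}_i^\phi D^\pi$), and $b\mapsto b=\phi-a$, we obtain $e_i^{(\phi-a)}D^\psi\subseteq f_i^{(\phi-(\phi-a))}D^\pi = f_i^{(a)}D^\pi$. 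So each is a submodule of the other.

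To conclude $f_i^{(a)}D^\pi\cong e_i^{(\phi-a)}D^\psi$ from the two inclusions, I would argue that both modules have the same finite dimension: if $X\subseteq Y$ and $Y\subseteq X$ as $F\s_m$-modules (for the appropriate $m=|\pi|+a$), then comparing, say, $\dim X\le\dim Y$ and $\dim Y\le\dim X$ forces $\dim X=\dim Y$, and then the inclusion $X\hookrightarrow Y$ between modules of equal finite dimension is an isomorphism. The dimensions here are finite because all modules in sight are finite-dimensional by construction. (Alternatively, one may compare $[\,\cdot\,:D^\gamma]$ for $D^\gamma=\tilde{e}_i^b D^\psi=\tilde{f}_i^a D^\pi$ the common socle, which by Lemmas~\ref{l39} and \ref{l40} both equal $\binom{\phi}{a}$, together with matching socles, but the blunt dimension count already suffices once we have inclusions in both directions.)

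The main obstacle is bookkeeping the adjustments of indices so that both halves of Lemma~\ref{l9'} apply cleanly: one must check that $\psi$ genuinely satisfies $\phi_i(\psi)=0$ (for the second part) via Lemma~\ref{l47}, and that the "$a-b$" shifts in Lemma~\ref{l9'} specialise to give exactly $f_i^{(a)}D^\pi$ and $e_i^{(\phi-a)}D^\psi$ rather than some neighbouring divided power. Everything else is formal: the symmetry of the situation under swapping the roles of $e_i$ and $f_i$ (together with the $\tilde e_i/\tilde f_i$ adjointness of Lemma~\ref{l47}) is precisely what makes the two inclusions drop out of the single lemma \ref{l9'}.
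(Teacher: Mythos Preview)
Your proposal is correct and follows essentially the same approach as the paper: use Lemma~\ref{l47} to verify $\epsilon_i(\psi)=\phi_i(\pi)$ and $\phi_i(\psi)=0$, then apply both halves of Lemma~\ref{l9'} to obtain mutual inclusions $e_i^{(\phi_i(\pi)-a)}D^\psi\subseteq f_i^{(a)}D^\pi$ and $f_i^{(a)}D^\pi\subseteq e_i^{(\phi_i(\pi)-a)}D^\psi$, and conclude by finite-dimensionality. The paper's proof is more terse (it compresses the two applications of Lemma~\ref{l9'} and the dimension argument into a single line), but the content is identical.
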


\begin{proof}
As $\epsilon_i(\psi)=\phi_i(\pi)$ and $\phi_i(\psi)=0$ from Lemma \ref{l47}, we have from Lemma \ref{l9'} that, up to isomorphism,
\[e_i^{(\phi_i(\pi)-a)}D^\psi\subseteq f_i^{(a)}D^\pi\subseteq e_i^{(\phi_i(\pi)-a)}D^\psi\]
and so the lemma holds.
\end{proof}

We will now prove some lemmas about the structure of certain modules of $\s_n$ and $\s_{n-1}$.

\begin{lemma}\label{l3}
If $n\geq 4$ and $p\!\not|\,n-1$ then
\[D^{(n-2,1)}\uparrow^{\s_n}\sim S^{(n-2,1,1)}|S^{(n-2,2)}|S^{(n-1,1)}.\]
\end{lemma}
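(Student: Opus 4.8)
The plan is to compute the right-hand side as an explicit module and match it against what is known about $D^{(n-2,1)}\uparrow^{\s_n}$. First I would note that over a field of characteristic $p$ with $p\nmid n-1$, the module $D^{(n-2,1)}$ is the Specht module $S^{(n-2,1)}$ (this is standard: the ordinary irreducible $S^{(n-2,1)}$ stays irreducible modulo $p$ precisely when $p\nmid n-1$, and then $D^{(n-2,1)}\cong S^{(n-2,1)}$). So the statement reduces to understanding $S^{(n-2,1)}\uparrow^{\s_n}$, the induction of a Specht module for $\s_{n-1}$ to $\s_n$.

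For the induction of Specht modules there is a branching rule: $S^{(n-2,1)}\uparrow^{\s_n}$ has a Specht filtration whose factors are exactly the Specht modules $S^\mu$ for all $\mu\vdash n$ obtained from $(n-2,1)$ by adding one box, namely $\mu\in\{(n-1,1),(n-2,2),(n-2,1,1)\}$. This is the dual (or the Frobenius-reciprocity companion) of the familiar branching rule for restriction, and it gives a filtration $S^{(n-2,1)}\uparrow^{\s_n}\sim S^{(n-2,1,1)}\,|\,S^{(n-2,2)}\,|\,S^{(n-1,1)}$ with the factors appearing in a specific order determined by dominance — the more dominant composition factor $S^{(n-1,1)}$ sits at the top (it is the image of the natural surjection onto a quotient), and $S^{(n-2,1,1)}$ at the bottom. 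So the core of the argument is: (1) identify $D^{(n-2,1)}$ with $S^{(n-2,1)}$ under the hypothesis $p\nmid n-1$; (2) invoke the Specht-module branching rule for induction; (3) pin down the order of the filtration factors.

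The main obstacle is step (3): the branching rule gives the set of Specht factors and that they can be arranged in a filtration, but getting the precise ordering $S^{(n-2,1,1)}$ (bottom) then $S^{(n-2,2)}$ then $S^{(n-1,1)}$ (top) as claimed requires a little more. The cleanest route is to work with the Young permutation module: since $S^{(n-2,1)}$ is a submodule of $M^{(n-2,1)}$ for $\s_{n-1}$ (in fact $M^{(n-2,1)}=S^{(n-1)}|S^{(n-2,1)}$ when $p\nmid n-1$, or one uses $M^{(n-2,1)}\cong S^{(n-1)}\oplus S^{(n-2,1)}$ in that case), inducing is exact and $M^{(n-2,1)}\uparrow^{\s_n}\cong M^{(n-2,1,1)}$, whose Specht filtration is governed by Young's rule with the standard dominance ordering on the factors. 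One then subtracts off the $M^{(n-1)}\uparrow = M^{(n-1,1)}$ part to isolate $S^{(n-2,1)}\uparrow^{\s_n}$ and read off its filtration with the factors ordered by dominance, which yields exactly the displayed order. Alternatively, and perhaps more in keeping with the tools set up in the paper, one can use the functors $f_i$ (Lemma \ref{l45}): decompose $D^{(n-2,1)}\uparrow^{\s_n}$ into its block components $f_iD^{(n-2,1)}$, identify each block component via the addable-node combinatorics, and assemble the filtration; but this still needs the ordering input and is not obviously shorter.

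I would therefore carry out the proof in this order: reduce to Specht modules via $D^{(n-2,1)}\cong S^{(n-2,1)}$; state the induced branching rule to get the three Specht factors; and then either cite the known structure of $M^{(n-2,1,1)}$ (Young's rule, dominance order) or argue directly that $S^{(n-1,1)}$ occurs as a top quotient (being the most dominant, it is a homomorphic image) while $S^{(n-2,1,1)}$ occurs as a bottom submodule (being the least dominant, by a duality/transpose argument), forcing the middle factor to be $S^{(n-2,2)}$ and completing the filtration as stated.
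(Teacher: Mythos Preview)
Your proposal is correct and follows the same approach as the paper: identify $D^{(n-2,1)}\cong S^{(n-2,1)}$ under the hypothesis $p\nmid n-1$, then apply the branching rule for induced Specht modules. The paper simply cites Corollary 17.14 of James' book, which already gives the Specht filtration of $S^\mu\uparrow^{\s_n}$ with the factors ordered by dominance, so your ``main obstacle'' in step (3) is not actually an obstacle --- the detour through $M^{(n-2,1,1)}$ or the $f_i$ functors is unnecessary.
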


\begin{proof}
As $p\!\not\hspace{-1pt}|\hspace{3pt} n-1$ we have that $D^{(n-1)}$ and $D^{(n-2,1)}$ are in distinct blocks. So $D^{(n-2,1)}\cong S^{(n-2,1)}$ from Corollary 12.2 of \cite{j1}. The lemma then follows from Corollary 17.14 of \cite{j1}.
\end{proof}

\begin{lemma}\label{l1}
If $p=2$ and $n\geq 4$ is even then $S^{(n-1,1)}=D^{(n)}|D^{(n-1,1)}$ and $M^{(n-1,1)}=D^{(n)}|D^{(n-1,1)}|D^{(n)}\sim S^{(n-1,1)}|S^{(n)}$.
\end{lemma}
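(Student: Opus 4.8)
The statement concerns $p = 2$ and $n \geq 4$ even, and asserts two things: first, that the Specht module $S^{(n-1,1)}$ is uniserial with composition series $D^{(n)} | D^{(n-1,1)}$ (trivial module in the socle, $D^{(n-1,1)}$ in the head); second, that the permutation module $M^{(n-1,1)}$ is uniserial of the form $D^{(n)} | D^{(n-1,1)} | D^{(n)}$, which also admits a Specht filtration $S^{(n-1,1)} | S^{(n)}$.

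For the first claim, I would recall the standard fact that for $p \mid n$ the Specht module $S^{(n-1,1)}$ has a trivial submodule (spanned by the sum of all the polytabloids, or equivalently the kernel of the natural map on the $(n-1,1)$-tabloid space contains the "all-ones" direction), and that $M^{(n-1,1)}$ is the natural $n$-dimensional permutation module with submodule lattice $0 \subset \langle \text{all-ones}\rangle \subset S^{(n-1,1)} \subset M^{(n-1,1)}$ when $p \mid n$ — see James, \cite{j1}. Since $n$ is even and $p = 2$, indeed $p \mid n$. The quotient $S^{(n-1,1)}/\langle\text{all-ones}\rangle$ is the irreducible $D^{(n-1,1)}$ (its dimension is $n-2$), and the submodule is $D^{(n)} \cong 1_{\s_n}$. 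As $S^{(n-1,1)}$ has dimension $n-1$ and only these two composition factors with a unique submodule, it is uniserial $D^{(n)} | D^{(n-1,1)}$.

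For $M^{(n-1,1)}$: it is self-dual (stated in the excerpt), of dimension $n$, and sits in $0 \to S^{(n-1,1)} \to M^{(n-1,1)} \to S^{(n)} \to 0$ (the usual quotient of a permutation module by its Specht submodule; here $M^{(n-1,1)}/S^{(n-1,1)} \cong S^{(n)} = 1_{\s_n}$), giving the Specht filtration $S^{(n-1,1)} | S^{(n)}$ and hence composition factors $D^{(n)}, D^{(n-1,1)}, D^{(n)}$. By self-duality the head equals the dual of the socle; since the socle contains $S^{(n-1,1)}$'s socle $D^{(n)}$, and $M^{(n-1,1)}$ is indecomposable (its endomorphism ring is local — one can see $\dim\End \leq 2$ via the Specht filtration together with $\Hom(S^{(n)}, S^{(n-1,1)}) = 0$, while $M^{(n-1,1)}$ is visibly not semisimple), it must be uniserial $D^{(n)} | D^{(n-1,1)} | D^{(n)}$.

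**Main obstacle.** The delicate point is pinning down that $M^{(n-1,1)}$ is \emph{uniserial} rather than, say, having socle $D^{(n)} \oplus D^{(n)}$ or a more complicated Loewy structure; equivalently, that $\soc(M^{(n-1,1)})$ is simple. I would handle this by computing $\dim\Hom_{\s_n}(1_{\s_n}, M^{(n-1,1)}) = 1$ (the trivial module appears in the socle with multiplicity one — it is the fixed-point space of the permutation action, which is one-dimensional), so $\soc(M^{(n-1,1)})$ has $D^{(n)}$-multiplicity one; since $D^{(n-1,1)}$ cannot be in the socle (it would split off a copy, as $[M^{(n-1,1)} : D^{(n-1,1)}] = 1$ and the complement would be forced), $\soc(M^{(n-1,1)}) \cong D^{(n)}$ is simple. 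Self-duality then forces the head to be simple as well, and with three composition factors this yields the uniserial structure. The whole lemma is essentially bookkeeping with James's results in \cite{j1}; no genuinely new argument is needed.
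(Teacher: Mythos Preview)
Your proposal is correct and takes essentially the same approach as the paper: the paper simply cites Corollary 12.2 and Theorem 24.15 of \cite{j1} for the structure of $S^{(n-1,1)}$, then invokes Example 17.17 of \cite{j1} for the Specht filtration $M^{(n-1,1)}\sim S^{(n-1,1)}|S^{(n)}$ together with self-duality of $M^{(n-1,1)}$ to conclude. You have spelled out the content of those citations in more detail (including the explicit one-dimensional fixed-point computation to pin down the socle), but the underlying argument is the same.
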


\begin{proof}
The structure of $S^{(n-1,1)}$ follows from Corollary 12.2 and Theorem 24.15 of \cite{j1}. Since $M^{(n-1,1)}$ is self-dual, the lemma then follows from Example 17.17 of \cite{j1}.
\end{proof}

\begin{lemma}\label{l2}
Let $p=2$ and $n\geq 6$ be even.

\begin{itemize}
\item
If $n\equiv 0\Md 4$ then $S^{(n-2,2)}=D^{(n-1,1)}|D^{(n-2,2)}$.

\item
If $n\equiv 2\Md 4$ then $S^{(n-2,2)}=D^{(n-1,1)}|D^{(n)}|D^{(n-2,2)}$.
\end{itemize}
\end{lemma}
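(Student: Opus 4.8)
The plan is to determine the structure of $S^{(n-2,2)}$ from known data about decomposition numbers and the submodule structure of Specht modules in characteristic $2$. First I would recall the composition factors of $S^{(n-2,2)}$ for $p=2$: from James's decomposition matrices (see \cite{j1}, Theorem 24.15, or the explicit hook/two-part formulas), $S^{(n-2,2)}$ has composition factors $D^{(n-2,2)}$, $D^{(n-1,1)}$ (each once), together with $D^{(n)}$ appearing with multiplicity $1$ when $n\equiv 2\pmod 4$ and multiplicity $0$ when $n\equiv 0\pmod 4$; this is exactly the parity split in the statement. Since $S^{(n-2,2)}$ has simple head $D^{(n-2,2)}$ (Specht modules have simple head given by the corresponding irreducible when the indexing partition is $2$-regular, and $(n-2,2)$ is $2$-regular for $n\geq 6$), it remains only to organise the remaining factors into a uniserial string.

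For the case $n\equiv 0\pmod 4$ there are only two factors, so $S^{(n-2,2)}$ is automatically uniserial with $D^{(n-1,1)}$ at the bottom and $D^{(n-2,2)}$ on top; there is nothing more to check beyond the composition series. For the case $n\equiv 2\pmod 4$ I would show that $\soc(S^{(n-2,2)})\cong D^{(n-1,1)}$ and that the heart (the quotient of the radical by the socle) is $D^{(n)}$, giving the uniserial shape $D^{(n-1,1)}\,|\,D^{(n)}\,|\,D^{(n-2,2)}$. The socle statement can be obtained by dualising: $S^{(n-2,2)}$ has simple head $D^{(n-2,2)}$, so its dual Specht module $S_{(n-2,2)}$ has simple socle $D^{(n-2,2)}$; one then identifies $\soc(S^{(n-2,2)})$ using the known fact that $D^{(n)}=1_{\s_n}$ is not a submodule of $S^{(n-2,2)}$ when $n\equiv 2 \pmod 4$ (equivalently, the bilinear form / the structure of the permutation module $M^{(n-2,2)}$ forces the trivial factor into the middle layer, not the socle). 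Alternatively, one can apply row/column removal or restriction: restricting to $\s_{n-1}$ and comparing with Lemma \ref{l1} and Lemma \ref{l3} pins down which factor sits at the bottom.

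The main obstacle is the $n\equiv 2\pmod 4$ case, specifically ruling out the split shape $(D^{(n-1,1)}\oplus D^{(n)})\,|\,D^{(n-2,2)}$ or the reversed uniserial order, i.e. showing the trivial module genuinely sits strictly between $D^{(n-1,1)}$ and $D^{(n-2,2)}$. The cleanest way I see is to invoke the precise submodule lattice of the Specht module $S^{(n-2,2)}$ from James's work (\cite{j1}, Chapter 24, and the companion analysis of $M^{(n-2,2)}$), where the relevant Gram matrix of the James bilinear form on $S^{(n-2,2)}$ is computed and its radical series read off; this directly yields uniseriality and the stated order. A self-contained alternative would be to use the hook-length / Carter--Payne style homomorphisms $S^{(n-1,1)}\to S^{(n-2,2)}$ and $S^{(n)}\to S^{(n-1,1)}$ together with $\dim\Hom$ computations (Lemma \ref{t1} is tailored for exactly this) to force the layers into the claimed order, but I expect checking non-vanishing and exactness of the relevant $\Hom$ spaces to be the delicate point.
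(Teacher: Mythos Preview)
Your approach is essentially the paper's: composition factors from James Theorem 24.15, simple head $D^{(n-2,2)}$ from Corollary 12.2, and for $n\equiv 2\pmod 4$ the single extra input $D^{(n)}\not\subseteq S^{(n-2,2)}$, which is James Theorem 24.4. You even state this ``known fact'' explicitly.

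Where you drift is in the ``main obstacle'' paragraph: there is no obstacle left. Once you know the three composition factors each have multiplicity one, the head is the simple module $D^{(n-2,2)}$, and $D^{(n)}$ is not a submodule, uniseriality with the claimed order is forced. Indeed $D^{(n-2,2)}$ cannot lie in the socle (a multiplicity-one factor in both head and socle would split off, contradicting simplicity of the head), and $D^{(n)}$ is excluded from the socle by hypothesis, so $\soc(S^{(n-2,2)})\cong D^{(n-1,1)}$; repeating the argument on the length-two quotient gives the middle layer $D^{(n)}$. Both alternative shapes you worry about, $(D^{(n-1,1)}\oplus D^{(n)})\,|\,D^{(n-2,2)}$ and $D^{(n)}\,|\,D^{(n-1,1)}\,|\,D^{(n-2,2)}$, are killed immediately by $D^{(n)}\not\subseteq S^{(n-2,2)}$. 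The Gram matrix, Carter--Payne maps, and restriction arguments you propose are all unnecessary.
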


\begin{proof}
If $n\equiv 0\Md 4$ then the lemma holds from  Corollary 12.2 and Theorem 24.15 of \cite{j1}.

If $n\equiv 2\Md 4$ then $S^{(n-2,2)}$ has compositions factors $D^{(n)}$, $D^{(n-1,1)}$ and $D^{(n-2,2)}$ from Theorem 24.15 of \cite{j1}. Also $D^{(n-2,2)}\cong\hd(S^{(n-2,2)})$ from Corollary 12.2 of \cite{j1} and $D^{(n)}\not\subseteq S^{(n-2,2)}$ from Theorem 24.4 of \cite{j1}. So the lemma holds also in this case.
\end{proof}

\begin{lemma}\label{l27}
If $p=2$ and $n\equiv 0\Md 2$ with $n\geq 4$ then $M^{(n-2,1)}=D^{(n-1)}\oplus D^{(n-2,1)}$. If further $n\equiv 0\Md 4$ with $n\geq 8$ then $M^{(n-3,2)}=D^{(n-1)}\oplus D^{(n-2,1)}\oplus D^{(n-3,2)}$.
\end{lemma}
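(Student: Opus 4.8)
The plan is, for each of the two modules, to first pin down its composition factors by Young's rule and then to upgrade this to a direct sum decomposition using that permutation modules are self-dual. Note that both $M^{(n-2,1)}$ and $M^{(n-3,2)}$ are modules for $\s_{n-1}$, and that $n-1$ is odd, so in particular $p=2\nmid n-1$; this is what will make the relevant Specht modules irreducible.

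For the first part I would argue as follows. By Young's rule $M^{(n-2,1)}$ has a Specht filtration with factors $S^{(n-1)}$ and $S^{(n-2,1)}$. Now $S^{(n-1)}\cong D^{(n-1)}$ is trivial, and since $p=2\nmid n-1$ one gets $S^{(n-2,1)}\cong D^{(n-2,1)}$ exactly as in the proof of Lemma \ref{l3} (the partitions $(n-1)$ and $(n-2,1)$ lie in different blocks, having $2$-cores $(1)$ and $(2,1)$ respectively, so Corollary 12.2 of \cite{j1} applies). Hence $M^{(n-2,1)}$ has precisely the two composition factors $D^{(n-1)}$ and $D^{(n-2,1)}$, each with multiplicity $1$. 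Since these are non-isomorphic and both self-dual and $M^{(n-2,1)}$ is self-dual, $M^{(n-2,1)}$ cannot be uniserial (the dual of $D^{(n-1)}|D^{(n-2,1)}$ is $D^{(n-2,1)}|D^{(n-1)}$), so it is semisimple and $M^{(n-2,1)}\cong D^{(n-1)}\oplus D^{(n-2,1)}$. Alternatively this is immediate from the two $2$-cores being different.

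For the second part I would proceed in the same way: by Young's rule $M^{(n-3,2)}$ has a Specht filtration with factors $S^{(n-1)}$, $S^{(n-2,1)}$, $S^{(n-3,2)}$, the first two contributing $D^{(n-1)}$ and $D^{(n-2,1)}$ as above. The extra input is that, because $n\equiv0\Md4$, the two-row Specht module $S^{(n-3,2)}$ of $\s_{n-1}$ is irreducible, so $S^{(n-3,2)}\cong D^{(n-3,2)}$; this is the only place where the stronger hypothesis $n\equiv0\Md4$ (rather than just $n\equiv0\Md2$) is used, and it follows from Theorem 24.15 of \cite{j1} (equivalently, from the irreducibility criterion for two-row Specht modules in characteristic $2$, which for $S^{(n-3,2)}$ amounts to $2\mid n-4$ and $2\mid\binom{n-3}{2}$, both equivalent, given $n$ even, to $n\equiv0\Md4$). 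Thus $M^{(n-3,2)}$ has exactly the three pairwise non-isomorphic composition factors $D^{(n-1)}$, $D^{(n-2,1)}$, $D^{(n-3,2)}$, each of multiplicity $1$, all of them self-dual. Since $M^{(n-3,2)}$ is self-dual it is then forced to be semisimple: its socle and head are isomorphic (all factors being self-dual), a common simple summand of them splits off as a direct summand because it occurs with multiplicity $1$, and one finishes by induction on the composition length; equivalently, one may first split off the block component $D^{(n-2,1)}$, which has $2$-core $(2,1)$ unlike the other two, and then treat the remaining self-dual length-$2$ summand as in the first part. Either way $M^{(n-3,2)}\cong D^{(n-1)}\oplus D^{(n-2,1)}\oplus D^{(n-3,2)}$.

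I do not expect a genuine obstacle; this is a preparatory lemma. The only point needing care is the identification $S^{(n-3,2)}\cong D^{(n-3,2)}$, i.e. checking that irreducibility of this Specht module holds precisely under $n\equiv0\Md4$ — everything else is Young's rule together with the soft fact that a self-dual module whose composition factors are pairwise non-isomorphic and self-dual is semisimple.
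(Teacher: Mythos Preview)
Your proposal is correct and follows essentially the same route as the paper: identify the Specht factors in the Young filtration as irreducible (the paper cites Theorem 24.15 of \cite{j1} for all three identifications, while you use a block argument for $S^{(n-2,1)}$, but this is a cosmetic difference), and then use self-duality of the permutation module to pass from a filtration by pairwise non-isomorphic simples to a direct sum. The paper's proof is terser but structurally identical.
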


\begin{proof}
From Theorem 24.15 of \cite{j1} we have $D^{(n-1)}\cong S^{(n)}$, $D^{(n-2,1)}\cong S^{(n-1,1)}$ as $n-1$ is odd. For $n\equiv 0\Md 4$ Theorem 24.15 of \cite{j1} also gives $D^{(n-3,2)}\cong S^{(n-3,2)}$. The lemma now follows as $M^{(n-2,1)}\sim S^{(n-2,1)}|S^{(n-1)}$ and $M^{(n-3,2)}\sim S^{(n-3,2)}|S^{(n-2,1)}|S^{(n-1)}$ by Example 17.17 of \cite{j1} and as $M^{(n-2,1)}$ and $M^{(n-3,2)}$ are self-dual.
\end{proof}

\begin{lemma}\label{l49}
If $p=2$ and $n\geq 4$ is even then $D^{(n-2,1)}\uparrow^{\s_n}\cong f_0D^{(n-2,1)}\cong Y^{(n-2,1,1)}$ is indecomposable with head and socle isomorphic to $D^{(n-1,1)}$. Also $[D^{(n-2,1)}\uparrow^{\s_n}:D^{(n-1,1)}]=3$.
\end{lemma}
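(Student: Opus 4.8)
The plan is to reduce the statement to a computation with the functors $f_0$ and $f_1$, read off the module structure from Lemma~\ref{l40}, and finally identify the module as a Young module via a permutation module decomposition. First I would list the addable and removable nodes of $(n-2,1)\vdash n-1$ together with their residues modulo~$2$. Since $n$ is even, all three addable nodes of $(n-2,1)$ (at the end of the first row, at the end of the second row, and below the second row) have residue $0$, while both removable nodes (at the ends of the first and second rows) have residue $1$. Hence the $0$-signature of $(n-2,1)$ consists of three $+$'s, so $\phi_0((n-2,1))=3$ and $\epsilon_0((n-2,1))=0$, while the $1$-signature consists of two $-$'s, so $\epsilon_1((n-2,1))=2$ and $\phi_1((n-2,1))=0$ (consistently with Lemma~\ref{l52}). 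By Lemma~\ref{l45} we have $D^{(n-2,1)}\uparrow^{\s_n}\cong f_0D^{(n-2,1)}\oplus f_1D^{(n-2,1)}$, and $f_1D^{(n-2,1)}=0$ by Lemma~\ref{l40} since $\phi_1((n-2,1))=0$; thus $D^{(n-2,1)}\uparrow^{\s_n}\cong f_0D^{(n-2,1)}=f_0^{(1)}D^{(n-2,1)}$.

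Next I would apply Lemma~\ref{l40} with $r=1$. The top conormal $0$-node of $(n-2,1)$ is the one at the end of the first row, so adding it gives $\pi=(n-1,1)$, which is $2$-regular since its two parts are distinct ($n\geq 4$). Lemma~\ref{l40} then yields that $f_0D^{(n-2,1)}$ is self-dual and indecomposable with head and socle isomorphic to $D^{(n-1,1)}$, and that $[f_0D^{(n-2,1)}:D^{(n-1,1)}]=\binom{\phi_0((n-2,1))}{1}=\binom{3}{1}=3$. Together with the previous paragraph this gives every assertion of the lemma except the identification with $Y^{(n-2,1,1)}$.

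For that last point, inducing in stages gives $M^{(n-2,1,1)}\cong M^{(n-2,1)}\uparrow^{\s_n}$, where $M^{(n-2,1)}$ now denotes the $\s_{n-1}$-permutation module; by Lemma~\ref{l27} this equals $D^{(n-1)}\oplus D^{(n-2,1)}$, and since $D^{(n-1)}=1_{\s_{n-1}}$ induces to $M^{(n-1,1)}$ we obtain $M^{(n-2,1,1)}\cong M^{(n-1,1)}\oplus D^{(n-2,1)}\uparrow^{\s_n}$. By Lemma~\ref{l1} the first summand is uniserial, hence an indecomposable direct summand of $M^{(n-1,1)}$ of dimension $n>1$, so it equals $Y^{(n-1,1)}$; and $D^{(n-2,1)}\uparrow^{\s_n}$ is indecomposable, of dimension $n(n-1)-n=n(n-2)$. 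Since every indecomposable direct summand of $M^{(n-2,1,1)}$ is a Young module $Y^\mu$ with $\mu\trianglerighteq(n-2,1,1)$, and $D^{(n-2,1)}\uparrow^{\s_n}$ is isomorphic to neither $Y^{(n-1,1)}$ nor $Y^{(n)}=1_{\s_n}$ for dimension reasons, it must be $Y^{(n-2,1,1)}$.

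The part I expect to need the most care is the residue and signature bookkeeping in the first paragraph: one must reduce the $i$-signatures with the correct convention so that $\phi_0((n-2,1))=3$ and $\phi_1((n-2,1))=0$, and one must verify that the top conormal $0$-node is the one producing the $2$-regular partition $(n-1,1)$ rather than, say, the $2$-singular $(n-2,1,1)$. Once these facts are pinned down, everything else is a routine application of Lemmas~\ref{l40}, \ref{l27}, \ref{l1} and standard facts about Young modules.
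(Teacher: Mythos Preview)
Your argument follows the paper's proof almost verbatim: compute the residues of the addable/removable nodes of $(n-2,1)$, conclude $\phi_0=3$, $\phi_1=0$, apply Lemma~\ref{l40} to get indecomposability, simple head and socle $D^{(n-1,1)}$, and multiplicity $3$, then split $M^{(n-2,1,1)}\cong M^{(n-1,1)}\oplus D^{(n-2,1)}\uparrow^{\s_n}$ via Lemma~\ref{l27} to identify the Young module. (A cosmetic point: in the sign convention used in this paper---see the proof of Lemma~\ref{l4'}---removable nodes contribute $+$ and addable nodes contribute $-$, the opposite of what you wrote; this does not affect any of your conclusions.)

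There is one small gap in your last paragraph. The partitions $\mu$ with $\mu\trianglerighteq(n-2,1,1)$ are $(n)$, $(n-1,1)$, $(n-2,2)$ and $(n-2,1,1)$; you exclude $Y^{(n)}$ and $Y^{(n-1,1)}$ by dimension but say nothing about $Y^{(n-2,2)}$, so the exclusion argument is incomplete as written. The quickest fix---and this is what the paper is implicitly using---is to argue from the other direction: $Y^{(n-2,1,1)}$ occurs exactly once as a summand of $M^{(n-2,1,1)}$ and not at all in $M^{(n-1,1)}$ (the latter contains only $Y^\mu$ with $\mu\trianglerighteq(n-1,1)$), so $Y^{(n-2,1,1)}$ must be a summand of the complementary piece $D^{(n-2,1)}\uparrow^{\s_n}$, which is indecomposable.
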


\begin{proof}
For $n$ even we have that the residues of $(n-2,1)$ are given by
\[\begin{tikzpicture}
\draw (0,0) node {0};
\draw (0.3,0) node {1};
\draw (0.7,0) node {$\cdots$};
\draw (1.1,0) node {0};
\draw (1.4,0) node {1};
\draw (0,-0.4) node {1};
\draw (0.3,-0.4) node {0};
\draw (1.7,0) node {0};
\draw (0,-0.80) node {0};

\draw (-0.15,0.2)--(1.55,0.2)--(1.55,-0.2)--(0.15,-0.2)--(0.15,-0.6)--(-0.15,-0.6)--(-0.15,0.2);
\end{tikzpicture}\]
In particular $(1,n-1),(2,2),(3,1)$ are the conormal nodes of $(n-2,1)$ and they all have residue 0. So $D^{(n-2,1)}\uparrow^{\s_n}\cong f_0D^{(n-2,1)}$ is indecomposable with head and socle isomorphic to $D^{(n-1,1)}$ and $[D^{(n-2,1)}\uparrow^{\s_n}:D^{(n-1,1)}]=3$.

From Lemma \ref{l27} we have that
\[M^{(n-2,1,1)}\cong M^{(n-2,1)}\uparrow^{\s_n}\cong M^{(n-1,1)}\oplus D^{(n-2,1)}\uparrow^{\s_n}.\]
Since $D^{(n-2,1)}\uparrow^{\s_n}$ is indecomposable, it follows that $D^{(n-2,1)}\uparrow^{\s_n}\cong Y^{(n-2,1,1)}$.
\end{proof}

\begin{lemma}\label{l33}
Let $p=2$ and $n\geq 6$ be even.

\begin{itemize}
\item
If $n\equiv 0 \Md 4$ then $[S^{(n-2,1,1)}:D^{(n)}],[S^{(n-2,1,1)}:D^{(n-1,1)}],[S^{(n-2,1,1)}:D^{(n-2,2)}]=1$.

\item
If $n\equiv 2 \Md 4$ then $[S^{(n-2,1,1)}:D^{(n-1,1)}],[S^{(n-2,1,1)}:D^{(n-2,2)}]=1$ and $[S^{(n-2,1,1)}:D^{(n)}]=2$.
\end{itemize}

Also $\soc(S^{(n-2,1,1)})\cong D^{(n-1,1)}$.
\end{lemma}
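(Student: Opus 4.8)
The plan is to combine the Specht filtration of $D^{(n-2,1)}\uparrow^{\s_n}$ from Lemma~\ref{l3} with the decomposition numbers of the relevant Specht modules from Lemmas~\ref{l1} and~\ref{l2}, and then extract the composition multiplicities of $S^{(n-2,1,1)}$ by a counting argument. Concretely, Lemma~\ref{l3} (applicable since $p=2$ and $n$ even, so $p\!\not|\,n-1$) gives $D^{(n-2,1)}\uparrow^{\s_n}\sim S^{(n-2,1,1)}|S^{(n-2,2)}|S^{(n-1,1)}$, so the composition factors of $D^{(n-2,1)}\uparrow^{\s_n}$ are exactly the union (with multiplicity) of those of $S^{(n-2,1,1)}$, $S^{(n-2,2)}$ and $S^{(n-1,1)}$. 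On the other hand, Lemma~\ref{l49} tells us $D^{(n-2,1)}\uparrow^{\s_n}$ has all its composition factors equal to $D^{(n-1,1)}$ with multiplicity $3$ — wait, that is not quite what Lemma~\ref{l49} says: it only records $[D^{(n-2,1)}\uparrow^{\s_n}:D^{(n-1,1)}]=3$ and that the module is indecomposable with head and socle $D^{(n-1,1)}$, not that there are no other factors. So instead I would use the total composition series length: by Lemma~\ref{l1}, $[S^{(n-1,1)}:D^{(n)}]=[S^{(n-1,1)}:D^{(n-1,1)}]=1$, and by Lemma~\ref{l2}, $[S^{(n-2,2)}:D^{(n-1,1)}]=[S^{(n-2,2)}:D^{(n-2,2)}]=1$ with an extra $D^{(n)}$ when $n\equiv 2\pmod 4$.

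Next I would pin down the composition factors of $S^{(n-2,1,1)}$ individually. From the character-theoretic (ordinary) decomposition one knows which irreducible $2$-modular constituents $D^\mu$ can appear in $S^{(n-2,1,1)}$: only $\mu\in\{(n),(n-1,1),(n-2,2)\}$ (the $2$-regular partitions dominated appropriately), so I will invoke Theorem 24.15 of~\cite{j1} to list these and their multiplicities, which is exactly the case split stated. The multiplicity of $D^{(n-1,1)}$ and $D^{(n-2,2)}$ being $1$, and of $D^{(n)}$ being $1$ or $2$ according to $n\bmod 4$, should come directly from the known decomposition matrix entries for two-part and hook Spechts in characteristic $2$ (Theorem 24.15 of~\cite{j1}); alternatively one can cross-check via Lemma~\ref{l49}: summing $[S^{(n-2,1,1)}:D^{(n-1,1)}]+[S^{(n-2,2)}:D^{(n-1,1)}]+[S^{(n-1,1)}:D^{(n-1,1)}]=3$ forces $[S^{(n-2,1,1)}:D^{(n-1,1)}]=1$, and a parallel bookkeeping for $D^{(n)}$ using the $n\bmod 4$ cases of Lemma~\ref{l2} fixes $[S^{(n-2,1,1)}:D^{(n)}]$.

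For the socle statement $\soc(S^{(n-2,1,1)})\cong D^{(n-1,1)}$, I would argue by self-duality and the submodule structure: $S^{(n-2,1,1)}$ sits at the bottom of the filtration $D^{(n-2,1)}\uparrow^{\s_n}\sim S^{(n-2,1,1)}|S^{(n-2,2)}|S^{(n-1,1)}$, hence $\soc(S^{(n-2,1,1)})\subseteq\soc(D^{(n-2,1)}\uparrow^{\s_n})\cong D^{(n-1,1)}$ by Lemma~\ref{l49}, so the socle is either $D^{(n-1,1)}$ or trivial — it is nonzero, so it is $D^{(n-1,1)}$ (and in particular simple). Alternatively, $\hd(S^{(n-2,1,1)})\cong D^{(n-2,1,1)}$… but $(n-2,1,1)$ is not $2$-regular, so instead use that by Corollary 12.2 of~\cite{j1} the head of a Specht module in characteristic $p$ need not be simple; the cleaner route is genuinely the socle-of-a-submodule inclusion above. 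The main obstacle I anticipate is bookkeeping the $D^{(n)}$ multiplicity correctly across the two congruence cases — one must be careful that the extra copy of $D^{(n)}$ in $S^{(n-2,2)}$ when $n\equiv 2\pmod 4$ is matched against the correct change in $S^{(n-2,1,1)}$, since the ordinary character of $S^{(n-2,1,1)}$ has a fixed decomposition independent of $n\bmod 4$, yet the $2$-modular multiplicities of $D^{(n)}$ do jump. Resolving this cleanly requires either a direct appeal to the explicit hook decomposition numbers (Theorem 24.15 of~\cite{j1}) or a careful comparison of Brauer characters, and that is where I would spend the most care.
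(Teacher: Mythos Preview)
Your socle argument is exactly the paper's: $S^{(n-2,1,1)}$ is a submodule of $D^{(n-2,1)}\uparrow^{\s_n}$ by Lemma~\ref{l3}, hence its socle embeds in $\soc(D^{(n-2,1)}\uparrow^{\s_n})\cong D^{(n-1,1)}$ by Lemma~\ref{l49}.

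For the composition factors, however, your bookkeeping does not close. The filtration from Lemma~\ref{l3} together with Lemma~\ref{l49} gives $[S^{(n-2,1,1)}:D^{(n-1,1)}]=3-1-1=1$, as you say, but there is no analogous input for $D^{(n)}$ or $D^{(n-2,2)}$: Lemma~\ref{l49} records only the $D^{(n-1,1)}$-multiplicity in $D^{(n-2,1)}\uparrow^{\s_n}$, so you cannot subtract your way to $[S^{(n-2,1,1)}:D^{(n)}]$ or $[S^{(n-2,1,1)}:D^{(n-2,2)}]$. Your fallback to Theorem~24.15 of~\cite{j1} does not help either, since that result gives decomposition numbers for Specht modules indexed by \emph{two-part} partitions, not hooks like $(n-2,1,1)$.

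The paper sidesteps this by invoking a character identity (page~93 of~\cite{j1}): the Brauer character of $S^{(n-2,1,1)}$ equals the sum of those of $S^{(n-2,2)}$ and $S^{(n)}$. This is just the determinantal formula for hook characters (or the Jacobi--Trudi relation) reduced mod~$2$. Once you have $[S^{(n-2,1,1)}]=[S^{(n-2,2)}]+[S^{(n)}]$ in the Grothendieck group, Lemma~\ref{l2} and $S^{(n)}=D^{(n)}$ immediately give the stated multiplicities in both congruence cases, with no case-by-case bookkeeping needed. You were circling this idea when you mentioned that ``the ordinary character of $S^{(n-2,1,1)}$ has a fixed decomposition'', but you never actually named or used the identity; that is the missing step.
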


\begin{proof}
From page 93 of \cite{j1} we have that the character of $S^{(n-2,1,1)}$ is equal to the sum of the characters of $S^{(n-2,2)}$ and of $S^{(n)}$. In particular, from Lemma \ref{l2}, the composition factors of $S^{(n-2,1,1)}$ are as given in the lemma.

From Lemmas \ref{l3} and \ref{l49} we have that
\[\soc(S^{(n-2,1,1)})\subseteq\soc(D^{(n-2,1)}\uparrow^{\s_n})\cong D^{(n-1,1)},\]
and so the second part of the lemma also holds.
\end{proof}

\begin{lemma}\label{l35}
If $p=2$ and $n\geq 4$ is even then $(S^{(n-1,1)})^*\subseteq S^{(n-2,1,1)}$.
\end{lemma}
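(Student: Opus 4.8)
The plan is to exhibit $(S^{(n-1,1)})^*$ as a submodule of $S^{(n-2,1,1)}$ by a $\Hom$-dimension count, using Lemma~\ref{t1}. By Lemma~\ref{l1} we have $S^{(n-1,1)} = D^{(n)}|D^{(n-1,1)}$, so dualizing gives $(S^{(n-1,1)})^* = D^{(n-1,1)}|D^{(n)}$; in particular $\soc((S^{(n-1,1)})^*) \cong D^{(n-1,1)}$. On the other side, Lemma~\ref{l33} tells us $\soc(S^{(n-2,1,1)}) \cong D^{(n-1,1)}$ as well, and moreover $[(S^{(n-1,1)})^* : D^{(n-1,1)}] = 1$. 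So by the second part of Lemma~\ref{t1}, it suffices to prove that
\[
\dim\Hom_{\s_n}\big((S^{(n-1,1)})^*, S^{(n-2,1,1)}\big) = 1,
\]
since the common simple socle is $C = D^{(n-1,1)}$ and $[(S^{(n-1,1)})^*:C]=1$. The inequality $\leq 1$ is immediate from the first part of Lemma~\ref{t1} (the socle of $S^{(n-2,1,1)}$ is simple), so the real content is the lower bound: producing one nonzero homomorphism.

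For the lower bound, I would use adjunction to move the computation to $\s_{n-1}$. Since $(S^{(n-1,1)})^*$ and $S^{(n-1,1)}$ have the same composition factors and $S^{(n-1,1)}$ is a Specht module, one expects $\Hom_{\s_n}((S^{(n-1,1)})^*, S^{(n-2,1,1)})$ to be controlled by the branching of $S^{(n-2,1,1)}$. Concretely, $S^{(n-2,1,1)}\downarrow_{\s_{n-1}}$ has a Specht filtration with factors $S^{(n-2,1)}$ and $S^{(n-3,1,1)}$ (removable boxes of $(n-2,1,1)$), and $D^{(n-2,1)}\cong S^{(n-2,1)}$ when $n-1$ is odd (Theorem~24.15 of~\cite{j1}, as already used in Lemma~\ref{l27}). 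Dually, $(S^{(n-1,1)})^*$ restricted to $\s_{n-1}$ contains a copy of $S^{(n-2)} = 1_{\s_{n-1}}$ (from removing the box in row~1) sitting appropriately. The cleanest route is probably: by Lemma~\ref{l49}, $D^{(n-2,1)}\uparrow^{\s_n} \cong Y^{(n-2,1,1)}$ has simple socle and head $D^{(n-1,1)}$ with $D^{(n-1,1)}$ appearing with multiplicity~$3$, and by Lemma~\ref{l3} there is a surjection $Y^{(n-2,1,1)} \twoheadrightarrow S^{(n-1,1)}$ and an inclusion $S^{(n-2,1,1)} \hookrightarrow Y^{(n-2,1,1)}$ (indeed $Y^{(n-2,1,1)} \sim S^{(n-2,1,1)}|S^{(n-2,2)}|S^{(n-1,1)}$). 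Dualizing the surjection $Y^{(n-2,1,1)} \twoheadrightarrow S^{(n-1,1)}$ and using self-duality of $Y^{(n-2,1,1)}$ (it is a Young module) gives an inclusion $(S^{(n-1,1)})^* \hookrightarrow Y^{(n-2,1,1)}$ whose image is the bottom Specht layer; but from the filtration $Y^{(n-2,1,1)} \sim S^{(n-2,1,1)}|S^{(n-2,2)}|S^{(n-1,1)}$ the bottom submodule isomorphic to a quotient...

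Rather than chase the layers by hand, the safe argument is the $\Hom$-count: compute $\dim\Hom_{\s_n}((S^{(n-1,1)})^*, Y^{(n-2,1,1)})$ via self-duality and Lemma~\ref{l49} as $\dim\Hom_{\s_n}(Y^{(n-2,1,1)}, S^{(n-1,1)})$, which by Lemma~\ref{t1} (socle of $S^{(n-1,1)}$ is $D^{(n)}$, simple, and $[Y^{(n-2,1,1)}:D^{(n)}]$ equals the number of trivial composition factors, which from Lemmas~\ref{l1},~\ref{l2},~\ref{l33} is... one needs $n\equiv 0$ vs $2 \bmod 4$), and then compare with $\dim\Hom_{\s_n}((S^{(n-1,1)})^*, Y^{(n-2,1,1)}/S^{(n-2,1,1)})$ using Lemma~\ref{l34} and the known structure of the top two layers $S^{(n-2,2)}$, $S^{(n-1,1)}$ (Lemmas~\ref{l1},~\ref{l2}), to conclude that the map must already be nonzero on the submodule $S^{(n-2,1,1)}$. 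The main obstacle I anticipate is precisely this bookkeeping: keeping track of the multiplicity of $D^{(n)}$ and of $D^{(n-1,1)}$ in the various layers separately in the two congruence classes $n\equiv 0,2 \pmod 4$, and checking that the homomorphism from $(S^{(n-1,1)})^*$ does not factor through the quotient $Y^{(n-2,1,1)}/S^{(n-2,1,1)}$ — equivalently, that $(S^{(n-1,1)})^*$ does not embed in $S^{(n-2,2)}|S^{(n-1,1)}$, which holds because $\soc(S^{(n-2,2)}|S^{(n-1,1)})$ is not $D^{(n-1,1)}$ (by Lemmas~\ref{l1},~\ref{l2} its socle is $D^{(n-1,1)}$... ) — so in fact one must argue via composition-factor multiplicities of $D^{(n-1,1)}$ in $S^{(n-2,2)}|S^{(n-1,1)}$ versus in all of $Y^{(n-2,1,1)}$, forcing the embedding to hit $S^{(n-2,1,1)}$.
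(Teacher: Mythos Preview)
Your approach is genuinely different from the paper's, and the first half is fine: dualising the surjection $Y^{(n-2,1,1)}\twoheadrightarrow S^{(n-1,1)}$ from Lemma~\ref{l3} and using self-duality of the Young module does give an embedding $(S^{(n-1,1)})^*\hookrightarrow Y^{(n-2,1,1)}$, and by Frobenius reciprocity one checks this embedding is unique up to scalar. The problem is the second half: you need the image to lie inside the Specht submodule $S^{(n-2,1,1)}$, and the $\Hom$- and multiplicity-count you propose does not force this. Concretely, write $N=Y^{(n-2,1,1)}/S^{(n-2,1,1)}\sim S^{(n-2,2)}|S^{(n-1,1)}$. If the image $I\cong (S^{(n-1,1)})^*$ is not contained in $S^{(n-2,1,1)}$, then since $\soc(I)=D^{(n-1,1)}=\soc(Y^{(n-2,1,1)})\subseteq S^{(n-2,1,1)}$, the image of $I$ in $N$ is exactly $D^{(n)}$. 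So the obstruction you need is that $D^{(n)}\not\subseteq\soc(N)$. But for $n\equiv 0\pmod 4$ this is \emph{false}: by Lemmas~\ref{l50} and~\ref{l6'} one has $e_0D^{(n-1,2)}\cong N/D^{(n)}$ (compare composition lengths), so $D^{(n)}$ sits in the socle of $N$. Thus neither the socle of $N$ nor the $D^{(n-1,1)}$-multiplicities distinguish whether $I$ lands in $S^{(n-2,1,1)}$ or spills over; your final sentence about ``forcing the embedding to hit $S^{(n-2,1,1)}$'' via multiplicities does not go through.

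The paper bypasses all of this with a short direct argument: an explicit computation with polytabloids shows that $S^{(n-2,1,1)}\!\downarrow_{\s_{1,n-1}}$ is a permutation module, hence by Frobenius reciprocity $\Hom_{\s_n}(M^{(n-1,1)},S^{(n-2,1,1)})\neq 0$. Since $\soc(S^{(n-2,1,1)})\cong D^{(n-1,1)}$ (Lemma~\ref{l33}), any nonzero map from $M^{(n-1,1)}=D^{(n)}|D^{(n-1,1)}|D^{(n)}$ kills the socle $D^{(n)}$ and hence factors through $M^{(n-1,1)}/D^{(n)}\cong(S^{(n-1,1)})^*$; the induced map is then injective because its source has simple socle $D^{(n-1,1)}$ and $D^{(n)}\not\hookrightarrow S^{(n-2,1,1)}$. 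Your route can be rescued, but only by importing the full Loewy structure of $Y^{(n-2,1,1)}$ from Lemmas~\ref{l5} and~\ref{l6} (i.e.\ the M\"uller--Orlob result), which is much heavier than the paper's two-line polytabloid check.
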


\begin{proof}
For $2\leq i<j\leq n$ let $e_{i,j}$ represent the polytabloid corresponding to the standard tableau of shape $(n-2,1,1)$ with second and third row entries $i$ and $j$ respectively. We have
\[S^{(n-2,1,1)}=\langle e_{i,j}:2\leq i<j\leq n\rangle.\]
Also
\[(n-1,n)e_{i,j}=\left\{\begin{array}{ll}
e_{i,j},&j\leq n-2,\\
e_{i,n},&j=n-1,\\
e_{i,n-1},&i\leq n-2,\,\,j=n,\\
e_{n-1,n},&i=n-1,\,\,j=n
\end{array}\right.\]
and
\[(2,\ldots,n)e_{i,j}=\left\{\begin{array}{ll}
e_{i+1,j+1},&j\leq n-1,\\
e_{2,i+1},&j=n.
\end{array}\right.\]
So the restriction of $S^{(n-2,1,1)}$ to $\s_{1,n-1}=\langle (n-1,n),(2,\ldots,n)\rangle$ is a permutation representation. In particular
\begin{align*}
\dim\Hom_{\s_n}(M^{(n-1,1)},S^{(n-2,1,1)})&=\dim\Hom_{\s_n}(M^{(1,n-1)},S^{(n-2,1,1)})\\
&=\dim\Hom_{\s_{1,n-1}}(1_{\s_{1,n-1}},S^{(n-2,1,1)}\downarrow_{\s_{1,n-1}})\\
&\geq 1.
\end{align*}
From Lemma \ref{l1} and from the self-duality of $M^{(n-1,1)}$ we have that 
\[M^{(n-1,1)}=D^{(n)}|D^{(n-1,1)}|D^{(n)}\sim D^{(n)}|(S^{(n-1,1)})^*.\]
As $\soc(S^{(n-2,1,1)})\cong D^{(n-1,1)}$ from Lemma \ref{l33}, so that $D^{(n)}$ and $M^{(n-1,1)}$ are not contained in $S^{(n-2,1,1)}$, the lemma follows.
\end{proof}

\begin{lemma}\label{l50}
Let $p=2$ and $n\geq 6$ be even. Then $e_0D^{(n-1,2)}$ is both a submodule and a quotient of $D^{(n-2,1)}\uparrow^{\s_n}$ and $S^{(n-2,2)}\subseteq e_0D^{(n-1,2)}$.

Further $[e_0D^{(n-1,2)}:D^{(n-1,1)}]=2$, socle and head of $e_0D^{(n-1,2)}$ are isomorphic to $D^{(n-1,1)}$ and 
\begin{itemize}
\item
$e_0D^{(n-1,2)}\subseteq M\subseteq D^{(n-2,1)}\uparrow^{\s_n}$ with $M\sim S^{(n-2,1,1)}|D^{(n-1,1)}$,

\item
$e_0D^{(n-1,2)}\cong N$ or $e_0D^{(n-1,2)}\cong N/D^{(n)}$ with
\[N=D^{(n-2,1)}\uparrow^{\s_n}/S^{(n-2,1,1)}\sim S^{(n-2,2)}|S^{(n-1,1)}.\]
\end{itemize}
\end{lemma}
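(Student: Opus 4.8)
The plan is to realise $e_0 D^{(n-1,2)}$ as a submodule of $X:=D^{(n-2,1)}\uparrow^{\s_n}$ via the divided‑power functors, and then to pin down its position using the Specht filtration of $X$ and the structure of the Specht factors. First I would assemble the combinatorics. By Lemma \ref{l49} the conormal nodes of $(n-2,1)$ are $(1,n-1),(2,2),(3,1)$, all of residue $0$, so $\epsilon_0((n-2,1))=0$ and $\phi_0((n-2,1))=3$; Lemma \ref{l47} then gives $\tilde{f}_0 D^{(n-2,1)}=D^{(n-1,1)}$ and $\tilde{f}_0^2 D^{(n-2,1)}=D^{(n-1,2)}$, whence (again by Lemma \ref{l47}) $\tilde{e}_0 D^{(n-1,2)}=D^{(n-1,1)}$, $\tilde{e}_0^2 D^{(n-1,2)}=D^{(n-2,1)}$ and $\epsilon_0((n-1,2))=2$. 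Applying Lemma \ref{l9'} with $\pi=(n-2,1)$, $i=0$, $a=2$, $b=1$ now gives $e_0 D^{(n-1,2)}=e_0^{(1)}D^{(n-1,2)}\subseteq f_0^{(1)}D^{(n-2,1)}=X$, the last identification being Lemma \ref{l49}. As $D^{(n-1,2)}$ and $X$ are self‑dual, Lemma \ref{l57} makes $e_0 D^{(n-1,2)}$ self‑dual, so dualising the inclusion shows it is also a quotient of $X$; and Lemma \ref{l39} gives $\soc(e_0 D^{(n-1,2)})\cong\hd(e_0 D^{(n-1,2)})\cong D^{(n-1,1)}$ together with $[e_0 D^{(n-1,2)}:D^{(n-1,1)}]=\binom{2}{1}=2$.

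Next I would record the structure of $X$. By Lemma \ref{l49} it is indecomposable with head and socle $D^{(n-1,1)}$ and $[X:D^{(n-1,1)}]=3$; by Lemma \ref{l3} it has the Specht filtration $X\sim S^{(n-2,1,1)}|S^{(n-2,2)}|S^{(n-1,1)}$. From Lemmas \ref{l1}, \ref{l2} and \ref{l33} I would read off the composition factors of the three Specht factors (the difference between $n\equiv 0$ and $n\equiv 2\Md 4$ being only an extra $D^{(n)}$), together with $S^{(n-1,1)}=D^{(n)}|D^{(n-1,1)}$, $\soc(S^{(n-2,2)})\cong D^{(n-1,1)}$, $\hd(S^{(n-2,2)})\cong D^{(n-2,2)}$, and $\soc(S^{(n-2,1,1)})\cong D^{(n-1,1)}$, the last giving $\soc(X)\subseteq S^{(n-2,1,1)}$. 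Put $N:=X/S^{(n-2,1,1)}\sim S^{(n-2,2)}|S^{(n-1,1)}$; since $\hd X\cong D^{(n-1,1)}$ is simple, so is $\hd N$, so $N$ is indecomposable, the extension of $S^{(n-1,1)}$ by $S^{(n-2,2)}$ is non‑split, and $[N:D^{(n-1,1)}]=2$. I would also establish two auxiliary facts: first, $D^{(n)}$ is not a submodule of $X$ — indeed $\epsilon_0((n))=0$ for $n$ even, so $e_0 D^{(n)}=0$ by Lemma \ref{l39}, and hence $\Hom_{\s_n}(D^{(n)},X)=\Hom_{\s_n}(D^{(n)},f_0 D^{(n-2,1)})=\Hom_{\s_{n-1}}(e_0 D^{(n)},D^{(n-2,1)})=0$ by Lemma \ref{l48}; second, $D^{(n-2,2)}$ is a composition factor of $e_0 D^{(n-1,2)}$, by Lemma \ref{l56} applied to the normal $0$‑node $(1,n-1)$ of $(n-1,2)$.

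Now set $U:=e_0 D^{(n-1,2)}\subseteq X$. Its socle is the simple socle $D^{(n-1,1)}$ of $X$, hence lies in $S^{(n-2,1,1)}$, so $U\cap S^{(n-2,1,1)}$ contains exactly one $D^{(n-1,1)}$ and $[U+S^{(n-2,1,1)}:D^{(n-1,1)}]=2+1-1=2$; as $(U+S^{(n-2,1,1)})/S^{(n-2,1,1)}$ is a nonzero quotient of $U$ it has head $D^{(n-1,1)}$, and once one knows $\soc N$ is simple this forces $M:=U+S^{(n-2,1,1)}$ to satisfy $U\subseteq M\subseteq X$ and $M\sim S^{(n-2,1,1)}|D^{(n-1,1)}$. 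Dually, self‑duality gives a surjection $X\twoheadrightarrow U$ whose kernel $V$ satisfies $X/V\cong U$ and $[V:D^{(n-1,1)}]=1$; the same use of $\soc N$ simple places $V\subseteq S^{(n-2,1,1)}$, and then comparing the multiplicity of $D^{(n-2,2)}$ (using that $D^{(n-2,2)}$ occurs in $U$, that $[X:D^{(n-2,2)}]=2$, and the submodule lattice of $S^{(n-2,1,1)}$ near its socle, controlled by $(S^{(n-1,1)})^*\subseteq S^{(n-2,1,1)}$ from Lemma \ref{l35}) forces $V=S^{(n-2,1,1)}$ or $V$ to differ from it by a single $D^{(n)}$; that is, $U\cong X/S^{(n-2,1,1)}=N$ or $U\cong N/D^{(n)}$. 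Finally $S^{(n-2,2)}\subseteq U$ in either case, since $S^{(n-2,2)}$ is a submodule of $N$ and the $D^{(n)}$ possibly quotiented off is the socle of $S^{(n-1,1)}$ in $N$, which meets $S^{(n-2,2)}$ trivially.

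The main obstacle is the delicate bookkeeping in the last paragraph — above all showing that $\soc N$ is simple and controlling the multiplicity of $D^{(n-2,2)}$ in $e_0 D^{(n-1,2)}$, together with tracking where the copies of $D^{(n)}$ sit, all of which genuinely depend on whether $n\equiv 0$ or $n\equiv 2\Md 4$ and account for the two possibilities $N$ and $N/D^{(n)}$ in the statement. I would handle these by composition‑factor counting, the self‑duality of $e_0 D^{(n-1,2)}$ and of $X$, the vanishing $\Hom_{\s_n}(D^{(n)},X)=0$ established above (combined with the long exact sequence attached to $0\to S^{(n-2,1,1)}\to X\to N\to 0$ to bound $\Hom_{\s_n}(D^{(n)},N)$), and Lemma \ref{l35}.
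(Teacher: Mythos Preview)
Your first two paragraphs match the paper almost exactly and are correct: the combinatorics of $(n-2,1)$ and $(n-1,2)$, the embedding $e_0D^{(n-1,2)}\subseteq X$ via Lemma~\ref{l9'}, self-duality, and the head/socle/multiplicity facts from Lemma~\ref{l39} are precisely what the paper uses.

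The quotient argument in your third paragraph, however, has two genuine problems. First, the hypothesis ``$\soc N$ simple'' that you propose to establish is \emph{false} when $n\equiv 0\Md 4$. Indeed, from Lemma~\ref{l6} one reads off that $S^{(n-2,1,1)}$ sits in $X$ as $D^{(n-1,1)}|(D^{(n)}\oplus D^{(n-2,2)})$, and after quotienting, the upper $D^{(n)}$ in the diagram has nothing beneath it in $N$, so $\soc N\cong D^{(n-1,1)}\oplus D^{(n)}$. This is exactly why the alternative $U\cong N/D^{(n)}$ genuinely occurs (compare Lemma~\ref{l6'}). Your proposed tools (the vanishing $\Hom_{\s_n}(D^{(n)},X)=0$ plus the long exact sequence and Lemma~\ref{l35}) cannot prove a false statement; the long exact sequence only gives $\Hom_{\s_n}(D^{(n)},N)\hookrightarrow\mathrm{Ext}^1_{\s_n}(D^{(n)},S^{(n-2,1,1)})$, which is no obstruction. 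Second, even where $\soc N$ \emph{is} simple, the inclusion you derive, $V\subseteq S^{(n-2,1,1)}$, points the wrong way: it yields a surjection $U=X/V\twoheadrightarrow X/S^{(n-2,1,1)}=N$, so $N$ is a quotient of $U$, not the reverse; and if $V\subsetneq S^{(n-2,1,1)}$ with $S^{(n-2,1,1)}/V\cong D^{(n)}$ you get $U\sim D^{(n)}|N$, not $U\cong N/D^{(n)}$. What is actually needed is $S^{(n-2,1,1)}\subseteq V$. (Your $M$-part also leans on ``$\soc N$ simple''; there the conclusion happens to survive, but the argument as written has the same gap.)

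The paper avoids all of this. Having displayed the Specht filtration with the three copies of $D^{(n-1,1)}$ located, it concludes that $e_0D^{(n-1,2)}$ is a quotient of $N$; the underlying reason is that any submodule of $N$ with no $D^{(n-1,1)}$-factor meets $B:=S^{(n-2,2)}\subseteq N$ trivially (since $\soc B\cong D^{(n-1,1)}$) and hence embeds in $N/B\cong S^{(n-1,1)}$, which combined with $\soc(e_0D^{(n-1,2)})\cong D^{(n-1,1)}$ forces $S^{(n-2,1,1)}\subseteq V$. Then, setting $A:=V/S^{(n-2,1,1)}\subseteq N$, the same observation gives $A\cap B=0$, hence simultaneously $S^{(n-2,2)}\cong B\hookrightarrow N/A\cong e_0D^{(n-1,2)}$ and $A\hookrightarrow N/B\cong S^{(n-1,1)}=D^{(n)}|D^{(n-1,1)}$, so $A=0$ or $A\cong D^{(n)}$. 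No $\soc N$, no $\mathrm{Ext}$-computation, no case split on $n\bmod 4$.
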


\begin{proof}
For $n$ even we have that
\[\begin{tikzpicture}
\draw (0,0) node {0};
\draw (0.3,0) node {1};
\draw (0.7,0) node {$\cdots$};
\draw (1.1,0) node {0};
\draw (1.4,0) node {1};
\draw (0,-0.4) node {1};
\draw (0.3,-0.4) node {0};
\draw (1.7,0) node {0};
\draw (0,-0.80) node {0};

\draw (-1.5,-0.2) node {$(n-2,1)=$};

\draw (-0.15,0.2)--(1.55,0.2)--(1.55,-0.2)--(0.15,-0.2)--(0.15,-0.6)--(-0.15,-0.6)--(-0.15,0.2);
\end{tikzpicture}\]
In particular $(1,n-2)$ and $(2,1)$ are the normal nodes (all of residue 1) and $(1,n-1),(2,2),(3,1)$ are the conormal nodes (all of residue 0) of $(n-2,1)$. It follows that $\epsilon_0(n-2,1)=0$ and $\phi_0(n-2,1)=3$. Also $\tilde{f}_0^2D^{(n-2,1)}=D^{(n-1,2)}$. So $e_0D^{(n-1,2)}\subseteq f_0D^{(n-2,1)}=D^{(n-2,1)}\uparrow^{\s_n}$ from Lemmas \ref{l9'} and \ref{l49}. Since $e_0D^{(n-1,2)}$ and $f_0D^{(n-2,1)}$ are self-dual by Lemma \ref{l57}, we also have that $e_0D^{(n-1,2)}$ is a quotient of $D^{(n-2,1)}\uparrow^{\s_n}$. Also head and socle of $e_0D^{(n-1,2)}$ are isomorphic to $D^{(n-1,1)}$, as so are those of $D^{(n-2,1)}\uparrow^{\s_n}$ (Lemma \ref{l49}). Further $\epsilon_0(n-1,2)=\epsilon_0(n-2,1)+2=2$ from Lemma \ref{l47}. So from Lemma \ref{l39} it follows that $[e_0D^{(n-1,2)}:D^{(n-1,1)}]=2$.

From Lemmas \ref{l3}, \ref{l1}, \ref{l2}, \ref{l33} and \ref{l49} we have that
\[D^{(n-2,1)}\hspace{-1pt}\uparrow^{\s_n}\sim \!\overbrace{\underbrace{D^{(n-1,1)}}_{\soc(D^{(n-1,1)}\uparrow^{\s_n})}|\underbrace{\ldots}_{\mbox{no }D^{(n-1,1)}}}^{S^{(n-2,1,1)}}\!|\!\overbrace{D^{(n-1,1)}|\underbrace{\ldots}_{\mbox{no }D^{(n-1,1)}}}^{S^{(n-2,2)}}\!|\!\overbrace{D^{(n)}|\underbrace{D^{(n-1,1)}}_{\hd(D^{(n-1,1)}\uparrow^{\s_n})}}^{S^{(n-1,1)}}\!\!.\]
From the previous part it follows that $e_0D^{(n-1,2)}\subseteq M\subseteq D^{(n-2,1)}\uparrow^{\s_n}$ with $M\sim S^{(n-2,1,1)}|D^{(n-1,1)}$ and that $e_0D^{(n-1,2)}$ is a quotient of $N=D^{(n-2,1)}\uparrow^{\s_n}/S^{(n-2,1,1)}\sim S^{(n-2,2)}|S^{(n-1,1)}$.

Let $A$ be a submodule with $N/A\cong e_0D^{(n-1,2)}$. Further let $B\subseteq N$ with $B\cong S^{(n-2,2)}$ and $N/B\cong S^{(n-1,1)}$. As $e_0D^{(n-1,2)}$ and $N$ both have exactly 2 composition factor isomorphic to $D^{(n-1,1)}$, no composition factor of $A$ is isomorphic to $D^{(n-1,1)}$. In particular $D^{(n-1,1)}\not\subseteq A$ and so, as $\soc(S^{(n-2,2)})\cong D^{(n-1,1)}$ (Lemma \ref{l2}), it follows that $A\cap B=0$. In particular $S^{(n-2,2)}\cong B\subseteq N/A\cong e_0D^{(n-1,1)}$. Also $A$ is isomorphic to a submodule of $N/B\cong S^{(n-1,1)}$ with no composition factor isomorphic to $D^{(n-1,1)}$. So $A=0$ or $A\cong D^{(n)}$ as $S^{(n-1,1)}\cong D^{(n)}|D^{(n-1,1)}$ (Lemma \ref{l1}).
\end{proof}

\begin{lemma}\label{l5}
If $p=2$ and $n\equiv 2\Md 4$ with $n\geq 6$ then
\[D^{(n-2,1)}\hspace{-1pt}\uparrow^{\s_n}=\hspace{-1pt}D^{(n-1,1)}|D^{(n)}|D^{(n-2,2)}|D^{(n)}|D^{(n-1,1)}|D^{(n)}|D^{(n-2,2)}|D^{(n)}|D^{(n-1,1)}.\]
\end{lemma}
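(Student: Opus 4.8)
Write $X:=D^{(n-2,1)}\uparrow^{\s_n}$, which by Lemma \ref{l57} is self-dual, and recall from Lemma \ref{l49} that $X$ is indecomposable with $\soc X\cong\hd X\cong D^{(n-1,1)}$ and $[X:D^{(n-1,1)}]=3$, and from Lemma \ref{l3} that $X\sim S^{(n-2,1,1)}|S^{(n-2,2)}|S^{(n-1,1)}$. For $n\equiv 2\Md 4$ I would first read off the nine composition factors: Lemmas \ref{l1} and \ref{l2} give $S^{(n-1,1)}=D^{(n)}|D^{(n-1,1)}$ and $S^{(n-2,2)}=D^{(n-1,1)}|D^{(n)}|D^{(n-2,2)}$, and Lemma \ref{l33} gives the composition factors of $S^{(n-2,1,1)}$ (one $D^{(n-1,1)}$, two $D^{(n)}$, one $D^{(n-2,2)}$) together with $\soc S^{(n-2,1,1)}\cong D^{(n-1,1)}$, so that $X$ has exactly the nine factors occurring in the statement; Lemma \ref{l35} moreover provides a uniserial submodule $(S^{(n-1,1)})^*\cong D^{(n-1,1)}|D^{(n)}$ of $S^{(n-2,1,1)}\subseteq X$. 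Everything that follows is about stacking these nine factors uniserially in the asserted order.

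The engine of the proof is the module $Y:=e_0D^{(n-1,2)}$. By Lemma \ref{l50} it is self-dual (Lemma \ref{l57}), is simultaneously a submodule and a quotient of $X$, has $\soc Y\cong\hd Y\cong D^{(n-1,1)}$ and $[Y:D^{(n-1,1)}]=2$, contains $S^{(n-2,2)}$, sits inside $X$ as $Y\subseteq M\subseteq X$ with $M\sim S^{(n-2,1,1)}|D^{(n-1,1)}$, and is isomorphic either to $N:=X/S^{(n-2,1,1)}\sim S^{(n-2,2)}|S^{(n-1,1)}$ or to $N/D^{(n)}$. First I would discard $Y\cong N/D^{(n)}$: then $Y$ has length $4$, and since $\hd Y$ is simple while $\hd S^{(n-2,2)}\cong D^{(n-2,2)}\not\cong D^{(n-1,1)}$ one gets $\mathrm{rad}\,Y=S^{(n-2,2)}$; self-duality of $Y$ then yields $Y/\soc Y\cong(S^{(n-2,2)})^*=D^{(n-2,2)}|D^{(n)}|D^{(n-1,1)}$, which is uniserial, yet the inclusion $S^{(n-2,2)}\hookrightarrow Y$ — which is the identity on socles — would force $S^{(n-2,2)}/\soc S^{(n-2,2)}\cong D^{(n)}|D^{(n-2,2)}$ to embed into $(S^{(n-2,2)})^*$, impossible as the latter is uniserial with socle $D^{(n-2,2)}$. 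Hence $Y\cong N$ has length $5$, so $Y=M$, $\mathrm{rad}\,Y=S^{(n-2,1,1)}$ and $Y/S^{(n-2,1,1)}\cong D^{(n-1,1)}$. Now I would show $Y$ is the uniserial module $D^{(n-1,1)}|D^{(n)}|D^{(n-2,2)}|D^{(n)}|D^{(n-1,1)}$: its socle is $D^{(n-1,1)}=\soc S^{(n-2,2)}$; since $[\mathrm{rad}\,Y:D^{(n-1,1)}]=1$ and $\hd(Y/\soc Y)\cong D^{(n-1,1)}$ is simple with $Y/\soc Y$ of length $4$, the second socle layer cannot be $D^{(n-1,1)}$, and combining $\mathrm{rad}\,Y=S^{(n-2,1,1)}$, self-duality, the known Specht structures, the submodule $(S^{(n-1,1)})^*\subseteq Y$, and the vanishing of self-extensions of irreducible $F\s_n$-modules, one forces it to be $D^{(n)}$; iterating gives the claimed uniserial shape of $Y$.

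Finally I would transport the uniseriality of $Y$ to $X$. Put $Q:=S^{(n-2,1,1)}$; then $Q=\mathrm{rad}\,Y\subseteq Y=M\subseteq X$ is uniserial of length $4$, while $X/Q=N\cong Y$ is uniserial of length $5$, and $Q\subseteq\mathrm{rad}\,X$ because $\hd Q\cong D^{(n)}\not\cong D^{(n-1,1)}\cong\hd X$. The submodules of $X$ contained in $Q$ form a chain, the submodules of $X$ containing $Q$ form a chain, and together these give a composition series of $X$; invoking once more that $\soc X$ and $\hd X$ are simple, that $X$ is self-dual, and the absence of self-extensions, one checks that no submodule lies off this chain, so $X$ is uniserial. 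Reading the composition factors from the bottom (those of $Q$, then the remaining factor of $Y$) and matching with the dual picture at the top then gives exactly the order displayed in the statement.

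The main obstacle is the uniseriality itself. The composition factors, the Specht filtration of Lemma \ref{l3} and self-duality are jointly compatible with several non-uniserial configurations (with forks in the socle or radical series); what rules these out is the rigidity coming from Lemma \ref{l50} — namely that $e_0D^{(n-1,2)}$ is realized at the same time as a submodule and as a quotient of $X$ — together with the known Specht module structures and the vanishing of self-extensions of irreducibles. Making the submodule bookkeeping precise, in particular pinning down $\mathrm{rad}\,Y=S^{(n-2,1,1)}$ and then that it equals $\mathrm{rad}\,X\cap Y$ and lies between two uniserial pieces, is where the argument must be carried out carefully.
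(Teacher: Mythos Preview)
The paper's proof is a one-line citation to M\"uller--Orlob \cite{mo}, so your approach is genuinely different and, if it worked, would give a self-contained argument from the paper's own lemmas. The broad strategy---use Lemma~\ref{l50} to pin down $Y=e_0D^{(n-1,2)}$ as a length-5 self-dual piece that is both a submodule and a quotient of $X$, show $Y$ is uniserial, then lift to $X$---is sound and can be made to work. However, your write-up has a real gap.

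The assertion ``vanishing of self-extensions of irreducible $F\s_n$-modules'' is \emph{false} for $p=2$. Indeed Lemma~\ref{l26} of this very paper exhibits a nonsplit extension $D^{(n)}|D^{(n)}\cong 1\uparrow_{A_n}^{\s_n}$. You invoke this nonexistent fact twice---once to force the shape of $Y$ and once to force the shape of $X$---and in both places the argument as written collapses. Moreover, the surrounding prose (``one forces it to be $D^{(n)}$; iterating gives\ldots'', ``one checks that no submodule lies off this chain'') is not a proof but a hope; several non-uniserial configurations are a priori compatible with a Specht filtration, simple head and socle, and self-duality.

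Both steps can be repaired without self-extension vanishing. For $Y$: you have $\mathrm{rad}\,Y\cong S^{(n-2,1,1)}$ and $\soc Y\cong D^{(n-1,1)}$, so the heart $H:=\mathrm{rad}\,Y/\soc Y$ is self-dual of length~3 with factors $D^{(n)},D^{(n)},D^{(n-2,2)}$ and contains $S^{(n-2,2)}/\soc\cong D^{(n)}|D^{(n-2,2)}$. A short case check on self-dual length-3 modules with these factors containing that submodule leaves only the uniserial $D^{(n)}|D^{(n-2,2)}|D^{(n)}$; hence $\mathrm{rad}\,Y$ is uniserial and so is $Y$. For $X$: the crucial point you underuse is that $Y$ is not just \emph{a} quotient of $X$ but is \emph{the} preimage in $X$ of $\soc(X/Q)$ (since $Y=M$ and $Y/Q\cong D^{(n-1,1)}$). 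Now compute the socle series of $X$ layer by layer: any simple submodule of $X/Y_k$ either lies in $Q/Y_k$ (uniserial, so a unique simple) or maps injectively into $X/Q\cong Y$, hence into $\soc(X/Q)$; but then its preimage together with $Q/Y_k$ would force $Y/Y_k$ to decompose, contradicting uniseriality of $Y$. This gives $\soc_k X=Y_k$ for $k\le 5$, and for $k\ge 5$ one is inside $X/Q\cong Y$, which is already uniserial. No self-extension statement is needed.
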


\begin{proof}
This follows from Theorem 1.1 of \cite{mo} and from Lemma \ref{l49}.
\end{proof}

\begin{cor}
If $p=2$ and $n\equiv 2\Md 4$ with $n\geq 6$ then
\[S^{(n-2,1,1)}=D^{(n-1,1)}|D^{(n)}|D^{(n-2,2)}|D^{(n)}\]
and
\[e_0D^{(n-1,2)}=D^{(n-1,1)}|D^{(n)}|D^{(n-2,2)}|D^{(n)}|D^{(n-1,1)}.\]
\end{cor}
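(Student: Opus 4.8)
The plan is to extract both identities from the uniserial structure of $D^{(n-2,1)}\uparrow^{\s_n}$ given in Lemma~\ref{l5}, combined with the structural information on the quotient and submodule filtration from Lemma~\ref{l50}. First I would recall from Lemma~\ref{l5} that for $n\equiv 2\Md 4$ the module $U:=D^{(n-2,1)}\uparrow^{\s_n}$ is uniserial with the nine-layer composition series listed there, reading from the bottom
\[U=D^{(n-1,1)}|D^{(n)}|D^{(n-2,2)}|D^{(n)}|D^{(n-1,1)}|D^{(n)}|D^{(n-2,2)}|D^{(n)}|D^{(n-1,1)}.\]
Since $U$ is uniserial, each of its subquotients is uniserial and determined by which consecutive stretch of these layers it occupies.

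For the statement about $S^{(n-2,1,1)}$: by Lemma~\ref{l33} we know $\soc(S^{(n-2,1,1)})\cong D^{(n-1,1)}$, and by Lemma~\ref{l3} $S^{(n-2,1,1)}$ is a submodule of $U$ (it is the bottom Specht factor in $U\sim S^{(n-2,1,1)}|S^{(n-2,2)}|S^{(n-1,1)}$). A submodule of a uniserial module is the bottom stretch, so $S^{(n-2,1,1)}$ is the bottom $k$ layers of $U$ for some $k$. From Lemma~\ref{l33} its composition factors (for $n\equiv 2\Md 4$) are $D^{(n-1,1)}$, $D^{(n-2,2)}$ and $D^{(n)}$ with multiplicities $1,1,2$, so $k=4$ and the bottom four layers give exactly $S^{(n-2,1,1)}=D^{(n-1,1)}|D^{(n)}|D^{(n-2,2)}|D^{(n)}$, as claimed. (One should check the multiset of the bottom four layers of $U$ matches: indeed it is $\{D^{(n-1,1)},D^{(n)},D^{(n-2,2)},D^{(n)}\}$, consistent.)

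For the statement about $e_0D^{(n-1,2)}$: by Lemma~\ref{l50} we have $e_0D^{(n-1,2)}\subseteq M\subseteq U$ with $M\sim S^{(n-2,1,1)}|D^{(n-1,1)}$, so $M$ is the bottom five layers of $U$, namely $M=D^{(n-1,1)}|D^{(n)}|D^{(n-2,2)}|D^{(n)}|D^{(n-1,1)}$; moreover $e_0D^{(n-1,2)}$ has head and socle $D^{(n-1,1)}$ and $[e_0D^{(n-1,2)}:D^{(n-1,1)}]=2$ by that same lemma. Since $e_0D^{(n-1,2)}$ is a submodule of the uniserial module $M$ and $M$ itself already has socle and head $D^{(n-1,1)}$ with exactly two composition factors $D^{(n-1,1)}$, the only submodule of $M$ with head $D^{(n-1,1)}$ and two copies of $D^{(n-1,1)}$ is $M$ itself; hence $e_0D^{(n-1,2)}\cong M$, which is the displayed uniserial module.

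The main obstacle — really the only thing requiring care — is matching up the composition-factor bookkeeping: one must verify that the bottom four (resp.\ five) layers of the explicit series in Lemma~\ref{l5} carry precisely the composition factors attributed to $S^{(n-2,1,1)}$ by Lemma~\ref{l33} (resp.\ to $e_0D^{(n-1,2)}$ by Lemma~\ref{l50}), so that the ``bottom stretch'' of $U$ is forced to have the stated length. Once the uniseriality of $U$ is in hand this is immediate, since in a uniserial module a submodule is uniquely determined by its length.
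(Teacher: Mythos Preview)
Your proof is correct and follows essentially the same approach as the paper: the paper's proof invokes Lemmas~\ref{l3}, \ref{l33} and \ref{l5} for $S^{(n-2,1,1)}$ and Lemmas~\ref{l50} and \ref{l5} for $e_0D^{(n-1,2)}$, which is exactly the combination of inputs you use. Your writeup is simply a more explicit unpacking of the same argument.
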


\begin{proof}
The structure of $S^{(n-2,1,1)}$ follows from Lemmas \ref{l3}, \ref{l33} and \ref{l5} (or from (2.4)c of \cite{mo}).

The structure of $e_0D^{(n-1,2)}$ follows from Lemmas \ref{l50} and \ref{l5}.
\end{proof}

\begin{lemma}\label{l25}
If $p=2$ and $n\equiv 0\Md 4$ with $n\geq 8$ then
\[\xymat{
\hspace{-24pt}&D^{(n-1)}\ar@{-}[dd]\ar@{-}[ddr]&D^{(n-3,2)}\ar@{-}[dd]\ar@{-}[ddl]\\
M^{(n-3,1,1)}=D^{(n-2,1)}\oplus D^{(n-2,1)}\oplus\hspace{-24pt}&&&.\\
\hspace{-24pt}&D^{(n-1)}&D^{(n-3,2)}
}\]
\end{lemma}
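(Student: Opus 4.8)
\textbf{Proof proposal for Lemma \ref{l25}.}

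The plan is to compute the module $M^{(n-3,1,1)}$ by first building it up via induction from $M^{(n-3,1)}$ and then pinning down the extensions. Concretely, since $M^{(n-3,1,1)}\cong M^{(n-3,1)}\uparrow^{\s_n}$, I would start from Lemma \ref{l27}, which gives $M^{(n-3,1)}\cong D^{(n-2)}\oplus D^{(n-3,1)}\oplus D^{(n-4,2)}$ (applied to $\s_{n-1}$, where $n-1$ is odd and $\equiv 3\Md 4$). Wait — more carefully, $M^{(n-3,1,1)}$ as an $\s_n$-module is obtained by inducing $M^{(n-3,1)}$ from $\s_{n-1}$; but I also have $M^{(n-3,1,1)}\cong M^{(n-3,2)}\uparrow^{\s_n}/(\text{something})$ is less convenient, so the cleanest route is $M^{(n-3,1,1)}\cong (D^{(n-2)}\oplus D^{(n-3,1)}\oplus D^{(n-4,2)})\uparrow^{\s_n}$ once we identify $M^{(n-3,1)}$ on $\s_{n-1}$ via Lemma \ref{l27} applied with $n-1$ in the odd case — but Lemma \ref{l27} is stated for even degree. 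So instead I would use that $M^{(n-3,1,1)}\cong M^{(n-2,1)}\uparrow^{\s_n}$-type relations: actually the right starting point is $M^{(n-3,1,1)} \downarrow$ or rather to induce $M^{(n-3,1)}$ of $\s_{n-1}$, whose structure on the odd symmetric group $\s_{n-1}$ I would get directly from $M^{(n-3,1)}\sim S^{(n-3,1)}|S^{(n-2)}$ together with $n-2$ odd forcing $D^{(n-2)}\cong S^{(n-1)}_{\s_{n-1}}$ etc. The upshot is a direct sum of three irreducibles $D^{(n-2)}\oplus D^{(n-3,1)}\oplus D^{(n-4,2)}$ on $\s_{n-1}$, each of which I then induce to $\s_n$.

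Next I would compute each of the three induced summands using the $f_i$ machinery (Lemmas \ref{l45}, \ref{l39}, \ref{l40}, \ref{l56}, \ref{l57}). For $D^{(n-3,1)}\uparrow^{\s_n}$, I note $(n-3,1)\vdash n-2$ and — since $n\equiv 0\Md 4$ so $n-2\equiv 2\Md 4$ — I read off the residues of $(n-3,1)$, find its conormal nodes (two of one residue, one of the other), and conclude that $D^{(n-3,1)}\uparrow^{\s_n}\cong f_0 D^{(n-3,1)}\oplus f_1 D^{(n-3,1)}$ with one summand indecomposable of the shape $D^{(n-1)}|D^{(n-3,2)}|D^{(n-1)}$ (the two equal-residue conormal nodes give a uniserial module with a repeated head/socle constituent, and the middle constituent $D^{(n-3,2)}$ comes from Lemma \ref{l56}) and the other summand being $D^{(n-2,1)}$ (a single conormal node, hence irreducible induction by Lemma \ref{l39}/\ref{l40}). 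For $D^{(n-2)}\uparrow^{\s_n}$ and $D^{(n-4,2)}\uparrow^{\s_n}$ I similarly read off conormal nodes: each should contribute a single $D^{(n-2,1)}$-type summand together with pieces that must be absorbed into the diagram. Assembling, the two $D^{(n-2,1)}$ direct summands in the statement come from these single-conormal-node inductions, and the two ``diamond'' blocks $D^{(n-1)}|D^{(n-3,2)}|D^{(n-1)}$-type pieces — drawn as the $D^{(n-1)}, D^{(n-3,2)}$ with the crossing edges — come from the remaining induced pieces; the crossing-edge diagram encodes that this middle block is $(D^{(n-1)}\oplus D^{(n-3,2)})$ in the socle and head simultaneously with the appropriate uniserial subquotients, i.e. it is $f_0D^{(n-3,1)}\oplus(\text{analogous piece})$ glued so that neither $D^{(n-1)}$ nor $D^{(n-3,2)}$ splits off.

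To pin down that the middle block really has the crossed-edge shape (socle $D^{(n-1)}\oplus D^{(n-3,2)}$, head $D^{(n-1)}\oplus D^{(n-3,2)}$, with the four uniserial length-two subquotients $D^{(n-1)}|D^{(n-3,2)}$, $D^{(n-3,2)}|D^{(n-1)}$, etc.) rather than splitting further, I would use self-duality of $M^{(n-3,1,1)}$ (so socle $\cong$ head), a dimension/composition-factor count via Lemma \ref{l33} or the relevant Specht filtrations of $M^{(n-3,1,1)}\sim S^{(n-3,1,1)}|S^{(n-2,1)}|S^{(n-1)}$ and $S^{(n-3,2)}$-type data, and an $\End$-computation: $\dim\End_{\s_n}(M^{(n-3,1,1)})$ equals the number of $\s_n$-orbits on pairs of $(n-3,1,1)$-tabloids, which I would compute combinatorially, and matching this against the $\End$-dimension of the proposed module forces the extensions to be non-split exactly as drawn. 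The main obstacle I expect is this last step: distinguishing the genuinely indecomposable crossed-diamond block from a partially-split module with the same composition factors. Self-duality alone is not enough, so the key leverage will be the precise value of $\dim\End_{\s_n}(M^{(n-3,1,1)})$ (equivalently $\dim\Hom_{\s_n}(M^{(n-3,1,1)}, M^{(n-3,1,1)})$ via Mackey and orbit counting) together with the known non-split extensions $S^{(n-3,2)}=D^{(n-2,1)}|D^{(n-1)}|D^{(n-3,2)}$-type structure from Lemma \ref{l2}-style results, which forces the relevant $\mathrm{Ext}^1(D^{(n-1)},D^{(n-3,2)})$ to be one-dimensional and hence the diamond to be as claimed.
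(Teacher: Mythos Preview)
There are two genuine problems that keep the plan from going through as written.

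First, an indexing slip that propagates throughout: $M^{(n-3,1,1)}$ is an $\s_{n-1}$-module (its parts sum to $n-1$), so every ``$\uparrow^{\s_n}$'' should be ``$\uparrow^{\s_{n-1}}$''. Consequently your intermediate module $M^{(n-3,1)}$ lives on $\s_{n-2}$ with $n-2\equiv 2\Md 4$ \emph{even}, and there Lemma~\ref{l1} (not Lemma~\ref{l27}) applies: $M^{(n-3,1)}=D^{(n-2)}|D^{(n-3,1)}|D^{(n-2)}$ is uniserial, not a direct sum of three simples. The claimed summand $D^{(n-4,2)}$ does not occur at all. Since the rest of your argument is built on inducing three ``summands'' separately, the plan collapses here.

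Second, even after fixing the above, the piecewise inductions you describe are miscomputed. For $(n-3,1)$ with $n\equiv 0\Md 4$ one has $\epsilon_0=1$, $\epsilon_1=0$, $\phi_0=2$, $\phi_1=0$; hence $f_1D^{(n-3,1)}=0$ and $f_0D^{(n-3,1)}$ has head and socle $D^{(n-3,2)}$, not $D^{(n-1)}$. No $D^{(n-2,1)}$ arises from this induction, so the bookkeeping for where the two $D^{(n-2,1)}$ summands come from is also off.

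The paper's route bypasses all of this. It induces the \emph{simple} module $D^{(n-3)}$ directly from $\s_{n-3}$ to $\s_{n-1}$: since $\phi_0((n-3))=0$ and $\phi_1((n-3))=2$, the block decomposition gives $M^{(n-3,1,1)}=f_1^2D^{(n-3)}\oplus f_0f_1D^{(n-3)}$, and Lemma~\ref{l40} yields $f_1^2D^{(n-3)}\cong(f_1^{(2)}D^{(n-3)})^{\oplus 2}\cong D^{(n-2,1)}\oplus D^{(n-2,1)}$ in one line. For the remaining summand the paper does not compute extensions by hand: it quotes Theorem~1.1 of \cite{mo} for the structure of the Young module $Y^{(n-3,1,1)}$ (which is exactly the crossed diamond), and a dimension comparison via Theorem~24.15 of \cite{j1} finishes. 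Your $\End$-dimension and $\mathrm{Ext}^1$ strategy for the diamond could in principle substitute for the citation to \cite{mo}, but it is substantially more work, and you would first have to repair the two errors above.
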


\begin{proof}
As $n$ is even we have that the content of $(n-3)$ is
\[\begin{tikzpicture}
\draw (0,0) node {0};
\draw (0.3,0) node {1};
\draw (0.7,0) node {$\cdots$};
\draw (1.1,0) node {0};
\draw (1.4,0) node {1};
\draw (0,-0.4) node {1};

\draw (-0.15,0.2)--(1.25,0.2)--(1.25,-0.2)--(-0.15,-0.2)--(-0.15,0.2);
\end{tikzpicture}\]
and so $\phi_0((n-3))=0$ and $\phi_1((n-3))=2$. So from Lemmas \ref{l45} and \ref{l40}, $f_1f_1D^{(n-3)}\cong D^{(n-2,1)}\oplus D^{(n-2,1)}$ is a direct summand of $M^{(n-3,1,1)}$. From Theorem 1.1 of \cite{mo} we have that
\[\xymat{
\hspace{-24pt}&D^{(n-1)}\ar@{-}[dd]\ar@{-}[ddr]&D^{(n-3,2)}\ar@{-}[dd]\ar@{-}[ddl]\\
Y^{(n-3,1,1)}=\hspace{-6pt}&&&.\\
\hspace{-24pt}&D^{(n-1)}&D^{(n-3,2)}
}\]
The lemma then follows by comparing degrees (through Theorem 24.15 of \cite{j1}).
\end{proof}

\begin{lemma}\label{l6'}
If $p=2$ and $n\equiv 0\Md 4$ with $n\geq 8$ then
\[e_0D^{(n-1,2)}=D^{(n-1,1)}|D^{(n-2,2)}|D^{(n-1,1)}.\]
\end{lemma}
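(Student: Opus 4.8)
\bigskip

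The plan is to feed the structure of the Specht modules $S^{(n-1,1)}$ and $S^{(n-2,2)}$ into Lemma~\ref{l50} and then resolve the remaining ambiguity by a dimension count. For $n\equiv 0\Md 4$ Lemmas~\ref{l1} and~\ref{l2} give $S^{(n-1,1)}=D^{(n)}|D^{(n-1,1)}$ and $S^{(n-2,2)}=D^{(n-1,1)}|D^{(n-2,2)}$, so the module $N=D^{(n-2,1)}\uparrow^{\s_n}/S^{(n-2,1,1)}\sim S^{(n-2,2)}|S^{(n-1,1)}$ of Lemma~\ref{l50} has composition factors $D^{(n-1,1)},D^{(n-2,2)},D^{(n)},D^{(n-1,1)}$ (from the socle up) and contains the uniserial submodule $S^{(n-2,2)}=D^{(n-1,1)}|D^{(n-2,2)}$. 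By Lemma~\ref{l50} we have $e_0D^{(n-1,2)}\cong N$ or $e_0D^{(n-1,2)}\cong N/D^{(n)}$, in both cases with $\soc(e_0D^{(n-1,2)})\cong\hd(e_0D^{(n-1,2)})\cong D^{(n-1,1)}$ and $[e_0D^{(n-1,2)}:D^{(n-1,1)}]=2$, $[e_0D^{(n-1,2)}:D^{(n-2,2)}]=1$; thus the only thing left to decide is whether the trivial module $D^{(n)}$ is a composition factor of $e_0D^{(n-1,2)}$.

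To exclude $e_0D^{(n-1,2)}\cong N$ I would compare dimensions. The two removable nodes of $(n-1,2)$ both have residue $0$, so $\epsilon_1(n-1,2)=0$ and hence $D^{(n-1,2)}\downarrow_{\s_n}=e_0D^{(n-1,2)}$, giving $\dim e_0D^{(n-1,2)}=\dim D^{(n-1,2)}$. On the other hand $\dim N=n\dim D^{(n-2,1)}-\dim S^{(n-2,1,1)}=n(n-2)-\binom{n-1}{2}=\frac{(n+1)(n-2)}{2}=\dim S^{(n-1,2)}$. For $n\equiv 0\Md 4$ the Specht module $S^{(n-1,2)}$ of $\s_{n+1}$ is not irreducible modulo $2$: the partitions $(n,1)$ and $(n-1,2)$ have different $2$-cores (namely $(2,1)$ and $(1)$) and so lie in different $2$-blocks, whence $[S^{(n-1,2)}:D^{(n,1)}]=0$, while the known decomposition of two-row Specht modules modulo $2$ shows that the trivial module is a composition factor of $S^{(n-1,2)}$ (this is exactly the feature that distinguishes $n\equiv 0$ from $n\equiv 2\Md 4$). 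Hence $\dim D^{(n-1,2)}=\dim S^{(n-1,2)}-1=\dim N-1<\dim N$, so $e_0D^{(n-1,2)}\not\cong N$ and therefore $e_0D^{(n-1,2)}\cong N/D^{(n)}$.

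It remains to identify $N/D^{(n)}$. Since $e_0D^{(n-1,2)}\cong N/D^{(n)}$, the module $D^{(n)}$ is a submodule of $N$; as $D^{(n)}$ is not a composition factor of $S^{(n-2,2)}$ we get $D^{(n)}\cap S^{(n-2,2)}=0$, so $S^{(n-2,2)}$ embeds in $N/D^{(n)}$ as a submodule of composition colength one, with quotient $D^{(n-1,1)}$. A module with simple head (Lemma~\ref{l50}) containing a uniserial submodule of colength one is itself uniserial, so $e_0D^{(n-1,2)}=N/D^{(n)}=D^{(n-1,1)}|D^{(n-2,2)}|D^{(n-1,1)}$, as claimed. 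The main obstacle is the middle step: one has to know that the composition factor $D^{(n)}$ genuinely drops out, equivalently that $S^{(n-1,2)}$ is reducible modulo $2$ for $n\equiv 0\Md 4$ (it is irreducible for $n\equiv 2\Md 4$, which is why there $e_0D^{(n-1,2)}$ is the longer module of the Corollary above). Pinning down this one composition factor — or, alternatively, reading the structure of $D^{(n-2,1)}\uparrow^{\s_n}$ for $n\equiv 0\Md 4$ off Theorem~1.1 of~\cite{mo} in the spirit of Lemma~\ref{l5} — is the only ingredient that is not purely formal.
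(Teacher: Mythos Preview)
Your argument is correct, and its decisive step --- the dimension count showing $\dim D^{(n-1,2)}=\dim S^{(n-1,2)}-1$ for $n\equiv 0\Md 4$ --- is exactly the same ingredient the paper uses (its ``comparing dimensions'' appeals to Theorem~24.15 of \cite{j1}, which gives $[S^{(n-1,2)}:D^{(n+1)}]=1$ precisely when $\binom{n-1}{2}$ is odd, i.e.\ when $n\equiv 0\Md 4$).

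The difference is in how you reach that point. The paper works directly from the branching lemmas: since the two normal nodes of $(n-1,2)$ both have residue $0$, Lemma~\ref{l45} gives $D^{(n-1,2)}\downarrow_{\s_n}=e_0D^{(n-1,2)}$, Lemma~\ref{l39} gives $\soc\cong\hd\cong D^{(n-1,1)}$ with multiplicity two, and Lemma~\ref{l56} supplies the extra composition factor $D^{(n-2,2)}$; the dimension count then shows there is nothing else. You instead route through Lemma~\ref{l50} to reduce to the dichotomy $N$ versus $N/D^{(n)}$, and then perform the same dimension comparison. Your route is valid but uses heavier machinery (the Specht filtration of $D^{(n-2,1)}\uparrow^{\s_n}$ and the embedding of $e_0D^{(n-1,2)}$ into it) where the paper gets by with the elementary branching statements. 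Your closing uniseriality argument is fine, but note it is not really needed: once you know the composition factors are $D^{(n-1,1)},D^{(n-2,2)},D^{(n-1,1)}$ with simple head and socle $D^{(n-1,1)}$, the Loewy structure is forced.

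One small point: you should make the reference for the two-row decomposition number explicit (Theorem~24.15 of \cite{j1}), since that is where the whole argument is decided.
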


\begin{proof}
 Notice that, as $n$ is even,
\[\begin{tikzpicture}
\draw (0,0) node {0};
\draw (0.3,0) node {1};
\draw (1,0) node {$\cdots$};
\draw (1.4,0) node {1};
\draw (1.7,0) node {0};
\draw (0,-0.4) node {1};
\draw (0.3,-0.4) node {0};
\draw (2,0) node {1};
\draw (0,-0.80) node {0};
\draw (0.6,0) node {0};
\draw (0.6,-0.4) node {1};

\draw (-1.5,-0.2) node {$(n-1,2)=$};

\draw (-0.15,0.2)--(1.85,0.2)--(1.85,-0.2)--(0.45,-0.2)--(0.45,-0.6)--(-0.15,-0.6)--(-0.15,0.2);
\end{tikzpicture}\]
and so the normal nodes of $(n-1,2)$ are $(n-1,1)$ and $(2,2)$ and they both have residue 0. In particular, from Lemmas \ref{l45}, \ref{l39} and \ref{l56},
\[D^{(n-1,2)}\downarrow_{\s_n}\cong e_0D^{(n-1,2)}\sim \overbrace{D^{(n-1,1)}}^{\soc(e_0D^{(n-1,2)})}|\ldots|D^{(n-2,2)}|\ldots|\overbrace{D^{(n-1,1)}}^{\hd(e_0D^{(n-1,2)})}.\]
The lemma then follows by comparing dimensions.
\end{proof}

\begin{lemma}\label{l6}
If $p=2$ and $n\equiv 0\Md 4$ with $n\geq 8$ then
\[\xymat{
&&D^{(n-1,1)}\ar@{-}[dr]\ar@{-}[dl]\\
&D^{(n)}\ar@{-}[ddrr]\ar@{-}[dd]&&D^{(n-2,2)}\ar@{-}[d]\ar@{-}[ddll]\\
D^{(n-2,1)}\uparrow^{\s_n}=&&&D^{(n-1,1)}\ar@{-}[d]&.\\
&D^{(n)}\ar@{-}[dr]&&D^{(n-2,2)}\ar@{-}[dl]\\
&&D^{(n-1,1)}
}\]
\end{lemma}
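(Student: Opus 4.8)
The goal is to pin down the module structure of $D^{(n-2,1)}\uparrow^{\s_n}\cong f_0D^{(n-2,1)}$ for $n\equiv 0\Md 4$, $n\geq 8$, as the pictured module with socle and head $D^{(n-1,1)}$, a Loewy layer $D^{(n)}\oplus D^{(n-2,2)}$ directly above the socle, then $D^{(n-1,1)}$, then again $D^{(n)}\oplus D^{(n-2,2)}$, then $D^{(n-1,1)}$. First I would assemble the data already available: by Lemma~\ref{l49} the module $U:=D^{(n-2,1)}\uparrow^{\s_n}$ is indecomposable, self-dual (Lemma~\ref{l57}), with $\soc(U)\cong\hd(U)\cong D^{(n-1,1)}$ and $[U:D^{(n-1,1)}]=3$; by Lemmas~\ref{l33} and \ref{l3} the composition factors of $U$ are $D^{(n-1,1)}$ three times, $D^{(n)}$ twice, and $D^{(n-2,2)}$ twice (using $[S^{(n-2,1,1)}:D^{(n)}]=[S^{(n-2,1,1)}:D^{(n-1,1)}]=[S^{(n-2,1,1)}:D^{(n-2,2)}]=1$ together with $S^{(n)}$ and $S^{(n-2,2)}$ giving the remaining factors). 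This forces $U$ to have exactly five composition factors, and self-duality already shows the Loewy structure is symmetric.

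Next I would exploit the submodule/quotient $V:=e_0D^{(n-1,2)}$ from Lemma~\ref{l6'}: $V=D^{(n-1,1)}|D^{(n-2,2)}|D^{(n-1,1)}$ is uniserial of length $3$, and by Lemma~\ref{l50} $V$ embeds in $U$ (and is a quotient), with $S^{(n-2,2)}\subseteq V$, and $e_0D^{(n-1,2)}\cong N$ or $N/D^{(n)}$ where $N=U/S^{(n-2,1,1)}\sim S^{(n-2,2)}|S^{(n-1,1)}$; since $[V:D^{(n)}]=0$ we must be in the case $V\cong N/D^{(n)}$, which places the two $D^{(n)}$ factors precisely: one lies in the submodule $S^{(n-2,1,1)}$ (at the top of its $S^{(n)}$-layer, by Lemma~\ref{l33} where $\soc S^{(n-2,1,1)}\cong D^{(n-1,1)}$ and the layer above is $D^{(n)}$ since $n\equiv 0\Md 4$ makes $S^{(n-2,2)}=D^{(n-1,1)}|D^{(n-2,2)}$ by Lemma~\ref{l2}), and the other $D^{(n)}$ in the top layer $S^{(n-1,1)}=D^{(n)}|D^{(n-1,1)}$ of $U$. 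Concretely $U$ has a filtration $S^{(n-2,1,1)}\,|\,S^{(n-2,2)}\,|\,S^{(n-1,1)}$ (Lemma~\ref{l3}), and with $S^{(n-2,1,1)}=D^{(n-1,1)}|D^{(n)}$, $S^{(n-2,2)}=D^{(n-1,1)}|D^{(n-2,2)}$, $S^{(n-1,1)}=D^{(n)}|D^{(n-1,1)}$ (Lemmas~\ref{l1}, \ref{l2}), the composition series of $U$ read from the bottom is $D^{(n-1,1)},D^{(n)},D^{(n-1,1)},D^{(n-2,2)},D^{(n)},D^{(n-1,1)}$ — wait, that is six factors; the correct reading uses that $S^{(n-2,1,1)}=D^{(n-1,1)}|D^{(n)}|D^{(n-2,2)}$ has length $3$ when $n\equiv 0\Md 4$, giving the total of five after noting $[S^{(n-2,1,1)}:D^{(n-2,2)}]=1$ absorbs one factor. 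I would instead read off: $\soc U=D^{(n-1,1)}$, then determine the second socle layer by computing $\Hom$ and $\mathrm{Ext}^1$ groups.

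The heart of the argument is identifying the second Loewy layer as $D^{(n)}\oplus D^{(n-2,2)}$ (not uniserial). Since $V=D^{(n-1,1)}|D^{(n-2,2)}|D^{(n-1,1)}$ is a submodule of $U$ containing $\soc U$, the radical quotient $\mathrm{rad}(U)/\mathrm{rad}^2(U)$ contains $D^{(n-2,2)}$; I would show it also contains $D^{(n)}$ by using Lemma~\ref{l50}'s statement $e_0D^{(n-1,2)}\subseteq M\subseteq U$ with $M\sim S^{(n-2,1,1)}|D^{(n-1,1)}$ and the uniserial structure $S^{(n-2,1,1)}=D^{(n-1,1)}|D^{(n)}|D^{(n-2,2)}$, so that $M$ has a subquotient $D^{(n-1,1)}|D^{(n)}$, forcing a nonsplit extension of $D^{(n)}$ by $D^{(n-1,1)}=\soc U$ inside $U$; hence $D^{(n)}\oplus D^{(n-2,2)}\subseteq \mathrm{rad}(U)/\mathrm{rad}^2(U)$, and a dimension/composition-factor count (only one $D^{(n)}$ and one $D^{(n-2,2)}$ remain outside the socle and head region, the third $D^{(n-1,1)}$ being the head) shows equality and that $\mathrm{rad}^2(U)/\mathrm{rad}^3(U)=D^{(n-1,1)}$. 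Self-duality then gives the symmetric picture and rules out any direct summand, completing the diagram. The main obstacle is the nonsplitness of the second layer — proving $D^{(n)}$ genuinely sits in $\mathrm{rad}/\mathrm{rad}^2$ rather than higher up — which is exactly where the subquotient $D^{(n-1,1)}|D^{(n)}$ of $S^{(n-2,1,1)}$ (available since $n\equiv0\Md4$ forces $\soc S^{(n-2,1,1)}=D^{(n-1,1)}$ with $D^{(n)}$ immediately above) is indispensable; the alternative would be to invoke the explicit description in Theorem~1.1 of \cite{mo} used already in Lemma~\ref{l25}, comparing with $Y^{(n-3,1,1)}$ after an induction functor, but I would prefer the self-contained $\mathrm{Ext}$-style argument above.
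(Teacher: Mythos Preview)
The paper's proof is a one-line citation: it combines Lemma~\ref{l49} (which identifies $D^{(n-2,1)}\uparrow^{\s_n}\cong Y^{(n-2,1,1)}$) with Theorem~1.1 of \cite{mo}, where M\"uller and Orlob explicitly determine the submodule structure of this Young module. You acknowledge this option at the end, and it is the route the paper takes.

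Your self-contained attempt has real problems. First, the composition-factor count is off: you write ``exactly five composition factors'' but there are seven ($D^{(n-1,1)}$ three times, $D^{(n)}$ twice, $D^{(n-2,2)}$ twice), as you yourself list a few lines earlier; the muddle persists through the paragraph and never gets resolved. Second, you invoke a ``uniserial structure $S^{(n-2,1,1)}=D^{(n-1,1)}|D^{(n)}|D^{(n-2,2)}$'', but nothing prior to Lemma~\ref{l6} establishes this, and in fact it is false: the Corollary immediately following Lemma~\ref{l6} shows $S^{(n-2,1,1)}=D^{(n-1,1)}|(D^{(n)}\oplus D^{(n-2,2)})$, and that corollary is \emph{deduced from} Lemma~\ref{l6}, so you cannot use it. What you have available beforehand is only Lemma~\ref{l33} (socle and multiplicities) and Lemma~\ref{l35} ($(S^{(n-1,1)})^*\subseteq S^{(n-2,1,1)}$).

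More fundamentally, even if you repaired these points, your plan would at best pin down the Loewy layers of $U$. The diagram asserts more: it records specific uniserial subquotients, for instance the edge between the two copies of $D^{(n)}$ (a subquotient $D^{(n)}|D^{(n)}$) and the diagonal edges $D^{(n)}$--$D^{(n-2,2)}$. These are genuinely used later (Lemmas~\ref{l61}--\ref{l63}) to enumerate quotients of $U$, and they do not follow from the Specht filtration, the embedding of $e_0D^{(n-1,2)}$, and self-duality alone. Establishing them from scratch would require explicit $\mathrm{Ext}^1$ computations among $D^{(n)}$, $D^{(n-1,1)}$, $D^{(n-2,2)}$ that you have not carried out. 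The citation to \cite{mo} is doing substantive work here, not just providing a shortcut.
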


\begin{proof}
The lemma follows from Theorem 1.1 of \cite{mo} and from Lemma \ref{l49}.
\end{proof}

\begin{cor}
If $p=2$ and $n\equiv 0\Md 4$ with $n\geq 8$ then $S^{(n-2,1,1)}=D^{(n-1,1)}|(D^{(n)}\oplus D^{(n-2,2)})$.
\end{cor}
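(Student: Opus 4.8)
The plan is to read the structure of $S^{(n-2,1,1)}$ off the known structure of $D^{(n-2,1)}\uparrow^{\s_n}$ supplied by Lemma \ref{l6}. First I would recall that, by Lemma \ref{l3}, the module $D^{(n-2,1)}\uparrow^{\s_n}$ has a filtration whose bottom layer is $S^{(n-2,1,1)}$, so that $S^{(n-2,1,1)}$ is, up to isomorphism, a submodule of $D^{(n-2,1)}\uparrow^{\s_n}$. By Lemma \ref{l33} (in the case $n\equiv 0\Md 4$) this submodule has composition factors $D^{(n-1,1)}$, $D^{(n)}$ and $D^{(n-2,2)}$, each with multiplicity one.

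Next I would examine the diagram for $D^{(n-2,1)}\uparrow^{\s_n}$ given in Lemma \ref{l6}: its socle is $D^{(n-1,1)}$ (consistently with Lemma \ref{l49}), and the factors sitting immediately above the socle are $D^{(n)}$ and $D^{(n-2,2)}$, forming a semisimple second socle layer isomorphic to $D^{(n)}\oplus D^{(n-2,2)}$. Since $S^{(n-2,1,1)}$ is a non-zero submodule it must contain this socle $D^{(n-1,1)}$, and then $S^{(n-2,1,1)}/D^{(n-1,1)}$ embeds into $(D^{(n-2,1)}\uparrow^{\s_n})/D^{(n-1,1)}$, whose socle is exactly $D^{(n)}\oplus D^{(n-2,2)}$. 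As $S^{(n-2,1,1)}/D^{(n-1,1)}$ has precisely the composition factors $D^{(n)}$ and $D^{(n-2,2)}$, it must coincide with that socle; hence $S^{(n-2,1,1)}/D^{(n-1,1)}\cong D^{(n)}\oplus D^{(n-2,2)}$, which yields $S^{(n-2,1,1)}=D^{(n-1,1)}|(D^{(n)}\oplus D^{(n-2,2)})$.

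There is no serious obstacle here: the statement is essentially a corollary of Lemma \ref{l6} together with the composition-factor count and socle determination of Lemma \ref{l33}. The only points requiring a little care are, first, being entitled to regard $S^{(n-2,1,1)}$ as a genuine submodule (not merely a subquotient) of $D^{(n-2,1)}\uparrow^{\s_n}$ — which is precisely what the filtration in Lemma \ref{l3} provides — and, second, confirming from the diagram of Lemma \ref{l6} that the second socle layer of $D^{(n-2,1)}\uparrow^{\s_n}$ is semisimple and equal to $D^{(n)}\oplus D^{(n-2,2)}$ rather than uniserial; this is what rules out the a priori possible alternatives $D^{(n-1,1)}|D^{(n)}|D^{(n-2,2)}$ and $D^{(n-1,1)}|D^{(n-2,2)}|D^{(n)}$ for $S^{(n-2,1,1)}$.
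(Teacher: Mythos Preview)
Your argument is correct and follows precisely the approach the paper indicates: the paper's proof is the single line ``It follows from Lemmas \ref{l3}, \ref{l33} and \ref{l6},'' and you have faithfully unpacked how those three lemmas combine (embedding $S^{(n-2,1,1)}$ as a submodule, knowing its composition factors and simple socle, and reading the second socle layer off the diagram of $D^{(n-2,1)}\uparrow^{\s_n}$). There is nothing to add.
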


\begin{proof}
It follows from Lemmas \ref{l3}, \ref{l33} and \ref{l6} (or from (2.4)c of \cite{mo}).
\end{proof}

\begin{lemma}\label{l12}
If $p=2$ and $n\equiv 0\Md 4$ with $n\geq 8$ then
\[\xymat{
&D^{(n)}\ar@{-}[ddr]&D^{(n-1,1)}\ar@{-}[ddl]\ar@{-}[d]\\
M^{(n-2,2)}=&&D^{(n-2,2)}\ar@{-}[d]&.\\
&D^{(n)}&D^{(n-1,1)}
}\]
\end{lemma}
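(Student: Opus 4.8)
The plan is to pin down, in order, the composition factors of $M^{(n-2,2)}$, then its socle and head, then its radical; by self-duality these data essentially force the displayed diagram.

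\emph{Composition factors.} By Young's rule (Example~17.17 of \cite{j1}) the module $M^{(n-2,2)}$ has a filtration by Specht modules $S^{(n-2,2)},S^{(n-1,1)},S^{(n)}$, with $S^{(n-2,2)}\subseteq M^{(n-2,2)}$. Feeding in Lemma~\ref{l1} ($S^{(n-1,1)}=D^{(n)}|D^{(n-1,1)}$ and $S^{(n)}=D^{(n)}$) and the case $n\equiv 0\Md 4$ of Lemma~\ref{l2} ($S^{(n-2,2)}=D^{(n-1,1)}|D^{(n-2,2)}$), one reads off that the composition factors of $M^{(n-2,2)}$ are $D^{(n)}$ and $D^{(n-1,1)}$, each with multiplicity $2$, and $D^{(n-2,2)}$, with multiplicity $1$.

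\emph{Socle and head.} As $M^{(n-2,2)}$ is self-dual it is enough to compute its head, and by Frobenius reciprocity $\dim\Hom_{\s_n}(M^{(n-2,2)},D^\mu)=\dim(D^\mu\downarrow_{\s_{n-2,2}})^{\s_{n-2,2}}$. This equals $1$ for $\mu=(n)$; it equals $1$ for $\mu=(n-1,1)$ by a short calculation with $D^{(n-1,1)}$ realised inside $F^n/F\cdot(1,\dots,1)$ (where $n$ even is used); and it is $0$ for $\mu=(n-2,2)$, since otherwise $D^{(n-2,2)}\subseteq\soc(M^{(n-2,2)})$, and as the copy of $D^{(n-2,2)}$ inside $S^{(n-2,2)}$ is its head this would give $D^{(n-2,2)}\oplus S^{(n-2,2)}\subseteq M^{(n-2,2)}$, contradicting $[M^{(n-2,2)}:D^{(n-2,2)}]=1$. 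Hence $\soc(M^{(n-2,2)})\cong\hd(M^{(n-2,2)})\cong D^{(n)}\oplus D^{(n-1,1)}$.

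\emph{Indecomposability and the radical.} Since $n\equiv 0\Md 4$, $2$ divides $[\s_n:\s_{n-2,2}]=\binom{n}{2}$, so the trivial module is not a direct summand of $M^{(n-2,2)}$; writing $M^{(n-2,2)}$ as a direct sum of Young modules $Y^\mu$ with $\mu$ dominating $(n-2,2)$, using $Y^{(n-1,1)}\cong M^{(n-1,1)}$ (uniserial of length $3$ by Lemma~\ref{l1}) and counting composition factors (note $[Y^{(n-1,1)}:D^{(n)}]=2$), one rules out the summands $Y^{(n-1,1)}$ and $Y^{(n)}$, so $M^{(n-2,2)}\cong Y^{(n-2,2)}$ is indecomposable and in particular has no simple direct summand. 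Because $\hd(S^{(n-2,2)})=D^{(n-2,2)}$ is not among the composition factors of $\hd(M^{(n-2,2)})$, the submodule $S^{(n-2,2)}$ lies in $\mathrm{rad}(M^{(n-2,2)})$, and $\mathrm{rad}(M^{(n-2,2)})/S^{(n-2,2)}\cong D^{(n)}$. This surjection $\mathrm{rad}(M^{(n-2,2)})\twoheadrightarrow D^{(n)}$ splits: as $M^{(n-2,2)}$ has no simple summand, $\soc(\mathrm{rad}(M^{(n-2,2)}))=\soc(M^{(n-2,2)})$ contains a copy of $D^{(n)}$, and it maps isomorphically onto the quotient since $S^{(n-2,2)}$ has no trivial composition factor. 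Thus $\mathrm{rad}(M^{(n-2,2)})\cong D^{(n)}\oplus S^{(n-2,2)}$, and dually $M^{(n-2,2)}/\soc(M^{(n-2,2)})\cong D^{(n)}\oplus(S^{(n-2,2)})^{*}$.

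\emph{The last step, and the main obstacle.} Combined with self-duality this gives the entire displayed diagram except for the placement of the two trivial composition factors relative to the socle copy of $D^{(n-1,1)}$ — equivalently, the two ``crossed'' uniserial subquotients $D^{(n)}|D^{(n-1,1)}$ and $D^{(n-1,1)}|D^{(n)}$ together with the absence of a subquotient $D^{(n)}|D^{(n)}$. This is the step I expect to be the main obstacle, since it cannot be read off a short exact sequence: in particular $M^{(n-2,2)}/S^{(n-2,2)}$ is \emph{not} isomorphic to $M^{(n-1,1)}$ here, as it has head $D^{(n)}\oplus D^{(n-1,1)}$ rather than $D^{(n)}$. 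The non-occurrence of $D^{(n)}|D^{(n)}$ is clear: $M^{(n-2,2)}\downarrow_{\s_{n-1}}\cong M^{(n-3,2)}\oplus M^{(n-2,1)}$ is semisimple (Lemma~\ref{l27}), while $\mathrm{Ext}^{1}_{\s_n}(1_{\s_n},1_{\s_n})\to\mathrm{Ext}^{1}_{\s_{n-1}}(1_{\s_{n-1}},1_{\s_{n-1}})$ is injective, so such a subquotient would restrict to a non-split self-extension of the trivial $\s_{n-1}$-module sitting inside a semisimple module. For the crossed subquotients I would use $\Hom$-dimension counts, or else work inside $M^{(n-2,1,1)}\cong M^{(n-1,1)}\oplus(D^{(n-2,1)}\uparrow^{\s_n})$, into which $M^{(n-2,2)}$ embeds essentially via $0\to M^{(n-2,2)}\to M^{(n-2,1,1)}\to M^{(n-2,2)}\to 0$, and extract the placement of the trivial factors from the known structures of $M^{(n-1,1)}$ and $D^{(n-2,1)}\uparrow^{\s_n}=Y^{(n-2,1,1)}$ (Lemmas~\ref{l1} and~\ref{l6}).
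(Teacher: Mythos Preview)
The paper does not give an independent argument for this lemma: its proof is a one-line citation of Theorem~1.1 of \cite{mo} (or Figure~1 of \cite{kst}).  Your proposal, by contrast, tries to build the structure of $M^{(n-2,2)}$ from scratch, and the first three steps are correct and genuinely self-contained: the composition factors, the socle/head via Frobenius reciprocity, and the identification $\mathrm{rad}(M^{(n-2,2)})\cong D^{(n)}\oplus S^{(n-2,2)}$ all go through as you describe.

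Where you sell yourself short is the final step.  You flag the two crossed edges as the ``main obstacle'' and propose either unspecified $\Hom$-counts or an embedding into $M^{(n-2,1,1)}$ and an appeal to Lemma~\ref{l6}.  But note that Lemma~\ref{l6} is itself proved in the paper by citing \cite{mo}, so that route would undo the self-containedness you have worked for.  In fact no new ingredient is needed: your own facts already force the crossed edges.  Since $S^{(n-2,2)}\subseteq\mathrm{rad}(M^{(n-2,2)})$, one has $\mathrm{rad}\bigl(M^{(n-2,2)}/S^{(n-2,2)}\bigr)=\mathrm{rad}(M^{(n-2,2)})/S^{(n-2,2)}\cong D^{(n)}$, which is nonzero, while $\hd\bigl(M^{(n-2,2)}/S^{(n-2,2)}\bigr)=D^{(n)}\oplus D^{(n-1,1)}$.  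Your restriction argument rules out a $D^{(n)}|D^{(n)}$ subquotient, so the extension of the head-$D^{(n)}$ by the radical $D^{(n)}$ splits; hence the extension of the head-$D^{(n-1,1)}$ by that $D^{(n)}$ must be nonsplit (else the radical would vanish).  This is precisely the edge from the head copy of $D^{(n-1,1)}$ to the socle copy of $D^{(n)}$, and self-duality gives the other crossed edge.  (Equivalently, you have reproved the identity $M^{(n-2,2)}/S^{(n-2,2)}\cong D^{(n)}\oplus S^{(n-1,1)}$ of Lemma~\ref{l59}.)  So your approach actually yields a complete, elementary proof avoiding \cite{mo} altogether; the paper's route is shorter only because it outsources the work.
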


\begin{proof}
The lemma follows from Theorem 1.1 of \cite{mo} (and comparing dimensions) or from Figure 1 of \cite{kst}. 
\end{proof}

The next lemmas study the structure of certain submodules and quotients of $M^{(n-2,2)}$ and $D^{(n-2,1)}\uparrow^{\s_n}$.

\begin{lemma}\label{l59}
Let $p=2$ and $n\equiv 0\Md 4$ with $n\geq 8$. Then $M^{(n-2,2)}$ has unique submodules isomorphic to $D^{(n-1,1)}$ and $S^{(n-2,2)}$.

Further $M^{(n-2,2)}/D^{(n-1,1)}\cong D^{(n)}\oplus N$ with
\[N=(D^{(n)}\oplus D^{(n-2,2)})|D^{(n-1,1)}\sim D^{(n-2,2)}|S^{(n-1,1)}\]
and $M^{(n-2,2)}/S^{(n-2,2)}\cong D^{(n)}\oplus S^{(n-1,1)}$.
\end{lemma}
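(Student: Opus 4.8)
The whole statement will be deduced from the explicit Loewy diagram of $M^{(n-2,2)}$ in Lemma \ref{l12}, which I treat as a rigid description (socle, head and uniserial subquotients exactly as displayed), together with the structures of $S^{(n-1,1)}$ and $S^{(n-2,2)}$ from Lemmas \ref{l1} and \ref{l2} and the fact that $S^{(n-2,2)}$ is by construction a submodule of $M^{(n-2,2)}$. From Lemma \ref{l12} I first record $\soc(M^{(n-2,2)})\cong D^{(n)}\oplus D^{(n-1,1)}$, $\hd(M^{(n-2,2)})\cong D^{(n)}\oplus D^{(n-1,1)}$, $[M^{(n-2,2)}:D^{(n-1,1)}]=[M^{(n-2,2)}:D^{(n)}]=2$ and $[M^{(n-2,2)}:D^{(n-2,2)}]=1$, keeping track of which copy of $D^{(n)}$ and of $D^{(n-1,1)}$ lies in the socle and which in the head.

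For the submodule $D^{(n-1,1)}$: since $D^{(n-1,1)}$ occurs in $\soc(M^{(n-2,2)})$ with multiplicity one, there is a unique submodule $L\cong D^{(n-1,1)}$, namely the corresponding socle constituent. Deleting $L$ in the diagram of Lemma \ref{l12} removes exactly the two edges incident to the socle copy of $D^{(n-1,1)}$; this isolates the head copy of $D^{(n)}$ and leaves the remaining three composition factors in the shape $(D^{(n)}\oplus D^{(n-2,2)})|D^{(n-1,1)}$, whence $M^{(n-2,2)}/L\cong D^{(n)}\oplus N$ with $N=(D^{(n)}\oplus D^{(n-2,2)})|D^{(n-1,1)}$. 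Quotienting $N$ by its unique $D^{(n-2,2)}$-submodule (which sits inside $\soc(N)=D^{(n)}\oplus D^{(n-2,2)}$) leaves a length-two module with socle $D^{(n)}$ and head $D^{(n-1,1)}$; as $[N:D^{(n-1,1)}]=1$ this extension is non-split, so by Lemma \ref{l1} it is $S^{(n-1,1)}$, giving $N\sim D^{(n-2,2)}|S^{(n-1,1)}$.

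For the submodule $S^{(n-2,2)}$: a copy of $S^{(n-2,2)}$ exists by construction, and by Lemma \ref{l2} it has socle $D^{(n-1,1)}$, hence any such copy $U$ contains $L$ and $U/L\cong D^{(n-2,2)}$ is a submodule of $M^{(n-2,2)}/L\cong D^{(n)}\oplus N$; since $\soc(D^{(n)}\oplus N)=D^{(n)}\oplus D^{(n)}\oplus D^{(n-2,2)}$ contains $D^{(n-2,2)}$ with multiplicity one, $U/L$, and therefore $U$, is uniquely determined. The composition factors of $U$ are the socle copy of $D^{(n-1,1)}$ and the unique $D^{(n-2,2)}$; deleting these two nodes in the diagram of Lemma \ref{l12} kills every edge except the one joining the head copy of $D^{(n-1,1)}$ to the socle copy of $D^{(n)}$, so $M^{(n-2,2)}/S^{(n-2,2)}\cong D^{(n)}\oplus(D^{(n)}|D^{(n-1,1)})\cong D^{(n)}\oplus S^{(n-1,1)}$ by Lemma \ref{l1}.

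The only delicate point is the use of rigidity: one has to be sure that removing the indicated submodule really splits off the leftover $D^{(n)}$ and that the two remaining factors really form the non-split module $S^{(n-1,1)}$ rather than a semisimple module. Both are forced by combining the socle/head information read off the diagram of Lemma \ref{l12} with the multiplicity counts $[M^{(n-2,2)}:D^{(n)}]=[M^{(n-2,2)}:D^{(n-1,1)}]=2$ (so the surviving $D^{(n)}$ cannot be absorbed into a longer uniserial piece) and with Lemma \ref{l1}, which pins down the unique non-split extension between $D^{(n)}$ and $D^{(n-1,1)}$ as $S^{(n-1,1)}$.
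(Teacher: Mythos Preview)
Your overall plan mirrors the paper's: you read off the Loewy structure from Lemma \ref{l12}, use Lemma \ref{l2} to identify $S^{(n-2,2)}$, and then compute the two quotients. The uniqueness arguments for the $D^{(n-1,1)}$- and $S^{(n-2,2)}$-submodules are fine and essentially identical to the paper's.

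The gap is in the step where you conclude that the non-split length-two module $D^{(n)}|D^{(n-1,1)}$ arising as $N/D^{(n-2,2)}$ (and again inside $M^{(n-2,2)}/S^{(n-2,2)}$) is isomorphic to $S^{(n-1,1)}$. You justify this by saying that Lemma \ref{l1} ``pins down the unique non-split extension between $D^{(n)}$ and $D^{(n-1,1)}$ as $S^{(n-1,1)}$''. Lemma \ref{l1} does no such thing: it only tells you that $S^{(n-1,1)}$ \emph{is} one uniserial module of the form $D^{(n)}|D^{(n-1,1)}$, not that it is the only one up to isomorphism. Uniqueness of a non-split extension of $D^{(n-1,1)}$ by $D^{(n)}$ is equivalent to $\dim\mathrm{Ext}^1_{\s_n}(D^{(n-1,1)},D^{(n)})=1$, which is nowhere established in the paper, and the diagrammatic convention (``edges correspond to uniserial subquotients'') does not record which isomorphism class of extension occurs along an edge.

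The paper avoids this issue entirely: instead of invoking uniqueness of extensions, it uses the Specht filtration of $M^{(n-2,2)}$ (Example 17.17 of \cite{j1}), which gives $M^{(n-2,2)}/S^{(n-2,2)}\sim S^{(n-1,1)}|S^{(n)}$. This places an honest copy of $S^{(n-1,1)}$ inside $M^{(n-2,2)}/S^{(n-2,2)}\cong D^{(n)}\oplus (N/D^{(n-2,2)})$, and a short head-count then forces $N/D^{(n-2,2)}\cong S^{(n-1,1)}$. If you insert this Specht-filtration step (or independently prove that $\mathrm{Ext}^1_{\s_n}(D^{(n-1,1)},D^{(n)})$ is one-dimensional), your argument goes through.
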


\begin{proof}
From Lemma \ref{l2} we have that $S^{(n-2,2)}=D^{(n-1,1)}|D^{(n-2,2)}$. Also by definition $S^{(n-2,2)}\subseteq M^{(n-2,2)}$. From Lemma \ref{l12}
\[\xymat{
&D^{(n)}\ar@{-}[ddr]&D^{(n-1,1)}\ar@{-}[ddl]\ar@{-}[d]\\
M^{(n-2,2)}=&&D^{(n-2,2)}\ar@{-}[d]\\
&D^{(n)}&D^{(n-1,1)}
}\]
and so $D^{(n-1,1)}$ is contained only once in $M^{(n-2,2)}$ and
\[\xymat{
&D^{(n)}&D^{(n-1,1)}\ar@{-}[ddl]\ar@{-}[d]&\\
M^{(n-2,2)}/D^{(n-1,1)}=&&D^{(n-2,2)}&=D^{(n)}\oplus N\\
&D^{(n)}&&
}\]
with $N=(D^{(n)}\oplus D^{(n-2,2)})|D^{(n-1,1)}$. Since $D^{(n-2,2)}$ is contained only once in $M^{(n-2,2)}/D^{(n-1,1)}$, we have that $M^{(n-2,2)}$ has a unique submodule of the form $D^{(n-1,1)}|D^{(n-2,2)}$ which is then isomorphic to $S^{(n-2,2)}$. Also
\[M^{(n-2,2)}/S^{(n-2,2)}=(M^{(n-2,2)}/D^{(n-1,1)})/D^{(n-2,2)}=D^{(n)}\oplus(N/D^{(n-2,2)}).\]
So to prove the lemma it is enough to prove that $N/D^{(n-2,2)}\cong S^{(n-1,1)}$. From Example 17.17 of \cite{j1} and Lemma \ref{l1} we have that
\[M^{(n-2,2)}/S^{(n-2,2)}\sim \overbrace{D^{(n)}|D^{(n-1,1)}}^{S^{(n-1,1)}}|\overbrace{D^{(n)}}^{S^{(n)}}.\]
Further, from the previous part of the proof,
\[M^{(n-2,2)}/S^{(n-2,2)}\cong (N/D^{(n-2,2)})\oplus D^{(n)}=(D^{(n)}|D^{(n-1,1)})\oplus D^{(n)}.\]
Since the head of $(N/D^{(n-2,2)})\oplus D^{(n)}$ contains a unique copy of $D^{(n)}$ it follows that $N/D^{(n-2,2)}\cong S^{(n-1,1)}$.
\end{proof}

\begin{lemma}\label{l61}
Let $p=2$ and $n\equiv 0\Md 4$ with $n\geq 8$. If $M\subseteq M^{(n-2,2)}$ and $N\subseteq D^{(n-2,1)}\uparrow^{\s_n}$ with $M,N=D^{(n-1,1)}|(D^{(n)}\oplus D^{(n-2,2)})$ then $M\cong N$.
\end{lemma}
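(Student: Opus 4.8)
The plan is to realize both $M$ and $N$ inside a common ambient module and use uniqueness of submodules with the prescribed structure. The module $D^{(n-2,1)}\!\uparrow^{\s_n}$ is described explicitly in Lemma \ref{l6}, and $M^{(n-2,2)}$ in Lemma \ref{l12}; from these diagrams one reads off that each of $D^{(n-2,1)}\!\uparrow^{\s_n}$ and $M^{(n-2,2)}$ has exactly three composition factors isomorphic to $D^{(n-1,1)}$, one to $D^{(n-2,2)}$ beyond..., wait — more precisely, $M^{(n-2,2)}$ has composition factors $D^{(n)},D^{(n)},D^{(n-1,1)},D^{(n-1,1)},D^{(n-2,2)}$, while a submodule of the shape $D^{(n-1,1)}|(D^{(n)}\oplus D^{(n-2,2)})$ has factors $D^{(n)},D^{(n-1,1)},D^{(n-2,2)}$. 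The first step is therefore to show $M$ is the \emph{unique} submodule of $M^{(n-2,2)}$ of this isomorphism type: using the diagram in Lemma \ref{l12}, the head $D^{(n)}\oplus D^{(n-2,2)}$ forces $M$ to contain the (unique) copy of $D^{(n-2,2)}$ and hence, by Lemma \ref{l59}, to contain the unique submodule $S^{(n-2,2)}=D^{(n-1,1)}|D^{(n-2,2)}$; the remaining $D^{(n)}$ in the head of $M$ together with the constraint that $M$ has no second $D^{(n-1,1)}$ pins down $M$ as the preimage in $M^{(n-2,2)}$ of the submodule $D^{(n)}\oplus D^{(n-2,2)}$ of $M^{(n-2,2)}/D^{(n-1,1)}$ described in Lemma \ref{l59}. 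So $M$ is unique up to equality, in particular unique up to isomorphism.

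The second step is the analogous uniqueness statement inside $D^{(n-2,1)}\!\uparrow^{\s_n}$. Reading the diagram in Lemma \ref{l6}, the bottom of $D^{(n-2,1)}\!\uparrow^{\s_n}$ is $D^{(n-1,1)}$, above it sit $D^{(n)}$ and $D^{(n-2,2)}$, and any submodule $N$ with head $D^{(n)}\oplus D^{(n-2,2)}$ and socle $D^{(n-1,1)}$ must be exactly the submodule spanned by these three factors at the bottom of the diagram — the two higher copies of $D^{(n)},D^{(n-2,2)},D^{(n-1,1)}$ cannot enter $N$ since $N$ has only one factor of each of these types. Hence $N$ is also unique, equal to the maximal submodule of $D^{(n-2,1)}\!\uparrow^{\s_n}$ of composition length $3$ containing $\soc$.

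Once uniqueness is established on each side, the third step is to exhibit a single module isomorphic to both. By Lemma \ref{l50} we have $e_0D^{(n-1,2)}\subseteq M^{(n-2,2)}$? — not directly; rather Lemma \ref{l50} places $e_0D^{(n-1,2)}$ between a submodule and a quotient of $D^{(n-2,1)}\!\uparrow^{\s_n}$, and Lemma \ref{l6'} gives $e_0D^{(n-1,2)}=D^{(n-1,1)}|D^{(n-2,2)}|D^{(n-1,1)}$, which is not of the required shape. The cleaner route is: the submodule $N$ of $D^{(n-2,1)}\!\uparrow^{\s_n}$ constructed in step two has a submodule $S^{(n-2,2)}=D^{(n-1,1)}|D^{(n-2,2)}$ (this is the image of the Specht module under Lemma \ref{l3}), and $N/\,?$... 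Instead I will argue directly that $N$ and $M$ have the same Loewy structure and that each is a \emph{homomorphic image} of a fixed module — e.g. both are quotients of $S^{(n-2,1,1)}$-adjacent modules appearing in Lemmas \ref{l33} and \ref{l6}, or both embed $S^{(n-2,2)}$ with the extra $D^{(n)}$ glued on top in the unique possible way. Concretely: both $M$ and $N$ fit in a non-split short exact sequence $0\to S^{(n-2,2)}\to (M\text{ or }N)\to D^{(n)}\to 0$, and $\Ext^1_{\s_n}(D^{(n)},S^{(n-2,2)})$ is one-dimensional (since $S^{(n-2,2)}$ has simple socle $D^{(n-1,1)}\neq D^{(n)}$ and simple head $D^{(n-2,2)}\neq D^{(n)}$, the only possible extension comes from the unique $D^{(n)}$-below-$D^{(n-2,2)}$ arrow visible in $M^{(n-2,2)}$, Lemma \ref{l12}, and in $D^{(n-2,1)}\!\uparrow^{\s_n}$, Lemma \ref{l6}), so the extension is unique up to isomorphism and $M\cong N$.

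The main obstacle I anticipate is the $\Ext^1$ computation in the last step: I must rule out the \emph{split} extension (i.e.\ check that neither $M$ nor $N$ is $D^{(n)}\oplus S^{(n-2,2)}$, which is immediate from $\hd(M)=\hd(N)=D^{(n)}\oplus D^{(n-2,2)}$ having no $D^{(n-1,1)}$, forcing indecomposability of the $S^{(n-2,2)}$ part into a single uniserial... actually $S^{(n-2,2)}$ is already uniserial, so non-splitness just says $D^{(n)}$ is glued on, which both diagrams confirm) and, more delicately, that $\dim\Ext^1_{\s_n}(D^{(n)},S^{(n-2,2)})\le 1$. The latter I would get from the structure of $M^{(n-2,2)}$ in Lemma \ref{l12}: any self-extension data for $D^{(n)}$ over $S^{(n-2,2)}$ would have to already be visible as a subquotient of $M^{(n-2,2)}$ or $D^{(n-2,1)}\!\uparrow^{\s_n}$, both of which contain only one "$D^{(n)}$ atop $D^{(n-2,2)}$" wiring; alternatively one can invoke that $[M^{(n-2,2)}:D^{(n)}]=2$ with the two copies in distinct layers, bounding the relevant $\Ext$. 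If a cleaner argument is available, it is to observe that both $M$ and $N$ occur as submodules of modules already shown isomorphic in earlier lemmas (the preimages of $D^{(n)}\oplus D^{(n-2,2)}$), and then transport the isomorphism; I would present whichever of these is shortest given the tools assembled in Lemmas \ref{l50}, \ref{l59}, and \ref{l6}.
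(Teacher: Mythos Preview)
Your uniqueness steps (that $M$ is unique inside $M^{(n-2,2)}$ and $N$ is unique inside $D^{(n-2,1)}\!\uparrow^{\s_n}$) are correct and are also what the paper uses. The problem is your third step: you reduce to showing that a non-split extension $0\to S^{(n-2,2)}\to X\to D^{(n)}\to 0$ is unique up to isomorphism, i.e.\ that $\dim\mathrm{Ext}^1_{\s_n}(D^{(n)},S^{(n-2,2)})\le 1$, but you never actually prove this. Saying that ``any extension data would have to already be visible inside $M^{(n-2,2)}$ or $D^{(n-2,1)}\!\uparrow^{\s_n}$'' is not an argument --- those particular modules need not detect every extension class. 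Via the long exact sequence coming from $0\to D^{(n-1,1)}\to S^{(n-2,2)}\to D^{(n-2,2)}\to 0$, bounding this $\mathrm{Ext}^1$ would require knowing $\dim\mathrm{Ext}^1(D^{(n)},D^{(n-1,1)})$ and $\dim\mathrm{Ext}^1(D^{(n)},D^{(n-2,2)})$, neither of which is computed anywhere in the paper. (In fact your remark about a ``$D^{(n)}$-below-$D^{(n-2,2)}$ arrow'' in $M^{(n-2,2)}$ is off: in the diagram of Lemma~\ref{l12} there is no such edge; the $D^{(n)}$ in the head of $M$ sits directly over the socle $D^{(n-1,1)}$, not over $D^{(n-2,2)}$.)

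The paper sidesteps the $\mathrm{Ext}$ computation entirely. Using Lemma~\ref{l27} to write $M^{(n-2,1,1)}\cong M^{(n-1,1)}\oplus D^{(n-2,1)}\!\uparrow^{\s_n}$, together with the standard count $\dim\Hom_{\s_n}(M^\alpha,M^\beta)$ from James--Kerber, one gets
\[
\dim\Hom_{\s_n}\bigl(M^{(n-2,2)},\,D^{(n-2,1)}\!\uparrow^{\s_n}\bigr)
=\dim\Hom(M^{(n-2,2)},M^{(n-2,1,1)})-\dim\Hom(M^{(n-2,2)},M^{(n-1,1)})=4-2=2.
\]
Since $\soc(D^{(n-2,1)}\!\uparrow^{\s_n})\cong D^{(n-1,1)}$, any nonzero map from $M^{(n-2,2)}$ has image with socle $D^{(n-1,1)}$; by Lemma~\ref{l12} the only such quotients of $M^{(n-2,2)}$ are $D^{(n-1,1)}$ and $M^{(n-2,2)}/D^{(n)}$. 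As $D^{(n-1,1)}$ embeds only once in $D^{(n-2,1)}\!\uparrow^{\s_n}$, the Hom space being $2$-dimensional forces an embedding $M^{(n-2,2)}/D^{(n)}\hookrightarrow D^{(n-2,1)}\!\uparrow^{\s_n}$. Now $M\subseteq M^{(n-2,2)}/D^{(n)}$ (since $M^{(n-2,2)}\sim (D^{(n)}\oplus M)\,|\,D^{(n-1,1)}$), so $M$ embeds in $D^{(n-2,1)}\!\uparrow^{\s_n}$, and your uniqueness of $N$ finishes the proof. This is the missing idea: embed one ambient module into the other via a Hom-dimension count on permutation modules, rather than trying to bound an unknown $\mathrm{Ext}^1$.
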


\begin{proof}
From Lemma \ref{l6} we have that
\[\xymat{
&&D^{(n-1,1)}\ar@{-}[dr]\ar@{-}[dl]\\
&D^{(n)}\ar@{-}[ddrr]\ar@{-}[dd]&&D^{(n-2,2)}\ar@{-}[d]\ar@{-}[ddll]\\
D^{(n-2,1)}\uparrow^{\s_n}=&&&D^{(n-1,1)}\ar@{-}[d]&.\\
&D^{(n)}\ar@{-}[dr]&&D^{(n-2,2)}\ar@{-}[dl]\\
&&D^{(n-1,1)}
}\]
Also from Lemma \ref{l12}
\[\xymat{
&D^{(n)}\ar@{-}[ddr]&D^{(n-1,1)}\ar@{-}[ddl]\ar@{-}[d]\\
M^{(n-2,2)}=&&D^{(n-2,2)}\ar@{-}[d]&.\\
&D^{(n)}&D^{(n-1,1)}
}\]
As $\soc(D^{(n-2,1)}\uparrow^{\s_n})=D^{(n-1,1)}$, the only quotients of $M^{(n-2,2)}$ which can be contained in $D^{(n-2,1)}\uparrow^{\s_n}$ are $D^{(n-1,1)}$ and $M^{(n-2,2)}/D^{(n)}$. Let $M=D^{(n-1,1)}|(D^{(n)}\oplus D^{(n-2,2)})\subseteq M^{(n-2,2)}$. Then
\[M^{(n-2,2)}\sim (D^{(n)}\oplus M)|D^{(n-1,1)}\sim D^{(n)}|\overbrace{M|D^{(n-1,1)}}^{M^{(n-2,2)}/D^{(n)}}.\]
In particular, up to isomorphism, $M\subseteq M^{(n-2,2)}/D^{(n)}$. Using  Lemma \ref{l27} and (1.3.5) and Corollary 1.3.11 of \cite{jk} we have that
\begin{align*}
&\dim\Hom_{\s_n}(M^{(n-2,2)},D^{(n-2,1)}\uparrow^{\s_n})\\
&\hspace{12pt}=\dim\Hom_{\s_n}(M^{(n-2,2)},M^{(n-2,1,1)})-\dim\Hom_{\s_n}(M^{(n-2,2)},M^{(n-1,1)})\\
&\hspace{12pt}=4-2\\
&\hspace{12pt}=2.
\end{align*}
As $D^{(n-1,1)}$ is contained only once in $D^{(n-2,1)}\uparrow^{\s_n}$ it follows that (up to isomorphism) $M\subseteq M^{(n-2,2)}/D^{(n)}\subseteq D^{(n-2,1)}\uparrow^{\s_n}$. As there exists a unique submodule of $D^{(n-2,1)}\uparrow^{\s_n}$ of the form $D^{(n-1,1)}|(D^{(n)}\oplus D^{(n-2,2)})$, the lemma follows.
\end{proof}

\begin{lemma}\label{l62}
Let $p=2$ and $n\equiv 0\Md 4$ with $n\geq 8$. Let $M$ be a quotient of $D^{(n-2,1)}\uparrow^{\s_n}$ with $D^{(n-2,2)}\subseteq M$. Then there exists a quotient $N$ of $M^{(n-2,2)}$ with $D^{(n-2,2)}\subseteq N\subseteq M$.
\end{lemma}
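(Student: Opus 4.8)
The plan is to produce the required quotient $N$ of $M^{(n-2,2)}$ explicitly, by transporting along the known map from $M^{(n-2,2)}$ into $D^{(n-2,1)}\uparrow^{\s_n}$ provided by the dimension count in the proof of Lemma \ref{l61}. First I would record, using Lemma \ref{l6}, the full submodule lattice of $D^{(n-2,1)}\uparrow^{\s_n}$: its socle is $D^{(n-1,1)}$, it has three copies of $D^{(n-1,1)}$, two of $D^{(n)}$ and two of $D^{(n-2,2)}$, and (from the diagram) the submodule generated by a fixed copy of $D^{(n-2,2)}$ together with the socle is exactly $D^{(n-1,1)}|D^{(n-2,2)}$, while the quotients in which a copy of $D^{(n-2,2)}$ survives are controlled by which of the two $D^{(n-2,2)}$-factors is killed. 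Since $M$ is a quotient of $D^{(n-2,1)}\uparrow^{\s_n}$ with $D^{(n-2,2)}\subseteq M$, write $M=D^{(n-2,1)}\uparrow^{\s_n}/K$; the hypothesis $D^{(n-2,2)}\subseteq\soc(M)$ forces $K$ to avoid the relevant copy of $D^{(n-2,2)}$ in the bottom half of the Loewy diagram, and from Lemma \ref{l6} the only submodules $K$ with this property are $0$, $D^{(n-1,1)}$ (the socle), $D^{(n-1,1)}|D^{(n)}$ (the $S^{(n-1,1)}$-type submodule), and a couple of further small ones; in every case $M$ has a submodule isomorphic to $D^{(n-1,1)}|D^{(n-2,2)}\cong S^{(n-2,2)}$.

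Next I would invoke the computation from Lemma \ref{l61} that $\dim\Hom_{\s_n}(M^{(n-2,2)},D^{(n-2,1)}\uparrow^{\s_n})=2$, together with the fact (Lemma \ref{l59}) that $M^{(n-2,2)}$ has a unique submodule isomorphic to $S^{(n-2,2)}$ and that $M^{(n-2,2)}/S^{(n-2,2)}\cong D^{(n)}\oplus S^{(n-1,1)}$, while $M^{(n-2,2)}/D^{(n)}$ is a uniserial-ish module $D^{(n-1,1)}|(D^{(n)}\oplus D^{(n-2,2)})$. The key point is that one of the two independent homomorphisms $M^{(n-2,2)}\to D^{(n-2,1)}\uparrow^{\s_n}$ has image isomorphic to $D^{(n-1,1)}|(D^{(n)}\oplus D^{(n-2,2)})$ sitting inside $M^{(n-2,2)}/D^{(n)}\subseteq D^{(n-2,1)}\uparrow^{\s_n}$, by the last paragraph of the proof of Lemma \ref{l61}. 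So there is a quotient $Q:=M^{(n-2,2)}/D^{(n)}$ of $M^{(n-2,2)}$ which embeds in $D^{(n-2,1)}\uparrow^{\s_n}$ with $D^{(n-2,2)}$ in its socle-second-layer; precisely, $Q$ has a submodule $S^{(n-2,2)}=D^{(n-1,1)}|D^{(n-2,2)}$ containing a copy of $D^{(n-2,2)}$, and under the embedding this copy lands on the copy of $D^{(n-2,2)}$ contained in the chosen $M$.

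The final step is to match up the copies. Given $M=D^{(n-2,1)}\uparrow^{\s_n}/K$, let $\pi:D^{(n-2,1)}\uparrow^{\s_n}\to M$ be the projection. I would take the embedding $\iota:Q\hookrightarrow D^{(n-2,1)}\uparrow^{\s_n}$ above and set $N:=\pi(\iota(Q))$, a quotient of $Q$, hence a quotient of $M^{(n-2,2)}$, and tautologically a submodule of $M$. It remains to check $D^{(n-2,2)}\subseteq N$, i.e. that $\pi\circ\iota$ does not kill the copy of $D^{(n-2,2)}$ in $\iota(Q)$. This is where the hypothesis $D^{(n-2,2)}\subseteq M$ (equivalently $D^{(n-2,2)}\not\subseteq K$, for the appropriate copy) is used: by the submodule-lattice analysis of the first paragraph, the copy of $D^{(n-2,2)}$ that survives in $M$ is exactly the one generated, modulo the socle, inside $\iota(Q)$ — the other copy of $D^{(n-2,2)}$ in $D^{(n-2,1)}\uparrow^{\s_n}$ lies above it in the Loewy filtration — so $K\cap\iota(Q)$ cannot contain that $D^{(n-2,2)}$, and hence $D^{(n-2,2)}\subseteq N$. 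I expect the main obstacle to be the bookkeeping in this last step: one must be careful about \emph{which} of the two isomorphic copies of $D^{(n-2,2)}$ inside $D^{(n-2,1)}\uparrow^{\s_n}$ is meant in the hypothesis and in the conclusion, and verify that the embedding from $M^{(n-2,2)}/D^{(n)}$ identifies the ``right'' one; the Loewy diagrams of Lemmas \ref{l6} and \ref{l12} together with the uniqueness statements of Lemmas \ref{l59} and \ref{l61} should pin this down without genuine extra work.
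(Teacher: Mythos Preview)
Your approach has a genuine gap in the case analysis. In the first paragraph you claim that the hypothesis $D^{(n-2,2)}\subseteq M$ forces the kernel $K$ to avoid the \emph{lower} copy of $D^{(n-2,2)}$ in the Loewy diagram of $D^{(n-2,1)}\uparrow^{\s_n}$, and that consequently $M$ always contains a copy of $S^{(n-2,2)}=D^{(n-1,1)}|D^{(n-2,2)}$. This is false. Using the diagram of Lemma~\ref{l6}, the upper copy $D^{(n-2,2)}_u$ sits above $D^{(n-1,1)}_m$ and $D^{(n)}_\ell$; if $K$ contains both of these (and hence also $D^{(n-2,2)}_\ell$ and the socle), then $D^{(n-2,2)}_u$ drops into the socle of the quotient $M$. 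Concretely, for $K=\iota(Q)$ itself one obtains $M_3=(D^{(n)}\oplus D^{(n-2,2)})|D^{(n-1,1)}$, and for $K=\iota(Q)+D^{(n)}_u$ one obtains $M_4=D^{(n-2,2)}|D^{(n-1,1)}$; in both cases $D^{(n-2,2)}\subseteq M$ but $S^{(n-2,2)}\not\subseteq M$.

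For exactly these two quotients your projection argument collapses: since $\iota(Q)\subseteq K$, the image $N=\pi(\iota(Q))$ is zero, so certainly $D^{(n-2,2)}\not\subseteq N$. The paper's proof deals with $M_3$ and $M_4$ by a different mechanism: it does not project through the embedding at all, but observes that $M_3^*=D^{(n-1,1)}|(D^{(n)}\oplus D^{(n-2,2)})$ is, by Lemma~\ref{l61}, isomorphic to a submodule of $M^{(n-2,2)}$, so by self-duality $M_3$ is itself a quotient of $M^{(n-2,2)}$ (take $N=M_3$), and then $M_4\cong M_3/D^{(n)}$ is too. The cases $M_1,M_2$ are reduced to $M_4$ via $M_4\cong(e_0D^{(n-1,2)})/D^{(n-1,1)}\subseteq M_1,M_2$ using Lemmas~\ref{l50} and \ref{l6'}. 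Your argument can be repaired by adding precisely this treatment of $M_3$ and $M_4$.
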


\begin{proof}
From Lemma \ref{l6} if $M$ is a quotient of $D^{(n-2,1)}\uparrow^{\s_n}$ with $D^{(n-2,2)}\subseteq M$ then $M$ has one of the following forms:
\[\xymat{
&&D^{(n-1,1)}\ar@{-}[dr]\ar@{-}[dl]\\
&D^{(n)}\ar@{-}[ddrr]\ar@{-}[dd]&&D^{(n-2,2)}\ar@{-}[d]\ar@{-}[ddll]\\
M_1=&&&D^{(n-1,1)}\ar@{-}[d]&,\\
&D^{(n)}&&D^{(n-2,2)}
}\]
\vspace{6pt}
\[\xymat{
&&D^{(n-1,1)}\ar@{-}[dr]\ar@{-}[dl]\\
&D^{(n)}\ar@{-}[ddrr]&&D^{(n-2,2)}\ar@{-}[d]\\
M_2=&&&D^{(n-1,1)}\ar@{-}[d]&,\\
&&&D^{(n-2,2)}
}\]
\vspace{6pt}
\[\xymat{
&&D^{(n-1,1)}\ar@{-}[dr]\ar@{-}[dl]\\
M_3=&D^{(n)}&&D^{(n-2,2)}&,
}\]
\vspace{6pt}
\[\xymat{
&&D^{(n-1,1)}\ar@{-}[dr]\\
M_4=&&&D^{(n-2,2)}&.
}\]
From Lemmas \ref{l50}, \ref{l6'} and \ref{l6} we have that
\[M_4\cong (e_0D^{(n-1,2)})/D^{(n-1,1)}\subseteq M_1,M_2.\]
In order to prove the lemma it is then enough to prove that $M_3$ and $M_4$ are isomorphic to quotients of $M^{(n-2,2)}$. For $M_3$ this holds from Lemma \ref{l61} and by self-duality of $D^{(n-2,1)}\uparrow^{\s_n}$ and $M^{(n-2,2)}$ (since $M_3=(D^{(n)}\oplus D^{(n-2,2)})|D^{(n-1,1)}$). Since $M_4\cong M_3/D^{(n)}$ it follows that $M_4$ is also isomorphic to a quotient of $M^{(n-2,2)}$ and so the lemma holds.
\end{proof}

\begin{lemma}\label{l63}
Let $p=2$ and $n\equiv 0\Md 4$ with $n\geq 8$. Let $M$ be a quotient of $D^{(n-2,1)}\uparrow^{\s_n}$ with $M\not\cong D^{(n-1,1)}$ and such that no submodule of $M$ is of the form $D^{(n)}|D^{(n)}$. Also let $N\cong (S^{(n-1,1)})^*$ with $D^{(n-1,1)}\subseteq M\cap N$. Then $K\subseteq L\subseteq M+N$ with $L$ a quotient of $M^{(n-2,2)}$ and $K=D^{(n-1,1)}|(D^{(n)}\oplus D^{(n-2,2)})\subseteq M^{(n-2,2)}$.
\end{lemma}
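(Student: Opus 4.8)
The plan is to determine $M$ and $N$ completely and then to construct $L$ by hand in each of the few resulting cases.

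Write $C:=\soc(N)$. By Lemma~\ref{l1} we have $S^{(n-1,1)}=D^{(n)}|D^{(n-1,1)}$, so $N\cong(S^{(n-1,1)})^*=D^{(n-1,1)}|D^{(n)}$; hence $C$ is simple, $C\cong D^{(n-1,1)}$ and $N/C\cong D^{(n)}$. The hypothesis $D^{(n-1,1)}\subseteq M\cap N$ identifies that copy of $D^{(n-1,1)}$ with $C$, and since it is a submodule of $M$ it lies in $\soc(M)$; thus $C\subseteq\soc(M)$. As $M$ is a nonzero quotient of $D^{(n-2,1)}\uparrow^{\s_n}$, whose head is the simple module $D^{(n-1,1)}$ by Lemma~\ref{l49}, we also have $\hd(M)\cong D^{(n-1,1)}$.

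Next I would pin down $M$. The module $D^{(n-2,1)}\uparrow^{\s_n}$ is self-dual (Lemma~\ref{l57}) and, by the Loewy diagram of Lemma~\ref{l6}, has only finitely many submodules; dualising, its nonzero quotients are the modules $D^{(n-2,1)}\uparrow^{\s_n}/X$ with $X$ a proper submodule, and one reads off the socle of each from the diagram. Imposing $M\not\cong D^{(n-1,1)}$, $C\cong D^{(n-1,1)}\subseteq\soc(M)$, and the absence of a submodule isomorphic to $D^{(n)}|D^{(n)}$ cuts the list down to
\[M\;\cong\;D^{(n-2,1)}\uparrow^{\s_n},\qquad M\;\cong\;D^{(n-2,1)}\uparrow^{\s_n}/S^{(n-2,1,1)},\qquad M\;\cong\;e_0D^{(n-1,2)}.\]
Here one uses that $S^{(n-2,1,1)}=D^{(n-1,1)}|(D^{(n)}\oplus D^{(n-2,2)})$ (corollary to Lemma~\ref{l6}) is the bottom term of the Specht filtration $D^{(n-2,1)}\uparrow^{\s_n}\sim S^{(n-2,1,1)}|S^{(n-2,2)}|S^{(n-1,1)}$ of Lemma~\ref{l3}, so that the second module is $\sim S^{(n-2,2)}|S^{(n-1,1)}$; and that $e_0D^{(n-1,2)}=D^{(n-1,1)}|D^{(n-2,2)}|D^{(n-1,1)}$ is a quotient of $D^{(n-2,1)}\uparrow^{\s_n}$ by Lemmas~\ref{l50} and~\ref{l6'}. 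The one remaining quotient with $D^{(n-1,1)}$ in its socle, obtained by factoring out a submodule $\cong S^{(n-2,2)}$, has a submodule $\cong D^{(n)}|D^{(n)}$ and so is excluded by the last hypothesis; the quotient $D^{(n-1,1)}$ is excluded directly.

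Finally I would exhibit $L$ in each case. One checks from the structures above (Lemmas~\ref{l6},~\ref{l50},~\ref{l6'}) that in all three cases $M$ has a submodule $S\cong S^{(n-2,2)}=D^{(n-1,1)}|D^{(n-2,2)}$ with $\soc(S)=C$. Since $S$ and $N$ are non-split extensions of $D^{(n-2,2)}$, respectively $D^{(n)}$, by the simple module $C=D^{(n-1,1)}$, and $N\not\subseteq S$ (no composition factor of $S$ is $D^{(n)}$), we get $S\cap N=C$, whence $S+N$ has simple socle $C$, radical quotient $D^{(n)}\oplus D^{(n-2,2)}$, and therefore $S+N\cong D^{(n-1,1)}|(D^{(n)}\oplus D^{(n-2,2)})=K$; thus $K\subseteq M+N$, while $K\subseteq M^{(n-2,2)}$ by Lemma~\ref{l61}. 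It remains to squeeze a quotient of $M^{(n-2,2)}$ between $K$ and $M+N$. Comparing composition structure with the diagram of $M^{(n-2,2)}$ in Lemma~\ref{l12} gives $M+N\cong M^{(n-2,2)}$ when $M\cong D^{(n-2,1)}\uparrow^{\s_n}/S^{(n-2,1,1)}$ and $M+N\cong M^{(n-2,2)}/D^{(n)}$ when $M\cong e_0D^{(n-1,2)}$, so in both cases $L:=M+N$ works; and when $M\cong D^{(n-2,1)}\uparrow^{\s_n}$, the proof of Lemma~\ref{l61} already exhibits $M^{(n-2,2)}/D^{(n)}$ as a submodule of $D^{(n-2,1)}\uparrow^{\s_n}\cong M\subseteq M+N$ containing a copy of $K$, so there $L:=M^{(n-2,2)}/D^{(n)}$ works.

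I expect the main obstacle to be this middle step: reading off the socle of each quotient of $D^{(n-2,1)}\uparrow^{\s_n}$ from the Loewy diagram of Lemma~\ref{l6} and keeping track of which hypothesis eliminates each inadmissible quotient. A secondary point is that the isomorphisms $M+N\cong M^{(n-2,2)}$ and $M+N\cong M^{(n-2,2)}/D^{(n)}$ rely on the displayed diagrams determining the modules, which follows from the one-dimensionality of the relevant $\mathrm{Ext}^1$ groups between $D^{(n)}$, $D^{(n-1,1)}$ and $D^{(n-2,2)}$ already implicit in Lemmas~\ref{l6} and~\ref{l12}.
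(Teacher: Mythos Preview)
Your overall strategy matches the paper's: classify the admissible quotients $M$ and then show that $M+N$ contains a quotient of $M^{(n-2,2)}$ which in turn contains $K$. Your list $M_1=D^{(n-2,1)}\uparrow^{\s_n}$, $M_2=D^{(n-2,1)}\uparrow^{\s_n}/S^{(n-2,1,1)}$, $M_3=e_0D^{(n-1,2)}$ agrees with the paper's, and your treatment of the $M_1$ case via the proof of Lemma~\ref{l61} is a valid shortcut (the paper instead observes $M_3\subseteq M_1$ and reduces that case to $M_3$).

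The genuine gap is exactly the step you flag as a ``secondary point'': the assertions $M_2+N\cong M^{(n-2,2)}$ and $M_3+N\cong M^{(n-2,2)}/D^{(n)}$ (and, implicitly, $S+N\cong K$) do \emph{not} follow from ``comparing composition structure with the diagram.'' A Loewy diagram does not in general determine a module up to isomorphism, and the one-dimensionality of the relevant $\mathrm{Ext}^1$ groups is not established in Lemmas~\ref{l6} or~\ref{l12}; those lemmas describe the structure of two specific modules, not the extension groups between their composition factors. Even granting one-dimensional $\mathrm{Ext}^1$'s, deducing uniqueness of a module with a prescribed five-term diagram still requires a careful inductive argument that you do not supply.

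The paper closes this gap by explicit construction rather than by any $\mathrm{Ext}$ computation: it exhibits $\overline{M_2}\cong M_2$ inside $M^{(n-2,2)}$ (as the dual of the submodule $M^{(n-2,2)}/D^{(n)}\subseteq D^{(n-2,1)}\uparrow^{\s_n}$ produced in the proof of Lemma~\ref{l61}) and $\overline{N_2}\cong(S^{(n-1,1)})^*$ inside $M^{(n-2,2)}$ (via Lemmas~\ref{l3} and~\ref{l35}), verifies $\overline{M_2}\cap\overline{N_2}\cong D^{(n-1,1)}$ directly from the structures, and then concludes $\overline{M_2}+\overline{N_2}=M^{(n-2,2)}$ by counting composition factors. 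Since $M_2+N$ is the pushout of $M_2$ and $N$ over their common simple socle, and this pushout is determined up to isomorphism by $M_2$ and $N$ alone, the construction yields $M_2+N\cong M^{(n-2,2)}$; the $M_3$ case then follows from $M_3\cong M_2/D^{(n)}$. You should replace your appeal to $\mathrm{Ext}^1$ by this construction.
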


\begin{proof}
From Lemma \ref{l6} $M$ is of one of the following forms:
\[M_1=D^{(n-2,1)}\uparrow^{\s_n},\]
\vspace{6pt}
\[\xymat{
&&D^{(n-1,1)}\ar@{-}[dr]\ar@{-}[dl]\\
M_2=&D^{(n)}&&D^{(n-2,2)}\ar@{-}[d]&,\\
&&&D^{(n-1,1)}\\
}\]
\vspace{6pt}
\[\xymat{
&&D^{(n-1,1)}\ar@{-}[dr]\\
M_3=&&&D^{(n-2,2)}\ar@{-}[d]&.\\
&&&D^{(n-1,1)}\\
}\]
From Lemmas \ref{l50}, \ref{l6'} and \ref{l6} we have that
\[M_3\cong e_0D^{(n-1,2)}\subseteq M_1.\]
In particular we can assume that $M\cong M_2$ or $M\cong M_3$. Let $N_2,N_3\cong (S^{(n-1,1)})^*$ with $D^{(n-1,1)}\subseteq M_2\cap N_2,M_3\cap N_3$. Since $(S^{(n-1,1)})^*\not\subseteq M_2,M_3$ (by the structure of $M_2$ and $M_3$ and Lemma \ref{l1}), we have that $M_2\cap N_2,M_3\cap N_3\cong D^{(n-1,1)}$. As $M_3\cong M_2/D^{(n)}$ it then follows that
\[M_3+N_3\cong (M_2+N_2)/D^{(n)}.\]
Assume that $M_2+N_2\cong M^{(n-2,2)}$. Then $M_3+N_3\cong M^{(n-2,2)}/D^{(n)}$ and so the lemma holds (the submodules of $M^{(n-2,2)}$ and of $M^{(n-2,2)}$ of the form $D^{(n-1,1)}|(D^{(n)}\oplus D^{(n-2,2)})$ are isomorphic, as
\[M^{(n-2,2)}\sim (D^{(n)}\oplus (D^{(n-1,1)}|(D^{(n)}\oplus D^{(n-2,2)})))|D^{(n-1,1)},\]
from Lemma \ref{l12}).

So it is enough to prove that $M_2+N_2\cong M^{(n-2,2)}$. Since $M_2\cap N_2\cong D^{(n-1,1)}$ it is then enough to prove that $M^{(n-2,2)}=\overline{M_2}+\overline{N_2}$ with $\overline{M_2}\cong M_2$, $\overline{N_2}\cong N_2\cong (S^{(n-1,1)})^*$ and $\overline{M_2}\cap\overline{N_2}\cong D^{(n-1,1)}$. From Lemmas \ref{l3} and \ref{l35} there exists $\overline{N_2}\subseteq M^{(n-2,2)}$ with $\overline{N_2}\cong (S^{(n-1,1)})^*$. Let $\overline{M_2}\subseteq M^{(n-2,2)}$ with
\[\overline{M_2}\sim(D^{(n)}\oplus (D^{(n-1,1)}|D^{(n-2,2)}))|D^{(n-1,1)}\]
(such a submodule of $M^{(n-2,2)}$ exists and is isomorphic to $(M^{(n-2,2)}/D^{(n)})^*$ from Lemma \ref{l12} and self-duality of $M^{(n-2,2)}$). From the proof of Lemma \ref{l61} and by self-duality of $M^{(n-2,2)}$ and $D^{(n-2,1)}\uparrow^{\s_n}$ we have that $\overline{M_2}$ is isomorphic to a quotient of $D^{(n-2,1)}\uparrow^{\s_n}$. From the structure of $\overline{M_2}$ and from Lemma \ref{l6} it follows that $\overline{M_2}\cong M_2$ (since $M_2$ is the only quotient of $D^{(n-2,1)}\uparrow^{\s_n}$ of the form $(D^{(n)}\oplus (D^{(n-1,1)}|D^{(n-2,2)}))|D^{(n-1,1)}$).

We will next prove that $\overline{M_2}\cap\overline{N_2}\cong D^{(n-1,1)}$. From Lemma \ref{l1} and definition of $\overline{M_2}$ and of $\overline{N_2}$ we have that both $\overline{M_2}$ and $\overline{N_2}$ contain a submodule isomorphic to $D^{(n-1,1)}$. Since both $\overline{M_2}$ and $\overline{N_2}$ are submodules of $M^{(n-2,2)}$ and since $M^{(n-2,2)}$ contains only one submodule isomorphic to $D^{(n-1,1)}$ (from Lemma \ref{l12}), it follows that $D^{(n-1,1)}\subseteq \overline{M_2}\cap \overline{N_2}$. As $\overline{N_2}\cong (S^{(n-1,1)})^*=D^{(n-1,1)}|D^{(n)}$ and as $\overline{M_2}$ does not contain any uniserial module of the form $D^{(n-1,1)}|D^{(n)}$ (the only composition factor of $\overline{M_2}$ isomorphic to $D^{(n)}$ is in its socle), we then have that $\overline{M_2}\cap\overline{N_2}\cong D^{(n-1,1)}$.

Since $\overline{M_2}\cap\overline{N_2}\cong D^{(n-1,1)}$ and $\overline{M_2}+\overline{N_2}\subseteq M^{(n-2,2)}$, comparing compositions factors we obtain that $\overline{M_2}+\overline{N_2}=M^{(n-2,2)}$. The lemma then follows.
\end{proof}

\begin{lemma}\label{l26}
If $p=2$ and $n\geq 2$ and $M$ is an $\s_n$ module with $M=D^{(n)}|D^{(n)}$, then $M\cong 1\uparrow_{A_n}^{\s_n}$.
\end{lemma}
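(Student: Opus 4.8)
The plan is to identify $M$ abstractly and then recognise $1\uparrow_{A_n}^{\s_n}$ as the same module. Since $p=2$ we have $D^{(n)}\cong 1_{\s_n}$, so the hypothesis $M=D^{(n)}|D^{(n)}$ says that $M$ is a $2$-dimensional $F\s_n$-module with $\soc(M)\cong 1_{\s_n}$, $M/\soc(M)\cong 1_{\s_n}$ and $M$ not semisimple; that is, $M$ is a non-split self-extension of the trivial module. I would first show that such a module is unique up to isomorphism, and then verify that $1\uparrow_{A_n}^{\s_n}$ is one.

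To pin down $M$, choose $v_1$ spanning $\soc(M)$ and any $v_2\in M\setminus\soc(M)$, so that $\{v_1,v_2\}$ is a basis. For $g\in\s_n$ one has $gv_1=v_1$, and since $M/\soc(M)$ is trivial, $gv_2=v_2+c(g)v_1$ for a unique scalar $c(g)\in F$. The identity $(gh)v_2=g(hv_2)$ forces $c(gh)=c(g)+c(h)$, so $c\colon\s_n\to(F,+)$ is a group homomorphism, and it is nonzero, for otherwise $\langle v_2\rangle$ would be a complement to $\soc(M)$. Being a homomorphism into an abelian group, $c$ factors through $\s_n/[\s_n,\s_n]$, and for every $n\geq 2$ the sign map identifies this quotient with $\Z/2$, with $[\s_n,\s_n]=A_n$; hence $c$ vanishes precisely on $A_n$. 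After replacing $v_1$ by $c(\tau)^{-1}v_1$ for an odd permutation $\tau$, we may assume $c(g)=0$ for $g\in A_n$ and $c(g)=1$ for $g\notin A_n$. This fixes the action of $\s_n$ on $M$ in the basis $\{v_1,v_2\}$, and hence $M$ up to isomorphism.

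Finally I would exhibit $1\uparrow_{A_n}^{\s_n}$ in this normal form. It has the coset basis $\{w_1,w_2\}$ with $w_1=A_n$, $w_2=\tau A_n$ for a fixed odd $\tau$, on which $\s_n$ acts by left translation, interchanging $w_1$ and $w_2$ exactly when $g\notin A_n$. Setting $u_1:=w_1+w_2$ and $u_2:=w_2$ gives a basis with $gu_1=u_1$ for all $g$ and $gu_2=u_2$ or $u_2+u_1$ according as $g\in A_n$ or $g\notin A_n$; in particular the space of $\s_n$-fixed vectors of $1\uparrow_{A_n}^{\s_n}$ is only $\langle u_1\rangle$, so this module is not semisimple and is therefore the unique non-split self-extension of $1_{\s_n}$. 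Comparing with the normal form above, $v_i\mapsto u_i$ is the desired isomorphism $M\cong 1\uparrow_{A_n}^{\s_n}$. I do not expect a genuine obstacle here; the only step requiring a little care is the claim $[\s_n,\s_n]=A_n$ for the small values $n=2,3,4$, which is a routine direct check.
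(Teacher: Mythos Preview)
Your proof is correct and follows essentially the same approach as the paper: both pick a basis adapted to the socle, observe that the off-diagonal entries define a nontrivial additive character of $\s_n$ (hence the sign character in characteristic~$2$), normalize, and then match with the induced module. The only cosmetic difference is that the paper phrases the last step as a conjugacy in $\GL_2(2)$ between $\left(\begin{smallmatrix}1&1\\0&1\end{smallmatrix}\right)$ and $\left(\begin{smallmatrix}0&1\\1&0\end{smallmatrix}\right)$, whereas you write down the change of basis $u_1=w_1+w_2$, $u_2=w_2$ explicitly.
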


\begin{proof}
We can choose a basis of $M$ so that the matrix representation has the form $\pi\mapsto\left(\begin{array}{cc}1&x_\pi\\0&1\end{array}\right)$. Further we may assume that $x_{(1,2)}\in\{0,1\}$. As matrices of the form $\left(\begin{array}{cc}1&x\\0&1\end{array}\right)$ commute, we have that $x_\rho=x_{(1,2)}$ for any transposition $\rho$ and then that $x_\pi=x_{(1,2)}\delta_{\pi\not\in A_n}$ for $\pi\in \s_n$. As $M=D^{(n)}|D^{(n)}$ it follows $x_{(1,2)}=1$. As $\left(\begin{array}{cc}1&1\\0&1\end{array}\right)$ and $\left(\begin{array}{cc}0&1\\1&0\end{array}\right)$ are conjugated in $\GL_2(2)$, the lemma follows.
\end{proof}

\section{Dimensions of homomorphism rings}

The next lemmas show that under, some assumptions on the dimensions of some homomorphism spaces, certain specific modules are contained in $\End_F(D^\lambda)$. These assumptions will be verified for most partitions in Sections \ref{s2} and \ref{s3}.

\begin{lemma}\label{l17}
Let $p=2$ and $n\equiv 2\Md 4$ with $n\geq 6$. If
\begin{align*}
&\dim\End_{\s_{n-1}}(D^\lambda\downarrow_{\s_{n-1}})+\dim\Hom_{\s_n}(S^{(n-1,1)},\End_F(D^\lambda))+1\\
&\hspace{12pt}<\dim\End_{\s_{n-2}}(D^\lambda\downarrow_{\s_{n-2}}),
\end{align*}
then $M\subseteq \End_F(D^\lambda)$, where $M$ is a quotient of $D^{(n-2,1)}\uparrow^{\s_n}$ of one of the following forms:
\begin{itemize}
\item
$D^{(n-2,2)}|D^{(n)}|D^{(n-1,1)}$,

\item
$D^{(n)}|D^{(n-2,2)}|D^{(n)}|D^{(n-1,1)}$,

\item
$D^{(n-1,1)}|D^{(n)}|D^{(n-2,2)}|D^{(n)}|D^{(n-1,1)}$.
\end{itemize}
\end{lemma}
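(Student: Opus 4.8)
The plan is to convert the two endomorphism-ring dimensions into dimensions of $\Hom$-spaces into $E:=\End_F(D^\lambda)$ and then to read the conclusion off the uniserial structure of $U:=D^{(n-2,1)}\uparrow^{\s_n}$ given by Lemma~\ref{l5}. Since $D^\lambda$ is self-dual, for each $r\ge 1$ we have $\End_{\s_{n-r}}(D^\lambda\downarrow_{\s_{n-r}})\cong\Hom_{\s_{n-r}}(1_{\s_{n-r}},E\downarrow_{\s_{n-r}})\cong\Hom_{\s_n}(1\uparrow_{\s_{n-r}}^{\s_n},E)$ by Frobenius reciprocity, while $1\uparrow_{\s_{n-1}}^{\s_n}=M^{(n-1,1)}$ and $1\uparrow_{\s_{n-2}}^{\s_n}=M^{(n-2,1,1)}$. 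By Lemma~\ref{l49}, $M^{(n-2,1,1)}\cong M^{(n-1,1)}\oplus U$, so after cancellation the hypothesis becomes
\[\dim\Hom_{\s_n}(U,E)\ \ge\ \dim\Hom_{\s_n}(S^{(n-1,1)},E)+2.\]
The second ingredient, used only once but decisively, is $\dim\Hom_{\s_n}(1_{\s_n},E)=\dim\End_{\s_n}(D^\lambda)=1$ by Schur's lemma.

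By Lemma~\ref{l5} (we are in the case $n\equiv 2\pmod 4$), $U$ is uniserial of length $9$; write $0=U_0\subset U_1\subset\cdots\subset U_9=U$ for its submodule chain, with layers $U_i/U_{i-1}=D_i$ and $(D_1,\dots,D_9)=(D^{(n-1,1)},D^{(n)},D^{(n-2,2)},D^{(n)},D^{(n-1,1)},D^{(n)},D^{(n-2,2)},D^{(n)},D^{(n-1,1)})$. Set $V_k:=\Hom_{\s_n}(U/U_k,E)$, viewed via $U\twoheadrightarrow U/U_k$ as a subspace of $V_0=\Hom_{\s_n}(U,E)$, so that $0=V_9\subseteq V_8\subseteq\cdots\subseteq V_0$. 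Applying $\Hom_{\s_n}(-,E)$ to $0\to D_{k+1}=U_{k+1}/U_k\to U/U_k\to U/U_{k+1}\to 0$ and using $D_{k+1}=\soc(U/U_k)$ gives $\dim V_k-\dim V_{k+1}\le\dim\Hom_{\s_n}(D_{k+1},E)$. Also $U/U_7$ is the unique length-$2$ quotient of $U$, while $S^{(n-1,1)}=D^{(n)}|D^{(n-1,1)}$ (Lemma~\ref{l1}) is a quotient of $U$ (it is a quotient of $D^{(n-2,1)}\uparrow^{\s_n}/S^{(n-2,1,1)}$ by Lemma~\ref{l50}), so $U/U_7\cong S^{(n-1,1)}$ and $\dim V_7=\dim\Hom_{\s_n}(S^{(n-1,1)},E)$. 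Hence the displayed hypothesis reads $\sum_{k=0}^{6}(\dim V_k-\dim V_{k+1})\ge 2$.

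The decisive step: $\soc(U/U_3)=D_4=D^{(n)}=1_{\s_n}$, so $\dim V_3-\dim V_4\le\dim\Hom_{\s_n}(1_{\s_n},E)=1$; subtracting, $\sum_{k\in\{0,1,2,4,5,6\}}(\dim V_k-\dim V_{k+1})\ge 1$, hence $V_k\supsetneq V_{k+1}$ for some $k\in\{0,1,2,4,5,6\}$. For such a $k$, any $\phi\in V_k\setminus V_{k+1}$ is a homomorphism $U/U_k\to E$ not killing $\soc(U/U_k)$; since $U/U_k$ is uniserial, every nonzero submodule of it contains the socle, so $\ker\phi=0$ and $U/U_k\hookrightarrow E$. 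For $k=6,5,4$ the quotient $U/U_k$ is, by Lemma~\ref{l5}, exactly the first, second, third module listed in the statement, and we are done. For $k=0,1,2$ I would note that $U_5=e_0D^{(n-1,2)}$ is a submodule of $U$ (Lemma~\ref{l50}), so $U_5/U_k\hookrightarrow U/U_k\hookrightarrow E$; and, using the self-duality of $U$ and of $e_0D^{(n-1,2)}$ (Lemma~\ref{l57}) — which yield $(U/U_m)^*\cong U_{9-m}$ and $(U_5/U_m)^*\cong U_{5-m}$, whence $U_5/U_k\cong U/U_{k+4}$ — together with the corollary to Lemma~\ref{l5} for the composition series, one sees $U_5/U_k$ is the third, second, first listed module. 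In all cases a module $M$ of one of the three listed shapes — each of which ($U/U_6$, $U/U_5$, $U/U_4$) is a quotient of $D^{(n-2,1)}\uparrow^{\s_n}$ — embeds in $\End_F(D^\lambda)$, as required.

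I expect the main difficulty to be book-keeping rather than conceptual: pinning down the composition series of $U$ and of its subquotients, and in particular justifying $U_5/U_k\cong U/U_{k+4}$ for $k\in\{0,1,2\}$ — two uniserial modules with the same composition factors need not be isomorphic, so one must argue via self-duality as above, not by counting. The conceptual content is entirely in the inequality $\dim V_3-\dim V_4\le 1$: it is exactly what stops the ``extra'' homomorphisms forced by the hypothesis from all being absorbed into the single $D^{(n)}$-layer $D_4$, which would produce no embedding of a sufficiently long uniserial quotient of $U$.
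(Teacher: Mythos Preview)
Your proof is correct and follows essentially the same approach as the paper's: both convert the hypothesis to $\dim\Hom_{\s_n}(U,E)\ge\dim\Hom_{\s_n}(S^{(n-1,1)},E)+2$ via $M^{(n-2,1,1)}\cong M^{(n-1,1)}\oplus U$, exploit the uniserial structure of $U$ from Lemma~\ref{l5}, use $\dim\Hom_{\s_n}(D^{(n)},E)=1$ to control the layer $D_4$, and in the case $k\in\{0,1,2\}$ pass to the submodule $U_5/U_k\cong e_0D^{(n-1,2)}/U_k$. Your filtration bookkeeping with the $V_k$ is a slightly cleaner packaging of the paper's two-case split; one small simplification is that $U_5/U_k\cong U/U_{k+4}$ follows directly from $U_5\cong e_0D^{(n-1,2)}\cong U/U_4$ (Lemma~\ref{l50}) and uniseriality, without invoking self-duality.
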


\begin{proof}
As $n$ is even, so that $M^{(n-2,1)}=D^{(n-1)}\oplus D^{(n-2,1)}$,
\begin{align*}
&\dim\End_{\s_{n-2}}(D^\lambda\downarrow_{\s_{n-2}})\\
&\hspace{12pt}=\dim\Hom_{\s_{n-2}}(D^{(n-2)},\End_F(D^\lambda)\downarrow_{\s_{n-2}})\\
&\hspace{12pt}=\dim\Hom_{\s_{n-1}}(M^{(n-2,1)},\End_F(D^\lambda)\downarrow_{\s_{n-1}})\\
&\hspace{12pt}=\dim\Hom_{\s_{n-1}}(D^{(n-1)},\End_F(D^\lambda\downarrow_{\s_{n-1}}))\\
&\hspace{36pt}+\dim\Hom_{\s_{n-1}}(D^{(n-2,1)},\End_F(D^\lambda)\downarrow_{\s_{n-1}})\\
&\hspace{12pt}=\dim\End_{\s_{n-1}}(D^\lambda\downarrow_{\s_{n-1}})+\dim\Hom_{\s_n}(D^{(n-2,1)}\uparrow^{\s_n},\End_F(D^\lambda)).
\end{align*}
So the assumption of the lemma is equivalent to
\[\dim\Hom_{\s_n}(D^{(n-2,1)}\uparrow^{\s_n},\End_F(D^\lambda))>\dim\Hom_{\s_n}(S^{(n-1,1)},\End_F(D^\lambda))+1.\]
From Lemmas \ref{l3}, \ref{l1}, \ref{l2}, \ref{l33} and \ref{l5} we have that $D^{(n-2,1)}\uparrow^{\s_n}$ is uniserial of the form
\[\overbrace{D^{(n-1,1)}|D^{(n)}|D^{(n-2,2)}|D^{(n)}}^{S^{(n-2,1,1)}}|\overbrace{D^{(n-1,1)}|D^{(n)}|D^{(n-2,2)}}^{S^{(n-2,2)}}|\overbrace{D^{(n)}|D^{(n-1,1)}}^{S^{(n-1,1)}}.\]
From Lemma \ref{l50} it follows that
\[\rlap{$\overbrace{\phantom{D^{(n-1,1)}|D^{(n)}|D^{(n-2,2)}|D^{(n)}|D^{(n-1,1)}}}^{e_0D^{(n-1,2)}}$}D^{(n-1,1)}|D^{(n)}|D^{(n-2,2)}|D^{(n)}|\underbrace{D^{(n-1,1)}|D^{(n)}|D^{(n-2,2)}|\overbrace{D^{(n)}|D^{(n-1,1)}}^{S^{n-1,1}}}_{e_0D^{(n-1,2)}}.\]

If
\begin{align*}
&\dim\Hom_{\s_n}(D^{(n-2,1)}\uparrow^{\s_n},\End_F(D^\lambda))\\
&\hspace{12pt}>\dim\Hom_{\s_n}(D^{(n-2,1)}\uparrow^{\s_n}/(D^{(n-1,1)}|D^{(n)}|D^{(n-2,2)}),\End_F(D^\lambda))
\end{align*}
then $\End_F(D^\lambda)$ contains a quotient of $D^{(n-2,1)}\uparrow^{\s_n}$ which is not a quotient of $D^{(n-2,1)}\uparrow^{\s_n}/(D^{(n-1,1)}|D^{(n)}|D^{(n-2,2)})$. As $D^{(n-2,1)}\uparrow^{\s_n}$ is uniserial such quotients are given by the modules $D^{(n-2,1)}\uparrow^{\s_n}/M$ with $M$ strictly contained in $D^{(n-1,1)}|D^{(n)}|D^{(n-2,2)}$. So $\End_F(D^\lambda)$ contains a module of one of the forms
\begin{align*}
\overbrace{D^{(n-1,1)}|D^{(n)}|D^{(n-2,2)}|D^{(n)}|D^{(n-1,1)}}^{e_0D^{(n-1,2)}}|D^{(n)}|D^{(n-2,2)}|D^{(n)}|D^{(n-1,1)},\\
\overbrace{D^{(n)}|D^{(n-2,2)}|D^{(n)}|D^{(n-1,1)}}^{(e_0D^{(n-1,2)})/D^{(n-1,1)}}|D^{(n)}|D^{(n-2,2)}|D^{(n)}|D^{(n-1,1)},\\
\overbrace{D^{(n-2,2)}|D^{(n)}|D^{(n-1,1)}}^{(e_0D^{(n-1,2)})/(D^{(n-1,1)}|D^{(n)})}|D^{(n)}|D^{(n-2,2)}|D^{(n)}|D^{(n-1,1)}.
\end{align*}
As $e_0D^{(n-1,2)}$ is a quotient of $D^{(n-2,1)}\uparrow^{\s_n}$ the lemma holds.

So assume now that 
\begin{align*}
&\dim\Hom_{\s_n}(D^{(n-2,1)}\uparrow^{\s_n},\End_F(D^\lambda))\\
&\hspace{12pt}=\dim\Hom_{\s_n}(D^{(n-2,1)}\uparrow^{\s_n}/(D^{(n-1,1)}|D^{(n)}|D^{(n-2,2)}),\End_F(D^\lambda)).
\end{align*}
As $D^{(n-2,1)}\uparrow^{\s_n}/(D^{(n-1,1)}|D^{(n)}|D^{(n-2,2)})\sim D^{(n)}|e_0D^{(n-1,2)}$ we have from Lemma \ref{l34} that
\begin{align*}
&\dim\Hom_{\s_n}(e_0D^{(n-1,2)},\End_F(D^\lambda))\\
&\hspace{12pt}\geq \dim\Hom_{\s_n}(D^{(n-2,1)}\uparrow^{\s_n}/(D^{(n-1,1)}|D^{(n)}|D^{(n-2,2)}),\End_F(D^\lambda))\\
&\hspace{36pt}-\dim\Hom_{\s_n}(D^{(n)},\End_F(D^\lambda))\\
&\hspace{12pt}=\dim\Hom_{\s_n}(D^{(n-2,1)}\uparrow^{\s_n}/(D^{(n-1,1)}|D^{(n)}|D^{(n-2,2)}),\End_F(D^\lambda))-1\\
&\hspace{12pt}>\dim\Hom_{\s_n}(S^{(n-1,1)},\End_F(D^\lambda)).
\end{align*}
So, from
\[e_0D^{(n-1,2)}=D^{(n-1,1)}|D^{(n)}|D^{(n-2,2)}|\overbrace{D^{(n)}|D^{(n-2,2)}}^{S^{(n-1,1)}},\]
it follows that one of $(e_0D^{(n-1,2)})/(D^{(n-1,1)}|D^{(n)})$, $(e_0D^{(n-1,2)})/D^{(n-1,1)}$ or $e_0D^{(n-1,2)}$ is contained in $\End_F(D^\lambda)$ and so also in this case the lemma holds.
\end{proof}

\begin{lemma}\label{l36}
Let $p=2$ and $n\equiv 0\Md 4$ with $n\geq 8$. If
\[\dim\End_{\s_{n-2,2}}(D^\lambda\downarrow_{\s_{n-2,2}})>\dim\Hom_{\s_n}(S^{(n-1,1)},\End_F(D^\lambda))+1,\]
then $M\subseteq L\subseteq \End_F(D^\lambda)$, where $L$ is a quotient of $M^{(n-2,2)}$ and $M$ is of one of the following forms:
\begin{itemize}
\item
$M\cong D^{(n-2,2)}$,

\item
$M=D^{(n-1,1)}|(D^{(n)}\oplus D^{(n-2,2)})\subseteq M^{(n-2,2)}$.
\end{itemize}
\end{lemma}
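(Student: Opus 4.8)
The plan is to convert the hypothesis into an inequality between dimensions of $\Hom$-spaces out of $M^{(n-2,2)}$, use it to produce a homomorphism $M^{(n-2,2)}\to\End_F(D^\lambda)$ which does not kill $S^{(n-2,2)}$, and take $L$ to be its image.

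First I would observe, exactly as in the proof of Lemma \ref{l17} but with the Young subgroup $\s_{n-2,2}$ in place of $\s_{n-1}$, that
\[\dim\End_{\s_{n-2,2}}(D^\lambda\downarrow_{\s_{n-2,2}})=\dim\Hom_{\s_{n-2,2}}(1,\End_F(D^\lambda)\downarrow_{\s_{n-2,2}})=\dim\Hom_{\s_n}(M^{(n-2,2)},\End_F(D^\lambda)),\]
the last equality by Frobenius reciprocity since $M^{(n-2,2)}=1\uparrow_{\s_{n-2,2}}^{\s_n}$. By Lemma \ref{l59} we have $M^{(n-2,2)}/S^{(n-2,2)}\cong D^{(n)}\oplus S^{(n-1,1)}$, and $\dim\Hom_{\s_n}(D^{(n)},\End_F(D^\lambda))=\dim\End_{\s_n}(D^\lambda)=1$ by Schur's lemma, so the hypothesis is equivalent to
\[\dim\Hom_{\s_n}(M^{(n-2,2)},\End_F(D^\lambda))>\dim\Hom_{\s_n}(M^{(n-2,2)}/S^{(n-2,2)},\End_F(D^\lambda)).\]
Applying the left exact functor $\Hom_{\s_n}(-,\End_F(D^\lambda))$ to $0\to S^{(n-2,2)}\to M^{(n-2,2)}\to M^{(n-2,2)}/S^{(n-2,2)}\to 0$, this strict inequality produces a $\phi\in\Hom_{\s_n}(M^{(n-2,2)},\End_F(D^\lambda))$ with $\phi(S^{(n-2,2)})\neq 0$. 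I then put $L:=\Im\phi$, so that $L\subseteq\End_F(D^\lambda)$ and $L\cong M^{(n-2,2)}/\ker\phi$ is a quotient of $M^{(n-2,2)}$.

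It remains to locate $M$ inside $L$, using the structure of $M^{(n-2,2)}$ from Lemmas \ref{l12} and \ref{l59}. Since $S^{(n-2,2)}=D^{(n-1,1)}|D^{(n-2,2)}$ is uniserial with simple socle $D^{(n-1,1)}$ and $\phi$ does not vanish on it, either $\ker\phi\cap S^{(n-2,2)}=D^{(n-1,1)}$ or $\ker\phi\cap S^{(n-2,2)}=0$. In the first case $\phi(S^{(n-2,2)})\cong S^{(n-2,2)}/D^{(n-1,1)}\cong D^{(n-2,2)}$ is a submodule of $L$, so $M:=\phi(S^{(n-2,2)})$ is of the first form. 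In the second case I take $M=D^{(n-1,1)}|(D^{(n)}\oplus D^{(n-2,2)})$ to be the distinguished submodule of $M^{(n-2,2)}$ (it occurs as the non-$D^{(n)}$ summand of the submodule of $M^{(n-2,2)}$ dual to the quotient $M^{(n-2,2)}/D^{(n-1,1)}\cong D^{(n)}\oplus N$ of Lemma \ref{l59}, using that $M^{(n-2,2)}$ is self-dual; cf.\ Lemma \ref{l61}); it contains $S^{(n-2,2)}$ with $M/S^{(n-2,2)}\cong D^{(n)}$ and has simple socle $D^{(n-1,1)}$. Then $\ker\phi\cap M$ meets $S^{(n-2,2)}$ trivially, hence embeds into $M/S^{(n-2,2)}\cong D^{(n)}$; if it were nonzero it would be a simple submodule of $M$ isomorphic to $D^{(n)}$, contradicting $\soc M\cong D^{(n-1,1)}$. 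So $\ker\phi\cap M=0$, whence $M\cong\phi(M)\subseteq L$ is of the second form.

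The main obstacle is this last paragraph: one has to read off from Lemma \ref{l12} (together with the uniqueness statements of Lemma \ref{l59}) the precise submodule $D^{(n-1,1)}|(D^{(n)}\oplus D^{(n-2,2)})$ of $M^{(n-2,2)}$ and verify that it contains $S^{(n-2,2)}$ with quotient $D^{(n)}$ and has simple socle; once this is in place, the injectivity of $\phi$ on it is forced by $\soc M\cong D^{(n-1,1)}$. Everything else — Frobenius reciprocity, Schur's lemma, left exactness of $\Hom$ — is routine.
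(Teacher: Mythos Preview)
Your argument is correct and a bit more direct than the paper's. Both proofs rest on the same structural facts (Lemmas \ref{l12} and \ref{l59}), but they organise the case split differently.

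The paper argues by assuming that no copy of $D^{(n-1,1)}|(D^{(n)}\oplus D^{(n-2,2)})\subseteq M^{(n-2,2)}$ sits inside $\End_F(D^\lambda)$; from this it deduces that every homomorphism $M^{(n-2,2)}\to\End_F(D^\lambda)$ factors through $M^{(n-2,2)}/D^{(n-1,1)}$, and then uses the filtration $M^{(n-2,2)}/D^{(n-1,1)}\sim D^{(n-2,2)}|(D^{(n)}\oplus S^{(n-1,1)})$ together with Lemma \ref{l34} to force a quotient $L$ with $D^{(n-2,2)}\subseteq L$. You instead go straight to the identification $M^{(n-2,2)}/S^{(n-2,2)}\cong D^{(n)}\oplus S^{(n-1,1)}$ (also from Lemma \ref{l59}), which rewrites the hypothesis as a strict inequality of $\Hom$-dimensions along the exact sequence $0\to S^{(n-2,2)}\to M^{(n-2,2)}\to D^{(n)}\oplus S^{(n-1,1)}\to 0$, and then work with a single map $\phi$ that is nonzero on $S^{(n-2,2)}$. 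Your case split on $\ker\phi\cap S^{(n-2,2)}\in\{0,D^{(n-1,1)}\}$ is clean, and the final step---that $\ker\phi\cap M$ injects into $M/S^{(n-2,2)}\cong D^{(n)}$ yet must lie in $\soc M\cong D^{(n-1,1)}$, hence vanishes---is a nice way to force injectivity on $M$. The ``obstacle'' you flag (that the distinguished $M\subseteq M^{(n-2,2)}$ really contains the unique $S^{(n-2,2)}$ with quotient $D^{(n)}$ and has simple socle) is indeed the only thing to verify, and it follows from Lemma \ref{l12} together with the uniqueness statements in Lemma \ref{l59} exactly as you indicate. Either route works; yours avoids the contrapositive and the use of Lemma \ref{l34}.
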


\begin{proof}
From Lemma \ref{l12}
\[\xymat{
&D^{(n)}\ar@{-}[ddr]&D^{(n-1,1)}\ar@{-}[ddl]\ar@{-}[d]\\
M^{(n-2,2)}=&&D^{(n-2,2)}\ar@{-}[d]&.\\
&D^{(n)}&D^{(n-1,1)}
}\]
Notice that the only quotients of $M^{(n-2,2)}$ but not of $M^{(n-2,2)}/D^{(n-1,1)}$ are of the form $M^{(n-2,2)}$ or $M^{(n-2,2)}/D^{(n)}$, where
\[\xymat{
&D^{(n)}\ar@{-}[ddr]&D^{(n-1,1)}\ar@{-}[d]\\
M^{(n-2,2)}/D^{(n)}=&&D^{(n-2,2)}\ar@{-}[d]\\
&&D^{(n-1,1)}
}\]
(no other quotients are possible since the socle of $M^{(n-2,2)}/D^{(n)}$ is ismorphic to $D^{(n-1,1)}$). Further the submodules of the form $D^{(n-1,1)}|(D^{(n)}\oplus D^{(n-2,2)})$ contained in $M^{(n-2,2)}$ and $M^{(n-2,2)}/D^{(n)}$ are isomorphic (by the structure of $M^{(n-2,2)}$ or from the proof of Lemma \ref{l63}).

Assume now that $\End_F(D^\lambda)$ does not contain any module
\[M=D^{(n-1,1)}|(D^{(n)}\oplus D^{(n-2,2)})\subseteq M^{(n-2,2)}.\]
Then neither $M^{(n-2,2)}$ nor $M^{(n-2,2)}/D^{(n)}$ are contained in $\End_F(D^\lambda)$ and so
\[\dim\Hom_{\s_n}\hspace{-1pt}(M^{(n-2,2)},\End_F(D^\lambda)\hspace{-1pt})\!=\!\dim\Hom_{\s_n}\hspace{-1pt}(M^{(n-2,2)}\hspace{-1pt}/\hspace{-1pt}D^{(n-1,1)},\End_F(D^\lambda)\hspace{-1pt}).\]
So, from Lemma \ref{l34} and by assumption,
\begin{align*}
&\dim\Hom_{\s_n}\hspace{-1pt}(M^{(n-2,2)}/D^{(n-1,1)},\End_F(D^\lambda)\hspace{-1pt})\\
&\hspace{12pt}=\dim\Hom_{\s_n}\hspace{-1pt}(M^{(n-2,2)},\End_F(D^\lambda)\hspace{-1pt})\\
&\hspace{12pt}=\dim\End_{\s_{n-2,2}}\hspace{-1pt}(D^\lambda\downarrow_{\s_{n-2,2}})\\
&\hspace{12pt}>\dim\Hom_{\s_n}\hspace{-1pt}(S^{(n-1,1)},\End_F(D^\lambda)\hspace{-1pt})\!+\!1\\
&\hspace{12pt}=\dim\Hom_{\s_n}\hspace{-1pt}(D^{(n)}\oplus S^{(n-1,1)},\End_F(D^\lambda)\hspace{-1pt}).
\end{align*}
Since from Lemma \ref{l59}
\[M^{(n-2,2)}/D^{(n-1,1)}\sim D^{(n)}\oplus (D^{(n-2,2)}|S^{(n-1,1)})\sim D^{(n-2,2)}|(D^{(n)}\oplus S^{(n-1,1)}),\]
it follows that there exists a quotient $L$ of $M^{(n-2,2)}$ (and so also of $M^{(n-2,2)}$) with $D^{(n-2,2)}\subseteq L\subseteq\End_F(D^\lambda)$.
\end{proof}

\label{s7}

We will now find lower/upper bounds depending on $\epsilon_0(\lambda)$ and $\epsilon_1(\lambda)$ for $\dim\End_{\s_{n-2}}(D^\lambda\downarrow_{\s_{n-2}})$ and $\dim\Hom_{\s_n}(S^{(n-1,1)},\End_F(D^\lambda))$. These bounds will be later used to prove that the assumptions in Lemmas \ref{l17} and \ref{l36} hold, at least when $\epsilon_0(\lambda)+\epsilon_1(\lambda)$ is large enough.

\begin{lemma}\label{l23}
Let $\lambda\vdash n$ be $p$-regular. Assume that $\epsilon_i(\lambda)\geq 1$ and let $\nu$ with $D^\nu=\tilde{e}_iD^\lambda$. Assume further that $\epsilon_j(\nu)\geq 1$. Then
\[\dim\End_{\s_{n-2}}(e_je_iD^\lambda)\geq \epsilon_i(\lambda)+\epsilon_j(\nu)-1.\]
\end{lemma}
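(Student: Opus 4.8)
The goal is a lower bound on $\dim\End_{\s_{n-2}}(e_je_iD^\lambda)$ in terms of $\epsilon_i(\lambda)$ and $\epsilon_j(\nu)$, where $D^\nu=\tilde e_iD^\lambda$. The natural strategy is to produce enough linearly independent endomorphisms of $e_je_iD^\lambda$ by combining two sources: endomorphisms coming from the $\s_{n-1}$-level (applying $e_j$ to endomorphisms of $e_iD^\lambda$) and endomorphisms arising from the structure of $e_je_iD^\nu$-type pieces. Concretely, I would first recall from Lemma~\ref{l39} that $e_iD^\lambda$ is self-dual indecomposable with head and socle $D^\nu$ and $\dim\End_{\s_{n-1}}(e_iD^\lambda)=\epsilon_i(\lambda)$, and likewise, since $\epsilon_j(\nu)\geq 1$, the module $e_jD^\nu$ is self-dual indecomposable with head and socle $D^\rho$ (where $D^\rho=\tilde e_jD^\nu$) and $\dim\End_{\s_{n-2}}(e_jD^\nu)=\epsilon_j(\nu)$.

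The key structural input is that $e_j$ is exact and additive, and applying $e_j$ to the (at least $\epsilon_i(\lambda)$-dimensional) endomorphism algebra of $e_iD^\lambda$ gives a map $\End_{\s_{n-1}}(e_iD^\lambda)\to\End_{\s_{n-2}}(e_je_iD^\lambda)$; I would argue this is injective, or at least has image of dimension $\geq\epsilon_i(\lambda)-\delta$ for a controllable defect, using that $e_j$ does not kill $D^\nu$ (since $\epsilon_j(\nu)\geq 1$) together with Lemma~\ref{l39}\ref{l39c} to track that the relevant composition factors survive. Separately, $e_jD^\nu$ is (by Lemma~\ref{l9} / the relation between $e_i$ and $\tilde e_i$, or directly by adjunction via Lemma~\ref{l48}) a submodule and quotient of $e_je_iD^\lambda$: indeed $e_i D^\lambda$ surjects onto $D^\nu$ and is a submodule of $D^\lambda\!\downarrow$, so exactness of $e_j$ gives $e_jD^\nu$ as a quotient of $e_je_iD^\lambda$, and self-duality (Lemma~\ref{l57}) gives it also as a submodule. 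This contributes $\epsilon_j(\nu)$ further dimensions of endomorphisms (composing the surjection $e_je_iD^\lambda\twoheadrightarrow e_jD^\nu$ with $\End_{\s_{n-2}}(e_jD^\nu)$ and then the inclusion $e_jD^\nu\hookrightarrow e_je_iD^\lambda$), via Lemma~\ref{t1}-type counting.

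The combinatorics is then an inclusion–exclusion: the two families of endomorphisms overlap in the ones that factor through $D^\rho=\soc(e_jD^\nu)$, and I expect exactly one dimension of overlap (the "identity direction" through the common simple socle/head $D^\rho$), which is precisely why the bound is $\epsilon_i(\lambda)+\epsilon_j(\nu)-1$ rather than $\epsilon_i(\lambda)+\epsilon_j(\nu)$. So the plan is: (i) exhibit a subspace $U\subseteq\End_{\s_{n-2}}(e_je_iD^\lambda)$ of dimension $\geq\epsilon_i(\lambda)$ coming from $e_j$ applied to $\End_{\s_{n-1}}(e_iD^\lambda)$; (ii) exhibit a subspace $V$ of dimension $\geq\epsilon_j(\nu)$ coming from the sub/quotient copies of $e_jD^\nu$; (iii) show $\dim(U\cap V)\leq 1$, or equivalently $\dim(U+V)\geq\epsilon_i(\lambda)+\epsilon_j(\nu)-1$, by analysing which endomorphisms factor through the common simple $D^\rho$.

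The main obstacle will be step (iii) — controlling the intersection $U\cap V$ precisely — and, relatedly, making sure step (i) genuinely yields $\epsilon_i(\lambda)$ independent endomorphisms after applying $e_j$ (a priori $e_j$ could collapse part of $\End_{\s_{n-1}}(e_iD^\lambda)$). I would handle the latter by using that $e_j$ is faithful on the block in question when $\epsilon_j(\nu)\geq1$ together with the self-duality and the fact that $\dim\End$ is computed by counting copies of the simple socle (Lemma~\ref{t1}), and the former by noting that any endomorphism in $U\cap V$ must send socle to socle and hence factor through the image of $D^\rho$, of which there is essentially one. If a clean general intersection bound proves elusive, the fallback is the cruder observation that $U+V$ already contains the span of $\{\mathrm{id}\}$ plus the $\epsilon_i(\lambda)-1$ nilpotent-type endomorphisms from $e_iD^\lambda$ plus the $\epsilon_j(\nu)-1$ nilpotent-type endomorphisms from $e_jD^\nu$, which are visibly independent (they live in "different" radical layers), giving $1+(\epsilon_i(\lambda)-1)+(\epsilon_j(\nu)-1)=\epsilon_i(\lambda)+\epsilon_j(\nu)-1$ directly.
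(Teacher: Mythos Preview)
Your proposal is correct and follows essentially the same approach as the paper: construct the subspace $U$ by restricting each $f\in\End_{\s_{n-1}}(e_iD^\lambda)$ to the block component $e_je_iD^\lambda$ (injective because $e_jD^\nu\neq 0$), construct $V$ by routing $\End_{\s_{n-2}}(e_jD^\nu)$ through the head and socle copies $e_j\hd(e_iD^\lambda)\to e_j\soc(e_iD^\lambda)$, and then bound $U\cap V$ by $1$. The paper carries out exactly this plan; the one place requiring genuine care is your step~(iii), where the paper shows that if $\overline{f}=\overline{g}\neq 0$ then $\Im(\overline f)=e_j\soc(e_iD^\lambda)$ forces $\hd(\Im f)=\soc(e_iD^\lambda)$, so $f$ lies in the $1$-dimensional space $\Hom_{\s_{n-1}}(\hd(e_iD^\lambda),\soc(e_iD^\lambda))\cong\End_{\s_{n-1}}(D^\nu)$ --- pursue this line rather than your radical-layer fallback, which is not rigorous as stated.
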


\begin{proof}
For $f\in\End_{\s_{n-1}}(e_iD^\lambda)$ let $\overline{f}$ be the restriction of $f$ to $e_je_iD^\lambda$. By definition of $e_i$ and $e_j$ we have that $e_iD^\lambda$ is contained in a unique block of $F\s_{n-1}$ and so $e_je_iD^\lambda$ is the restriction of $e_iD^\lambda\downarrow_{\s_{n-2}}$ to a certain block of $\s_{n-2}$. As $f\in\End_{\s_{n-1}}(e_iD^\lambda)\subseteq\End_{\s_{n-2}}(e_iD^\lambda\downarrow_{\s_{n-2}})$, so that $f$ acts blockwise on $e_iD^\lambda\downarrow_{\s_{n-2}}$, we have that $\overline{f}\in\End_{\s_{n-2}}(e_je_iD^\lambda)$.

From Lemma \hyperref[l39a]{\ref*{l39}\ref*{l39a}} we have $\hd(e_iD^\lambda)\cong D^\nu$ and by assumption $e_jD^\nu\not=0$, so that $0\not=e_j\hd(e_iD^\lambda)\subseteq \hd(e_iD^\lambda)\downarrow_{\s_{n-2}}$. If $f\in\End_{\s_{n-1}}(e_iD^\lambda)$ is non-zero, then $\hd(e_iD^\lambda)\not\subseteq \ker(f)$. As $\hd(e_iD^\lambda)$ is simple it then follows that $\hd(e_iD^\lambda)\cap\ker(f)=0$ and so $e_j\hd(e_iD^\lambda)\cap\ker(f)=0$. So $e_j\hd(e_iD^\lambda)\not\subseteq\ker(f)$ and then $\overline{f}\not=0$.

Let now $g\in\End_{\s_{n-2}}(e_jD^\nu)$. As $\soc(e_iD^\lambda)$ and $\hd(e_iD^\lambda)$ are isomorphic to $D^\nu$ from Lemma \hyperref[l39a]{\ref*{l39}\ref*{l39a}}, we can consider $g$ as
\[g\in\Hom_{\s_{n-2}}(e_j\hd(e_iD^\lambda),e_j\soc(e_iD^\lambda)).\]
In particular $g$ defines an endomorphism $\overline{g}\in\End_{\s_{n-2}}(e_je_iD^\lambda)$ with $\overline{g}\not=0$ if $g\not=0$.

Assume now that $\overline{f}=\overline{g}\not=0$ for some $f\in\End_{\s_{n-1}}(e_iD^\lambda)$ and $g\in\End_{\s_{n-2}}(e_jD^\nu)$. Then $f\not=0$, in particular $\soc(e_iD^\lambda)\subseteq\Im(f)$ (as the socle of $e_iD^\lambda$ is simple). From
\[e_j\soc(e_iD^\lambda)\subseteq \Im(\overline{f})=\Im(\overline{g})\subseteq e_j\soc(e_iD^\lambda)\]
it follows that
\[\Im(\overline{f})=e_j\soc(e_iD^\lambda)\subseteq \soc(e_iD^\lambda)\downarrow_{\s_{n-2}}.\]
As $\overline{f}\not=0$ and so $f\not=0$ and as $\hd(e_iD^\lambda)\cong D^\nu$ from Lemma \hyperref[l39a]{\ref*{l39}\ref*{l39a}} it follows that
\[0\not=\hd(\Im(f))\subseteq \hd(e_iD^\lambda)\cong D^\nu\]
and then $\hd(\Im(f))\cong D^\nu$. Further $e_j(\Im(f))=\Im(\overline{f})$, as $f$ acts blockwise on $e_i(D^\lambda)\downarrow_{\s_{n-2}}$ and by definition of $\overline{f}$. In particular $e_j\hd(\Im(f))$ is a quotient of $\Im(\overline{f})$. So
\[e_jD^\nu\cong e_j\hd(\Im(f))=\Im(\overline{f})/A=(e_j\soc(e_iD^\lambda))/A\cong (e_jD^\nu)/B,\]
for certain submodules $A\subseteq e_j\soc(e_iD^\lambda)$ and $B\subseteq e_jD^\nu$ with $A\cong B$. It follows that $A,B=0$ and then
\[0\not= e_j\soc(e_iD^\lambda)=e_j\hd(\Im(f))\subseteq \hd(\Im(f))\downarrow_{\s_{n-2}}.\]
As $\soc(e_iD^\lambda)\cong D^\nu$ is simple we then have (by the previous part) that
\[D^\nu\cong\soc(e_iD^\lambda)\subseteq \hd(\Im(f))\cong D^\nu\]
and then that $\hd(\Im(f))=\soc(e_iD^\lambda)$. In particular $f$ can be seen as an element of $\Hom_{\s_{n-1}}(\hd(e_iD^\lambda),\soc(e_iD^\lambda))\cong\End_{\s_{n-1}}(D^\nu)$.

By the first part of the proof we also have that $f\mapsto \overline{f}$ and $g\mapsto \overline{g}$ are injective. So
\begin{align*}
&\dim\End_{\s_{n-2}}(e_je_iD^\lambda)\\
&\hspace{12pt}\geq\dim\langle\{ \overline{f}:f\in\End_{\s_{n-1}}(e_iD^\lambda)\}\cup\{\overline{g}:g\in\End_{\s_{n-2}}(e_jD^\nu)\}\rangle\\
&\hspace{12pt}=\dim\End_{\s_{n-1}}(e_iD^\lambda)+\dim\End_{\s_{n-2}}(e_jD^\nu)-\dim\End_{\s_{n-1}}(D^\nu)\\
&\hspace{12pt}=\epsilon_i(\lambda)+\epsilon_j(\nu)-1.
\end{align*}
\end{proof}

\begin{lemma}\label{l4'}
Let $p=2$ and $\lambda$ be 2-regular. Assume that $\epsilon_0(\lambda),\epsilon_1(\lambda)\geq 1$. Let $x_0$ and $x_1$ be the lowest normal nodes of $\lambda$ of residue 0 and 1 respectively. Also let $\nu_0=\lambda\setminus\{x_0\}$ and $\nu_1=\lambda\setminus\{x_1\}$. Then $\tilde{e}_0D^\lambda=D^{\nu_0}$ and $\tilde{e}_1D^\lambda=D^{\nu_1}$.

Further if $0\leq i\leq 1$ and $j=1-i$ are such that $x_i$ is above $x_j$ then $\epsilon_i(\nu_j)\geq\epsilon_i(\lambda)$ and $\epsilon_j(\nu_i)=\epsilon_j(\lambda)+2$.
\end{lemma}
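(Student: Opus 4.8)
\emph{Proof plan.} The first assertion is immediate from the definitions: since $\epsilon_i(\lambda)\ge 1$, the definition of $\tilde{e}^r_i$ with $r=1$ gives $\tilde{e}_iD^\lambda=D^\nu$, where $\nu$ is obtained from $\lambda$ by deleting its bottom $i$-normal node; that node is by definition $x_i$, so $\tilde{e}_iD^\lambda=D^{\lambda\setminus\{x_i\}}=D^{\nu_i}$, and likewise with $i$ replaced by $j$. For the remaining statements I would argue entirely with $i$-signatures. As $p=2$, the $2$-regular partition $\lambda=(\lambda_1>\cdots>\lambda_h>0)$ is strict, so its removable nodes are the $(r,\lambda_r)$ and its addable nodes are the $(r,\lambda_r+1)$ for $1\le r\le h$ (each addable, by strictness) together with $(h+1,1)$. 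Setting $s_r=(\lambda_r-r)\bmod 2$, row $r$ contributes exactly one symbol to the $i$-signature: a $-$ (from $(r,\lambda_r)$) if $s_r=i$, and a $+$ (from $(r,\lambda_r+1)$) if $s_r=1-i$; in addition $(h+1,1)$ adds a final $+$ to the $(h\bmod 2)$-signature. I would then use the elementary fact that the number of unmatched $-$'s in such a word (which is $\epsilon_i$ of the underlying partition) equals the maximum over all prefixes of (number of $-$'s minus number of $+$'s), together with the observation that for $\lambda$ this maximum is already attained at the symbol coming from the row $r_i$ of $x_i$ --- the running count of unmatched $-$'s reaches its final value $\epsilon_i(\lambda)$ precisely at the lowest normal node. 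The hypothesis ``$x_i$ above $x_j$'' means $r_i<r_j$.

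The heart of the argument is the effect of passing from $\lambda$ to $\lambda\setminus\{x\}$, where $x=(r,\lambda_r)$ is a removable node of residue $c$ and $\lambda\setminus\{x\}$ is again $2$-regular. Decreasing $\lambda_r$ by $1$ toggles $s_r$ and changes nothing else: the symbols contributed by rows $r-1$ and $r+1$ are unchanged (here the $2$-regularity of $\lambda\setminus\{x\}$ is exactly what makes $(r+1,\lambda_{r+1}+1)$ remain addable), the only genuinely separate case being $\lambda_r=1$, in which row $r$ (necessarily the last) disappears and $(h+1,1)$ is replaced by $(h,1)$. The upshot is: the $c$-signature of $\lambda\setminus\{x\}$ is that of $\lambda$ with the symbol at position $r$ changed from $-$ to $+$; while the $(1-c)$-signature of $\lambda\setminus\{x\}$ is that of $\lambda$ with the symbol at position $r$ changed from $+$ to $-$, except when $\lambda_r=1$, where the $(1-c)$-signature instead loses its two trailing $+$'s.

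Granting this, I would conclude as follows. For $\epsilon_j(\nu_i)=\epsilon_j(\lambda)+2$: here $x_i$ is the higher node, so $r_i<r_j\le h$ and hence $\lambda_{r_i}>\lambda_{r_i+1}\ge 1$, ruling out the degenerate case; thus the $j$-signature of $\nu_i$ is that of $\lambda$ with position $r_i$ changed from $+$ to $-$, which raises by $2$ the prefix sum of every prefix reaching position $r_i$ and leaves the shorter prefixes alone. Since $\epsilon_j(\lambda)$ is attained at position $r_j>r_i$, the maximum over the long prefixes becomes $\epsilon_j(\lambda)+2$ while the maximum over the short ones is at most $\epsilon_j(\lambda)$, so $\epsilon_j(\nu_i)=\epsilon_j(\lambda)+2$. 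For $\epsilon_i(\nu_j)\ge\epsilon_i(\lambda)$: the $i$-signature of $\nu_j$ is obtained from that of $\lambda$ either by changing position $r_j$ from $+$ to $-$ --- in which case every prefix of length $<r_j$ is unchanged and already contains the prefix reaching $r_i<r_j$ at which $\epsilon_i(\lambda)$ is attained --- or, in the degenerate case, by deleting the two trailing $+$'s, whose prefix sums lie strictly below $\epsilon_i(\lambda)$ and hence do not affect the maximum; either way $\epsilon_i(\nu_j)\ge\epsilon_i(\lambda)$. (Alternatively, the equality can be derived from Lemmas \ref{l52} and \ref{l47} once one knows, again from the description above, that removing $x_i$ leaves the number of conormal $j$-nodes unchanged.)

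The step I expect to be most delicate is the signature bookkeeping of the second paragraph --- correctly tracking the effect of deleting a single node on \emph{both} residues' signatures at once, in particular the bottom node $(h+1,1)$ and the row-deletion case $\lambda_r=1$. Once that is pinned down, the two assertions follow at once from $r_i<r_j$.
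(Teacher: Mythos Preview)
Your proof is correct and proceeds by the same underlying mechanism as the paper's --- a direct combinatorial analysis of how the $i$- and $j$-signatures change when a single node is removed from a strict partition. Your bookkeeping (each row contributes exactly one symbol, toggled by removing the end node; the $\lambda_r=1$ case handled separately) is accurate, and the prefix-maximum characterisation of $\epsilon_i$ together with the observation that this maximum is attained at the row of the bottom $i$-normal node is exactly what is needed.

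The one difference worth noting is that the paper dispatches $\epsilon_i(\nu_j)\ge\epsilon_i(\lambda)$ in a single sentence without any signature computation: since $x_i$ is the lowest $i$-normal node and lies above $x_j$, \emph{every} $i$-normal node of $\lambda$ lies above $x_j$, and removing a node strictly below a given normal node cannot destroy its normality. Your prefix-max argument recovers this, but the direct observation is shorter. For the equality $\epsilon_j(\nu_i)=\epsilon_j(\lambda)+2$ the paper does essentially what you do --- it identifies the single changed symbol in the $j$-signature and reads off the effect on the reduced signature --- so the two arguments are equivalent there.
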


\begin{proof}
The first part of the lemma follows from the definition of $\tilde{e}_0D^\lambda$ and $\tilde{e}_1D^\lambda$.

By definition of $i$, all normal nodes of $\lambda$ of residue $i$ are above the node $x_j$. So all normal nodes of $\lambda$ of residue $i$ are also normal in $\nu_j$ and then $\epsilon_i(\nu_j)\geq\epsilon_i(\lambda)$.

We will now show that $\epsilon_j(\nu_i)=\epsilon_j(\lambda)+2$. To see this let $y$ and $z$ be the nodes to the right and to the left of $x_i$ (notice that $z$ is a node of $\nu_i$, since the row of $\lambda$ containing $x_i$ contains at least 2 nodes, as $x_i$ is above $x_j$ and so not on the last row of $\lambda$ and as $\lambda$ is 2-regular). Then $y$ and $z$ both have residue $j$. As $x_i$ is the bottom normal $i$-node of $\lambda$ we have that $\nu_i$ is also 2-regular (since $D^{\nu_i}$ is defined). As all parts of $\lambda$ and $\nu_i$ are distinct
\[\begin{tikzpicture}
\draw (-2,-1.5) node {$\lambda=$};

\draw (2.3,-0.6) node {$\iddots$};
\draw (1,-1.2) node {$z$};
\draw (1.37,-1.22) node {$x_i$};
\draw (1.8,-1.2) node {$y$};
\draw (0.5,-1.6) node {$\iddots$};
\draw (-0.0,-2.22) node {$x_j$};
\draw (-0.5,-2.6) node {$\iddots$};

\draw (2,-1.0) -- (1.6,-1.0) -- (1.6,-1.4) -- (0.8,-1.4);
\draw (0.2,-2.0) -- (0.2,-2.4) -- (-0.2,-2.4);
\draw (-0.8,-3) -- (-1.4,-3) -- (-1.4,0) -- (3.2,0) -- (3.2,-0.4) -- (2.6,-0.4);

\draw (3.4,-1.7) node {,};
\end{tikzpicture}
\hspace{12pt}
\begin{tikzpicture}
\draw (-2,-1.5) node {$\nu_i=$};

\draw (2.3,-0.6) node {$\iddots$};
\draw (1,-1.2) node {$z$};
\draw (1.37,-1.22) node {$x_i$};
\draw (1.8,-1.2) node {$y$};
\draw (0.5,-1.6) node {$\iddots$};
\draw (-0.0,-2.22) node {$x_j$};
\draw (-0.5,-2.6) node {$\iddots$};

\draw (2,-1.0) -- (1.15,-1.0) -- (1.15,-1.4) -- (0.8,-1.4);
\draw (0.2,-2.0) -- (0.2,-2.4) -- (-0.2,-2.4);
\draw (-0.8,-3) -- (-1.4,-3) -- (-1.4,0) -- (3.2,0) -- (3.2,-0.4) -- (2.6,-0.4);
\end{tikzpicture}\]
and so $y$ is an addable node of $\lambda$ but not of $\nu_i$ and $z$ is a removable node of $\nu_i$ but not of $\lambda$. All other addable and removable nodes of residue $j$ of $\lambda$ and $\nu_i$ are equal.
So (as $y$ and $z$ both have residue $j$) the $j$-signatures of $\lambda$ and $\nu_i$ are equal, apart for the position corresponding to $y$ and $z$ respectively. In this position the $j$-signature of $\lambda$ is $-$ and that of $\nu_i$ is $+$. From the definition of $x_j$ (it is the bottom normal $j$-node of $\lambda$) we can then (partly) reduce the $j$-signature of $\lambda$ and $\nu_i$ to
 \[\begin{array}{lllllll}
&&x_j&&&y\\
\lambda:&-\ldots-&+&+\ldots+&+&-&+\ldots+\\
\\
&&x_j&&&z\\
\nu_i:&-\ldots-&+&+\ldots+&+&+&+\ldots+.
\end{array}\]
So the reduced $j$-signature are given by
 \[\begin{array}{lllllll}
\lambda:&-\ldots-&+&+\ldots+&&&+\ldots+\\
\nu_i:&-\ldots-&+&+\ldots+&+&+&+\ldots+.
\end{array}\]
In particular $\epsilon_j(\nu_i)=\epsilon_j(\lambda)+2$.
\end{proof}

\begin{lemma}\label{l4}
Let $p=2$ and $\lambda\vdash n$ be 2-regular. Assume that $\epsilon_0(\lambda),\epsilon_1(\lambda)\geq 1$. Then
\[\dim\End_{\s_{n-2}}(e_0e_1D^\lambda)+\dim\End_{\s_{n-2}}(e_1e_0D^\lambda)\geq 2\epsilon_0(\lambda)+2\epsilon_1(\lambda).\]
\end{lemma}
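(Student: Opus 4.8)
The plan is to apply Lemma~\ref{l23} twice, once with $(i,j)=(0,1)$ and once with $(i,j)=(1,0)$, and then combine the two resulting inequalities using the structural information about normal nodes provided by Lemma~\ref{l4'}. First I would invoke Lemma~\ref{l4'}: since $\epsilon_0(\lambda),\epsilon_1(\lambda)\geq 1$, the partition $\lambda$ has a bottom normal node $x_0$ of residue $0$ and a bottom normal node $x_1$ of residue $1$, with $D^{\nu_0}=\tilde e_0D^\lambda$ and $D^{\nu_1}=\tilde e_1D^\lambda$ where $\nu_0=\lambda\setminus\{x_0\}$, $\nu_1=\lambda\setminus\{x_1\}$. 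Exactly one of the two nodes lies above the other; write $i$ for the residue of the higher one and $j=1-i$ for the residue of the lower one. Lemma~\ref{l4'} then gives $\epsilon_i(\nu_j)\geq\epsilon_i(\lambda)$ and $\epsilon_j(\nu_i)=\epsilon_j(\lambda)+2$; in particular both $\epsilon_i(\nu_j)$ and $\epsilon_j(\nu_i)$ are $\geq 1$, so the hypotheses of Lemma~\ref{l23} are met in both orderings.

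Next I would run Lemma~\ref{l23} in the ordering that removes $x_i$ first (so that the inner functor is $e_i$, the partition $\nu$ of that lemma is $\nu_i$, and the outer functor is $e_j$): this yields
\[
\dim\End_{\s_{n-2}}(e_je_iD^\lambda)\geq \epsilon_i(\lambda)+\epsilon_j(\nu_i)-1=\epsilon_i(\lambda)+\epsilon_j(\lambda)+1,
\]
using $\epsilon_j(\nu_i)=\epsilon_j(\lambda)+2$ from Lemma~\ref{l4'}. Then I would run Lemma~\ref{l23} in the other ordering, removing $x_j$ first (inner functor $e_j$, partition $\nu_j$, outer functor $e_i$), obtaining
\[
\dim\End_{\s_{n-2}}(e_ie_jD^\lambda)\geq \epsilon_j(\lambda)+\epsilon_i(\nu_j)-1\geq \epsilon_j(\lambda)+\epsilon_i(\lambda)-1,
\]
using only $\epsilon_i(\nu_j)\geq\epsilon_i(\lambda)$.

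Adding the two displayed inequalities gives
\[
\dim\End_{\s_{n-2}}(e_0e_1D^\lambda)+\dim\End_{\s_{n-2}}(e_1e_0D^\lambda)\geq 2\epsilon_0(\lambda)+2\epsilon_1(\lambda),
\]
since $\{\dim\End_{\s_{n-2}}(e_je_iD^\lambda),\dim\End_{\s_{n-2}}(e_ie_jD^\lambda)\}=\{\dim\End_{\s_{n-2}}(e_0e_1D^\lambda),\dim\End_{\s_{n-2}}(e_1e_0D^\lambda)\}$ and the $+1$ and $-1$ cancel. The only genuine subtlety — and the step I expect to be the main point to get right — is the bookkeeping of which of $x_0,x_1$ is higher: the asymmetric bonus $\epsilon_j(\nu_i)=\epsilon_j(\lambda)+2$ from Lemma~\ref{l4'} applies only when the removed node is the higher one, so it is essential to feed Lemma~\ref{l23} the two orderings in the correct roles. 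Everything else is a direct substitution, and no case distinction on $n\bmod 4$ or on $p$ beyond $p=2$ is needed here.
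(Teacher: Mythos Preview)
Your proof is correct and follows essentially the same approach as the paper: apply Lemma~\ref{l4'} to determine which of the two bottom normal nodes is higher, then apply Lemma~\ref{l23} in both orderings and add the resulting bounds, using the asymmetric relations $\epsilon_j(\nu_i)=\epsilon_j(\lambda)+2$ and $\epsilon_i(\nu_j)\geq\epsilon_i(\lambda)$ so that the $+1$ and $-1$ cancel. Your careful bookkeeping of which ordering receives the $+2$ bonus is exactly the point the paper's proof handles (more tersely) by writing ``for a certain $0\leq i\leq 1$''.
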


\begin{proof}
It follows from Lemmas \ref{l23} and \ref{l4'} as if $\nu_0$ and $\nu_1$ are such that $\tilde{e}_0D^\lambda=D^{\nu_0}$ and $\tilde{e}_1D^\lambda=D^{\nu_1}$ then, for a certain $0\leq i\leq 1$ and for $j=1-i$,
\begin{align*}
&\dim\End_{\s_{n-2}}(e_0e_1D^\lambda)+\dim\End_{\s_{n-2}}(e_1e_0D^\lambda)\\
&\hspace{12pt}=\dim\End_{\s_{n-2}}(e_je_iD^\lambda)+\dim\End_{\s_{n-2}}(e_ie_jD^\lambda)\\
&\hspace{12pt}\geq\epsilon_i(\lambda)+\epsilon_j(\nu_i)-1+\epsilon_j(\lambda)+\epsilon_i(\nu_j)-1\\
&\hspace{12pt}\geq\epsilon_i(\lambda)+\epsilon_j(\lambda)+2-1+\epsilon_j(\lambda)+\epsilon_i(\lambda)-1\\
&\hspace{12pt}=2\epsilon_0(\lambda)+2\epsilon_1(\lambda).
\end{align*}
\end{proof}

\begin{lemma}\label{l60}
Let $M$ be an $\s_n$ module. Then
\[M\uparrow^{\s_{n+1}}\downarrow_{\s_n}\cong M\oplus M\downarrow_{\s_{n-1}}\uparrow^{\s_n}.\]
\end{lemma}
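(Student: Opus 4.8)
The plan is to recognise Lemma~\ref{l60} as an instance of the Mackey decomposition theorem applied to the composite functor $\mathrm{Res}^{\s_{n+1}}_{\s_n}\circ\mathrm{Ind}^{\s_{n+1}}_{\s_n}$. First I would identify the double cosets in $\s_n\backslash\s_{n+1}/\s_n$. Viewing $\s_n=\mathrm{Sym}\{1,\ldots,n\}$ inside $\s_{n+1}=\mathrm{Sym}\{1,\ldots,n+1\}$, one checks directly (for instance by tracking the image of $n+1$) that there are exactly two double cosets: the one of the identity $e$, namely $\s_n$ itself, and the one of the transposition $t:=(n,n+1)$.

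Next I would compute, for each representative $g$, the intersection $\s_n\cap{}^g\!\s_n$. For $g=e$ this is $\s_n$. For $g=t$ we have ${}^t\!\s_n=\mathrm{Sym}(t\{1,\ldots,n\})=\mathrm{Sym}\{1,\ldots,n-1,n+1\}$, so $\s_n\cap{}^t\!\s_n=\mathrm{Sym}\{1,\ldots,n-1\}=\s_{n-1}$. Plugging into the Mackey formula gives
\[M\uparrow^{\s_{n+1}}\downarrow_{\s_n}\cong\Bigl(\mathrm{Res}_{\s_n}{}^eM\Bigr)\uparrow^{\s_n}\oplus\Bigl(\mathrm{Res}_{\s_{n-1}}{}^tM\Bigr)\uparrow^{\s_n}.\]
The first summand is just $M$. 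For the second, the key observation is that $t=(n,n+1)$ commutes with every element of $\mathrm{Sym}\{1,\ldots,n-1\}=\s_{n-1}$, so the conjugation twist ${}^tM$ restricted to $\s_{n-1}$ is canonically the module $M\downarrow_{\s_{n-1}}$; hence the second summand is $M\downarrow_{\s_{n-1}}\uparrow^{\s_n}$, and the lemma follows.

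The main obstacle is essentially just bookkeeping: correctly counting the two double cosets and verifying that the conjugation twist in the Mackey formula is trivial on the relevant subgroup. There is no genuine difficulty. If one prefers to avoid invoking Mackey as a black box, one can instead argue by hand: choosing coset representatives $g_1,\ldots,g_{n+1}\in\s_{n+1}$ with $g_i$ sending $n+1\mapsto i$, the module $M\uparrow^{\s_{n+1}}$ decomposes as $\bigoplus_{i=1}^{n+1}g_i\otimes M$; taking $g_{n+1}=e$, the summand $g_{n+1}\otimes M$ is an $\s_n$-submodule isomorphic to $M$, while $\bigoplus_{i=1}^{n}g_i\otimes M$ is an $\s_n$-submodule on which $\s_n$ permutes the summands transitively with point stabiliser $\s_{n-1}$, and the $\s_{n-1}$-module $g_n\otimes M$ is isomorphic to $M\downarrow_{\s_{n-1}}$; this identifies that submodule with $M\downarrow_{\s_{n-1}}\uparrow^{\s_n}$ and reproves the claim.
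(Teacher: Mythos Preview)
Your argument is correct and is essentially the same as the paper's: the paper simply cites Mackey's theorem (together with two results from James--Kerber that supply the double-coset description), whereas you have written out the double-coset computation and the triviality of the conjugation twist explicitly. There is nothing to add.
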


\begin{proof}
This follows from Lemma 1.3.8 and Theorem 1.3.10 of \cite{jk} and Mackey's theorem.
\end{proof}

\begin{lemma}\label{l21}
If $i\not=j$ then $e_if_j$ and $f_je_i$ are isomorphic functors.
\end{lemma}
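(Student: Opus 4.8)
The plan is to deduce this from the Mackey-type decomposition of Lemma \ref{l60} by passing to block components. Since $e_i$ and $f_j$ are extended to arbitrary $F\s_n$-modules additively, it suffices to produce a natural isomorphism $e_if_jM\cong f_je_iM$ for every module $M$ lying in a single block, say with content $\mathbf b=(b_0,\ldots,b_{p-1})$; the general case then follows by summing over the block decomposition.

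Recall from the definitions preceding Lemma \ref{l45} that, on a module lying in one block, $e_k$ subtracts $1$ from the $k$-th entry of the content and $f_l$ adds $1$ to the $l$-th entry, with the functor being $0$ whenever the resulting tuple is not the content of an actual block. Hence, writing $u_0,\ldots,u_{p-1}$ for the standard basis of $\Z^p$, for our $M$ and any $k,l$ both $e_kf_lM$ and $f_le_kM$ lie in the $F\s_n$-block of content $\mathbf b+u_l-u_k$, while $M$ itself lies in the block of content $\mathbf b$.

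Applying Lemma \ref{l60} to $M$ and expanding the composites $M\uparrow^{\s_{n+1}}\downarrow_{\s_n}$ and $M\downarrow_{\s_{n-1}}\uparrow^{\s_n}$ via Lemma \ref{l45} yields
\[\bigoplus_{k,l}e_kf_lM\;\cong\;M\;\oplus\;\bigoplus_{k,l}f_le_kM,\]
an isomorphism natural in $M$, since both the Mackey isomorphism of Lemma \ref{l60} and the block-component projections are natural. Now fix $i\neq j$ and project both sides onto the $F\s_n$-block of content $\mathbf b+u_j-u_i$. On the left a summand $e_kf_lM$ survives only when $u_l-u_k=u_j-u_i$; as $u_0,\ldots,u_{p-1}$ are linearly independent and $u_j-u_i\neq0$ (because $i\neq j$), this forces $k=i$ and $l=j$. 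On the right the summand $M$, of content $\mathbf b$, does not survive (again since $u_j-u_i\neq0$), and $f_le_kM$ survives only for $(k,l)=(i,j)$. Thus the projection produces a natural isomorphism $e_if_jM\cong f_je_iM$; when $\mathbf b+u_j-u_i$ is not the content of a block, both sides are $0$ and there is nothing to prove. As $M$ was an arbitrary module in a single block, we conclude $e_if_j\cong f_je_i$ as functors.

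The main point to watch is the block bookkeeping — checking that on each side precisely the pair $(i,j)$ contributes to the chosen block and that the identity summand $M$ does not — but this is immediate from $i\neq j$ and the linear independence of the $u_k$.
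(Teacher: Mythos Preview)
Your proof is correct and follows essentially the same approach as the paper's: both reduce to a module $M$ in a single block, invoke Lemma~\ref{l60} together with the block decompositions of Lemma~\ref{l45}, and then project onto the block of content $\mathbf b+u_j-u_i$ to isolate the $(i,j)$ summand on each side. Your version is slightly more explicit about why only the pair $(k,l)=(i,j)$ survives the projection and about the naturality of the isomorphism (which the paper leaves implicit), but the argument is the same.
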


\begin{proof}
Let $M$ be an $\s_n$-module corresponding to a single block $B$ with content $(b_0,\ldots,b_{p-1})$. Let $C$ be the block with content $(c_0,\ldots,c_{p-1})$, where $c_i=b_i-1$, $c_j=b_j+1$ and $c_k=b_k$ if $k\not=i,j$. As $i\not=j$ we have from Lemma \ref{l45} that the components of
\[M\uparrow^{\s_{n+1}}\downarrow_{\s_n}\cong \sum_{k,l}e_kf_lD^\lambda\]
and
\[M\downarrow_{\s_{n-1}}\uparrow^{\s_n}\cong \sum_{k,l}f_le_kD^\lambda\]
corresponding to block $C$ are $e_if_jM$ and $f_je_iM$ respectively. Since from Lemma \ref{l60}
\[M\uparrow^{\s_{n+1}}\downarrow_{\s_n}\cong M\oplus M\downarrow_{\s_{n-1}}\uparrow^{\s_n}\]
and $B\not=C$, so that the components of $M\downarrow_{\s_{n-1}}\uparrow^{\s_n}$ and $M\uparrow^{\s_{n+1}}\downarrow_{\s_n}$ in block $C$ are isomorphic (as $M$ corresponds to $B$), it follows that $e_if_jM\cong f_je_iM$.

The lemma holds as $e_if_j$ and $f_je_i$ respect direct sums.
\end{proof}

\begin{lemma}\label{l22}
Let $\lambda\vdash n$ be $p$-regular. For $i\not=j$ we have that
\[\dim\Hom_{\s_{n-2}}(e_je_iD^\lambda,e_ie_jD^\lambda)\geq\epsilon_i(\lambda)\epsilon_j(\lambda).\]
\end{lemma}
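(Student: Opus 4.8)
The plan is to transport this Hom space to $\s_n$ by means of the biadjunctions of Lemma \ref{l48} together with the commutation $f_ie_j\cong e_jf_i$ of Lemma \ref{l21}, and then to produce enough linearly independent maps by factoring through $D^\lambda$. Write $a=\epsilon_i(\lambda)$ and $b=\epsilon_j(\lambda)$; if $a=0$ or $b=0$ the inequality is trivial, since then $e_iD^\lambda$ or $e_jD^\lambda$ vanishes by Lemma \ref{l39}, so assume $a,b\geq 1$. First I would establish the isomorphism
\[\Hom_{\s_{n-2}}(e_je_iD^\lambda,e_ie_jD^\lambda)\cong\Hom_{\s_n}(f_ie_iD^\lambda,f_je_jD^\lambda).\]
Using that $f_i$ is left adjoint to $e_i$ (Lemma \ref{l48}), the left-hand side is $\Hom_{\s_{n-1}}(f_ie_je_iD^\lambda,e_jD^\lambda)$; since $i\neq j$, Lemma \ref{l21} gives $f_ie_je_iD^\lambda\cong e_jf_ie_iD^\lambda$; and using that $e_j$ is left adjoint to $f_j$ this becomes $\Hom_{\s_n}(f_ie_iD^\lambda,f_je_jD^\lambda)$.

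Next I would compute, again by adjunction and Lemma \ref{l39} (recall $e_i=e_i^{(1)}$, so that $\dim\End_{\s_{n-1}}(e_iD^\lambda)=\binom{\epsilon_i(\lambda)}{1}=a$, and likewise for $j$),
\[\dim\Hom_{\s_n}(f_ie_iD^\lambda,D^\lambda)=\dim\End_{\s_{n-1}}(e_iD^\lambda)=a,\qquad\dim\Hom_{\s_n}(D^\lambda,f_je_jD^\lambda)=\dim\End_{\s_{n-1}}(e_jD^\lambda)=b.\]
Fix bases $\phi_1,\dots,\phi_a$ and $\psi_1,\dots,\psi_b$ of these two spaces. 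I claim the $ab$ composites $\psi_k\circ\phi_l\colon f_ie_iD^\lambda\to f_je_jD^\lambda$ are linearly independent. Since $D^\lambda$ is simple and $\psi_1,\dots,\psi_b$ form a basis of $\Hom_{\s_n}(D^\lambda,f_je_jD^\lambda)$, the map $\Psi\colon(D^\lambda)^{\oplus b}\to f_je_jD^\lambda$ with $k$-th component $\psi_k$ is injective (its image is the $D^\lambda$-isotypic part of $\soc(f_je_jD^\lambda)$, on which $\Psi$ is given by an invertible matrix over $\End(D^\lambda)=F$). If $\sum_{k,l}c_{kl}\psi_k\phi_l=0$, put $\chi_k=\sum_l c_{kl}\phi_l$; then $\Psi$ composed with the map $f_ie_iD^\lambda\to(D^\lambda)^{\oplus b}$ of components $\chi_1,\dots,\chi_b$ is zero, hence every $\chi_k=0$, hence every $c_{kl}=0$ by independence of $\phi_1,\dots,\phi_a$. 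Therefore $\dim\Hom_{\s_n}(f_ie_iD^\lambda,f_je_jD^\lambda)\geq ab$, and the displayed isomorphism yields $\dim\Hom_{\s_{n-2}}(e_je_iD^\lambda,e_ie_jD^\lambda)\geq\epsilon_i(\lambda)\epsilon_j(\lambda)$.

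The main obstacle is the first step: one has to keep careful track of which of $e_i,f_i$ (and of $e_j,f_j$) is the left and which the right adjoint at each stage, so that after passing to $\s_{n-1}$ the two applications of $e_j$ are adjacent and Lemma \ref{l21} applies; once the identification with $\Hom_{\s_n}(f_ie_iD^\lambda,f_je_jD^\lambda)$ is in hand, the dimension count is routine.
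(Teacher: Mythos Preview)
Your proof is correct and follows essentially the same route as the paper: the same chain of adjunctions and the commutation $f_ie_j\cong e_jf_i$ to identify the Hom space with $\Hom_{\s_n}(f_ie_iD^\lambda,f_je_jD^\lambda)$, and then factoring through the simple module $D^\lambda$ to get the bound $\epsilon_i(\lambda)\epsilon_j(\lambda)$. The paper compresses your linear-independence argument into the single line ``Since $D^\lambda$ is simple, $\dim\Hom_{\s_n}(f_ie_iD^\lambda,f_je_jD^\lambda)\geq\dim\Hom_{\s_n}(f_ie_iD^\lambda,D^\lambda)\cdot\dim\Hom_{\s_n}(D^\lambda,f_je_jD^\lambda)$'', but your explicit justification via injectivity of $\Psi$ is exactly what underlies that step.
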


\begin{proof}
From Lemmas \hyperref[l39b]{\ref*{l39}\ref*{l39b}} and \ref{l48}
\begin{align*}
\dim\Hom_{\s_n}(f_ke_kD^\lambda,D^\lambda)&=\dim\Hom_{\s_n}(D^\lambda,f_ke_kD^\lambda)\\
&=\dim\End_{\s_n}(e_kD^\lambda)\\
&=\epsilon_k(\lambda).
\end{align*}
Since $D^\lambda$ is simple, using Lemmas \ref{l48} and \ref{l21} it then follows that
\begin{align*}
&\dim\Hom_{\s_{n-2}}(e_je_iD^\lambda,e_ie_jD^\lambda)\\
&\hspace{12pt}=\dim\Hom_{\s_{n-1}}(f_ie_je_iD^\lambda,e_jD^\lambda)\\
&\hspace{12pt}=\dim\Hom_{\s_{n-1}}(e_jf_ie_iD^\lambda,e_jD^\lambda)\\
&\hspace{12pt}=\dim\Hom_{\s_n}(f_ie_iD^\lambda,f_je_jD^\lambda)\\
&\hspace{12pt}\geq\dim\Hom_{\s_n}(f_ie_iD^\lambda,D^\lambda)\cdot\dim\Hom_{\s_n}(D^\lambda,f_je_jD^\lambda)\\
&\hspace{12pt}=\epsilon_i(\lambda)\epsilon_j(\lambda).
\end{align*}
\end{proof}

\begin{lemma}\label{l7}
Let $p=2$ and $\lambda\vdash n$ be 2-regular. If $\epsilon_0(\lambda),\epsilon_1(\lambda)\geq 1$ let $a=2\epsilon_0(\lambda)+2\epsilon_1(\lambda)+2\epsilon_0(\lambda)\epsilon_1(\lambda)$, else let $a=0$. Then
\[\dim\End_{\s_{n-2}}(D^\lambda\downarrow_{\s_{n-2}})\geq2\epsilon_0(\lambda)(\epsilon_0(\lambda)-1)+2\epsilon_1(\lambda)(\epsilon_1(\lambda)-1)+a.\]
\end{lemma}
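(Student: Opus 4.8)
The plan is to branch $D^\lambda$ down to $\s_{n-2}$ and estimate $\dim\End_{\s_{n-2}}(D^\lambda\downarrow_{\s_{n-2}})$ block by block. First I would apply Lemma~\ref{l45} twice; since $p=2$ this gives
\[D^\lambda\downarrow_{\s_{n-2}}\cong e_0^2D^\lambda\oplus e_0e_1D^\lambda\oplus e_1e_0D^\lambda\oplus e_1^2D^\lambda.\]
If the block of $D^\lambda$ has content $(b_0,b_1)$, then $e_i^2D^\lambda$ lies in the block with content obtained from $(b_0,b_1)$ by subtracting $2$ from $b_i$, while $e_0e_1D^\lambda$ and $e_1e_0D^\lambda$ both lie in the block with content obtained by subtracting $1$ from each of $b_0,b_1$. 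Hence $e_0^2D^\lambda$, $e_1^2D^\lambda$ and $e_0e_1D^\lambda\oplus e_1e_0D^\lambda$ lie in pairwise distinct blocks of $\s_{n-2}$, so
\begin{align*}
\dim\End_{\s_{n-2}}(D^\lambda\downarrow_{\s_{n-2}})&=\dim\End_{\s_{n-2}}(e_0^2D^\lambda)+\dim\End_{\s_{n-2}}(e_1^2D^\lambda)\\
&\quad+\dim\End_{\s_{n-2}}(e_0e_1D^\lambda\oplus e_1e_0D^\lambda).
\end{align*}

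For the square terms I would use Lemma~\ref{l39}: since $e_i^2D^\lambda\cong(e_i^{(2)}D^\lambda)^{\oplus 2}$, we get
\[\dim\End_{\s_{n-2}}(e_i^2D^\lambda)=4\dim\End_{\s_{n-2}}(e_i^{(2)}D^\lambda)=4\binom{\epsilon_i(\lambda)}{2}=2\epsilon_i(\lambda)(\epsilon_i(\lambda)-1),\]
which holds also when $\epsilon_i(\lambda)\leq 1$, as then $e_i^{(2)}D^\lambda=0$ and both sides vanish. So the first two summands together contribute exactly $2\epsilon_0(\lambda)(\epsilon_0(\lambda)-1)+2\epsilon_1(\lambda)(\epsilon_1(\lambda)-1)$.

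It then remains to show $\dim\End_{\s_{n-2}}(e_0e_1D^\lambda\oplus e_1e_0D^\lambda)\geq a$. If $\epsilon_0(\lambda)=0$ or $\epsilon_1(\lambda)=0$, then $e_0e_1D^\lambda=e_1e_0D^\lambda=0$ and $a=0$, so there is nothing to prove. Otherwise $\epsilon_0(\lambda),\epsilon_1(\lambda)\geq 1$, and I would expand
\begin{align*}
\dim\End_{\s_{n-2}}(e_0e_1D^\lambda\oplus e_1e_0D^\lambda)&=\dim\End_{\s_{n-2}}(e_0e_1D^\lambda)+\dim\End_{\s_{n-2}}(e_1e_0D^\lambda)\\
&\quad+\dim\Hom_{\s_{n-2}}(e_0e_1D^\lambda,e_1e_0D^\lambda)+\dim\Hom_{\s_{n-2}}(e_1e_0D^\lambda,e_0e_1D^\lambda),
\end{align*}
bounding the first two terms together by $2\epsilon_0(\lambda)+2\epsilon_1(\lambda)$ via Lemma~\ref{l4}, and each of the last two by $\epsilon_0(\lambda)\epsilon_1(\lambda)$ via Lemma~\ref{l22} (applied with $(i,j)=(1,0)$ and $(i,j)=(0,1)$ respectively). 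This gives $\dim\End_{\s_{n-2}}(e_0e_1D^\lambda\oplus e_1e_0D^\lambda)\geq 2\epsilon_0(\lambda)+2\epsilon_1(\lambda)+2\epsilon_0(\lambda)\epsilon_1(\lambda)=a$, and adding the three block contributions yields the claim.

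The argument is essentially an assembly of Lemmas~\ref{l45}, \ref{l39}, \ref{l4} and \ref{l22}, so I do not anticipate a real obstacle; the two points requiring care are the block bookkeeping that decouples the three summands of $D^\lambda\downarrow_{\s_{n-2}}$ (and identifies the mixed pieces as lying in a common block), and the multiplicity $2!=2$ — equivalently the factor $4$ in $\dim\End$ — produced by Lemma~\ref{l39} on the square terms.
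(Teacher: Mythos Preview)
Your proof is correct and follows essentially the same route as the paper's: decompose via Lemma~\ref{l45}, separate the three blocks, compute the diagonal pieces $e_i^2D^\lambda$ exactly via Lemma~\ref{l39}, and bound the mixed block using Lemma~\ref{l4} for the two $\End$ terms and Lemma~\ref{l22} for the two cross $\Hom$ terms. One small inaccuracy: when, say, $\epsilon_0(\lambda)=0$ you cannot conclude that $e_0e_1D^\lambda=0$ (removing a $1$-normal node can create new $0$-normal nodes), only that $e_1e_0D^\lambda=0$; but this is harmless since $a=0$ already makes the required inequality trivial.
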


\begin{proof}
From Lemma \ref{l45} we have that
\[D^\lambda\downarrow_{\s_{n-2}}=e_0e_0D^\lambda\oplus e_1e_1D^\lambda\oplus e_0e_1D^\lambda\oplus e_1e_0D^\lambda.\]
Further $e_0e_0D^\lambda$, $e_1e_1D^\lambda$ and $(e_0e_1D^\lambda\oplus e_1e_0D^\lambda)$ correspond to 3 distinct blocks, from the definition of $e_0$ and $e_1$.

So, from Lemmas \ref{l39} and \ref{l22},
\begin{align*}
&\dim\End_{\s_{n-2}}(D^\lambda\downarrow_{\s_{n-2}})\\
&\hspace{6pt}=\dim\End_{\s_{n-2}}(e_0e_0D^\lambda\oplus e_0e_1D^\lambda\oplus e_1e_0D^\lambda\oplus e_1e_1D^\lambda)\\
&\hspace{6pt}=\dim\End_{\s_{n-2}}(e_0e_0D^\lambda)\!+\!\dim\End_{\s_{n-2}}(e_1e_1D^\lambda)\\
&\hspace{24pt}+\!\dim\End_{\s_{n-2}}(e_0e_1D^\lambda\oplus e_1e_0D^\lambda)\\
&\hspace{6pt}=4\binom{\!\epsilon_0(\lambda)\!}{2}\!+\!4\binom{\!\epsilon_1(\lambda)\!}{2}\!+\!\dim\End_{\s_{n-2}}(e_0e_1D^\lambda)\!+\!\dim\End_{\s_{n-2}}(e_1e_0D^\lambda)\\
&\hspace{24pt}+\!\dim\Hom_{\s_{n-2}}(e_0e_1D^\lambda,e_1e_0D^\lambda)\!+\!\dim\Hom_{\s_{n-2}}(e_1e_0D^\lambda,e_0e_1D^\lambda)\\
&\hspace{6pt}\geq4\binom{\!\epsilon_0(\lambda)\!}{2}\!+\!4\binom{\!\epsilon_1(\lambda)\!}{2}\!+\!2\epsilon_0(\lambda)\epsilon_1(\lambda)\!+\!\dim\End_{\s_{n-2}}(e_0e_1D^\lambda)\\
&\hspace{24pt}+\!\dim\End_{\s_{n-2}}(e_1e_0D^\lambda).
\end{align*}
The lemma now follows from Lemma \ref{l4}.
\end{proof}

\begin{lemma}\label{l58}
If $M,N$ are $G$-modules then
\[\Hom_G(M,\End_F(N))\cong\Hom_G(N,N\otimes M^*).\]
\end{lemma}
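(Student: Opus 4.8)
The plan is to obtain the isomorphism from a short chain of natural, $G$-equivariant identifications among internal-$\Hom$ and tensor functors, taking $G$-fixed points at the end of each stage. All modules here are finite dimensional, so $F$-linear duals and the canonical comparison maps between $\Hom_F$ and $\otimes$ are available.

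First I would use the uncurrying isomorphism of $F$-spaces
\[\Hom_F(M,\Hom_F(N,N))\ \xrightarrow{\ \sim\ }\ \Hom_F(M\otimes N,N),\qquad f\mapsto\bigl(m\otimes n\mapsto f(m)(n)\bigr),\]
and check that it intertwines the $G$-actions, where $G$ acts diagonally on $M\otimes N$ and by conjugation $g\cdot h=g\,h\,g^{-1}$ on $\Hom_F(N,N)=\End_F(N)$. Passing to $G$-invariants gives $\Hom_G(M,\End_F(N))\cong\Hom_G(M\otimes N,N)$. Then, using the symmetry $M\otimes N\cong N\otimes M$, the analogous uncurrying applied to the factor $M$, and the canonical $G$-equivariant identification $\Hom_F(M,N)\cong N\otimes M^*$ (which is where finite-dimensionality of $M$ is used), one gets the chain
\[\Hom_F(N\otimes M,N)\cong\Hom_F(N,\Hom_F(M,N))\cong\Hom_F(N,N\otimes M^*)\]
of $G$-equivariant isomorphisms; taking $G$-invariants yields $\Hom_G(M\otimes N,N)\cong\Hom_G(N,N\otimes M^*)$. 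Concatenating the two isomorphisms proves the lemma. (Equivalently, one could begin from $\End_F(N)\cong N\otimes N^*$ and apply the adjunction $\Hom_G(A,B\otimes C^*)\cong\Hom_G(A\otimes C,B)$ twice, once with $C=N^*$ and once with $C=M$.)

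The only point that really needs to be written out is the routine verification that each displayed $F$-linear map commutes with the $G$-actions, so that it restricts to an isomorphism on fixed points; the underlying vector-space identities are entirely standard and require nothing beyond $\dim_F M<\infty$. I do not expect any genuine obstacle.
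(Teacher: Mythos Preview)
Your proposal is correct and matches the paper's argument; indeed the paper's proof is precisely the two-line chain $\Hom_G(M,\End_F(N))\cong\Hom_G(M,N\otimes N^*)\cong\Hom_G(N,N\otimes M^*)$, which is exactly the parenthetical alternative you give at the end. Your main write-up just unpacks the same adjunctions more explicitly.
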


\begin{proof}
We have that
\[\Hom_G(M,\End_F(N))\cong\Hom_G(M,N\otimes N^*)\cong \Hom_G(N,N\otimes M^*).\]
\end{proof}

\begin{lemma}\label{l45'}
If $n$ is an $\s_n$-module then
\[N\otimes M^{(n-1,1)}\cong \sum_{i,j}f_ie_j N.\]
\end{lemma}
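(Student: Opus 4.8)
The plan is to reduce the tensor product to a restriction followed by an induction, and then to apply Lemma \ref{l45} twice.

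\textbf{Step 1: identify the permutation module with an induced module.} By definition $M^{(n-1,1)}=1\uparrow^{\s_n}_{\s_{n-1,1}}$, and the Young subgroup $\s_{n-1,1}$ is just $\s_{n-1}$, so $M^{(n-1,1)}\cong 1\uparrow^{\s_n}_{\s_{n-1}}$. The tensor identity (projection formula) for the subgroup $\s_{n-1}\leq\s_n$ then gives an isomorphism of $\s_n$-modules
\[N\otimes M^{(n-1,1)}\cong N\otimes\bigl(1\uparrow^{\s_n}_{\s_{n-1}}\bigr)\cong\bigl(N\downarrow_{\s_{n-1}}\otimes 1\bigr)\uparrow^{\s_n}\cong N\downarrow_{\s_{n-1}}\uparrow^{\s_n},\]
since tensoring $N\downarrow_{\s_{n-1}}$ with the trivial module changes nothing. (This is standard; alternatively it can be read off from Mackey's formula exactly as in the proof of Lemma \ref{l60}.)

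\textbf{Step 2: decompose via Lemma \ref{l45}.} Splitting $N$ into its block components and applying Lemma \ref{l45} gives $N\downarrow_{\s_{n-1}}\cong e_0N\oplus\ldots\oplus e_{p-1}N$. Inducing each summand back up to $\s_n$ and applying the second half of Lemma \ref{l45} (after again splitting $e_jN$ into its block components) gives $(e_jN)\uparrow^{\s_n}\cong f_0e_jN\oplus\ldots\oplus f_{p-1}e_jN$. Combining with Step 1,
\[N\otimes M^{(n-1,1)}\cong N\downarrow_{\s_{n-1}}\uparrow^{\s_n}\cong\bigoplus_{i,j}f_ie_jN=\sum_{i,j}f_ie_jN,\]
which is exactly the assertion of the lemma.

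\textbf{Main point of care.} There is no real obstacle here. The only subtlety is that $e_i$, $f_i$ and Lemma \ref{l45} are, strictly speaking, first defined for modules lying in a single block and then extended additively; so one must pass to block components of $N$ (and of each $e_jN$) before quoting Lemma \ref{l45}. Since $\downarrow_{\s_{n-1}}$, $\uparrow^{\s_n}$, $e_i$ and $f_i$ are all additive, this causes no difficulty, and one should just check that every isomorphism above is one of $\s_n$-modules, which it is.
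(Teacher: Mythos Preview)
Your proof is correct and follows exactly the same route as the paper: use the tensor identity to rewrite $N\otimes M^{(n-1,1)}$ as $N\downarrow_{\s_{n-1}}\uparrow^{\s_n}$, then apply Lemma \ref{l45} twice. Your added remark about passing to block components before invoking Lemma \ref{l45} just makes explicit what the paper's one-line proof leaves implicit.
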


\begin{proof}
From Lemma \ref{l45} it follows that
\[N\otimes M^{(n-1,1)}=N\otimes 1\uparrow^{\s_n}_{\s_{n-1}}\cong N\downarrow_{\s_{n-1}}\uparrow^{\s_n}\cong\sum_{i,j}f_ie_j N.\]
\end{proof}

\begin{lemma}\label{l51}
Let $n\geq 3$ with $p|n$ and $\lambda$ be $p$-regular. Then
\begin{align*}
&\min\{\max_{i:\epsilon_i(\lambda)\geq1}\hspace{-1pt}\{[\soc((f_i\tilde{e}_iD^\lambda)/D^\lambda):\hspace{-1pt}D^\lambda]\},\max_{i:\phi_i(\lambda)\geq1}\hspace{-1pt}\{[\soc((e_i\tilde{f}_iD^\lambda)/D^\lambda):\hspace{-1pt}D^\lambda]\}\hspace{-1pt}\}\\
&\hspace{36pt} +\epsilon_0(\lambda)+\ldots+\epsilon_{p-1}(\lambda)-1\\
&\hspace{12pt}\geq\dim\Hom_{\s_n}(S^{(n-1,1)},\End_F(D^\lambda))\\
\end{align*}
\end{lemma}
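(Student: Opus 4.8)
The plan is to reduce the statement, via Lemma~\ref{l58}, to the vanishing of a single connecting homomorphism, and then to bound the image of that map by a direct ``collapsing'' argument using $e_i$ and $f_i$.

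First I would apply Lemma~\ref{l58} with $M=S^{(n-1,1)}$ and $N=D^\lambda$, so that it suffices to bound $\dim\Hom_{\s_n}(D^\lambda,D^\lambda\otimes(S^{(n-1,1)})^*)$. Since $p\mid n$, the module $1_{\s_n}=S^{(n)}$ is the head of $M^{(n-1,1)}$ and, by self-duality of $M^{(n-1,1)}$, also a submodule, with $M^{(n-1,1)}/1_{\s_n}\cong(S^{(n-1,1)})^*$ (the general-$p$ analogue of Lemma~\ref{l1}, obtained from $M^{(n-1,1)}/S^{(n-1,1)}\cong 1_{\s_n}$). Tensoring $0\to 1_{\s_n}\to M^{(n-1,1)}\to(S^{(n-1,1)})^*\to 0$ with $D^\lambda$ gives $0\to D^\lambda\overset{c}{\to}D^\lambda\otimes M^{(n-1,1)}\overset{\pi}{\to}D^\lambda\otimes(S^{(n-1,1)})^*\to 0$, where, under $D^\lambda\otimes M^{(n-1,1)}\cong D^\lambda\downarrow_{\s_{n-1}}\uparrow^{\s_n}$, the map $c$ is the unit of the restriction/induction adjunction. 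Applying $\Hom_{\s_n}(D^\lambda,-)$ and evaluating $\dim\Hom_{\s_n}(D^\lambda,D^\lambda\otimes M^{(n-1,1)})$ by Lemmas~\ref{l45'}, \ref{l48} and \ref{l39} --- the summands $f_ie_jD^\lambda$ of $D^\lambda\otimes M^{(n-1,1)}$ with $i\ne j$ lie in blocks different from that of $D^\lambda$, while $\Hom_{\s_n}(D^\lambda,f_ie_iD^\lambda)\cong\End_{\s_{n-1}}(e_iD^\lambda)$ has dimension $\epsilon_i(\lambda)$ --- one obtains $\dim\Hom_{\s_n}(D^\lambda,D^\lambda\otimes(S^{(n-1,1)})^*)=\bigl(\epsilon_0(\lambda)+\dots+\epsilon_{p-1}(\lambda)\bigr)-1+\dim\Im(\delta)$, where $\delta$ is the connecting map into $\mathrm{Ext}^1_{\s_n}(D^\lambda,D^\lambda)$. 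So it remains to bound $\dim\Im(\delta)$ above by $[\soc((f_i\tilde e_iD^\lambda)/D^\lambda):D^\lambda]$ for some $i$ with $\epsilon_i(\lambda)\ge1$ and by $[\soc((e_i\tilde f_iD^\lambda)/D^\lambda):D^\lambda]$ for some $i$ with $\phi_i(\lambda)\ge1$; such $i$ exist since $\sum_i\epsilon_i(\lambda)\ge1$ and $\sum_i\phi_i(\lambda)\ge1$ by Lemma~\ref{l52}, and the lemma then follows.

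For the first bound I would fix $i$ with $\epsilon_i(\lambda)\ge1$, set $D^\nu=\tilde e_iD^\lambda$, and record (Lemmas~\ref{l39}, \ref{l40}, \ref{l47}) that $e_iD^\lambda$ has simple head $D^\nu$, that $f_i\tilde e_iD^\lambda=f_iD^\nu$ has simple socle and head $D^\lambda$, and that $f_iD^\nu$ embeds in $f_ie_iD^\lambda$ (apply $f_i$ to $\soc(e_iD^\lambda)\hookrightarrow e_iD^\lambda$). Let $r_i\colon D^\lambda\otimes M^{(n-1,1)}\cong\bigoplus_{j,k}f_je_kD^\lambda\to f_iD^\nu$ be the composite of the projection onto the $(i,i)$-summand with $f_i$ applied to the head projection $e_iD^\lambda\to D^\nu$. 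The crucial observation is that $r_i\circ c\ne 0$: the $(i,i)$-component of $c$ is the unit $D^\lambda\to f_ie_iD^\lambda$ of the $(e_i,f_i)$-adjunction, corresponding to $\id_{e_iD^\lambda}$, so $r_i\circ c$ corresponds under adjunction to the head projection, which is nonzero. Now for nonzero $g\colon D^\lambda\to D^\lambda\otimes(S^{(n-1,1)})^*$ the preimage $P_g=\pi^{-1}(g(D^\lambda))$ contains $c(D^\lambda)$, has exactly two composition factors $D^\lambda$, and $\delta(g)$ is the class of $0\to D^\lambda\to P_g\to D^\lambda\to 0$; if this is nonzero then $\soc(P_g)=c(D^\lambda)$ is simple, hence $r_i$ is injective on $P_g$ and carries it into $f_iD^\nu$ with socle equal to $\soc(f_iD^\nu)$. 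Therefore every $\delta(g)$ lies in the image of $\Hom_{\s_n}(D^\lambda,(f_iD^\nu)/\soc(f_iD^\nu))$ in $\mathrm{Ext}^1_{\s_n}(D^\lambda,D^\lambda)$, a subspace of dimension $[\soc((f_i\tilde e_iD^\lambda)/D^\lambda):D^\lambda]$, and this bounds $\dim\Im(\delta)$.

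For the second bound I would run the same argument with $\s_{n+1}$ replacing $\s_{n-1}$: by Lemma~\ref{l60} there is a split surjection $D^\lambda\uparrow^{\s_{n+1}}\downarrow_{\s_n}\to D^\lambda\otimes M^{(n-1,1)}$, whose composite with $\pi$ has kernel $E\cong D^\lambda\oplus D^\lambda$; applying $\Hom_{\s_n}(D^\lambda,-)$ and using $\dim\Hom_{\s_n}(D^\lambda,D^\lambda\uparrow^{\s_{n+1}}\downarrow_{\s_n})=\dim\End_{\s_{n+1}}(D^\lambda\uparrow^{\s_{n+1}})=\phi_0(\lambda)+\dots+\phi_{p-1}(\lambda)=\epsilon_0(\lambda)+\dots+\epsilon_{p-1}(\lambda)+1$ (Lemmas~\ref{l53}, \ref{l52}) again isolates $\dim\Im(\delta)$ as the correction term. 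Since $D^\lambda\uparrow^{\s_{n+1}}\downarrow_{\s_n}\cong\bigoplus_j e_jf_jD^\lambda\oplus\bigoplus_{j\ne k}f_je_kD^\lambda$ by Lemma~\ref{l21}, with the block of $D^\lambda$ being $\bigoplus_j e_jf_jD^\lambda$, for $i$ with $\phi_i(\lambda)\ge1$ one projects onto $e_if_iD^\lambda$ and composes with $e_i$ applied to the head projection $f_iD^\lambda\to\tilde f_iD^\lambda$, concluding as before. The step I expect to be the main obstacle is exactly here: owing to the extra summand $D^\lambda$ coming from Lemma~\ref{l60}, the relevant preimage module no longer has simple socle, so one must check that the collapsing map remains injective on the copy of $D^\lambda$ contributed by $P_g$ --- equivalently, that the $(i,i)$-component of the pertinent natural map $D^\lambda\to e_if_iD^\lambda$ corresponds to an \emph{invertible} element of $\End_{\s_n}(f_iD^\lambda)$. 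This in turn rests on the compatibility of the Mackey decomposition in Lemma~\ref{l60} with the decomposition $\bigoplus_j e_jf_jD^\lambda\cong D^\lambda\oplus\bigoplus_j f_je_jD^\lambda$ into $e_i/f_i$-summands; the first bound, by contrast, is essentially formal once $c$ is identified with the adjunction unit.
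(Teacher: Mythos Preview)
Your argument for the first inequality is correct and in fact yields more than the paper proves: since the collapsing map $r_i$ is available for \emph{every} $i$ with $\epsilon_i(\lambda)\ge1$, you actually obtain $\dim\Im(\delta)\le\min_{i:\epsilon_i(\lambda)\ge1}[\soc((f_i\tilde e_iD^\lambda)/D^\lambda):D^\lambda]$, whereas the paper only gets the bound with $\max$. The paper's method is different: it does not pass through $\mathrm{Ext}^1$ at all. Instead it takes an \emph{arbitrary} copy $D\cong D^\lambda$ inside $\bigoplus_if_ie_iD^\lambda$, filters each $e_iD^\lambda$ by a composition series so that $\bigoplus_if_ie_iD^\lambda$ is filtered by modules $f_iD_{i,k}$, locates the unique subquotient $f_{\bar i}D_{\bar i,\bar k}$ into which $D$ falls (necessarily with $D_{\bar i,\bar k}\cong\tilde e_{\bar i}D^\lambda$), and bounds $\dim\Hom_{\s_n}(D^\lambda,(\bigoplus_if_ie_iD^\lambda)/D)$ termwise via Lemma~\ref{l34}. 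This is cruder---there is no control over $\bar i$, hence the $\max$---but it is entirely elementary and, crucially, transports verbatim to the second inequality.

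The second inequality is not established in your proposal, and the obstacle you flag is genuine. Once you pass to $D^\lambda\uparrow^{\s_{n+1}}\downarrow_{\s_n}$, you need your collapsing map $s_i$ (projection to $e_if_iD^\lambda$ followed by $e_i$ of the head projection $f_iD^\lambda\to\tilde f_iD^\lambda$) to be nonzero on $c(D^\lambda)$, which sits inside the summand $D^\lambda\downarrow\uparrow$ of the Mackey decomposition. But under the standard Mackey isomorphism the unit of the $(\uparrow,\downarrow)$-adjunction lands in the \emph{other} summand $D^\lambda$; so knowing $s_i$ is nonzero on that unit tells you nothing about $s_i(c(D^\lambda))$, and the latter amounts to understanding the blockwise embedding $f_ie_iD^\lambda\hookrightarrow e_if_iD^\lambda$, which you do not analyse. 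The paper avoids this completely: it chooses the specific $D=c(D^\lambda)\subseteq D^\lambda\downarrow\uparrow\subseteq D^\lambda\uparrow\downarrow$, so that $(D^\lambda\uparrow\downarrow)/D\cong D^\lambda\oplus\bigl(D^\lambda\otimes(S^{(n-1,1)})^*\bigr)$, and then reruns the filtration argument on $\bigoplus_je_jf_jD^\lambda$ (now filtering each $f_jD^\lambda$ instead of each $e_jD^\lambda$). This yields $\dim\Hom_{\s_n}(D^\lambda,(D^\lambda\uparrow\downarrow)/D)\le\sum_j\phi_j(\lambda)-1+\max_i[\soc((e_i\tilde f_iD^\lambda)/D^\lambda):D^\lambda]$; subtracting $1$ for the extra summand $D^\lambda$ and using Lemma~\ref{l52} gives the second bound. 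The moral is that the filtration argument needs no compatibility between the two adjunctions, whereas your $\mathrm{Ext}^1$ approach does.
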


\begin{proof}
Since $p|n$ we have that
\[M^{(n-1,1)}=D^{(n)}|D^{(n-1,1)}|D^{(n)}\sim D^{(n)}|(S^{(n-1,1)})^*\]
from Lemma \ref{l1} and as $M^{(n-1,1)}$ is self-dual. Also, from Lemma \ref{l58},
\[\Hom_{\s_n}(S^{(n-1,1)},\End_F(D^\lambda))
\cong\Hom_{\s_n}(D^\lambda,D^\lambda\otimes (S^{(n-1,1)})^*).\]
Since $(S^{(n-1,1)})^*\cong M^{(n-1,1)}/D^{(n)}$, there exists $D\subseteq D^\lambda\otimes M^{(n-1,1)}$ with $D\cong D^\lambda$ for which $D^\lambda\otimes (S^{(n-1,1)})^*\cong(D^\lambda\otimes M^{(n-1,1)})/D$. We will now show that for an arbitrary $D\subseteq D^\lambda\otimes M^{(n-1,1)}$ with $D\cong D^\lambda$ we have
\begin{align*}
&\dim\Hom_{\s_n}(D^\lambda,(D^\lambda\otimes M^{(n-1,1)})/D)\\
&\hspace{12pt}\leq \epsilon_0(\lambda)+\ldots+\epsilon_{p-1}(\lambda)-1+\max_{i:\epsilon_i(\lambda)\geq1}\{[\soc((f_i\tilde{e}_iD^\lambda)/D^\lambda):D^\lambda]\}.
\end{align*}
Notice first that, from Lemma \ref{l45'},
\[D^\lambda\otimes M^{(n-1,1)}\cong\sum_{i,j}f_ie_jD^\lambda.\]
By definition of $e_i$ and $f_j$, the block component of $D^\lambda\otimes M^{(n-1,1)}$ is isomorphic to $\sum_i f_ie_iD^\lambda$. So, up to isomorphism, 
if $D\subseteq D^\lambda\otimes M^{(n-1,1)}$ with $D\cong D^\lambda$, then $D\subseteq \sum_if_ie_iD^\lambda$. Further
\[\dim\Hom_{\s_n}(D^\lambda,(D^\lambda\otimes M^{(n-1,1)})/D)=\dim\Hom_{\s_n}(D^\lambda,(\sum_if_ie_iD^\lambda)/D).\]
So we will consider $\sum_if_ie_iD^\lambda$ instead of $D^\lambda\otimes M^{(n-1,1)}$.

Let
\[e_iD^\lambda\sim D_{i,1}|\ldots|D_{i,h_i},\]
with $D_{i,k}$ simple. Then
\[\sum_if_ie_iD^\lambda\sim f_0D_{0,1}|\ldots|f_0D_{0,h_0}|\ldots|f_{p-1}D_{p-1,1}|\ldots|f_{p-1}D_{p-1,h_{p-1}}.\]
Notice that, from Lemmas \hyperref[l40a]{\ref*{l40}\ref*{l40a}} and \ref{l47}, we have that $D^\lambda\subseteq f_iD_{i,k}$ if and only if $D_{i,k}\cong \tilde{e}_iD^\lambda$. By definition and from the previous part of the proof we have that $D\cong D^\lambda$ and $D\subseteq \sum_if_ie_iD^\lambda$. So there exist $\overline{i}$ and $\overline{k}$ with $1\leq \overline{k}\leq h_{\overline{i}}$ and with $D_{\overline{i},\overline{k}}\cong \tilde{e}_{\overline{i}}D^\lambda$ such that
\[(\sum_if_ie_iD^\lambda)\hspace{-1pt}/\hspace{-1pt}D\hspace{-1pt}\sim\hspace{-1pt} f_0D_{0,1}|\ldots|f_{\overline{i}}D_{\overline{i},\overline{k}-1}|(\hspace{-1pt}(f_{\overline{i}}D_{\overline{i},\overline{k}})\hspace{-1pt}/\hspace{-1pt}D)|f_{\overline{i}}D_{\overline{i},\overline{k}+1}|\ldots|f_{p-1}D_{p-1,h_{p-1}}.\]
In particular, from Lemma \hyperref[l39b]{\ref*{l39}\ref*{l39b}} and by definition of $D_{i,k}$,
\begin{align*}
&\dim\Hom_{\s_n}(D^\lambda,(\sum_if_ie_iD^\lambda)/D)\\
&\hspace{12pt}=[\soc((\sum_if_ie_iD^\lambda)/D):D^\lambda]\\
&\hspace{12pt}\leq[\soc((f_{\overline{i}}D_{\overline{i},\overline{k}})/D):D^\lambda]+\sum_{(i,k)\not=(\overline{i},\overline{k})}[\soc(f_iD_{i,k}):D^\lambda]\\
&\hspace{12pt}\leq[\soc((f_{\overline{i}}D_{\overline{i},\overline{k}})/D^\lambda):D^\lambda]+|\{(i,k)\not=(\overline{i},\overline{k}):\tilde{e}_iD^\lambda\cong D_{i,k}\}|\\
&\hspace{12pt}=[\soc((f_{\overline{i}}D_{\overline{i},\overline{k}})/D^\lambda):D^\lambda]+|\{(i,k):\tilde{e}_iD^\lambda\cong D_{i,k}\}|-1\\
&\hspace{12pt}=[\soc((f_{\overline{i}}D_{\overline{i},\overline{k}})/D^\lambda):D^\lambda]+\epsilon_0(\lambda)+\ldots+\epsilon_{p-1}(\lambda)-1.
\end{align*}
As $e_{\overline{i}}D^\lambda\not=0$ this gives
\begin{align*}
&\dim\Hom_{\s_n}(S^{(n-1,1)},\End_F(D^\lambda))\\
&\hspace{12pt}=\dim\Hom_{\s_n}(D^\lambda,(\sum_if_ie_iD^\lambda)/D)\\
&\hspace{12pt}\leq \epsilon_0(\lambda)+\ldots+\epsilon_{p-1}(\lambda)-1+\max_{i:\epsilon_i(\lambda)\geq1}\{[\soc((f_i\tilde{e}_iD^\lambda)/D^\lambda):D^\lambda]\}.
\end{align*}

From Lemma \ref{l60} we have that
\[D^\lambda\uparrow^{\s_{n+1}}\downarrow_{\s_n}\cong D^\lambda\oplus D^\lambda\downarrow_{\s_{n-1}}\uparrow^{\s_n}.\]
So there exists $D\subseteq D^\lambda\uparrow^{\s_{n+1}}\downarrow_{\s_n}$ with $D\cong D^\lambda$ and
\[(D^\lambda\uparrow^{\s_{n+1}}\downarrow_{\s_n})/D\cong D^\lambda\oplus (D^\lambda\otimes (S^{(n-1,1)})^*).\]
In particular, with a similar proof, we obtain
\begin{align*}
&\dim\Hom_{\s_n}(D^\lambda,D^\lambda\otimes (S^{(n-1,1)})^*)\\
&\hspace{12pt}=\dim\Hom_{\s_n}(D^\lambda,(D^\lambda\uparrow^{\s_{n+1}}\downarrow_{\s_n})/D)-\dim\End_{\s_n}(D^\lambda)\\
&\hspace{12pt}\leq \phi_0(\lambda)+\ldots+\phi_{p-1}(\lambda)-1+\max_{i:\phi_i(\lambda)\geq1}\{[\soc((e_i\tilde{f}_iD^\lambda)/D^\lambda):D^\lambda]\}-1.
\end{align*}

As $\epsilon_0(\lambda)+\ldots+\epsilon_{p-1}(\lambda)+1=\phi_0(\lambda)+\ldots+\phi_{p-1}(\lambda)$ from Lemma \ref{l52}, the lemma follows.
\end{proof}

\begin{lemma}\label{l8}
Let $n\geq 3$ with $p|n$ and $\lambda$ be $p$-regular. Then
\begin{align*}
&\min\{\max_{i:\epsilon_i(\lambda)\geq1}\{\phi_i(\lambda)-\delta_{\phi_i(\lambda)>1}\},\max_{i:\phi_i(\lambda)\geq1}\{\epsilon_i(\lambda)-\delta_{\epsilon_i(\lambda)>1}\}\}\\
&\hspace{36pt} +\epsilon_0(\lambda)+\ldots+\epsilon_{p-1}(\lambda)-1\\
&\hspace{12pt}\geq\dim\Hom_{\s_n}(S^{(n-1,1)},\End_F(D^\lambda))\\
\end{align*}
In particular if $\epsilon_0(\lambda)+\ldots+\epsilon_{p-1}(\lambda)\geq 2$ then
\[\dim\Hom_{\s_n}(S^{(n-1,1)},\End_F(D^\lambda))\leq 2\epsilon_0(\lambda)+\ldots+2\epsilon_{p-1}(\lambda)-2.\]
\end{lemma}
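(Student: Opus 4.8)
The idea is to deduce Lemma~\ref{l8} from Lemma~\ref{l51} by bounding the two socle multiplicities that occur there. Fix $i$ with $\epsilon_i(\lambda)\geq 1$ and let $\nu$ be such that $D^\nu=\tilde{e}_iD^\lambda$; by Lemma~\ref{l47} we then have $D^\lambda=\tilde{f}_iD^\nu$ and $\phi_i(\nu)=\phi_i(\lambda)+1$. Applying Lemma~\ref{l40} with $r=1$, the module $M:=f_i\tilde{e}_iD^\lambda=f_iD^\nu$ has simple head and simple socle, both isomorphic to $D^\lambda$ (the partition $\pi$ of Lemma~\ref{l40} being the one with $D^\pi=\tilde{f}_iD^\nu=D^\lambda$), and $[M:D^\lambda]=\binom{\phi_i(\nu)}{1}=\phi_i(\lambda)+1=:t$. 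The key auxiliary step I would establish is purely module-theoretic: if $M$ is a finite dimensional module whose head and socle are both simple and isomorphic to a fixed simple module $D$, and $[M:D]=t$, then $[\soc(M/\soc M):D]\leq t-1$, and moreover $[\soc(M/\soc M):D]\leq t-2$ when $t\geq 3$. Comparing with $\phi_i(\lambda)=t-1$ in the three cases $t=1$ (where $M=D^\lambda$ and $M/\soc M=0$), $t=2$, $t\geq 3$, this yields $[\soc((f_i\tilde{e}_iD^\lambda)/D^\lambda):D^\lambda]\leq\phi_i(\lambda)-\delta_{\phi_i(\lambda)>1}$. The symmetric argument, with the roles of $e_i$ and $f_i$ interchanged and Lemma~\ref{l39} used in place of Lemma~\ref{l40}, gives $[\soc((e_i\tilde{f}_iD^\lambda)/D^\lambda):D^\lambda]\leq\epsilon_i(\lambda)-\delta_{\epsilon_i(\lambda)>1}$ for each $i$ with $\phi_i(\lambda)\geq 1$. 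Taking the relevant maxima, then the minimum, and adding $\epsilon_0(\lambda)+\ldots+\epsilon_{p-1}(\lambda)-1$, the first displayed inequality of the lemma follows from Lemma~\ref{l51}.

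For the auxiliary fact, the bound $[\soc(M/\soc M):D]\leq[M/\soc M:D]=t-1$ is immediate. Now suppose $t\geq 3$ and, for contradiction, that $[\soc(M/\soc M):D]=t-1$; put $N:=M/\soc M\neq 0$. As $N$ is a nonzero quotient of $M$ and $\hd M\cong D$ is simple, $\hd N\cong D$; and by hypothesis every composition factor of $N$ isomorphic to $D$ lies in $\soc N$. If $\mathrm{rad}\,N=0$ then $N$ is semisimple, hence $N\cong\hd N\cong D$, forcing $t-1=[N:D]=1$, a contradiction; so $\mathrm{rad}\,N\neq 0$ and $[\mathrm{rad}\,N:D]=t-2\geq 1$. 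Let $V\subseteq\soc N$ be the $D$-isotypic component; by hypothesis $[V:D]=t-1$, so $V\cong D^{\oplus(t-1)}$. The inclusion $(V+\mathrm{rad}\,N)/\mathrm{rad}\,N\hookrightarrow N/\mathrm{rad}\,N\cong D$ shows $[V\cap\mathrm{rad}\,N:D]\in\{t-1,t-2\}$, while $V\cap\mathrm{rad}\,N\subseteq\mathrm{rad}\,N$ shows $[V\cap\mathrm{rad}\,N:D]\leq t-2$; hence $[V\cap\mathrm{rad}\,N:D]=t-2$ and $V+\mathrm{rad}\,N=N$. By semisimplicity of $V$ choose a complement $W\cong D$ to $V\cap\mathrm{rad}\,N$ inside $V$; then $W\cap\mathrm{rad}\,N=0$ and $N=W+\mathrm{rad}\,N=W\oplus\mathrm{rad}\,N$, so $\hd N\cong D\oplus\hd(\mathrm{rad}\,N)$ has composition length at least $2$, contradicting that $\hd N$ is simple. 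Therefore $[\soc(M/\soc M):D]\leq t-2$ when $t\geq 3$, as claimed.

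It remains to prove the final ``in particular'' inequality. Assume $\epsilon_0(\lambda)+\ldots+\epsilon_{p-1}(\lambda)\geq 2$. By Lemma~\ref{l52}, $\phi_0(\lambda)+\ldots+\phi_{p-1}(\lambda)=\epsilon_0(\lambda)+\ldots+\epsilon_{p-1}(\lambda)+1>0$, so there is some $i$ with $\phi_i(\lambda)\geq 1$ and the maximum $\max_{i:\phi_i(\lambda)\geq1}\{\epsilon_i(\lambda)-\delta_{\epsilon_i(\lambda)>1}\}$ is over a nonempty set. If it is attained at some $i$ with $\epsilon_i(\lambda)\leq 1$ it is at most $1\leq\epsilon_0(\lambda)+\ldots+\epsilon_{p-1}(\lambda)-1$; if it is attained at some $i$ with $\epsilon_i(\lambda)\geq 2$ it equals $\epsilon_i(\lambda)-1\leq\epsilon_0(\lambda)+\ldots+\epsilon_{p-1}(\lambda)-1$. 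In either case the minimum in the first displayed inequality is at most $\epsilon_0(\lambda)+\ldots+\epsilon_{p-1}(\lambda)-1$, so that inequality yields $\dim\Hom_{\s_n}(S^{(n-1,1)},\End_F(D^\lambda))\leq 2\epsilon_0(\lambda)+\ldots+2\epsilon_{p-1}(\lambda)-2$. The main obstacle is the auxiliary module-theoretic fact: upgrading the obvious bound $[\soc(M/\soc M):D]\leq t-1$ to $t-2$ for $t\geq 3$ requires the semisimple-splitting argument above, and matching the $\delta$-terms exactly requires careful bookkeeping in the small cases $t\leq 2$.
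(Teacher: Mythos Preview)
Your proof is correct and follows essentially the same approach as the paper: both reduce to Lemma~\ref{l51} and then bound $[\soc((f_i\tilde{e}_iD^\lambda)/D^\lambda):D^\lambda]$ by $\phi_i(\lambda)-\delta_{\phi_i(\lambda)>1}$ (and symmetrically for the other term), and both derive the ``in particular'' by observing that $\max_{i:\phi_i(\lambda)\geq1}\{\epsilon_i(\lambda)-\delta_{\epsilon_i(\lambda)>1}\}\leq\sum_j\epsilon_j(\lambda)-1$ under the hypothesis. The only difference is cosmetic: for the auxiliary module-theoretic step the paper simply notes that $N:=(f_iD^\nu)/D^\lambda$ has simple head $D^\lambda$ and nonzero radical $M$ (when $\phi_i(\lambda)>1$), whence $\soc N\subseteq M$ and so $[\soc N:D^\lambda]\leq[M:D^\lambda]=\phi_i(\lambda)-1$; your splitting argument via the $D$-isotypic component of $\soc N$ reaches the same conclusion with a bit more machinery.
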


\begin{proof}
If $\epsilon_0(\lambda)+\ldots+\epsilon_{p-1}(\lambda)\geq 2$ then
\[\epsilon_0(\lambda)+\ldots+\epsilon_{p-1}(\lambda)-1\geq \max_i\{\epsilon_i(\lambda)-\delta_{\epsilon_i(\lambda)>1}\},\]
since if $\epsilon_i(\lambda)=\epsilon_0(\lambda)+\ldots+\epsilon_{p-1}(\lambda)$ then $\epsilon_i(\lambda)>1$. So it is enough to prove the first part of the lemma.

We will now prove that if $\epsilon_i(\lambda)\geq 1$ then $\soc((f_i\tilde{e}_iD^\lambda)/D^\lambda)$ contains at most $\phi_i(\lambda)-\delta_{\phi_i(\lambda)>1}$ copies of $D^\lambda$. Since it can be proved similarly that  if $\phi_i(\lambda)\geq 1$ then $\soc((e_i\tilde{f}_iD^\lambda)/D^\lambda)$ contains at most $\epsilon_i(\lambda)-\delta_{\epsilon_i(\lambda)>1}$ copies of $D^\lambda$, this will complete the proof of the lemma.

Assume that $\epsilon_i(\lambda)\geq 1$. Then $\tilde{e}_iD^\lambda\not=0$ from definition of $\tilde{e}_i$. Let $\nu$ with $D^\nu=\tilde{e}_iD^\lambda$. Then $\tilde{f}_iD^\nu=D^\lambda$ and $\phi_i(\nu)=\phi_i(\lambda)+1\geq 1$ from Lemma \ref{l47}. So we have to prove that $\soc((f_iD^\nu)/D^\lambda)$ contains at most $\phi_i(\lambda)-\delta_{\phi_i(\lambda)>1}$ copies of $D^\lambda$ (notice that $D^\lambda\cong \soc(f_iD^\nu)$ from Lemma \hyperref[l40a]{\ref*{l40}\ref*{l40a}}). From Lemma \hyperref[l40b]{\ref*{l40}\ref*{l40b}} we have
\[[\soc((f_iD^\nu)/D^\lambda)\hspace{-1pt}:\hspace{-2pt}D^\lambda]\!\leq\![(f_iD^\nu)/D^\lambda\hspace{-1pt}:\hspace{-2pt}D^\lambda]\!=\![f_iD^\nu\hspace{-2pt}:\hspace{-2pt}D^\lambda]-1\!=\!\phi_i(\nu)-1\!=\!\phi_i(\lambda).\]
So we can assume that $\phi_i(\lambda)>1$. In this case $(f_iD^\nu)/D^\lambda\not=0$ and so
\[0\not=\hd((f_iD^\nu)/D^\lambda)\subseteq \hd(f_iD^\nu)\cong D^\lambda\]
by Lemma \hyperref[l40a]{\ref*{l40}\ref*{l40a}}. In particular $\hd((f_iD^\nu)/D^\lambda)\cong D^\lambda$ and then
\[(f_iD^\nu)/D^\lambda\sim M|\overbrace{D^\lambda}^{\hd((f_iD^\nu)/D^\lambda)}\]
for a certain module $M\subseteq (f_iD^\nu)/D^\lambda$. As
\[[M:D^\lambda]=[(f_iD^\nu)/D^\lambda:D^\lambda]-1=\phi_i(\lambda)-1>0\]
we have that $M\not=0$. So $\soc((f_iD^\nu)/D^\lambda)\subseteq M$, as $\hd((f_iD^\nu)/D^\lambda)$ is simple and $(f_iD^\nu)/D^\lambda\sim M|\hd((f_iD^\nu)/D^\lambda)$. In particular
\[[\soc((f_iD^\nu)/D^\lambda):D^\lambda]\leq[M:D^\lambda]=\phi_i(\lambda)-1.\]
\end{proof}

We will also need the following lemma which compares the dimensions of $\End_{\s_{n-2}}(M\downarrow_{\s_{n-2}})$ and of $\End_{\s_{n-2,2}}(M\downarrow_{\s_{n-2,2}})$ for any $\s_n$-module $M$.

\begin{lemma}\label{l54}
If $p=2$ and $M$ is an $\s_n$-module then
\[\dim\End_{\s_{n-2}}(M\downarrow_{\s_{n-2}})\leq 2\dim\End_{\s_{n-2,2}}(M\downarrow_{\s_{n-2,2}}).\]
\end{lemma}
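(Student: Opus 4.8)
The plan is to use that $\s_{n-2}$ is an index-$2$ subgroup of $\s_{n-2,2}=\s_{n-2}\times\s_2$ and to translate the asserted inequality into a statement about a single square-zero operator on the (finite dimensional) algebra $E:=\End_{\s_{n-2}}(M\downarrow_{\s_{n-2}})$.

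First I would observe that the transposition $\tau=(n-1,n)$ generates the $\s_2$-factor and has support disjoint from $\s_{n-2}$, so it commutes with $\s_{n-2}$ inside $\s_n$. Hence the action of $\tau$ on $M$ is an $\s_{n-2}$-module endomorphism $t\in E$ with $t^2=\id$. Since an endomorphism of $M\downarrow_{\s_{n-2,2}}$ as an $\s_{n-2}\times\s_2$-module is precisely an $F$-linear endomorphism of $M$ commuting with both $\s_{n-2}$ and $\tau$, this identifies $\End_{\s_{n-2,2}}(M\downarrow_{\s_{n-2,2}})$ with the centralizer $C_E(t)=\{f\in E:ft=tf\}$.

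Next, in characteristic $2$, set $u:=t+\id\in E$. Then $u^2=(t+\id)^2=t^2+\id=0$, using $t^2=\id$ and $\mathrm{char}\,F=2$, and clearly $C_E(t)=C_E(u)$. I would then consider the $F$-linear map $\delta\colon E\to E$, $\delta(f):=uf-fu$, whose kernel is $C_E(u)=\End_{\s_{n-2,2}}(M\downarrow_{\s_{n-2,2}})$. Expanding $\delta^2(f)=u^2f-2ufu+fu^2$ and using $u^2=0$ together with $\mathrm{char}\,F=2$ shows $\delta^2=0$, hence $\Im(\delta)\subseteq\ker(\delta)$ and $\mathrm{rank}(\delta)\le\dim\ker(\delta)$. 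Rank–nullity for $\delta$ on $E$ then gives
\[\dim E=\mathrm{rank}(\delta)+\dim\ker(\delta)\le 2\dim\ker(\delta),\]
and since $\dim E=\dim\End_{\s_{n-2}}(M\downarrow_{\s_{n-2}})$ and $\dim\ker(\delta)=\dim\End_{\s_{n-2,2}}(M\downarrow_{\s_{n-2,2}})$, this is the claim.

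There is no serious obstacle here; the two points that need care are the identification of $\End_{\s_{n-2,2}}(M\downarrow_{\s_{n-2,2}})$ with $C_E(t)$ and the verification that $\delta^2=0$ genuinely relies on $\mathrm{char}\,F=2$ — which is exactly why the factor $2$, the composition length of $F\s_2$ in characteristic $2$, is forced. An alternative route, closer in spirit to Lemmas \ref{l34} and \ref{l60}, is to use Frobenius reciprocity and the tensor identity to write $E\cong\Hom_{\s_{n-2,2}}(M,\,M\otimes(1\uparrow_{\s_{n-2}}^{\s_{n-2,2}}))$, to note that $1\uparrow_{\s_{n-2}}^{\s_{n-2,2}}\sim 1_{\s_{n-2,2}}|1_{\s_{n-2,2}}$ in characteristic $2$ so that $M\otimes(1\uparrow_{\s_{n-2}}^{\s_{n-2,2}})\sim M|M$, and then to apply Lemma \ref{l34}.
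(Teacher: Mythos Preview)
Your main argument is correct and takes a genuinely different route from the paper. The paper proceeds exactly along the lines of the alternative you sketch at the end: it writes
\[
\dim\End_{\s_{n-2}}(M\downarrow_{\s_{n-2}})=\dim\Hom_{\s_{n-2,2}}\!\bigl(1_{\s_{n-2}}\!\uparrow^{\s_{n-2,2}},\,\End_F(M)\downarrow_{\s_{n-2,2}}\bigr)
\]
via Frobenius reciprocity, observes that $1_{\s_{n-2}}\!\uparrow^{\s_{n-2,2}}\sim (D^{(n-2)}\otimes D^{(2)})\,|\,(D^{(n-2)}\otimes D^{(2)})$ in characteristic~$2$, and then applies Lemma~\ref{l34}. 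Your primary proof instead stays entirely inside the finite-dimensional algebra $E=\End_{\s_{n-2}}(M\downarrow_{\s_{n-2}})$: identifying $\End_{\s_{n-2,2}}(M\downarrow_{\s_{n-2,2}})$ with the centraliser of $t=\tau|_M$, passing to the square-zero element $u=t+\id$, and using that the adjoint map $\delta=[u,-]$ satisfies $\delta^2=0$ so that rank--nullity gives the factor of~$2$. This is more elementary in that it avoids any module-theoretic machinery (no induction, no filtrations, no appeal to Lemma~\ref{l34}); the paper's approach has the advantage of fitting the pattern used repeatedly elsewhere in the section and making the source of the bound---the composition length of $F\s_2$---transparent in representation-theoretic terms. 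Both are short and both use characteristic~$2$ in the same essential place.
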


\begin{proof}
As $p=2$, so that $M^{(2)}\sim D^{(2)}|D^{(2)}$ (as $D^{(2)}$ is the only simple module of $\s_2$ in characteristic 2), we have that
\[1\hspace{-1pt}\uparrow_{\s_{n-2}}^{\s_{n-2,2}}\cong 1_{\s_{n-2}}\otimes M^{(2)}\!\sim\! D^{(n-2)}\otimes (D^{(2)}|D^{(2)})\!\sim\! (D^{(n-2)}\otimes D^{(2)})|(D^{(n-2)}\otimes D^{(2)}).\]
So, from Lemma \ref{l34},
\begin{align*}
&\dim\End_{\s_{n-2}}(M\downarrow_{\s_{n-2}})\\
&\hspace{12pt}=\dim\Hom_{\s_{n-2}}(D^{(n-2)},\End_F(M)\downarrow_{\s_{n-2}})\\
&\hspace{12pt}=\dim\Hom_{\s_{n-2,2}}(D^{(n-2)}\uparrow^{\s_{n-2,2}},\End_F(M)\downarrow_{\s_{n-2,2}})\\
&\hspace{12pt}=\dim\Hom_{\s_{n-2,2}}((D^{(n-2)}\otimes D^{(2)})|(D^{(n-2)}\otimes D^{(2)}),\End_F(M)\downarrow_{\s_{n-2,2}})\\
&\hspace{12pt}\leq2\dim\Hom_{\s_{n-2,2}}(D^{(n-2)}\otimes D^{(2)},\End_F(M)\downarrow_{\s_{n-2,2}})\\
&\hspace{12pt}=2\dim\End_{\s_{n-2,2}}(M\downarrow_{\s_{n-2}}).
\end{align*}
\end{proof}

The next lemma considers the structures of $\overline{e}_i^2D^\lambda$ when $\epsilon_i(\lambda)=2$.

\begin{lemma}\label{l31}
Let $p=2$ and $\lambda$ be a 2-regular partition with $\epsilon_i(\lambda)=2$. If $D^\nu\cong\tilde{e}_i^2D^\lambda$, then $\overline{e}_i^2D^\lambda=(D^\nu\otimes D^{(2)})|(D^\nu\otimes D^{(2)})$.
\end{lemma}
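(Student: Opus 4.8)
The plan is to first determine the composition factors of $\overline{e}_i^2D^\lambda$ as an $F\s_{n-2,2}$-module and then to prove it is indecomposable. Since $\epsilon_i(\lambda)=2$, Lemma \ref{l39} gives that $e_i^{(2)}D^\lambda$ is indecomposable with head and socle isomorphic to $D^\nu$ and with $[e_i^{(2)}D^\lambda:D^\nu]=\binom{2}{2}=1$; an indecomposable module with simple head and simple socle both isomorphic to $D^\nu$, in which $D^\nu$ occurs only once as a composition factor, is isomorphic to $D^\nu$, so $e_i^{(2)}D^\lambda\cong D^\nu$ and hence $e_i^2D^\lambda\cong(D^\nu)^{\oplus 2}$ again by Lemma \ref{l39}. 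Thus $(\overline{e}_i^2D^\lambda)\downarrow_{\s_{n-2}}\cong(D^\nu)^{\oplus 2}$. As $p=2$, the only simple $F\s_2$-module is $D^{(2)}$, so every simple $F\s_{n-2,2}$-module is of the form $D^\mu\otimes D^{(2)}$ and restricts to $D^\mu$ over $\s_{n-2}$; since restriction is exact, every composition factor of $\overline{e}_i^2D^\lambda$ must be isomorphic to $D^\nu\otimes D^{(2)}$, and there must be exactly two of them. Hence $\overline{e}_i^2D^\lambda$ is either $(D^\nu\otimes D^{(2)})^{\oplus 2}$ or $(D^\nu\otimes D^{(2)})|(D^\nu\otimes D^{(2)})$, and it remains to rule out the first, semisimple, option.

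For the remaining step I would use Jucys--Murphy elements. Since $\s_2$ acts trivially on $D^{(2)}$, the semisimple option occurs exactly when the transposition $(n-1,n)$ acts as the identity on $\overline{e}_i^2D^\lambda$, and I will show this is impossible. Write $L_j=\sum_{a<j}(a,j)\in F\s_n$; I will use the standard facts (from \cite{k1}) that the $L_j$ pairwise commute and that for any $F\s_m$-module $M$ the summand $e_iM$ of $M\downarrow_{\s_{m-1}}$ is the generalised $i$-eigenspace of $L_m$. Viewing $\overline{e}_i^2D^\lambda=e_i^2D^\lambda$ as a subspace of $D^\lambda$, the operators $L_{n-1}$, $L_n$ and $(n-1,n)$ all preserve it (the first two commute with $\s_{n-2}$ and so act blockwise on $D^\lambda\downarrow_{\s_{n-2}}$, while $(n-1,n)\in\s_{n-2,2}$ stabilises the submodule $\overline{e}_i^2D^\lambda$), and on it $L_{n-1}-i$ and $L_n-i$ act nilpotently, so $L_n-L_{n-1}=(L_n-i)-(L_{n-1}-i)$ acts nilpotently as well. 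On the other hand, the one-line conjugation identity $(n-1,n)L_n(n-1,n)=L_{n-1}+(n-1,n)$ in $F\s_n$ rearranges to $(n-1,n)L_n=L_{n-1}(n-1,n)+1$; if $(n-1,n)$ acted as the identity on $e_i^2D^\lambda$, evaluating this on any vector $v$ there would give $(L_n-L_{n-1})v=v$, forcing $L_n-L_{n-1}$ to be the identity on the nonzero space $e_i^2D^\lambda$, contradicting nilpotency. Hence $(n-1,n)$ does not act as the identity, the semisimple case is excluded, and $\overline{e}_i^2D^\lambda=(D^\nu\otimes D^{(2)})|(D^\nu\otimes D^{(2)})$.

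The hard part is this last step: composition factors together with self-duality do not distinguish the two candidate structures, so one really needs an extra ingredient detecting that the $\s_2$-action on $\overline{e}_i^2D^\lambda$ is nontrivial, and the Jucys--Murphy relation above is the most economical way I see to produce it. The only points needing a little care are checking that $L_{n-1}$, $L_n$ and $(n-1,n)$ genuinely stabilise the subspace $e_i^2D^\lambda\subseteq D^\lambda$ and invoking the eigenspace description of the functors $e_i$, which is standard but is not recalled in the paper.
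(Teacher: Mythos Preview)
Your proof is correct but takes a genuinely different route from the paper for the key step of excluding the semisimple possibility. The paper does not touch Jucys--Murphy elements at all; instead it computes
\[
\dim\Hom_{\s_{n-2,2}}(D^\nu\otimes D^{(2)},\overline{e}_i^2D^\lambda)
=\dim\Hom_{\s_n}((D^\nu\otimes D^{(2)})\uparrow^{\s_n},D^\lambda)
=\dim\Hom_{\s_n}(f_i^{(2)}D^\nu,D^\lambda)=1,
\]
using Frobenius reciprocity, the block description of $\overline{e}_i^2$ and $f_i^{(2)}$, and the fact that $f_i^{(2)}D^\nu$ has simple head $D^\lambda$ (Lemma~\ref{l40} together with $\tilde{f}_i^2D^\nu=D^\lambda$ from Lemma~\ref{l47}). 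This immediately gives that the socle of $\overline{e}_i^2D^\lambda$ is simple, hence the module is the claimed uniserial length-two module.

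Your Jucys--Murphy argument is a nice alternative: the identity $(n-1,n)L_n(n-1,n)=L_{n-1}+(n-1,n)$ forces $(n-1,n)$ to act nontrivially on the joint generalised $i$-eigenspace of $L_{n-1}$ and $L_n$, which is exactly what is needed. The trade-off is that your argument imports the eigenspace description of $e_i$ (standard in \cite{k1} but not stated in the paper), whereas the paper's argument stays entirely within the adjunction/head-socle framework already set up in Lemmas~\ref{l39}, \ref{l40} and \ref{l47}. On the other hand, your approach is more elementary in that it avoids invoking the identification of the block component of $(D^\nu\otimes D^{(2)})\uparrow^{\s_n}$ with the divided-power functor $f_i^{(2)}D^\nu$, which the paper uses without further comment.
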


\begin{proof}
As $D^{(2)}$ is the only simple module of $\s_2$ in characteristic 2 and as $(\overline{e}_i^2D^\lambda)\downarrow_{\s_{n-2}}=e_i^2D^\lambda=D^\nu\oplus D^\nu$ from Lemma \ref{l39}, we have that $\overline{e}_i^2D^\lambda\sim(D^\nu\otimes D^{(2)})|(D^\nu\otimes D^{(2)})$.

Considering the block decomposition of $D^\lambda\downarrow_{\s_{n-2,2}}$ and of $(D^\nu\otimes D^{(2)})\uparrow^{\s_n}$ and from the definitions of $\overline{e}_i^2D^\lambda$ and of $f_i^{(2)}D^\nu$ (restrictions of $D^\lambda\downarrow_{\s_{n-2,2}}$ and $(D^\nu\otimes D^{(2)})\uparrow^{\s_n}$ to certain blocks), we have that
\begin{align*}
\dim\Hom_{\s_{n-2,2}}(D^\nu\otimes D^{(2)},\overline{e}_i^2D^\lambda)&=\dim\Hom_{\s_{n-2,2}}(D^\nu\otimes D^{(2)},D^\lambda\downarrow_{\s_{n-2,2}})\\
&=\dim\Hom_{\s_n}((D^\nu\otimes D^{(2)})\uparrow^{\s_n},D^\lambda)\\
&=\dim\Hom_{\s_n}(f_i^{(2)}D^\nu,D^\lambda),\\
&=1
\end{align*}
from which the lemma follows.
\end{proof}

\begin{rem}
Notice that Lemmas \ref{l34}, \ref{ll1}, \ref{l9'}, \ref{l9}, \ref{l3}, \ref{l23}, \ref{l21}, \ref{l22}, \ref{l58}, \ref{l51} and \ref{l8} hold in arbitrary characteristic.
\end{rem}

\section{Partitions with at least 3 normal nodes}\label{s2}

We now consider the structure of $\End_F(D^\lambda)$ for partitions with at least 3 normal nodes. We first show that $D^{(n-1,1)}$ is contained in $\End_F(D^\lambda)$. This will be used when considering tensor products with $D^{(m+1,m-1)}$ for $m$ even.

\begin{lemma}\label{l18}
If $p=2$, $\lambda\vdash n$ with $n\geq 4$ even is 2-regular and $\epsilon_0(\lambda)+\epsilon_1(\lambda)\geq 3$ then $D^{(n-1,1)}\subseteq \End_F(D^\lambda)$.
\end{lemma}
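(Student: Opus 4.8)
The plan is to show that $D^{(n-1,1)}$ appears in the socle of $\End_F(D^\lambda)$ by comparing the dimension of $\End_{\s_{n-1}}(D^\lambda\downarrow_{\s_{n-1}})$ with $\dim\Hom_{\s_n}(D^{(n)},\End_F(D^\lambda))=\dim\End_{\s_n}(D^\lambda)=1$. Concretely, recall from Lemma~\ref{l1} that for $n$ even $M^{(n-1,1)}=D^{(n)}|D^{(n-1,1)}|D^{(n)}$, so by adjunction of induction and restriction, self-duality of $M^{(n-1,1)}$, and the branching rule
\[\dim\Hom_{\s_{n-1}}(D^{(n-1)},\End_F(D^\lambda)\downarrow_{\s_{n-1}})=\dim\End_{\s_n}(D^\lambda)=1.\]
The first step is therefore to translate ``$D^{(n-1,1)}\subseteq\End_F(D^\lambda)$'' into a numerical inequality: since $M^{(n-1,1)}=D^{(n)}|D^{(n-1,1)}|D^{(n)}$, one has
\[\dim\End_{\s_{n-1}}(D^\lambda\downarrow_{\s_{n-1}})=\dim\Hom_{\s_n}(D^{(n)}\oplus D^{(n-1,1)}\oplus D^{(n)},\End_F(D^\lambda))\]
up to the submodule structure of $M^{(n-1,1)}$; more carefully, $\dim\Hom_{\s_{n-1}}(M^{(n-1,1)},\End_F(D^\lambda)\downarrow_{\s_{n-1}})=\dim\End_{\s_{n-1}}(D^\lambda\downarrow_{\s_{n-1}})$, and $M^{(n-1,1)}$ having exactly one copy each of $D^{(n-1,1)}$ and two of $D^{(n)}$ forces $\dim\Hom_{\s_n}(D^{(n-1,1)},\End_F(D^\lambda))\geq\dim\End_{\s_{n-1}}(D^\lambda\downarrow_{\s_{n-1}})-2$. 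By Lemma~\ref{l53} we have $\dim\End_{\s_{n-1}}(D^\lambda\downarrow_{\s_{n-1}})=\epsilon_0(\lambda)+\epsilon_1(\lambda)\geq 3$, so this lower bound is at least $1$, hence $\Hom_{\s_n}(D^{(n-1,1)},\End_F(D^\lambda))\neq 0$, i.e.\ $D^{(n-1,1)}$ is a quotient (hence, by self-duality of $\End_F(D^\lambda)$, also a submodule) of $\End_F(D^\lambda)$.

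To make this rigorous I would argue as follows. Put $E=\End_F(D^\lambda)$, a self-dual $\s_n$-module with $\dim\Hom_{\s_n}(D^{(n)},E)=\dim\End_{\s_n}(D^\lambda)=1$; in fact $\soc(E)\supseteq D^{(n)}=1_{\s_n}$ with multiplicity exactly one in the socle. Using $1\uparrow_{\s_{n-1}}^{\s_n}=M^{(n-1,1)}$ and Frobenius reciprocity,
\[\dim\Hom_{\s_{n-1}}(1_{\s_{n-1}},E\downarrow_{\s_{n-1}})=\dim\Hom_{\s_n}(M^{(n-1,1)},E).\]
The left side equals $\dim\End_{\s_{n-1}}(D^\lambda\downarrow_{\s_{n-1}})$ (standard identification of the trivial-isotypic part of the restriction with endomorphisms of the restriction; more precisely $\Hom_{\s_{n-1}}(1,E\downarrow)=\Hom_{\s_{n-1}}(1,D^\lambda\otimes (D^\lambda)^*\downarrow)=\End_{\s_{n-1}}(D^\lambda\downarrow)$ by self-duality of $D^\lambda$). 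Now apply Lemma~\ref{l34} to the filtration $M^{(n-1,1)}=D^{(n)}|D^{(n-1,1)}|D^{(n)}$ to get
\[\dim\Hom_{\s_n}(M^{(n-1,1)},E)\leq 2\dim\Hom_{\s_n}(D^{(n)},E)+\dim\Hom_{\s_n}(D^{(n-1,1)},E)=2+\dim\Hom_{\s_n}(D^{(n-1,1)},E).\]
Combining, $\dim\Hom_{\s_n}(D^{(n-1,1)},E)\geq\dim\End_{\s_{n-1}}(D^\lambda\downarrow_{\s_{n-1}})-2=\epsilon_0(\lambda)+\epsilon_1(\lambda)-2\geq 1$, so there is a nonzero map $D^{(n-1,1)}\to E$; since $D^{(n-1,1)}$ is simple this is injective, giving $D^{(n-1,1)}\subseteq E=\End_F(D^\lambda)$.

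The main obstacle, such as it is, is the careful bookkeeping in the application of Lemma~\ref{l34}: one must check that the relevant $\Hom$ dimension into $M^{(n-1,1)}$ is bounded below (not above) by $\dim\End_{\s_{n-1}}(D^\lambda\downarrow_{\s_{n-1}})$ — that direction is an equality from adjunction, so it is fine — and then that the upper bound coming from the Specht/composition-factor filtration of $M^{(n-1,1)}$ only costs the two copies of $D^{(n)}$. One subtlety worth a sentence is whether we want $D^{(n-1,1)}$ as a submodule or a quotient: Lemma~\ref{l34} naturally produces maps $D^{(n-1,1)}\to E$, i.e.\ a copy of $D^{(n-1,1)}$ as a quotient; but $E$ is self-dual, so $D^{(n-1,1)}\subseteq E$ as well, and in any case ``$D^{(n-1,1)}\subseteq\End_F(D^\lambda)$'' in the paper's usage just means it occurs, which is immediate once the $\Hom$ space is nonzero. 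No result beyond Lemmas~\ref{l1}, \ref{l34}, \ref{l53} and the self-duality of $D^\lambda$ and of permutation modules is needed.
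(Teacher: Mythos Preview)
Your proof is correct and is essentially identical to the paper's own argument: both use Lemma~\ref{l1} to write $M^{(n-1,1)}=D^{(n)}|D^{(n-1,1)}|D^{(n)}$, Frobenius reciprocity to identify $\dim\Hom_{\s_n}(M^{(n-1,1)},\End_F(D^\lambda))$ with $\dim\End_{\s_{n-1}}(D^\lambda\downarrow_{\s_{n-1}})=\epsilon_0(\lambda)+\epsilon_1(\lambda)$ (Lemma~\ref{l53}), and Lemma~\ref{l34} to deduce $\dim\Hom_{\s_n}(D^{(n-1,1)},\End_F(D^\lambda))\geq \epsilon_0(\lambda)+\epsilon_1(\lambda)-2\geq 1$. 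Your remark that a nonzero map from the simple module $D^{(n-1,1)}$ is injective is the (implicit) final step; the self-duality discussion is unnecessary but harmless.
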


\begin{proof}
As $n$ is even $M^{(n-1,1)}=D^{(n)}|D^{(n-1,1)}|D^{(n)}$. From Lemmas \ref{l53} and \ref{l34} it then follows that
\begin{align*}
&\dim\Hom_{\s_n}(D^{(n-1,1)},\End_F(D^\lambda))\\
&\hspace{12pt}\geq \dim\Hom_{\s_n}(M^{(n-1,1)},\End_F(D^\lambda))-2\dim\Hom_{\s_n}(D^{(n)},\End_F(D^\lambda))\\
&\hspace{12pt}=\dim\End_{\s_{n-1}}(D^\lambda\downarrow_{\s_{n-1}})-2\\
&\hspace{12pt}=\epsilon_0(\lambda)+\epsilon_1(\lambda)-2\\
&\hspace{12pt}\geq 1.
\end{align*}
\end{proof}

We will now show that the assumptions of Lemmas \ref{l17} and \ref{l36} holds when $\lambda$ has at least 3 normal nodes.

\begin{lemma}\label{l15}
Let $p=2$ and $\lambda\vdash n$ with $n\geq 4$ even be 2-regular and assume that $\epsilon_0(\lambda)+\epsilon_1(\lambda)\geq 3$. Then
\begin{align*}
&\dim\End_{\s_{n-1}}(D^\lambda\downarrow_{\s_{n-1}})+\dim\Hom_{\s_n}(S^{(n-1,1)},\End_F(D^\lambda))+1\\
&\hspace{12pt}<\dim\End_{\s_{n-2}}(D^\lambda\downarrow_{\s_{n-2}}).
\end{align*}
\end{lemma}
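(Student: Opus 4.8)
The plan is to bound the three quantities in the inequality separately in terms of $\epsilon_0(\lambda)$ and $\epsilon_1(\lambda)$, using the results of Section~\ref{s7}, and then check the resulting numerical inequality under the hypothesis $\epsilon_0(\lambda)+\epsilon_1(\lambda)\geq 3$. Write $r_0=\epsilon_0(\lambda)$, $r_1=\epsilon_1(\lambda)$, so $r_0+r_1\geq 3$. By Lemma~\ref{l53} the left-hand term $\dim\End_{\s_{n-1}}(D^\lambda\downarrow_{\s_{n-1}})$ equals $r_0+r_1$. For the middle term, since $p=2$ and $n$ is even we have $p\mid n$, so Lemma~\ref{l8} applies: because $r_0+r_1\geq 2$, it gives $\dim\Hom_{\s_n}(S^{(n-1,1)},\End_F(D^\lambda))\leq 2r_0+2r_1-2$. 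For the right-hand side, Lemma~\ref{l7} gives a lower bound for $\dim\End_{\s_{n-2}}(D^\lambda\downarrow_{\s_{n-2}})$: if $r_0,r_1\geq 1$ it is at least $2r_0(r_0-1)+2r_1(r_1-1)+2r_0+2r_1+2r_0r_1$, while if one of them is $0$ (say $r_1=0$, so $r_0\geq 3$) it is at least $2r_0(r_0-1)$.

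**First I would** dispose of the case where both $r_0,r_1\geq 1$. Here it suffices to show
\[
(r_0+r_1)+(2r_0+2r_1-2)+1 < 2r_0(r_0-1)+2r_1(r_1-1)+2r_0+2r_1+2r_0r_1,
\]
i.e. $r_0+r_1-1 < 2r_0^2+2r_1^2+2r_0r_1-2r_0-2r_1$, i.e. $3(r_0+r_1) < 2r_0^2+2r_1^2+2r_0r_1 = (r_0+r_1)^2 + (r_0^2+r_1^2)$. Since $r_0^2+r_1^2\geq 1$ always, it is enough that $3(r_0+r_1)\leq (r_0+r_1)^2$, which holds whenever $r_0+r_1\geq 3$. (In fact with the $+1$ from $r_0^2+r_1^2$ the inequality is strict already for $r_0+r_1\geq 3$; I would just write out the one-line estimate.)

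**Then I would** handle the remaining case, where one of the two is $0$; by symmetry say $r_1=0$ and $r_0\geq 3$. Now the middle term is at most $2r_0-2$ (Lemma~\ref{l8} with $r_1=0$), the left term is $r_0$ (Lemma~\ref{l53}), and the right-hand side is at least $2r_0(r_0-1)$ by Lemma~\ref{l7}. So it suffices to show $r_0 + (2r_0-2) + 1 < 2r_0(r_0-1)$, i.e. $3r_0 - 1 < 2r_0^2 - 2r_0$, i.e. $5r_0 < 2r_0^2 + 1$, which holds for $r_0\geq 3$ (indeed $2r_0^2+1 - 5r_0 = 2r_0^2-5r_0+1 > 0$ for $r_0\geq 3$). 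That completes the proof.

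**The main point to get right** is simply lining up which auxiliary lemma bounds which term and in which direction, and being careful that Lemma~\ref{l7} and Lemma~\ref{l8} both genuinely apply (Lemma~\ref{l8} needs $p\mid n$, which holds since $p=2$ and $n$ even; Lemma~\ref{l7}'s stronger bound needs $r_0,r_1\geq1$, which is why the $r_1=0$ case must be treated on its own). There is no real obstacle beyond the elementary quadratic estimates, which I would present in a couple of displayed inequalities rather than belabour.
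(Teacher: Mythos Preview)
Your proposal is correct and follows essentially the same route as the paper: both proofs invoke Lemmas~\ref{l53}, \ref{l7} and~\ref{l8} to reduce the statement to the numerical inequality $3(\epsilon_0+\epsilon_1)-1 < 2\epsilon_0(\epsilon_0-1)+2\epsilon_1(\epsilon_1-1)+a$ and then verify it by an elementary case split. The only cosmetic difference is that the paper rewrites the inequality as $\epsilon_0(2\epsilon_0-5)+\epsilon_1(2\epsilon_1-5)+a\geq 0$ and checks the cases $\epsilon_i\in\{1,2\}$ explicitly, whereas you split according to whether both $r_i\geq 1$ or one vanishes; the arithmetic is equivalent.
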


\begin{proof}
Let $a=2\epsilon_0(\lambda)+2\epsilon_1(\lambda)+2\epsilon_0(\lambda)\epsilon_1(\lambda)$ if $\epsilon_0(\lambda),\epsilon_1(\lambda)\geq 1$ and $a=0$ otherwise. From Lemmas \ref{l53}, \ref{l7} and \ref{l8} it is enough to prove that
\[3\epsilon_0(\lambda)+3\epsilon_1(\lambda)-1<2\epsilon_0(\lambda)(\epsilon_0(\lambda)-1)+2\epsilon_1(\lambda)(\epsilon_1(\lambda)-1)+a\]
and so it is also enough to prove that
\[\epsilon_0(\lambda)(2\epsilon_0(\lambda)-5)+\epsilon_1(\lambda)(2\epsilon_1(\lambda)-5)+a\geq 0.\]

Since $x(2x-5)\geq 0$ for $x=0$ or $x\geq 3$ and since $a\geq 0$, we only need to check the lemma when $1\leq \epsilon_i(\lambda)\leq 2$ for some $0\leq i\leq 1$. Let $j=1-i$. If $\epsilon_i(\lambda)=1$ then $\epsilon_j(\lambda)\geq 2$ and so
\[\epsilon_0(\lambda)(2\epsilon_0(\lambda)-5)+\epsilon_1(\lambda)(2\epsilon_1(\lambda)-5)+a=\epsilon_j(\lambda)(2\epsilon_j(\lambda)-5)-3+4\epsilon_j(\lambda)+2\geq 0.\]
If instead $\epsilon_i(\lambda)=2$ then $\epsilon_j(\lambda)\geq 1$ and so
\[\epsilon_0(\lambda)(2\epsilon_0(\lambda)-5)+\epsilon_1(\lambda)(2\epsilon_1(\lambda)-5)+a=\epsilon_j(\lambda)(2\epsilon_j(\lambda)-5)-2+6\epsilon_j(\lambda)+4\geq 0.\]
\end{proof}

\begin{lemma}\label{l41}
Let $p=2$ and $\lambda\vdash n$ with $n\geq 4$ even be 2-regular and assume that $\epsilon_0(\lambda)+\epsilon_1(\lambda)\geq 3$. Then
\[\dim\End_{\s_{n-2,2}}(D^\lambda\downarrow_{\s_{n-2,2}})>\dim\Hom_{\s_n}(S^{(n-1,1)},\End_F(D^\lambda))+1.\]
\end{lemma}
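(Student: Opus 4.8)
The plan is to chain together the bounds from Lemmas \ref{l54}, \ref{l7} and \ref{l8}. Write $\epsilon_i=\epsilon_i(\lambda)$ for short. First I would apply Lemma \ref{l54} with $M=D^\lambda$ to obtain
\[\dim\End_{\s_{n-2,2}}(D^\lambda\downarrow_{\s_{n-2,2}})\geq\tfrac12\dim\End_{\s_{n-2}}(D^\lambda\downarrow_{\s_{n-2}}).\]
Since $n$ is even we have $p\mid n$, so Lemma \ref{l8} applies; as $\epsilon_0+\epsilon_1\geq 3\geq 2$ it gives $\dim\Hom_{\s_n}(S^{(n-1,1)},\End_F(D^\lambda))\leq 2\epsilon_0+2\epsilon_1-2$. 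Setting $a=2\epsilon_0+2\epsilon_1+2\epsilon_0\epsilon_1$ if $\epsilon_0,\epsilon_1\geq 1$ and $a=0$ otherwise, Lemma \ref{l7} gives $\dim\End_{\s_{n-2}}(D^\lambda\downarrow_{\s_{n-2}})\geq 2\epsilon_0(\epsilon_0-1)+2\epsilon_1(\epsilon_1-1)+a$. Combining, it suffices to verify the purely numerical inequality
\[\epsilon_0(\epsilon_0-1)+\epsilon_1(\epsilon_1-1)+\frac a2>2\epsilon_0+2\epsilon_1-1.\]

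Next I would split into two cases. If $\epsilon_0,\epsilon_1\geq 1$, then $a/2=\epsilon_0+\epsilon_1+\epsilon_0\epsilon_1$, so the inequality becomes $\epsilon_0^2+\epsilon_1^2+\epsilon_0\epsilon_1-2\epsilon_0-2\epsilon_1+1>0$, i.e. $\epsilon_0(\epsilon_0-2)+\epsilon_1(\epsilon_1-2)+\epsilon_0\epsilon_1+1>0$; since $\epsilon_0+\epsilon_1\geq 3$, one of them, say $\epsilon_0$, is $\geq 2$, so $\epsilon_0(\epsilon_0-2)\geq 0$, $\epsilon_1(\epsilon_1-2)\geq -1$ and $\epsilon_0\epsilon_1\geq 2$, giving a left-hand side at least $0-1+2+1=2>0$. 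If instead one of the two vanishes, say $\epsilon_1=0$, then $\epsilon_0\geq 3$ and $a=0$, and the inequality reads $\epsilon_0^2-3\epsilon_0+1>0$, which holds because $\epsilon_0(\epsilon_0-3)+1\geq 1$ for $\epsilon_0\geq 3$; the case $\epsilon_0=0$ is symmetric.

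The argument is thus a short combination of earlier estimates followed by elementary algebra, and there is no genuine obstacle. The only point that needs attention is that the factor $\tfrac12$ lost in passing from $\s_{n-2}$ to $\s_{n-2,2}$ via Lemma \ref{l54} still leaves enough slack; this is precisely why the hypothesis $\epsilon_0+\epsilon_1\geq 3$ (rather than $\geq 2$) is used, since for $\epsilon_0=\epsilon_1=1$ the displayed numerical inequality would fail.
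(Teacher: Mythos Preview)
Your proposal is correct and follows essentially the same route as the paper: both apply Lemma~\ref{l54} to pass from $\s_{n-2,2}$ to $\s_{n-2}$, then use the lower bound of Lemma~\ref{l7} and the upper bound of Lemma~\ref{l8} to reduce to a purely numerical inequality in $\epsilon_0,\epsilon_1$. The only difference is cosmetic: the paper rewrites the inequality as $2\epsilon_0(\epsilon_0-3)+2\epsilon_1(\epsilon_1-3)+a\geq 0$ and checks the cases $\epsilon_i\in\{1,2\}$ separately, while you split according to whether both $\epsilon_i\geq 1$ or one vanishes; both case analyses are routine and yield the same conclusion.
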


\begin{proof}
From Lemma \ref{l54} we have that
\[\dim\End_{\s_{n-2}}(D^\lambda\downarrow_{\s_{n-2}})\leq 2\dim\End_{\s_{n-2,2}}(D^\lambda\downarrow_{\s_{n-2,2}}).\]
So it is enough to prove that
\[\dim\End_{\s_{n-2}}(D^\lambda\downarrow_{\s_{n-2}})>2\dim\Hom_{\s_n}(S^{(n-1,1)},\End_F(D^\lambda))+2.\]

Let $a=2\epsilon_0(\lambda)+2\epsilon_1(\lambda)+2\epsilon_0(\lambda)\epsilon_1(\lambda)$ if $\epsilon_0(\lambda),\epsilon_1(\lambda)\geq 1$ or $a=0$ otherwise. From Lemmas \ref{l7} and \ref{l8} it is enough to prove that
\[2\epsilon_0(\lambda)(\epsilon_0(\lambda)-1)+2\epsilon_1(\lambda)(\epsilon_1(\lambda)-1)+a>4\epsilon_0(\lambda)+4\epsilon_1(\lambda)-2.\]
It is then also enough to prove that
\[2\epsilon_0(\lambda)(\epsilon_0(\lambda)-3)+2\epsilon_1(\lambda)(\epsilon_1(\lambda)-3)+a\geq 0.\]
As $2x(x-3)\geq 0$ for $x=0$ or $x\geq 3$ and as $a\geq 0$, we still only need to prove the lemma if $1\leq\epsilon_i(\lambda)\leq 2$ for some $0\leq i\leq 1$. Let $j=1-i$.  If $\epsilon_i(\lambda)=1$ then $\epsilon_j(\lambda)\geq 2$ and so
\[2\epsilon_0(\lambda)(\epsilon_0(\lambda)-3)+2\epsilon_1(\lambda)(\epsilon_1(\lambda)-3)+a=2\epsilon_j(\lambda)(\epsilon_j(\lambda)-3)-4+4\epsilon_j(\lambda)+2\geq 0.\]
If $\epsilon_i(\lambda)=2$ then $\epsilon_j(\lambda)\geq 1$ and so
\[2\epsilon_0(\lambda)(\epsilon_0(\lambda)-3)+2\epsilon_1(\lambda)(\epsilon_1(\lambda)-3)+a=2\epsilon_j(\lambda)(\epsilon_j(\lambda)-3)-4+6\epsilon_j(\lambda)+4\geq 0.\]
\end{proof}

\section{Partitions with 2 normal nodes}\label{s3}

In this section we will consider partitions with 2 normal nodes. Proofs or results will be more complicated than in the previous case as we have to explicitly consider the structure of such partitions. We first start by studying the structure of partitions with 2 normal nodes.

\begin{lemma}\label{l42}
Let $p=2$ and $\lambda\vdash n$ be 2-regular with $\epsilon_0(\lambda)+\epsilon_1(\lambda)=2$. For $1\leq k\leq h(\lambda)$ let $a_k$ be the residue of the removable node of the $k$-th row of $\lambda$. Further let $1<b_1<\ldots<b_t\leq h(\lambda)$ be the set of indexes $k$ for which $a_k=a_{k-1}$. Then the normal nodes of $\lambda$ are on rows 1 and $b_1$, while the conormal nodes of $\lambda$ are on rows $b_t-1$, $h(\lambda)$ and $h(\lambda)+1$. Further $a_{b_k}\not=a_{b_k-1}$ for $1<k\leq t$.
\end{lemma}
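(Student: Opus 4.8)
Since $\lambda$ is $2$-regular its parts $\lambda_1>\lambda_2>\cdots>\lambda_h$ are distinct ($h=h(\lambda)$), and I would begin by recording the addable and removable nodes explicitly: for $1\le k\le h$ the unique removable node of row $k$ is $(k,\lambda_k)$, of residue $a_k\equiv\lambda_k-k\pmod 2$, and row $k$ also carries an addable node $(k,\lambda_k+1)$ of residue $1-a_k$ (using $\lambda_{k-1}>\lambda_k$, or $k=1$), while there is one last addable node $(h+1,1)$ of residue $h\bmod 2$; also $a_k=a_{k-1}$ exactly when $\lambda_{k-1}-\lambda_k$ is odd. Plugging this into the definition of the reduced $i$-signature ($i=0,1$): reading from the top, each of rows $1,\dots,h$ contributes one symbol — a $-$ to the $a_k$-signature and a $+$ to the $(1-a_k)$-signature — and there is a final $+$ in the $(h\bmod 2)$-signature; after cancelling adjacent $+-$ pairs the surviving $-$'s are the normal nodes and the surviving $+$'s the conormal nodes. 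Note $\epsilon_0(\lambda)+\epsilon_1(\lambda)=2>1$, so $\lambda$ is not a JS-partition by Lemma~\ref{l55} and $(a_k)_k$ is not strictly alternating, i.e. $t\ge 1$, so that $b_1$ and $b_t$ make sense.

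\textbf{Normal nodes.} The symbol for row $1$ is the first symbol of the $a_1$-signature, a $-$ that can never be cancelled, so the removable node of row $1$ is always normal. By minimality of $b_1$, the residues $a_1,\dots,a_{b_1-1}$ strictly alternate while $a_{b_1}=a_{b_1-1}$; in the appropriate signature the alternating stretch reads $-+-+\cdots$, which reduces leaving only its first $-$, and the repeat at row $b_1$ then contributes exactly one more uncancelled $-$. So the removable node of row $b_1$ is normal as well; since uncancelled $-$'s survive when more rows are appended and $\epsilon_0(\lambda)+\epsilon_1(\lambda)=2$ by hypothesis, these are all the normal nodes, proving they lie on rows $1$ and $b_1$.

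\textbf{Conormal nodes.} Dually, the last addable node $(h+1,1)$ is the final symbol of its signature, a $+$ that can never be cancelled, so it is always conormal; and the addable node of row $h$ is followed in its signature only by (at most) this extra $+$, hence is never cancelled either, so it too is always conormal. Reading the signatures from the bottom over rows $h+1,h,h-1,\dots,b_t$ — the mirror of the argument for the normal nodes, using that $a_h,a_{h-1},\dots,a_{b_t}$ strictly alternate and $a_{b_t}=a_{b_t-1}$ — produces exactly one further uncancelled $+$, the addable node of row $b_t-1$. By Lemma~\ref{l52} one has $\phi_0(\lambda)+\phi_1(\lambda)=\epsilon_0(\lambda)+\epsilon_1(\lambda)+1=3$, so these three are all the conormal nodes, which therefore lie on rows $b_t-1$, $h$ and $h+1$.

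\textbf{The assertion on the $b_k$; where the difficulty lies.} A constant run of $(a_k)_k$ of length $\ge 3$ would, by the same stack-counting, force a third uncancelled $-$, so $\epsilon_0(\lambda)+\epsilon_1(\lambda)=2$ forces every run of $(a_k)_k$ to have length $\le 2$, and hence the collisions $b_1<\cdots<b_t$ to lie in distinct runs. Since runs alternate in residue and, between two collisions of equal residue, the intervening opposite-residue rows must be numerous enough to absorb the surplus $-$ that the second collision would otherwise create, one gets that consecutive collisions have opposite residue, i.e. $a_{b_k}\neq a_{b_{k-1}}$ for $1<k\le t$. The real work — and the main obstacle — is precisely this last point: extracting from the single equation $\epsilon_0(\lambda)+\epsilon_1(\lambda)=2$ a sufficiently rigid grip on the run structure of $(a_k)_k$; everything else is the careful-but-routine bookkeeping of the reduced $i$-signatures in the two preceding paragraphs.
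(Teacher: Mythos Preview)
Your treatment of the normal and conormal nodes is correct and essentially identical to the paper's: both of you read off rows~$1$ and~$b_1$ from the reduced signature of the alternating prefix, invoke $\epsilon_0(\lambda)+\epsilon_1(\lambda)=2$ to stop there, and then mirror the computation from the bottom (together with Lemma~\ref{l52}) for the conormals.

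The last paragraph, however, is only a sketch, and the two sub-claims you rely on are asserted rather than proved. Saying a length-$3$ run forces a third uncancelled $-$ ``by the same stack-counting'' is not enough: in $a=(1{-}i,\,1{-}i,\,i,\,i,\,i)$ the $i$-signature $++---$ leaves only \emph{one} uncancelled~$-$, so the extra normal node must come from the other residue, and your argument does not locate it. The phrase about intervening rows ``absorbing the surplus'' is likewise an intuition, not an inequality. Your route \emph{can} be salvaged via the identity $\epsilon_0(\lambda)+\epsilon_1(\lambda)=\max_{j} D_j-\min_{j} D_j$ for the walk $D_j=\#\{k\le j:a_k=0\}-\#\{k\le j:a_k=1\}$ (with $D_0=0$): a length-$3$ run, or two equal-residue collisions separated by an alternating stretch, forces $D$ to climb by~$3$ over that window, contradicting range~$2$. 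But you neither state nor prove this. The paper takes a different and more direct line: it assumes a minimal $k\ge2$ with $a_{b_k}=a_{b_{k-1}}$, observes that by minimality every addable $a_{b_k}$-node on a row $s<b_k$ with $s\ne b_{k-1}-1$ can be matched to a removable $a_{b_k}$-node on some strictly lower row (via an explicit injection $s\mapsto f(s)>s$), while the addable on row $b_{k-1}-1$ has residue $1-a_{b_{k-1}}\ne a_{b_k}$; hence the removable on row $b_k$ is normal --- a third normal node, contradiction.
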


\begin{proof}
Notice that $a_k$ is defined for each $1\leq k\leq h(\lambda)$, since $\lambda$ is 2-regular. Further $\lambda$ has an addable node on row $k$ for each $1\leq k\leq h(\lambda)+1$.

Notice first that if the removable node on row $k$ is normal then $k=1$ or $k=b_s$ for some $s$ (as otherwise the addable node on row $k-1$ has residue $1-a_{k-1}=a_k$). From $\lambda$ having 2 normal nodes it then follows that $t\geq 1$. The removable node on the first row is normal (this is always the case). Also, from definition of $a_k$, we have that the residue of the addable node on row $k\leq h(\lambda)$ is given by $1-a_k$. For $1\leq k<b_1-1$ we have $1-a_k=a_{k+1}$ by definition of $b_1$. As $1-a_{b_1-1}\not=a_{b_1-1}=a_{b_1}$, it then follows that the removable node on row $b_1$ is also normal (if the addable node on row $k$, with $1\leq k<b_1$, has residue $a_{b_1}$ then $k<b_1-1$ and the removable node on row $k+1$ also has residue $a_{b_1}$). As $\lambda$ has 2 normal nodes and, by definition, $b_1\geq 2$, there are no other normal nodes of $\lambda$.

The addable nodes on rows $h(\lambda)$ and $h(\lambda)+1$ are conormal (they are always conormal). Further for $b_t+1\leq k\leq h(\lambda)$ we have, by definition of $b_t$, that $1-a_k=a_{k-1}$. Also $a_{b_t}=a_{b_t-1}\not=1-a_{b_t-1}$ and $1-a_{b_t-1}$ is the residue of the addable node on row $b_t-1$. It then follows that the addable node on row $b_t-1$ is also conormal (notice that $b_t-1\geq b_1-1\geq 1$). As $\lambda$ has 2 normal nodes and so 3 conormal (Lemma \ref{l52}), these are all the conormal nodes of $\lambda$.

We will now show that $a_{b_k}\not=a_{b_{k-1}}$ for $1<k\leq t$. To do this, let $k\geq 2$ minimal such that $a_{b_k}=a_{b_{k-1}}$ (if such an $k$ exists). Again notice that the addable node on row $s$ has residue $1-a_s$ for $s\leq h(\lambda)$. From the minimality of $k$ it follows that, for $1\leq s\leq b_k-2$ with $s\not=b_{k-1}-1$,
\[1-a_s=\left\{\begin{array}{ll}
a_{s+1},&s\not\in\{b_1-1,\ldots,b_{k-2}-1\},\\
a_{b_{r+1}},&s=b_r-1.
\end{array}\right.\]
Also $f:\{1,\ldots,b_k-2\}\setminus\{b_{k-1}-1\}\rightarrow\{1,\ldots,b_k-1\}$ given through
\[f(s)=\left\{\begin{array}{ll}
s+1,&s\not\in\{b_1-1,\ldots,b_{k-2}-1\},\\
b_{r+1},&s=b_r-1
\end{array}\right.\]
is injective and satisfies $f(s)>s$ for each $s$.

By assumption on $k$ we have that $a_{b_k}=a_{b_{k-1}}=a_{b_k-1}=a_{b_{k-1}-1}$ (so that the addable nodes on rows $b_k-1$ and $b_{k-1}-1$ do not have residue $a_{b_k}$). It then follows that the removable node on row $b_k$ is normal. As $k\geq 2$ this would then mean that $\lambda$ has at least 3 normal nodes, which contradicts the assumptions. So $a_{b_k}\not=a_{b_{k-1}}$ for $2\leq k\leq t$.
\end{proof}

\begin{lemma}\label{l24}
Let $p=2$ and $\lambda\vdash n$ be 2-regular with $n$ even. Then we cannot have that $\epsilon_i(\lambda)=2$, $\epsilon_j(\lambda)=0$, $\phi_i(\lambda)=0$ and $\phi_j(\lambda)=3$ for some $0\leq i\leq 1$ and $j=1-i$.
\end{lemma}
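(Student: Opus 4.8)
The claim to refute is that some $2$-regular $\lambda\vdash n$ with $n$ even satisfies $\epsilon_i(\lambda)=2$, $\epsilon_j(\lambda)=0$, $\phi_i(\lambda)=0$, $\phi_j(\lambda)=3$ (where $\{i,j\}=\{0,1\}$). Note first that these hypotheses are consistent with Lemma~\ref{l52}: the total number of normal nodes is $2$ and the total number of conormal nodes is $3$, so nothing is ruled out by that count alone. The natural strategy is to combine the structural description of partitions with exactly two normal nodes (Lemma~\ref{l42}) with a parity constraint coming from $n$ being even, and to use the known combinatorics relating residues to part sizes when $p=2$.

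First I would set up notation as in Lemma~\ref{l42}: write $a_k$ for the residue of the removable node on row $k$, and $1<b_1<\dots<b_t\le h(\lambda)$ for the indices with $a_k=a_{k-1}$. Lemma~\ref{l42} tells us the two normal nodes lie on rows $1$ and $b_1$, with residues $a_1$ and $a_{b_1}=a_{b_1-1}$ respectively; and the three conormal nodes lie on rows $b_t-1$, $h(\lambda)$, $h(\lambda)+1$. The hypothesis $\epsilon_i(\lambda)=2$, $\epsilon_j(\lambda)=0$ forces \emph{both} normal residues to equal $i$, i.e. $a_1=a_{b_1}=i$; and $\phi_j(\lambda)=3$, $\phi_i(\lambda)=0$ forces all three conormal residues to equal $j$. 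Now $a_1$ is the residue of the rightmost box in row $1$; in characteristic $2$ the residue of node $(r,c)$ is $c-r \bmod 2$, so $a_1\equiv \lambda_1-1 \pmod 2$, giving $\lambda_1\equiv i+1\pmod 2$. Similarly the residue of the \emph{addable} node on row $h(\lambda)+1$ is that of $(h(\lambda)+1,1)$, which is $1-(h(\lambda)+1)\equiv h(\lambda)\pmod 2$; this must equal $j=1-i$, so $h(\lambda)\equiv i+1\pmod 2$. The key is then to extract a contradiction with $n=\sum_k\lambda_k$ even by tracking how residues $a_k$ and the addable-node residues $1-a_k$ propagate down the rows, exactly in the spirit of the residue bookkeeping in the proof of Lemma~\ref{l42}.

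The main obstacle, and the heart of the argument, is turning the residue pattern along the rows into a parity statement about $\sum\lambda_k$. Between consecutive ``jump'' rows the residues $a_k$ alternate; so on the block of rows strictly between $b_r$ and $b_{r+1}$ (and below $b_t$, and above $b_1$) the values $a_k$ strictly alternate, which forces the part sizes $\lambda_k$ there to have a predictable parity pattern since $a_k\equiv\lambda_k-k\pmod 2$. Concretely $a_k\equiv\lambda_k-k\pmod 2$, so $\lambda_k\equiv a_k+k\pmod 2$, and summing over all rows gives $n\equiv\sum_k a_k+\sum_k k=\sum_k a_k+\binom{h(\lambda)+1}{2}\pmod 2$. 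I would compute $\sum_k a_k\bmod 2$ using that $a_k=a_{k-1}$ exactly at the $t$ indices $b_1,\dots,b_t$ and $a_k\ne a_{k-1}$ elsewhere: this pins down each $a_k$ in terms of $a_1=i$, the $b_r$'s, and $k$. Feeding in the additional constraints $a_{b_1}=i$ (so that the first jump-block has even length, i.e. $b_1$ has a definite parity relative to $i$), that all conormal residues equal $j$ (controlling $a_{b_t-1}$, hence $a_{h(\lambda)}$ via the alternation below $b_t$, hence $h(\lambda)$'s parity), and Lemma~\ref{l42}'s last assertion $a_{b_r}\ne a_{b_{r-1}}$ for $1<r\le t$, should over-determine the system and yield $n$ odd, contradicting $n$ even. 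If a direct global sum is messy, the fallback is to argue locally: show that $\lambda_1$, the row-$b_1$ configuration, and the row-$h(\lambda)$ configuration each impose a parity on $h(\lambda)$ and on partial sums that cannot be simultaneously satisfied with $n$ even; a small case check on $t$ (the cases $t=1$ versus $t\ge2$ behave slightly differently because the $b_t-1$ conormal node may or may not coincide with a jump row) would then finish it.
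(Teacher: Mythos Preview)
Your proposal is correct and follows essentially the same route as the paper: invoke Lemma~\ref{l42} to locate the two normal nodes on rows $1$ and $b_1$ and the three conormal nodes on rows $b_t-1$, $h(\lambda)$, $h(\lambda)+1$, read off the residue constraints $a_1=a_{b_1}=a_{b_t-1}=a_{h(\lambda)}=i$ and $h(\lambda)\equiv j\pmod 2$, and then push these through the alternating pattern of the $a_k$'s to force $n$ odd. The paper carries out the final parity step block-by-block (showing $b_1-1$ and $h(\lambda)-b_t+1$ are odd while each $b_s-b_{s-1}$ is even, hence $h(\lambda)$ even, $i=1$, and then $n\equiv 1\pmod 2$), whereas your global identity $n\equiv\sum_k a_k+\binom{h(\lambda)+1}{2}\pmod 2$ is a slightly slicker packaging of the same bookkeeping; either way the argument is the same.
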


\begin{proof}
Assume that $\epsilon_i(\lambda)=2$, $\epsilon_j(\lambda)=0$, $\phi_i(\lambda)=0$ and $\phi_j(\lambda)=3$. For $1\leq k\leq h(\lambda)$ let $a_k$ be the residue of the removable node on the $k$-th row of $\lambda$ (on each row of $\lambda$ there are both a removable and an addable node, as $\lambda$ is 2-regular). Also let $1<b_1<\ldots<b_t\leq h(\lambda)$ be the set of indexes $k$ for which $a_k=a_{k-1}$. From Lemma \ref{l42} we have that the removable nodes on rows 1 and $b_1$ are normal. As $\epsilon_j(\lambda)=0$ we have that $a_1,a_{b_1}=i$.

Further, again from Lemma \ref{l42}, the conormal nodes of $\lambda$ are the addable nodes on rows $b_t-1$, $h(\lambda)$ and $h(\lambda)+1$. As $\phi_i(\lambda)=0$ it follows that $a_{b_t-1},a_{h(\lambda)}=i$ and $h(\lambda)\equiv 1-(h(\lambda)+1)\equiv j\Md 2$.

We have that $(a_1,\ldots,a_{b_1-1})=(i,j,\ldots,i)$, $(a_{b_t},\ldots,a_{h(\lambda)})=(i,j,\ldots,i)$ and $(a_{b_{s-1}},\ldots,a_{b_s-1})=(a_{b_{s-1}},1-a_{b_{s-1}},\ldots,a_{b_{s-1}},1-a_{b_{s-1}})$ for $2\leq s\leq t$ from definition of $b_s$ and Lemma \ref{l42}. So
\begin{align*}
&\lambda_1\equiv\lambda_2\equiv\ldots\equiv\lambda_{b_1-1}\Md 2,\\
&\lambda_{b_{s-1}}\equiv\lambda_{b_{s-1}+1}\equiv\ldots\equiv\lambda_{b_s-1}\Md 2,&2\leq s\leq t,\\
&\lambda_{b_t}\equiv\lambda_{b_t+1}\equiv\ldots\equiv\lambda_{h(\lambda)}\Md 2.
\end{align*}
Further $b_1-1$ and $h(\lambda)-b_t+1$ are odd while $b_s-b_{s-1}$ is even for $2\leq s\leq t$. In particular $h(\lambda)$ is even and so $j=0$ and $i=1$. From $a_1=i=1$ it follows that $\lambda_1$ is even, while from $h(\lambda)$ even and $a_{h(\lambda)}=i=1$ it follows that $\lambda_{h(\lambda)}$ is odd. So
\[n\equiv \lambda_1\cdot(b_1-1)+\lambda_{h(\lambda)}\cdot(h(\lambda)-b_t+1)+\sum_{s=2}^t\lambda_{b_{s-1}}\cdot(b_s-b_{s-1})\equiv 1\Md 2,\]
contradicting $n$ being even.
\end{proof}

\begin{lemma}\label{l20}
Let $p=2$ and $\lambda\vdash n$ with $n\geq 4$ even be 2-regular. Assume that $\epsilon_0(\lambda)+\epsilon_1(\lambda)=2$. Then $D^{(n-1,1)}\subseteq\End_F(D^\lambda)$.
\end{lemma}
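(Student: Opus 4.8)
The goal is to show $D^{(n-1,1)}\subseteq\End_F(D^\lambda)$ when $\lambda\vdash n$ ($n\geq 4$ even) is $2$-regular with exactly two normal nodes. By Lemma~\ref{l53} we have $\dim\End_{\s_{n-1}}(D^\lambda\downarrow_{\s_{n-1}})=\epsilon_0(\lambda)+\epsilon_1(\lambda)=2$, and since $n$ is even, $M^{(n-1,1)}=D^{(n)}|D^{(n-1,1)}|D^{(n)}$ by Lemma~\ref{l1}. Adjunction (as in Lemma~\ref{l18}) gives
\[
\dim\Hom_{\s_n}(M^{(n-1,1)},\End_F(D^\lambda))=\dim\End_{\s_{n-1}}(D^\lambda\downarrow_{\s_{n-1}})=2,
\]
and the short exact sequence $0\to S^{(n-1,1)}\to M^{(n-1,1)}\to D^{(n)}\to 0$ together with left-exactness of $\Hom$ shows that $D^{(n-1,1)}$ fails to lie in $\End_F(D^\lambda)$ exactly when $\dim\Hom_{\s_n}(D^{(n)},\End_F(D^\lambda))\geq 2$, i.e. when $D^{(n)}$ occurs at least twice in $\soc(\End_F(D^\lambda))$, forcing $\dim\Hom_{\s_n}(S^{(n-1,1)},\End_F(D^\lambda))=0$ and $M^{(n-1,1)}$ itself not to embed. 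So the plan is to argue by contradiction: \emph{assume} $D^{(n-1,1)}\not\subseteq\End_F(D^\lambda)$, deduce that $\Hom_{\s_n}(S^{(n-1,1)},\End_F(D^\lambda))=0$, and derive a contradiction.

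First I would bound $\dim\Hom_{\s_n}(S^{(n-1,1)},\End_F(D^\lambda))$ from below using the structure theory of Section~\ref{s7}. By Lemma~\ref{l8}, with $\epsilon_0(\lambda)+\epsilon_1(\lambda)=2$, we have
\[
\dim\Hom_{\s_n}(S^{(n-1,1)},\End_F(D^\lambda))\leq 1+\min\{\max_{i:\epsilon_i(\lambda)\geq 1}\{\phi_i(\lambda)-\delta_{\phi_i(\lambda)>1}\},\ \max_{i:\phi_i(\lambda)\geq 1}\{\epsilon_i(\lambda)-\delta_{\epsilon_i(\lambda)>1}\}\},
\]
but I actually want a \emph{lower} bound. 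The key point is to produce nonzero homomorphisms $S^{(n-1,1)}\to\End_F(D^\lambda)$, equivalently (Lemma~\ref{l58}) nonzero maps $D^\lambda\to D^\lambda\otimes(S^{(n-1,1)})^*\cong(D^\lambda\otimes M^{(n-1,1)})/D^\lambda$, equivalently copies of $D^\lambda$ in the socle of $(\sum_i f_ie_iD^\lambda)/D^\lambda$ (via Lemma~\ref{l45'} and the block argument in the proof of Lemma~\ref{l51}). Here the case split from Lemma~\ref{l42} enters. Write the two normal nodes of $\lambda$ as having residues determined by $a_1$ and $a_{b_1}$. There are two cases: the two normal nodes have the \emph{same} residue $i$ (so $\epsilon_i(\lambda)=2$, $\epsilon_j(\lambda)=0$, $j=1-i$), or they have \emph{different} residues (so $\epsilon_0(\lambda)=\epsilon_1(\lambda)=1$). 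In each case one analyses $f_i\tilde e_iD^\lambda$: since $D^\nu:=\tilde e_iD^\lambda$ has $\phi_i(\nu)=\phi_i(\lambda)+1$, the module $f_iD^\nu$ has $[f_iD^\nu:D^\lambda]=\phi_i(\nu)$, with $D^\lambda$ as head and socle, so $(f_iD^\nu)/D^\lambda$ contributes $\phi_i(\nu)-1=\phi_i(\lambda)$ copies of $D^\lambda$ to the relevant subquotient before taking socles; combined with the $\epsilon_0(\lambda)+\epsilon_1(\lambda)-1=1$ extra copies from the other $f_ke_kD^\lambda$ summands, one gets $\dim\Hom_{\s_n}(S^{(n-1,1)},\End_F(D^\lambda))\geq 1$ as soon as $\phi_i(\lambda)\geq 1$ for the right $i$.

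The remaining case to rule out — and I expect this to be the main obstacle — is when no normal node has the same residue as a conormal node in the wrong way, i.e. when for every $i$ with $\epsilon_i(\lambda)\geq 1$ one has $\phi_i(\lambda)=0$. By Lemma~\ref{l52}, $\phi_0(\lambda)+\phi_1(\lambda)=3$, so this forces, in the same-residue case, $\epsilon_i(\lambda)=2$, $\phi_i(\lambda)=0$, $\phi_j(\lambda)=3$, $\epsilon_j(\lambda)=0$ — but this configuration is exactly what Lemma~\ref{l24} forbids for $n$ even. In the different-residue case ($\epsilon_0=\epsilon_1=1$) one would need $\phi_0(\lambda)=\phi_1(\lambda)=0$, impossible since they sum to $3$; so at least one of $\phi_0(\lambda),\phi_1(\lambda)$ is positive, say $\phi_i(\lambda)\geq 1$ with $\epsilon_i(\lambda)=1$, and the socle computation above again yields a nonzero homomorphism. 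Thus in all cases $\dim\Hom_{\s_n}(S^{(n-1,1)},\End_F(D^\lambda))\geq 1$, contradicting the assumption that it is $0$. Hence $D^{(n-1,1)}\subseteq\End_F(D^\lambda)$. The delicate part is checking that the copy of $D^\lambda$ surviving in the socle of $(f_iD^\nu)/D^\lambda$ genuinely survives to the socle of the whole quotient $(\sum_k f_ke_kD^\lambda)/D^\lambda$ after killing the distinguished copy $D\cong D^\lambda$; this requires the uniseriality/block-component bookkeeping exactly as in Lemma~\ref{l51}, together with the fact that the distinguished submodule $D$ sits inside one specific $f_{\overline i}D_{\overline i,\overline k}$ with $D_{\overline i,\overline k}\cong\tilde e_{\overline i}D^\lambda$, so that a different $f_ke_kD^\lambda$ summand (which exists because $\epsilon_0(\lambda)+\epsilon_1(\lambda)=2>1$) still contributes an untouched $D^\lambda$ to the socle.
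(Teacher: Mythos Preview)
Your reduction in the Plan is incorrect. You claim that if $D^{(n-1,1)}\not\subseteq\End_F(D^\lambda)$ then $\Hom_{\s_n}(S^{(n-1,1)},\End_F(D^\lambda))=0$. But $S^{(n-1,1)}=D^{(n)}|D^{(n-1,1)}$ has socle $D^{(n)}$, so an injective map $S^{(n-1,1)}\hookrightarrow\End_F(D^\lambda)$ is entirely compatible with $D^{(n-1,1)}\not\subseteq\End_F(D^\lambda)$. In fact your own exact sequence, together with $\dim\Hom_{\s_n}(M^{(n-1,1)},\End_F(D^\lambda))=2$ and $\dim\Hom_{\s_n}(D^{(n)},\End_F(D^\lambda))=1$, already forces $\dim\Hom_{\s_n}(S^{(n-1,1)},\End_F(D^\lambda))\geq 1$ unconditionally. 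So the inequality you then work to establish is both automatic and unable to produce a contradiction. (Compare Lemma~\ref{l10}: for the JS-partition $(m+1,m-1)$ with $m$ odd one has $\dim\Hom_{\s_n}(S^{(n-1,1)},\End_F(D^\lambda))=1$ while $D^{(n-1,1)}\not\subseteq\End_F(D^\lambda)$.)

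The ingredients you assemble are the right ones, but the conclusion must be drawn differently. The paper uses exactly the facts you isolate --- Lemma~\ref{l24} to find $i$ with $\epsilon_i(\lambda)\geq 1$ and $\phi_i(\lambda)\geq 1$, so that $f_i\tilde e_iD^\lambda$ has $\phi_i(\nu)\geq 2$ and hence a copy of $D^\lambda$ in its socle disjoint from the one in its head --- but then argues directly with the filtration $D^\lambda\otimes M^{(n-1,1)}\sim D^\lambda|(D^\lambda\otimes D^{(n-1,1)})|D^\lambda$. Since the socle and head of $D^\lambda\otimes M^{(n-1,1)}$ each contain exactly two copies of $D^\lambda$ (by $\dim\End_{\s_{n-1}}(D^\lambda\downarrow_{\s_{n-1}})=2$), and the analysis of $f_ie_iD^\lambda$ shows one socle copy is not in the head and one head copy is not in the socle, the bottom and top $D^\lambda$ in the filtration cannot account for both socle copies; hence $D^\lambda$ must occur in the socle (or head) of $D^\lambda\otimes D^{(n-1,1)}$, and self-duality finishes. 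You should replace your $S^{(n-1,1)}$ detour with this direct filtration argument.
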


\begin{proof}
From Lemma \ref{l1} we have that $M^{(n-1,1)}=D^{(n)}|D^{(n-1,1)}|D^{(n)}$, so that
\[D^\lambda\otimes M^{(n-1,1)}\sim D^\lambda|(D^\lambda\otimes D^{(n-1,1)})|D^\lambda.\]

From Lemma \ref{l24} (and Lemma \ref{l52}) there exists $i$ with $\epsilon_i(\lambda)\not=0$ and $\phi_i(\lambda)\not=0$. Let $\nu$ with $\tilde{e}_iD^\lambda=D^\nu$. Then $\phi_i(\nu)=\phi_i(\lambda)+1\geq 2$ from Lemma \ref{l47}. Also, from Lemma \hyperref[l39a]{\ref*{l39}\ref*{l39a}},
\[f_i(D^\nu)=f_i\tilde{e}_iD^\lambda\cong f_i\soc(e_iD^\lambda)\subseteq f_ie_iD^\lambda.\]
From Lemma \ref{l40} it then follows that
\[f_i\tilde{e}_iD^\lambda\sim \underbrace{\overbrace{D^\lambda}^{\soc(f_i\tilde{e}_iD^\lambda)}|\ldots|\overbrace{D^\lambda}^{\hd(f_i\tilde{e}_iD^\lambda)}}_{f_i\tilde{e}_iD^\lambda}|\ldots\]
and so in particular
\[f_ie_iD^\lambda\sim \rlap{$\hspace{3pt}\overbrace{\phantom{D^\lambda}}^{\not\subseteq \hd(f_ie_iD^\lambda)}$}\underbrace{D^\lambda}_{\subseteq\soc(f_ie_iD^\lambda)}|\ldots.\]
Similarly there exists $D\cong D^\lambda$ contained in the head of $f_ie_iD^\lambda$ which is not contained in the socle of $f_ie_iD^\lambda$. So
\[f_ie_iD^\lambda\sim\rlap{$\hspace{3pt}\overbrace{\phantom{D^\lambda}}^{\not\subseteq \hd(f_ie_iD^\lambda)}$}\underbrace{D^\lambda}_{\subseteq\soc(f_ie_iD^\lambda)}|\ldots|\rlap{$\hspace{3pt}\overbrace{\phantom{D^\lambda}}^{\subseteq \hd(f_ie_iD^\lambda)}$}\underbrace{D^\lambda}_{\not\subseteq\soc(f_ie_iD^\lambda)}.\]
From Lemma \ref{l45'}
\[D^\lambda\otimes M^{(n-1,1)}\cong f_0e_0D^\lambda\oplus f_1e_1D^\lambda\oplus f_0e_1D^\lambda\oplus f_1e_0D^\lambda.\]
Also, from Lemma \ref{l53},
\[\dim\Hom_{\s_n}(D^\lambda,D^\lambda\downarrow_{\s_{n-1}}\uparrow^{\s_n})=\dim\End_{\s_{n-1}}(D^\lambda\downarrow_{\s_{n-1}})=\epsilon_0(\lambda)+\epsilon_1(\lambda)=2.\]
As $D^\lambda$ is self-dual we then have that the head and socle of $D^\lambda\otimes M^{(n-1,1)}$ both contain 2 copies of $D^\lambda$. From the previous computations there exists $D,D'\cong D^\lambda$ with $D\subseteq\hd(D^\lambda\otimes M^{(n-1,1)})$ but $D\not\subseteq\soc(D^\lambda\otimes M^{(n-1,1)})$ and similarly $D'\subseteq\soc(D^\lambda\otimes M^{(n-1,1)})$ but $D'\not\subseteq\hd(D^\lambda\otimes M^{(n-1,1)})$. So
\[D^\lambda|(D^\lambda\otimes D^{(n-1,1)})|D^\lambda\sim D^\lambda\otimes M^{(n-1,1)}\sim (D^\lambda\oplus D^\lambda)|\ldots|(D^\lambda\oplus D^\lambda)\]
or
\[D^\lambda|(D^\lambda\otimes D^{(n-1,1)})|D^\lambda\sim D^\lambda\otimes M^{(n-1,1)}\sim D^\lambda\oplus (D^\lambda|\ldots|D^\lambda).\]
In each of these cases we can easily deduce that $D^\lambda\otimes D^{(n-1,1)}$ contains a copy of $D^\lambda$ in its socle or in its head. Since $D^\lambda\otimes D^{(n-1,1)}$ is self-dual, it follows that $D^\lambda\subseteq\soc(D^\lambda\otimes D^{(n-1,1)})$. Then, from Lemma \ref{l58},
\[\dim\Hom_{\s_n}(D^{(n-1,1)},\End_F(D^\lambda))=\dim\Hom_{\s_n}(D^\lambda,D^\lambda\otimes D^{(n-1,1)})\geq 1,\]
and so the lemma holds.
\end{proof}

\begin{lemma}\label{l44}
Let $p=2$ and $\lambda\vdash n$ with $n$ even be 2-regular with $\epsilon_i(\lambda)=2$ and $\epsilon_j(\lambda)=0$ for some $0\leq i\leq 1$ and for $j=1-i$. Then $\dim\End_{\s_{n-2}}(e_je_iD^\lambda)\geq 2$.
\end{lemma}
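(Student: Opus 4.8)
The plan is to reduce the whole statement to Lemma~\ref{l23}. I would put $D^\nu:=\tilde e_iD^\lambda$, which is defined since $\epsilon_i(\lambda)=2\geq 1$. Applying Lemma~\ref{l23} with this $i$ and with $j=1-i$ gives
\[\dim\End_{\s_{n-2}}(e_je_iD^\lambda)\geq\epsilon_i(\lambda)+\epsilon_j(\nu)-1=1+\epsilon_j(\nu),\]
so it is enough to prove $\epsilon_j(\nu)\geq 1$. From Lemma~\ref{l47} applied to $\tilde e_iD^\lambda=D^\nu$ I also get $\epsilon_i(\nu)=\epsilon_i(\lambda)-1=1$ and $\phi_i(\nu)=\phi_i(\lambda)+1\geq 1$; these are the only other facts I will use.

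I would establish $\epsilon_j(\nu)\geq 1$ by contradiction. If $\epsilon_j(\nu)=0$, then $\nu$ has exactly $\epsilon_0(\nu)+\epsilon_1(\nu)=1$ normal node, so by Lemma~\ref{l55} all parts of $\nu$ are congruent modulo $2$; since $\nu\vdash n-1$ and $n$ is even, parts all even is impossible, hence all parts of $\nu$ are odd and $h(\nu)$ is odd. Now I count the residue-$0$ nodes of $\nu$. For $1\leq k\leq h(\nu)$ the removable node of row $k$ is $(k,\nu_k)$, of residue $\nu_k-k\equiv 1-k\pmod 2$, which is $0$ exactly when $k$ is odd; this gives $(h(\nu)+1)/2$ removable $0$-nodes. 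For $1\leq k\leq h(\nu)$ the addable node of row $k$ is $(k,\nu_k+1)$, of residue $\nu_k+1-k\equiv k\pmod 2$, which is $0$ exactly when $k$ is even, giving $(h(\nu)-1)/2$ of them, while the remaining addable node $(h(\nu)+1,1)$ has residue $-h(\nu)\equiv 1\pmod 2$ and is not a $0$-node. Since the reduced $0$-signature differs from the $0$-signature only by cancellation of addable/removable pairs, $\epsilon_0(\nu)-\phi_0(\nu)$ equals the number of removable $0$-nodes minus the number of addable $0$-nodes, i.e.\ $1$. In particular $\epsilon_0(\nu)\geq 1$, so $\epsilon_0(\nu)=1$, $\epsilon_1(\nu)=0$ and $\phi_0(\nu)=0$; combined with $\epsilon_i(\nu)=1$ this forces $i=0$, whence $\phi_i(\nu)=\phi_0(\nu)=0$, contradicting $\phi_i(\nu)\geq 1$. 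Thus $\epsilon_j(\nu)\geq 1$, and Lemma~\ref{l23} finishes the proof.

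The only substantive step is excluding the case $\epsilon_j(\nu)=0$: this is where the hypothesis that $n$ is even enters (through $n-1$ being odd, which forces all parts of $\nu$ to be odd), and it rests on combining the JS-classification of Lemma~\ref{l55} with the elementary node count above. I expect this exclusion to be the main obstacle; once it is in place, the bound is an immediate application of Lemmas~\ref{l47} and~\ref{l23}.
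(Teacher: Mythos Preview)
Your argument is correct, and it is genuinely different from the paper's proof. The paper splits into two cases according to whether $\lambda_1\equiv\lambda_2\pmod 2$. In the first case it removes the \emph{top} normal $i$-node and exhibits a composition factor $D^{\nu'}$ of $e_iD^\lambda$ with $\epsilon_j(\nu')\geq 2$, concluding via self-duality that $e_je_iD^\lambda$ is not simple; in the second case it uses Lemma~\ref{l42} to locate the bottom normal $i$-node on row~$2$, checks by hand that row~$3$ of $\rho=\tilde e_iD^\lambda$ carries a normal $j$-node, and only then invokes Lemma~\ref{l23}. Your route is more uniform: you go straight for Lemma~\ref{l23} and reduce everything to the single inequality $\epsilon_j(\tilde e_iD^\lambda)\geq 1$, which you establish by a neat parity argument (JS classification forces all parts of $\nu$ odd and $h(\nu)$ odd, whence the residue count gives $\epsilon_0(\nu)-\phi_0(\nu)=1$, pinning down $i=0$ and contradicting $\phi_i(\nu)\geq 1$). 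This avoids the case split and the explicit use of Lemma~\ref{l42}, at the cost of a short global counting argument; the paper's version is more pictorial but longer. One cosmetic point: your first sentence reads as if you apply Lemma~\ref{l23} before knowing its hypothesis $\epsilon_j(\nu)\geq 1$; it would be cleaner to state up front that the whole proof reduces to showing $\epsilon_j(\nu)\geq 1$, and only then quote Lemma~\ref{l23}.
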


\begin{proof}
Notice that $h(\lambda)\geq 2$, as $\lambda$ has 2 normal nodes.

First assume that $\lambda_1\equiv\lambda_2\Md 2$ and let $\nu:=(\lambda_1-1,\lambda_2,\lambda_3,\ldots)$. Then $\lambda_1-\lambda_2\geq 2$ and so $\nu=\lambda\setminus (1,\lambda_1)$ is 2-regular. As $(1,\lambda_1)$ is a normal node of $\lambda$, we have that $D^\nu$ is a composition factor of $e_iD^\lambda=D^\lambda\downarrow_{\s_{n-1}}$ (Lemma \ref{l56}). As
\[
\begin{tikzpicture}
\draw (0.6,-0.7) node {$\lambda=$};

\draw (3,-0.2) node {j};
\draw (3.3,-0.2) node {i};
\draw (2.4,-0.6) node {j};

\draw (1.9,-1) node {$\iddots$};

\draw (2.25,-0.8)--(2.55,-0.8)--(2.55,-0.4)--(3.45,-0.4)--(3.45,0)--(1.2,0)--(1.2,-1.4)--(1.5,-1.4);

\draw (4.8,-0.6) node {and\hspace{18pt}$ $};
\end{tikzpicture}
\begin{tikzpicture}
\draw (0.6,-0.7) node {$\nu=$};

\draw (3,-0.2) node {j};
\draw (3.3,-0.2) node {i};
\draw (2.4,-0.6) node {j};

\draw (1.9,-1) node {$\iddots$};

\draw (2.25,-0.8)--(2.55,-0.8)--(2.55,-0.4)--(3.15,-0.4)--(3.15,0)--(1.2,0)--(1.2,-1.4)--(1.5,-1.4);

\draw (3.65,-0.8) node{,};
\end{tikzpicture}
\]
it follows that $\epsilon_j(\nu)\geq 2$ (the removable nodes on the first 2 rows of $\nu$ are normal of residue $j$), from which follows from Lemma \hyperref[l39b]{\ref*{l39}\ref*{l39b}} that $e_jD^\nu$ is non-zero and not simple. In particular
\[e_je_iD^\lambda\sim \ldots|e_jD^\nu|\ldots\]
is also non-zero and not simple and so the lemma holds as $e_je_iD^\lambda$ is self-dual by Lemma \ref{l57}.

Assume now that $\lambda_1\not\equiv\lambda_2\Md 2$. Then $\lambda_1+\lambda_2$ is odd and so $h(\lambda)\geq 3$. The removable nodes on the first 2 rows are the only normal nodes of $\lambda$ (from Lemma \ref{l42} as they have the same residue). In particular if $\rho:=(\lambda_1,\lambda_2-1,\lambda_3,\lambda_4,\ldots)$ then $D^\rho=\tilde{e}_iD^\lambda$. As the removable nodes on the first two rows of $\lambda$ have the same residue the removable node on the third row must have a different residue, from Lemma \ref{l42}. So
\[\begin{tikzpicture}
\draw (-0.6,-0.7) node {$\lambda=$};

\draw (3,-0.2) node {i};
\draw (2.1,-0.6) node {j};
\draw (2.4,-0.6) node {i};
\draw (1.2,-1) node {j};

\draw (0.7,-1.4) node {$\iddots$};

\draw (1.05,-1.2)--(1.35,-1.2)--(1.35,-0.8)--(2.55,-0.8)--(2.55,-0.4)--(3.15,-0.4)--(3.15,0)--(0,0)--(0,-1.8)--(0.3,-1.8);

\draw (4.5,-0.6) node {and\hspace{18pt}$ $};
\end{tikzpicture}
\begin{tikzpicture}
\draw (-0.6,-0.7) node {$\rho=$};

\draw (3,-0.2) node {i};
\draw (2.1,-0.6) node {j};
\draw (2.4,-0.6) node {i};
\draw (1.2,-1) node {j};

\draw (0.7,-1.4) node {$\iddots$};

\draw (1.05,-1.2)--(1.35,-1.2)--(1.35,-0.8)--(2.25,-0.8)--(2.25,-0.4)--(3.15,-0.4)--(3.15,0)--(0,0)--(0,-1.8)--(0.3,-1.8);

\draw (3.45,-0.8) node{.};
\end{tikzpicture}\]
So the removable node at the end of the third row of $\rho$ is normal with residue $j$. In particular $\epsilon_j(\rho)\geq 1$ and then $\dim\End_{\s_{n-2}}(e_je_iD^\lambda)\geq 2$ from Lemma \ref{l23}.
\end{proof}

We will now prove that the assumptions of Lemma \ref{l17} and, in most cases, those of Lemma \ref{l36} hold.

\begin{lemma}\label{l19}
Let $p=2$ and $\lambda\vdash n$ with $n\geq 4$ even be 2-regular and assume that $\epsilon_0(\lambda)+\epsilon_1(\lambda)=2$. Then
\begin{align*}
&\dim\End_{\s_{n-1}}(D^\lambda\downarrow_{\s_{n-1}})+\dim\Hom_{\s_n}(S^{(n-1,1)},\End_F(D^\lambda))+1\\
&\hspace{12pt}<\dim\End_{\s_{n-2}}(D^\lambda\downarrow_{\s_{n-2}}).
\end{align*}
\end{lemma}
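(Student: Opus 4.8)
The plan is to bound the left-hand side from above by $5$ and the right-hand side from below by $6$, which proves the strict inequality.

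First, by Lemma \ref{l53} we have $\dim\End_{\s_{n-1}}(D^\lambda\downarrow_{\s_{n-1}})=\epsilon_0(\lambda)+\epsilon_1(\lambda)=2$. Since $p=2$ and $n$ is even we have $p\mid n$, so Lemma \ref{l8} applies; as $\epsilon_0(\lambda)+\epsilon_1(\lambda)=2\geq 2$, its second part gives
\[\dim\Hom_{\s_n}(S^{(n-1,1)},\End_F(D^\lambda))\leq 2\epsilon_0(\lambda)+2\epsilon_1(\lambda)-2=2.\]
Hence the left-hand side of the claimed inequality is at most $2+2+1=5$, and it remains to show that $\dim\End_{\s_{n-2}}(D^\lambda\downarrow_{\s_{n-2}})\geq 6$. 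Since $\epsilon_0(\lambda),\epsilon_1(\lambda)\geq 0$ and $\epsilon_0(\lambda)+\epsilon_1(\lambda)=2$, either $\epsilon_0(\lambda)=\epsilon_1(\lambda)=1$, or $\epsilon_i(\lambda)=2$ and $\epsilon_j(\lambda)=0$ for some $0\leq i\leq 1$ with $j=1-i$; I would treat these two cases separately.

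In the first case, $a:=2\epsilon_0(\lambda)+2\epsilon_1(\lambda)+2\epsilon_0(\lambda)\epsilon_1(\lambda)=6$, so Lemma \ref{l7} immediately yields $\dim\End_{\s_{n-2}}(D^\lambda\downarrow_{\s_{n-2}})\geq 2\epsilon_0(\lambda)(\epsilon_0(\lambda)-1)+2\epsilon_1(\lambda)(\epsilon_1(\lambda)-1)+a=6$, as wanted.

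The second case is the main obstacle, since there Lemma \ref{l7} (with $a=0$) only gives the bound $4$, so the argument must be refined. Here I would argue as follows: as $\epsilon_j(\lambda)=0$ we have $e_jD^\lambda=0$ by Lemma \ref{l39}, so applying Lemma \ref{l45} twice kills the summands containing $e_j D^\lambda$ and gives $D^\lambda\downarrow_{\s_{n-2}}\cong e_ie_iD^\lambda\oplus e_je_iD^\lambda$, where (as in the proof of Lemma \ref{l7}) the two summands lie in distinct blocks of $\s_{n-2}$ and hence contribute additively to the endomorphism ring. By Lemma \ref{l39} we have $e_ie_iD^\lambda\cong(e_i^{(2)}D^\lambda)^{\oplus 2}$ with $\dim\End_{\s_{n-2}}(e_i^{(2)}D^\lambda)=\binom{\epsilon_i(\lambda)}{2}=1$, so $\dim\End_{\s_{n-2}}(e_ie_iD^\lambda)=4$; and by Lemma \ref{l44} we have $\dim\End_{\s_{n-2}}(e_je_iD^\lambda)\geq 2$. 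Adding these gives $\dim\End_{\s_{n-2}}(D^\lambda\downarrow_{\s_{n-2}})\geq 6$, which together with the first case completes the proof. Thus the whole argument is short except for the case $\epsilon_i(\lambda)=2$, $\epsilon_j(\lambda)=0$, which is resolved precisely by the extra input of Lemma \ref{l44}.
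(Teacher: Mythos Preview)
Your proof is correct and follows essentially the same approach as the paper: bound the left-hand side by $5$ via Lemmas \ref{l53} and \ref{l8}, then in the case $\epsilon_0(\lambda)=\epsilon_1(\lambda)=1$ get the lower bound $6$ (you cite Lemma \ref{l7} directly, the paper unpacks it via Lemmas \ref{l4} and \ref{l22}, which amounts to the same thing), and in the case $\epsilon_i(\lambda)=2$, $\epsilon_j(\lambda)=0$ combine $\dim\End_{\s_{n-2}}(e_ie_iD^\lambda)=4$ from Lemma \ref{l39} with $\dim\End_{\s_{n-2}}(e_je_iD^\lambda)\geq 2$ from Lemma \ref{l44}.
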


\begin{proof}
As $\epsilon_0(\lambda),\epsilon_1(\lambda)\leq 2$, from Lemma \ref{l8} we have that
\[\dim\Hom_{\s_n}(S^{(n-1,1)},\End_F(D^\lambda))\leq 2.\]
So, from Lemma \ref{l53}, it is enough to prove that $\dim\End_{\s_{n-2}}(D^\lambda\downarrow_{\s_{n-2}})>5$.

Let first $\epsilon_0(\lambda),\epsilon_1(\lambda)=1$. Then $e_0e_0D^\lambda,e_1e_1D^\lambda=0$ from Lemma \ref{l39}. So from Lemmas
\ref{l45}, \ref{l4} and \ref{l22} we have that
\begin{align*}
\dim\End_{\s_{n-2}}(D^\lambda\downarrow_{\s_{n-2}})&=\dim\End_{\s_{n-2}}(e_0e_1D^\lambda\oplus e_1e_0D^\lambda)\\
&=\dim\End_{\s_{n-2}}(e_0e_1D^\lambda)+\dim\End_{\s_{n-2}}(e_1e_0D^\lambda)\\
&\hspace{24pt}+2\dim\Hom_{\s_{n-2}}(e_0e_1D^\lambda,e_1e_0D^\lambda)\\
&\geq 2\epsilon_0(\lambda)+2\epsilon_1(\lambda)+2\epsilon_0(\lambda)\epsilon_1(\lambda)\\
&=6.
\end{align*}

Assume now that $\epsilon_i(\lambda)=2$ and $\epsilon_j(\lambda)=0$. Then $e_jD^\lambda=0$ from Lemma \ref{l39} and so, from Lemmas \ref{l45}, \ref{l39} and \ref{l44} and by block decomposition of $D^\lambda\downarrow_{\s_{n-2}}$,
\begin{align*}
\dim\End_{\s_{n-2}}(D^\lambda\downarrow_{\s_{n-2}})&=\dim\End_{\s_{n-2}}(e_ie_iD^\lambda\oplus e_je_iD^\lambda)\\
&=\dim\End_{\s_{n-2}}(e_ie_iD^\lambda)+\dim\End_{\s_{n-2}}(e_je_iD^\lambda)\\
&\geq4+2.
\end{align*}
\end{proof}

\begin{lemma}\label{l43}
Let $p=2$ and $\lambda\vdash n$ with $n\geq 4$ even be 2-regular Assume that $\epsilon_0(\lambda)+\epsilon_1(\lambda)=2$. If $\epsilon_0(\lambda),\epsilon_1(\lambda)=1$ further assume that we do not have
\[\lambda_1\equiv\lambda_2\equiv\ldots\equiv\lambda_{h(\lambda)-1}.\]
Then
\[\dim\End_{\s_{n-2,2}}(D^\lambda\downarrow_{\s_{n-2,2}})>\dim\Hom_{\s_n}(S^{(n-1,1)},\End_F(D^\lambda))+1.\]
\end{lemma}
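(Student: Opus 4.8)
The plan is to combine Lemma~\ref{l54} with the bound coming from Lemma~\ref{l8} and then analyse $D^\lambda\downarrow_{\s_{n-2,2}}$ block by block. Since $\epsilon_0(\lambda)+\epsilon_1(\lambda)=2$, Lemma~\ref{l8} gives $\dim\Hom_{\s_n}(S^{(n-1,1)},\End_F(D^\lambda))\leq 2$, so it suffices to prove $\dim\End_{\s_{n-2,2}}(D^\lambda\downarrow_{\s_{n-2,2}})\geq 4$. Write $D^\lambda\downarrow_{\s_{n-2,2}}=\overline{e}_0^2D^\lambda\oplus\overline{e}_1^2D^\lambda\oplus N$, where $N$ is the remaining block component, so $N\downarrow_{\s_{n-2}}\cong e_0e_1D^\lambda\oplus e_1e_0D^\lambda$; all three summands are self-dual, being block components of the self-dual module $D^\lambda\downarrow_{\s_{n-2,2}}$. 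Two facts about $F\s_{n-2,2}$-modules will be used throughout: a simple such module has the form $D^\alpha\otimes D^{(2)}$, hence is self-dual with simple restriction to $\s_{n-2}$; consequently a module with non-simple restriction to $\s_{n-2}$ is non-simple, and a self-dual non-simple $F\s_{n-2,2}$-module has endomorphism ring of dimension $\geq 2$.

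Suppose first $\epsilon_i(\lambda)=2$ and $\epsilon_j(\lambda)=0$, where $\{i,j\}=\{0,1\}$. Then $\overline{e}_j^2D^\lambda=0$ and $N\downarrow_{\s_{n-2}}\cong e_je_iD^\lambda$ by Lemma~\ref{l39}. By Lemma~\ref{l31}, $\overline{e}_i^2D^\lambda=(D^\nu\otimes D^{(2)})|(D^\nu\otimes D^{(2)})$ with $D^\nu\cong\tilde{e}_i^2D^\lambda$, a uniserial module with equal simple head and socle, so $\dim\End_{\s_{n-2,2}}(\overline{e}_i^2D^\lambda)=2$. By Lemma~\ref{l44}, $\dim\End_{\s_{n-2}}(e_je_iD^\lambda)\geq 2$, so $N\downarrow_{\s_{n-2}}$, hence $N$, is non-simple; being self-dual, $\dim\End_{\s_{n-2,2}}(N)\geq 2$. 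Adding up, $\dim\End_{\s_{n-2,2}}(D^\lambda\downarrow_{\s_{n-2,2}})\geq 4$, as required.

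Suppose now $\epsilon_0(\lambda)=\epsilon_1(\lambda)=1$. Then $\overline{e}_0^2D^\lambda=\overline{e}_1^2D^\lambda=0$ by Lemma~\ref{l39}, so $D^\lambda\downarrow_{\s_{n-2}}\cong e_0e_1D^\lambda\oplus e_1e_0D^\lambda$ (the two summands in one block of $\s_{n-2}$), and by Lemma~\ref{l54} it is enough to prove $\dim\End_{\s_{n-2}}(D^\lambda\downarrow_{\s_{n-2}})\geq 7$. Let $i$, $j=1-i$ be such that the higher of the two normal nodes $x_i,x_j$ of $\lambda$ is $x_i$, and put $\nu_i=\lambda\setminus\{x_i\}$, $\nu_j=\lambda\setminus\{x_j\}$, so by Lemma~\ref{l4'} we have $\tilde{e}_iD^\lambda=D^{\nu_i}$, $\tilde{e}_jD^\lambda=D^{\nu_j}$, $\epsilon_j(\nu_i)=3$ and $\epsilon_i(\nu_j)\geq 1$. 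Then Lemma~\ref{l23} gives $\dim\End_{\s_{n-2}}(e_je_iD^\lambda)\geq\epsilon_i(\lambda)+\epsilon_j(\nu_i)-1=3$ and $\dim\End_{\s_{n-2}}(e_ie_jD^\lambda)\geq\epsilon_j(\lambda)+\epsilon_i(\nu_j)-1=\epsilon_i(\nu_j)$, while Lemma~\ref{l22} makes each of $\Hom_{\s_{n-2}}(e_je_iD^\lambda,e_ie_jD^\lambda)$ and $\Hom_{\s_{n-2}}(e_ie_jD^\lambda,e_je_iD^\lambda)$ at least $1$-dimensional. Hence $\dim\End_{\s_{n-2}}(D^\lambda\downarrow_{\s_{n-2}})\geq 3+\epsilon_i(\nu_j)+1+1$, and it remains to show $\epsilon_i(\nu_j)\geq 2$.

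This final step is where the extra hypothesis is used and is the main obstacle. By Lemma~\ref{l42} the normal nodes of $\lambda$ lie on rows $1$ and $b_1$; since $\epsilon_0(\lambda)=\epsilon_1(\lambda)=1$ they have different residues, and because $a_1,\dots,a_{b_1-1}$ alternate with $a_{b_1}=a_{b_1-1}$ this forces $b_1$ odd, with $x_i$ the node on row $1$ (so $i=a_1$) and $x_j$ the node on row $b_1$; the assumption that $\lambda_1\equiv\dots\equiv\lambda_{h(\lambda)-1}$ fails is exactly $b_1<h(\lambda)$. Passing from $\lambda$ to $\nu_j$ changes the $i$-signature only at row $b_1$, turning the symbol there from $+$ (addable $i$-node) to $-$ (removable $i$-node of $\nu_j$). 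Reading the $i$-signature of $\lambda$ from the top, rows $1,\dots,b_1-1$ contribute $-,+,-,\dots,-,+$, so (as $b_1-1$ is even) the partial sum is $0$ after row $b_1-1$ and $-1$ after row $1$. By Lemma~\ref{l42} the conormal nodes lie on rows $b_t-1$, $h(\lambda)$, $h(\lambda)+1$, and since Lemma~\ref{l42} forbids $b_t=b_1+1$ and $b_1<h(\lambda)$, row $b_1$ is none of these; thus the $+$ at position $b_1$ is matched, in the reduction, to a $-$ strictly below it, which forces the partial sum to return to $0$ at some row $\geq b_1$. A short bookkeeping with partial sums then shows that replacing this $+$ by $-$ leaves at least two surviving $-$ in the reduced $i$-signature of $\nu_j$, i.e. $\epsilon_i(\nu_j)\geq 2$. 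Therefore $\dim\End_{\s_{n-2}}(D^\lambda\downarrow_{\s_{n-2}})\geq 7$, completing the $(1,1)$ case and the proof. The delicate point is precisely this parenthesis-matching computation, where the oddness of $b_1$ and the inequality $b_1<h(\lambda)$ have to be played carefully against the row-by-row residue data of Lemma~\ref{l42}.
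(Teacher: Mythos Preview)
Your proof is correct and follows the same architecture as the paper's: reduce via Lemma~\ref{l8} to $\dim\End_{\s_{n-2,2}}(D^\lambda\downarrow_{\s_{n-2,2}})\geq 4$, handle the $(\epsilon_i,\epsilon_j)=(2,0)$ case identically via Lemmas~\ref{l31} and~\ref{l44}, and in the $(1,1)$ case reduce via Lemma~\ref{l54} to $\dim\End_{\s_{n-2}}(D^\lambda\downarrow_{\s_{n-2}})\geq 7$, which comes down to $\epsilon_i(\nu_j)\geq 2$.

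The one genuine difference is how you establish $\epsilon_i(\nu_j)\geq 2$. The paper's route is shorter: it observes that $(\nu_j)_1\equiv\cdots\equiv(\nu_j)_{b_1}\not\equiv(\nu_j)_{b_1+1}\pmod 2$ (this uses $b_1+1\leq h(\lambda)$ together with $a_{b_1+1}\neq a_{b_1}$ from Lemma~\ref{l42}), so $\nu_j$ is not a JS-partition by Lemma~\ref{l55}; since $\epsilon_j(\nu_j)=\epsilon_j(\lambda)-1=0$ by Lemma~\ref{l47}, this forces $\epsilon_i(\nu_j)\geq 2$. Your direct signature argument is also valid, but the closing sentence (``a short bookkeeping with partial sums'') should be spelled out: with $h(k)$ denoting $(\#\text{addable}-\#\text{removable})$ $i$-nodes on rows $1,\ldots,k$, one has $h_\lambda(b_1-1)=0$, the non-conormality of the addable $i$-node at row $b_1$ gives $h_\lambda(c)=0$ for some $c\geq b_1$, and then $h_{\nu_j}(k)=h_\lambda(k)-2$ for all $k\geq b_1$ yields $h_{\nu_j}(c)=-2$, whence $\epsilon_i(\nu_j)=-\min_k h_{\nu_j}(k)\geq 2$. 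Once this line is added, your argument is complete; the paper's parity argument simply reaches the same conclusion with less bookkeeping.
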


\begin{proof}
From Lemma \ref{l54} it is enough to prove that
\[\dim\End_{\s_{n-2}}(D^\lambda\downarrow_{\s_{n-2}})>2\dim\Hom_{\s_n}(S^{(n-1,1)},\End_F(D^\lambda))+2.\]
So, from Lemma \ref{l8}, it is enough to prove that $\dim\End_{\s_{n-2,2}}(D^\lambda\downarrow_{\s_{n-2,2}})\geq 4$ or that  $\dim\End_{\s_{n-2}}(D^\lambda\downarrow_{\s_{n-2}})\geq 7$.

Assume first that $\epsilon_i(\lambda)=2$ and $\epsilon_j(\lambda)=0$. In this case $e_jD^\lambda=0$ from Lemma \ref{l39} and then $D^\lambda\downarrow_{\s_{n-2}}=e_ie_iD^\lambda\oplus e_je_iD^\lambda$ from Lemma \ref{l39}. So $D^\lambda\downarrow_{\s_{n-2,2}}=\overline{e}_i^2D^\lambda\oplus M$, where $M\downarrow_{\s_{n-2}}=e_je_iD^\lambda$. It then follows from Lemma \ref{l31} and the block decomposition of $D^\lambda\downarrow_{\s_{n-2,2}}$ that
\[\dim\End_{\s_{n-2,2}}(D^\lambda\downarrow_{\s_{n-2,2}})=2+\dim\End_{\s_{n-2,2}}(M).\]
From Lemma \ref{l44} we have that $\dim\End_{\s_{n-2}}(e_je_iD^\lambda)\geq 2$. In particular $e_je_iD^\lambda$ is non-zero and not simple. It then follows that the same holds for $M$ (as $\s_{n-2,2}\cong \s_{n-2}\times \s_2$ and the only simple module of $\s_2$ in characteristic 2 is $D^{(2)}$). Further $M$ is self-dual, as it is the restriction to a block of $\s_{n-2,2}$ of a self-dual module. So $\dim\End_{\s_{n-2,2}}(M)\geq 2$ and then $\dim\End_{\s_{n-2,2}}(D^\lambda\downarrow_{\s_{n-2,2}})\geq 4$. In particular the lemma holds when $\epsilon_i(\lambda)=2$ and $\epsilon_j(\lambda)=0$.

Assume now that $\epsilon_0(\lambda),\epsilon_1(\lambda)=1$. Then $(\lambda_1,\ldots,\lambda_{h(\lambda)-1})$ is not a JS-partition.  Let $a_k$ be the residue of the removable node on the $k$-th row of $\lambda$. Let $r\geq 2$ minimal such that $a_r\not=a_{r-1}$ (it exists from Lemma \ref{l42}). From Lemma \ref{l42} we have that the removable node on the $r$-th row of $\lambda$ is normal. Since $\epsilon_0(\lambda),\epsilon_1(\lambda)=1$ we have that $a_1\not=a_r$. From Lemma \ref{l39}, $e_0e_0D^\lambda,e_1e_1D^\lambda=0$ and so $D^\lambda\downarrow_{\s_{n-2}}=e_1e_0D^\lambda\oplus e_0e_1D^\lambda$ from Lemma \ref{l45}. If $e_iD^\lambda=D^{\nu_i}$, we then have from Lemmas \ref{l23} and \ref{l22} that
\begin{align*}
\dim\End_{\s_{n-2}}(D^\lambda\downarrow_{\s_{n-2}})&=\dim\End_{\s_{n-2}}(e_0e_1D^\lambda)+\dim\End_{\s_{n-2}}(e_1e_0D^\lambda)\\
&\hspace{24pt}+\dim\Hom_{\s_{n-2}}(e_0e_1D^\lambda,e_1e_0D^\lambda)\\
&\hspace{24pt}+\dim\Hom_{\s_{n-2}}(e_1e_0D^\lambda,e_0e_1D^\lambda)\\
&\geq \epsilon_1(\nu_0)+\epsilon_0(\nu_1)+2.
\end{align*}
So it is enough to prove that $\epsilon_1(\nu_0)+\epsilon_0(\nu_1)\geq 5$. Let $i,j$ with $a_1=i$ and $a_r=j$. Then from Lemma \ref{l4'} 
we already know that $\epsilon_j(\nu_i)=3$. So it is enough to prove that $\epsilon_i(\nu_j)\geq 2$. By definition of $r$ we have that $a_k\not=a_{k-1}$ for $2\leq k<r$. So
\[\lambda_1\equiv\lambda_2\equiv\ldots\equiv\lambda_{r-1}\Md 2\]
and so by assumption $r+1\leq h(\lambda)$. Notice that $a_{r+1}\not=a_r$ from Lemma \ref{l42} as $a_r=a_{r-1}$ by definition of $r$. So 
\[\lambda_1\equiv\lambda_2\equiv\ldots\equiv\lambda_{r-1}\not\equiv \lambda_r\equiv\lambda_{r+1}\Md 2.\]
As $\nu_j=(\lambda_1,\ldots,\lambda_{r-1},\lambda_r-1,\lambda_{r+1},\lambda_{r+2},\ldots)$ and $(\nu_j)_{r+1}=\lambda_{r+1}>0$ we have that $h(\nu_j)\geq r+1$ and
\[(\nu_j)_1\equiv(\nu_j)_2\equiv\ldots\equiv(\nu_j)_{r-1}\equiv (\nu_j)_r\not\equiv(\nu_j)_{r+1}\Md 2.\]
In particular from Lemma \ref{l55} we have that $\nu_j$ is not a JS-partition. From Lemmas \ref{l47} and \ref{l55} we then have that
\[\epsilon_i(\nu_j)\geq 2-\epsilon_j(\nu_j)=3-\epsilon_j(\lambda)=2.\]
\end{proof}

For the remaining cases we will prove directly that $\End_F(D^\lambda)$ contains a module of the form $D^{(n-2,2)}$ or $D^{(n-1,1)}|(D^{(n)}\oplus D^{(n-2,2)})$.

\begin{lemma}\label{l46}
Let $p=2$ and $\lambda\vdash n$ with $n\equiv 0\Md 4$ and $n\geq 8$ even be 2-regular. Assume that $\epsilon_0(\lambda),\epsilon_1(\lambda)=1$ and
\[\lambda_1\equiv\lambda_2\equiv\ldots\equiv\lambda_{h(\lambda)-1}\Md 2.\]
Then $M\subseteq L\subseteq\End_F(D^\lambda)$, where $L$ is a quotient of $M^{(n-2,2)}$ and $M$ is of one of the following forms:
\begin{itemize}
\item
$M\cong D^{(n-2,2)}$,

\item
$M=D^{(n-1,1)}|(D^{(n)}\oplus D^{(n-2,2)})\subseteq M^{(n-2,2)}$.
\end{itemize}
\end{lemma}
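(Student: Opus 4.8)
The plan is to read the (very rigid) shape of $\lambda$ off Lemma~\ref{l42}, use it to pin down the structure of $D^\lambda\downarrow_{\s_{n-2}}$, verify the hypothesis of Lemma~\ref{l36}, and then quote Lemma~\ref{l36}. First I would determine $\lambda$: writing $h=h(\lambda)$ and letting $a_k\equiv\lambda_k-k\pmod 2$ be the residue of the removable node on row $k$, the assumption $\lambda_1\equiv\dots\equiv\lambda_{h-1}\pmod 2$ makes $a_1,\dots,a_{h-1}$ alternate, so by Lemma~\ref{l42} the least index $b_1>1$ with $a_{b_1}=a_{b_1-1}$ is $b_1=h$; hence $\lambda_h\not\equiv\lambda_{h-1}$ and the two normal nodes of $\lambda$ sit on rows $1$ and $h$. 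As $\epsilon_0(\lambda)=\epsilon_1(\lambda)=1$ these two nodes have different residues, which forces $h$ odd, and then counting parities together with $n$ even forces $\lambda_1,\dots,\lambda_{h-1}$ odd, $\lambda_h$ even, the row-$1$ node of residue $0$ and the row-$h$ node of residue $1$. Defining $\nu_0,\nu_1$ by $D^{\nu_0}=\tilde e_0D^\lambda$, $D^{\nu_1}=\tilde e_1D^\lambda$ (so $\nu_0,\nu_1$ are as in Lemma~\ref{l4'}), one then has $\nu_1=(\lambda_1,\dots,\lambda_{h-1},\lambda_h-1)$, which has all parts odd, so $\nu_1$ is a JS-partition by Lemma~\ref{l55}; and $\nu_0=(\lambda_1-1,\lambda_2,\dots,\lambda_h)$ has $\epsilon_1(\nu_0)=\epsilon_1(\lambda)+2=3$ by Lemma~\ref{l4'}.

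Because $\epsilon_0(\lambda)=\epsilon_1(\lambda)=1$, Lemma~\ref{l39} shows $e_0D^\lambda=D^{\nu_0}$ and $e_1D^\lambda=D^{\nu_1}$ are simple, so by Lemma~\ref{l45}
\[D^\lambda\downarrow_{\s_{n-2}}\cong A\oplus B,\qquad A:=e_1D^{\nu_0},\qquad B:=D^{\nu_1}\downarrow_{\s_{n-2}},\]
where $B$ is simple (as $\nu_1$ is a JS-partition), say $B\cong D^\mu$, and where by Lemma~\ref{l39} (using $\epsilon_1(\nu_0)=3$) the module $A$ is indecomposable and self-dual with simple head and socle and $\dim\End_{\s_{n-2}}(A)=3$. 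In particular $\End_{\s_{n-2}}(A)$ is a local $F$-algebra of dimension $3$ with residue field $F$, hence commutative, and $\dim\End_{\s_{n-2}}(A\oplus B)\in\{4,6\}$ according as $\hd A\not\cong D^\mu$ or $\hd A\cong\soc A\cong D^\mu$.

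The main obstacle is to prove $\dim\End_{\s_{n-2,2}}(D^\lambda\downarrow_{\s_{n-2,2}})\ge 4$, which I would do by analysing the $\s_2$-action. Since $D^{(2)}$ is the only simple $F\s_2$-module, an $\s_{n-2,2}$-module is an $\s_{n-2}$-module equipped with a commuting operator $\tau$ with $\tau^2=0$ (namely $(n-1,n)-\mathrm{id}$), and $\dim\End_{\s_{n-2,2}}(D^\lambda\downarrow_{\s_{n-2,2}})$ equals the dimension of the centralizer of the relevant $\tau$ in $\End_{\s_{n-2}}(A\oplus B)$. If $\hd A\not\cong D^\mu$ then $\Hom_{\s_{n-2}}(A,B)=\Hom_{\s_{n-2}}(B,A)=0$, so $\tau$ is block-diagonal, its $B$-block vanishes ($\End_{\s_{n-2}}(B)=F$), and its centralizer contains all of the commutative ring $\End_{\s_{n-2}}(A)$ together with $\End_{\s_{n-2}}(B)$, of total dimension $4$. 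If $\hd A\cong\soc A\cong D^\mu$ then $\Hom_{\s_{n-2}}(A,B)$ and $\Hom_{\s_{n-2}}(B,A)$ are one-dimensional, spanned by the projection $\pi$ of $A$ onto $D^\mu$ and the inclusion $\iota$ of $D^\mu$ into $A$, with $\pi\iota=0$ and $\iota\pi\ne0$; writing $\tau$ as a $2\times 2$ matrix over the four pieces of $\End_{\s_{n-2}}(A\oplus B)$, the relation $\tau^2=0$ first kills its $B\to B$ entry and, unless both off-diagonal entries vanish, kills the scalar part of its $A\to A$ entry, after which working out the equations $g\tau=\tau g$ shows that in each of the resulting cases at most two independent linear conditions are imposed on the $6$-dimensional $\End_{\s_{n-2}}(A\oplus B)$. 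Thus the centralizer has dimension at least $4$ in all cases.

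Granting $\dim\End_{\s_{n-2,2}}(D^\lambda\downarrow_{\s_{n-2,2}})\ge 4$, Lemma~\ref{l8} gives $\dim\Hom_{\s_n}(S^{(n-1,1)},\End_F(D^\lambda))\le 2\epsilon_0(\lambda)+2\epsilon_1(\lambda)-2=2$, so
\[\dim\End_{\s_{n-2,2}}(D^\lambda\downarrow_{\s_{n-2,2}})\ge 4>\dim\Hom_{\s_n}(S^{(n-1,1)},\End_F(D^\lambda))+1;\]
hence the hypothesis of Lemma~\ref{l36} holds, and Lemma~\ref{l36} gives exactly the desired conclusion. The only genuinely delicate point is the case analysis for $\tau$ together with the (routine) algebra of the local ring $\End_{\s_{n-2}}(A)$; the determination of the shape of $\lambda$ and everything surrounding the invocation of Lemmas~\ref{l8} and~\ref{l36} is bookkeeping.
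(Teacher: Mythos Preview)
Your approach is correct and takes a genuinely different route from the paper's. You verify the hypothesis of Lemma~\ref{l36} directly by proving $\dim\End_{\s_{n-2,2}}(D^\lambda\downarrow_{\s_{n-2,2}})\ge 4$ via the centralizer of $\tau=(n-1,n)-\id$ inside $\End_{\s_{n-2}}(A\oplus B)$. The determination of the shape of $\lambda$ is right (and matches the paper), the commutativity of the $3$-dimensional local algebra $\End_{\s_{n-2}}(A)$ is a genuine fact, and the case analysis for $\tau$ does give centralizer dimension $\ge 4$ in every case. (In fact Lemma~\ref{l22} forces $\hd A\cong\soc A\cong B$, so only your second case occurs, but your treatment is sound either way.) Then Lemma~\ref{l8} gives $\dim\Hom_{\s_n}(S^{(n-1,1)},\End_F(D^\lambda))\le 2$, and Lemma~\ref{l36} finishes.

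The paper does \emph{not} go through Lemma~\ref{l36} here. Instead it first proves $(S^{(n-1,1)})^*\subseteq\End_F(D^\lambda)$ by analysing $f_ie_iD^\lambda$, and then argues by contradiction: assuming no suitable quotient of $M^{(n-2,2)}$ embeds in $\End_F(D^\lambda)$, it combines the structure of $D^{(n-2,1)}\uparrow^{\s_n}$ (Lemma~\ref{l6}), the translation Lemmas~\ref{l62} and~\ref{l63}, and Benson's criterion for the non-splitting of $D^\lambda\downarrow_{A_n}$ (to exclude submodules of the form $D^{(n)}|D^{(n)}$) to derive a contradiction. Lemmas~\ref{l62} and~\ref{l63} exist essentially to feed this argument. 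Your route is much shorter, avoids Benson's theorem and those two lemmas, and in effect shows that the exclusion clause in Lemma~\ref{l43} was unnecessary: the excluded case also satisfies $\dim\End_{\s_{n-2,2}}(D^\lambda\downarrow)\ge 4$, just not via the crude halving bound of Lemma~\ref{l54} (which only yields $\ge 3$ from $\dim\End_{\s_{n-2}}\ge 6$). What the paper's longer argument buys is more explicit submodule information inside $\End_F(D^\lambda)$, though none of it is needed for the lemma as stated.
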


\begin{proof}
As $D^\lambda\downarrow_{\s_{n-1}}$ is not irreducible (Lemma \ref{l53}) we have from Lemma \ref{l55} and by assumption that
\[\lambda_1\equiv\ldots\equiv\lambda_{h(\lambda)-1}\not\equiv\lambda_{h(\lambda)}\Md 2.\]
In particular
\[0\equiv n\equiv \lambda_1\cdot h(\lambda)+1\Md 2.\]
So $\lambda_1,\ldots,\lambda_{h(\lambda)-1}$ are odd, $\lambda_{h(\lambda)}$ is even and $\lambda$ has an odd number of parts. Then
\[
\begin{tikzpicture}
\draw (-0.8,-0.7) node {$\lambda=$};

\draw (0,0) node {0};
\draw (1.7,0) node {$\cdots$};
\draw (3.4,0) node {0};
\draw (0,-0.5) node {$\vdots$};
\draw (2.5,-0.5) node {$\iddots$};
\draw (0,-1.2) node {1};
\draw (1,-1.2) node {$\cdots$};
\draw (2,-1.2) node {1};
\draw (0,-1.6) node {0};
\draw (0.5,-1.6) node {$\cdots$};
\draw (1,-1.6) node {1};
\draw (1.3,-1.6) node {0};
\draw (0,-2) node {1};

\draw (2.9,-0.2) -- (3.55,-0.2) -- (3.55,0.2) -- (-0.15,0.2) -- (-0.15,-1.8) -- (1.15,-1.8) -- (1.15,-1.4) -- (2.15,-1.4) -- (2.15,-1);
\end{tikzpicture}
\]
and so $\phi_0(\lambda),\phi_1(\lambda)\geq 1$ as $(h(\lambda),\lambda_{h(\lambda)}+1)$ and $(h(\lambda)+1,1)$ are conormal (they are always conormal).

We will first prove that $(S^{(n-1,1)})^*\subseteq\End_F(D^\lambda)$. Let $\nu_i$ with $\tilde{e}_iD^\lambda=D^{\nu_i}$ for $0\leq i\leq 1$ (since $\epsilon_i(\lambda)=1$). As $\epsilon_i(\lambda)=1$, from Lemma \ref{l39} we have that
\[f_ie_iD^\lambda\cong f_i\tilde{e}_iD^\lambda=f_iD^{\nu_i}.\]
From Lemma \ref{l47} we also have that $\phi_i(\nu_i)=\phi_i(\lambda)+1\geq 2$. So, from Lemmas \ref{l40} and \ref{l47},
\begin{align*}
f_0e_0D^\lambda\oplus f_1e_1D^\lambda&\sim\hspace{7pt}(\hspace{5pt}\rlap{$\hspace{-19pt}\overbrace{\phantom{D^\lambda}}^{\soc(f_0e_0D^\lambda)}$}D^\lambda\hspace{5pt}|\hspace{5pt}\ldots\hspace{5pt}|\hspace{5pt}\rlap{$\hspace{-18pt}\overbrace{\phantom{D^\lambda}}^{\hd(f_0e_0D^\lambda)}$}D^\lambda\hspace{5pt})\hspace{7pt}\oplus\hspace{7pt} (\hspace{5pt}\rlap{$\hspace{-19pt}\overbrace{\phantom{D^\lambda}}^{\soc(f_1e_1D^\lambda)}$}D^\lambda\hspace{5pt}|\hspace{5pt}\ldots\hspace{5pt}|\hspace{5pt}\rlap{$\hspace{-18pt}\overbrace{\phantom{D^\lambda}}^{\hd(f_1e_1D^\lambda)}$}D^\lambda\hspace{5pt})\\
&\sim \overbrace{(D^\lambda\oplus D^\lambda)}^{\soc(f_0e_0D^\lambda\oplus f_1e_1D^\lambda)}|\ldots|\overbrace{(D^\lambda\oplus D^\lambda)}^{\hd(f_0e_0D^\lambda\oplus f_1e_1D^\lambda)}.
\end{align*}

From Lemma \ref{l45'},
\[D^\lambda\otimes M^{(n-1,1)}\cong f_0e_0D^\lambda\oplus f_1e_1D^\lambda\oplus f_0e_1D^\lambda\oplus f_1e_0D^\lambda.\]
Further from Lemma \ref{l1} and self-duality of $M^{(n-1,1)}$,
\[D^\lambda\otimes M^{(n-1,1)}\sim D^\lambda\otimes(D^{(n)}|(S^{((n-1,1)})^*)\sim D^\lambda|(D^\lambda\otimes (S^{(n-1,1)})^*)\]
there exists $D\subseteq D^\lambda\downarrow_{\s_{n-1}}\uparrow^{\s_n}$ with $D\cong D^\lambda$ and
\[D^\lambda\otimes(S^{(n-1,1)})^*\cong D^\lambda\downarrow_{\s_{n-1}}\uparrow^{\s_n}/D.\]
From the block decomposition of $D^\lambda\downarrow_{\s_{n-1}}\uparrow^{\s_n}$ (which comes from the definition of $e_i$ and $f_i$) it follows that $D\subseteq f_0e_0D^\lambda\oplus f_1e_1D^\lambda$. So, as the head and socle of $f_0e_0D^\lambda\oplus f_1e_1D^\lambda$ are disjoint, $D$ is not contained in the head of $D^\lambda\downarrow_{\s_{n-1}}\uparrow^{\s_n}$. In particular, as $D$ is simple,
\[\hd(D^\lambda\otimes (S^{(n-1,1)})^*)\cong\hd(D^\lambda\downarrow_{\s_{n-1}}\uparrow^{\s_n}/D)=\hd(D^\lambda\downarrow_{\s_{n-1}}\uparrow^{\s_n})\]
and so from Lemmas \ref{l53} and \ref{l58}
\begin{align*}
\dim\Hom_{\s_n}((S^{(n-1,1)})^*,\End_F(D^\lambda))&=\dim\Hom_{\s_n}(D^\lambda,D^\lambda\otimes S^{(n-1,1)})\\
&=\dim\Hom_{\s_n}(D^\lambda\otimes (S^{(n-1,1)})^*,D^\lambda)\\
&=\dim\Hom_{\s_n}(D^\lambda\downarrow_{\s_{n-1}}\uparrow^{\s_n},D^\lambda)\\
&=\dim\End_{\s_{n-1}}(D^\lambda\downarrow_{\s_{n-1}})\\
&=2.
\end{align*}
From Lemma \ref{l1}, $(S^{(n-1,1)})^*=D^{(n-1,1)}|D^{(n)}$. As $D^{(n)}$ is contained exactly once in $\End_F(D^\lambda)$ it follows that $(S^{(n-1,1)})^*\subseteq\End_F(D^\lambda)$.

We can assume that no quotient of $M^{(n-2,2)}$ containing $D^{(n-2,2)}$ as submodule is contained in $\End_F(D^\lambda)$. From Lemma \ref{l62} we then have that $\End_F(D^\lambda)$ does not contain any quotient of $D^{(n-2,1)}\uparrow^{\s_n}$ which contains $D^{(n-2,2)}$ as a submodule. From Lemma \ref{l3} and by self-duality of $D^{(n-2,1)}\uparrow^{\s_n}$ we have that $(S^{(n-1,1)})^*\subseteq D^{(n-2,1)}\uparrow^{\s_n}$. So from Lemmas \ref{l6} (or \ref{l49}) and \ref{l63} we can further assume that $D^{(n-2,1)}\uparrow^{\s_n}\not\subseteq \End_F(D^\lambda)$.

From Lemmas \ref{l53}, \ref{l27} and \ref{l7}, 
\begin{align*}
&\dim\Hom_{\s_n}(D^{(n-2,1)}\uparrow^{\s_n},\End_F(D^\lambda))\\
&\hspace{12pt}=\dim\Hom_{\s_n}(M^{(n-2,1,1)},\End_F(D^\lambda))-\dim\Hom_{\s_n}(M^{(n-1,1)},\End_F(D^\lambda))\\
&\hspace{12pt}=\dim\End_{\s_{n-2}}(D^\lambda\downarrow_{\s_{n-2}})-\dim\End_{\s_{n-1}}(D^\lambda\downarrow_{\s_{n-1}})\\
&\hspace{12pt}\geq 6-2\\
&\hspace{12pt}=4.
\end{align*}
By Lemma \ref{l6}, the socle of $D^{(n-2,1)}\uparrow^{\s_n}$ is isomorphic to $D^{(n-1,1)}$ and $D^{(n-2,2)}\subseteq (D^{(n-2,1)}\uparrow^{\s_n})/D^{(n-1,1)}$. As by assumption neither $D^{(n-2,1)}\uparrow^{\s_n}$ nor any of its quotients containing $D^{(n-2,2)}$ as submodule are contained in $\End_F(D^\lambda)$ it follows that
\begin{align*}
&\dim\Hom_{\s_n}(D^{(n-2,1)}\uparrow^{\s_n}/(D^{(n-1,1)}|D^{(n-2,2)}),\End_F(D^\lambda))\\
&\hspace{12pt}=\dim\Hom_{\s_n}(D^{(n-2,1)}\uparrow^{\s_n}/D^{(n-1,1)},\End_F(D^\lambda))\\
&\hspace{12pt}=\dim\Hom_{\s_n}(D^{(n-2,1)}\uparrow^{\s_n},\End_F(D^\lambda))\\
&\hspace{12pt}\geq 4.
\end{align*}
Notice that
\[\xymat{
&&D^{(n-1,1)}\ar@{-}[dr]\ar@{-}[dl]\\
&D^{(n)}\ar@{-}[dd]&&D^{(n-2,2)}\ar@{-}[d]\ar@{-}[ddll]\\
D^{(n-2,1)}\uparrow^{\s_n}/(D^{(n-1,1)}|D^{(n-2,2)})=&&&D^{(n-1,1)}&.\\
&D^{(n)}
}\]
Let $A$ be any quotient of $D^{(n-2,1)}\uparrow^{\s_n}/(D^{(n-1,1)}|D^{(n-2,2)})$ containing the copy of $D^{(n)}$ contained in the socle of $D^{(n-2,1)}\uparrow^{\s_n}/(D^{(n-1,1)}|D^{(n-2,2)})$. Then $A$ contains an indecomposable module of the form $D^{(n)}|D^{(n)}$. So $1\uparrow_{A_n}^{\s_n}\subseteq A$ from Lemma \ref{l26}. Assume that $A\subseteq\End_F(D^\lambda)$. Then $1\uparrow_{A_n}^{\s_n}\subseteq\End_F(D^\lambda)$ and so
\begin{align*}
\dim\End_{A_n}(D^\lambda\downarrow_{A_n})&=\dim\Hom_{\s_n}(1\uparrow_{A_n}^{\s_n},\End_F(D^\lambda))\\
&\geq\dim\End_{\s_n}(1\uparrow_{A_n}^{\s_n})\\
&=2.
\end{align*}
So in this case $D^\lambda\downarrow{A_n}$ splits. This contradicts $h(\lambda)$ being odd and $\lambda_{h(\lambda)}$ even (see Theorem 1.1 of \cite{b1}). It follows that no such module $A$ is contained in $\End_F(D^\lambda)$. Let $M$ is the submodule of $D^{(n-2,1)}\uparrow^{\s_n}$ of the form $D^{(n-1,1)}|(D^{(n)}\oplus D^{(n-2,2)})$. Since $D^{(n)}$ is contained only once in $D^{(n-2,1)}\uparrow^{\s_n}$ as submodule, we have that
\[(D^{(n-2,1)}\uparrow^{\s_n}/(D^{(n-1,1)}|D^{(n-2,2)}))/D^{(n)}=D^{(n-2,1)}\uparrow^{\s_n}/M.\]
In particular
\begin{align*}
&\dim\Hom_{\s_n}(D^{(n-2,1)}\uparrow^{\s_n}/M,\End_F(D^\lambda))\\
&\hspace{12pt}=\dim\Hom_{\s_n}(D^{(n-2,1)}\uparrow^{\s_n}/(D^{(n-1,1)}|D^{(n-2,2)}),\End_F(D^\lambda))\\
&\hspace{12pt}\geq 4\\
&\hspace{12pt}\geq \dim\Hom_{\s_n}(S^{(n-1,1)},\End_F(D^\lambda))+2
\end{align*}
(the last line following from Lemma \ref{l8}).

From Lemma  \ref{l3} we have that $D^{(n-2,1)}\uparrow^{\s_n}$ has a quotient isomorphic to $S^{(n-1,1)}$. As $S^{(n-1,1)}=D^{(n)}|D^{(n-1,1)}$ from Lemma \ref{l1}, this quotient is the unique quotient of $D^{(n-2,1)}\uparrow^{\s_n}$ of the form $D^{(n)}|D^{(n-1,1)}$. So
\[D^{(n-2,1)}\uparrow^{\s_n}\sim \overbrace{D^{(n-1,1)}|(D^{(n)}\oplus D^{(n-2,2)})}^M|D^{(n-1,1)}|D^{(n-2,2)}|\overbrace{D^{(n)}|D^{(n-1,1)}}^{S^{(n-1,1)}}.\]

As $S^{(n-1,1)}=D^{(n)}|D^{(n-1,1)}$ from Lemma \ref{l1}, we also have from Lemma \ref{l8} that
\[\dim\Hom_{\s_n}(D^{(n-1,1)},\End_F(D^\lambda))\leq \dim\Hom_{\s_n}(S^{(n-1,1)},\End_F(D^\lambda))\leq 2,\]
so that $D^{(n-1,1)}$ is contained at most twice in $\End_F(D^\lambda)$ as a submodule. 

By assumption no quotient of $D^{(n-2,1)}\uparrow^{\s_n}$ containing $D^{(n-2,2)}$ as submodule is contained in $\End_F(D^\lambda)$. As 
\[(D^{(n-2,1)}\uparrow^{\s_n})/M\sim D^{(n-1,1)}|D^{(n-2,2)}|\overbrace{D^{(n)}|\underbrace{D^{(n-1,1)}}_{\hd((D^{(n-2,1)}\uparrow^{\s_n})/M)}}^{S^{(n-1,1)}}\]
and

\begin{align*}
&\dim\Hom_{\s_n}(D^{(n-2,1)}\uparrow^{\s_n}/M,\End_F(D^\lambda)\\
&\hspace{12pt}\geq\dim\Hom_{\s_n}(S^{(n-1,1)},\End_F(D^\lambda))+2\\
&\hspace{12pt}\geq\dim\Hom_{\s_n}(S^{(n-1,1)},\End_F(D^\lambda))+\dim\Hom_{\s_n}(D^{(n-1,1)},\End_F(D^\lambda)),
\end{align*}
it then follows that, if $C\subseteq\End_F(D^\lambda)$ with $C\cong D^{(n-1,1)}$, there exists $C\subseteq B\subseteq\End_F(D^\lambda)$ with $B\not\cong D^{(n-1,1)}$ a quotient of $(D^{(n-2,1)}\uparrow^{\s_n})/M$. In particular, as $(D^{(n-2,1)}\uparrow^{\s_n})/M$ has only one composition factor isomorphic to $D^{(n)}$, we have that $B$ is a quotient of $D^{(n-2,1)}\uparrow^{\s_n}$ with $B\not\cong D^{(n-1,1)}$ and not containing any submodule of the form $D^{(n)}|D^{(n)}$.

By the first part of the proof $(S^{(n-1,1)})^*\subseteq \End_F(D^\lambda)$. Since $D^{(n-1,1)}\subseteq(S^{(n-1,1)})^*$ by Lemma \ref{l1}, we can then conclude by Lemma \ref{l63}.
\end{proof}

\section{JS-partitions}\label{s4}

We will now consider JS-partitions. Notice that from the definition of JS-partitions and from Lemma \ref{l53}, a 2-regular partitions is a JS-partition if and only if it has only 1 normal node.

Also in this case we will need some case analysis. In this case results will be different from those obtained when $\lambda$ has at least 2 normal nodes and results will depend on whether $\lambda=(m+1,m-1)$ or not.

\begin{lemma}\label{l10}
Let $p=2$ and $n=2m\geq 4$. Let $\lambda\not=(n)$ be a JS-partition which satisfies $\dim\Hom_{\s_n}(S^{(n-1,1)},\End_F(D^\lambda))\geq 1$. Then $\lambda=(m+1,m-1)$ and $\dim\Hom_{\s_n}(S^{(n-1,1)},\End_F(D^\lambda))=1$.

If $m$ is odd $S^{(n-1,1)}\subseteq \End_F(D^{(m+1,m-1)})$, while if $m$ is even $D^{(n-1,1)}\subseteq \End_F(D^{(m+1,m-1)})$.
\end{lemma}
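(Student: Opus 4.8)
The plan is to pass to $\End_F(D^\lambda)=D^\lambda\otimes D^\lambda$ and reduce everything to the structure of one induced module. Since $\lambda\neq(n)$ is a JS-partition, by Lemma \ref{l55} it has a single normal node, on its first row, of some residue $c$, so $\epsilon_c(\lambda)=1$, $\epsilon_{1-c}(\lambda)=0$, $D^\lambda\downarrow_{\s_{n-1}}\cong e_cD^\lambda\cong D^\mu$ with $\mu:=\tilde e_cD^\lambda$, and $\epsilon_c(\mu)=0$, $\phi_c(\mu)=\phi_c(\lambda)+1$ by Lemma \ref{l47}. By the projection formula and Lemma \ref{l45}, $D^\lambda\otimes M^{(n-1,1)}\cong D^\mu\uparrow^{\s_n}\cong\bigoplus_i f_iD^\mu$; as $D^\lambda$ occurs as a composition factor of $f_iD^\mu$ only for $i=c$ and $\soc(f_cD^\mu)\cong D^\lambda$, the unique copy of $D^\lambda$ in $\soc(D^\lambda\otimes M^{(n-1,1)})$ is $\soc(f_cD^\mu)=D^\lambda\otimes D^{(n)}$. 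Since $(S^{(n-1,1)})^*\cong M^{(n-1,1)}/D^{(n)}$ (Lemma \ref{l1}), Lemma \ref{l58} then gives
\[\dim\Hom_{\s_n}(S^{(n-1,1)},\End_F(D^\lambda))=[\soc((f_cD^\mu)/\soc(f_cD^\mu)):D^\lambda].\]
In particular this forces $\phi_c(\lambda)\geq1$, hence $\phi_c(\lambda)\in\{1,2\}$ by Lemma \ref{l52}, and the right-hand side being $\geq1$ is equivalent to $f_cD^\mu$ having a submodule isomorphic to $D^\lambda|D^\lambda$.

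Next I would run the classification. Computing the reduced $c$-signature of $\lambda$ from the fact (Lemma \ref{l55}) that all parts of $\lambda$ are congruent mod $2$, one finds $\phi_c(\lambda)=1$ when all parts of $\lambda$ are even (so $c=1$) and $\phi_c(\lambda)=2$ when all parts are odd (so $c=0$, and $h(\lambda)$ is even since $n$ is). In each case write $\mu=(\lambda_1-1,\lambda_2,\dots,\lambda_h)$, list the $\phi_c(\mu)$ conormal $c$-nodes of $\mu$ — the top one adds back to give $\lambda$ — and use Lemma \ref{l56} together with Lemma \ref{l39} to control the composition factors of $f_cD^\mu$: in the all-parts-even case $f_cD^\mu$ has two composition factors, and the requirement $D^\lambda|D^\lambda\subseteq f_cD^\mu$ forces both to be $D^\lambda$, hence forces the remaining conormal $c$-node of $\mu$ to add to a non-$2$-regular partition, which a short case-check on $h(\lambda)$ shows happens only for $\lambda=(m+1,m-1)$; in the all-parts-odd case a parallel analysis of the three conormal $0$-nodes of $\mu$ (which lie on rows $1$, $h$, $h+1$) shows that for $\lambda\neq(m+1,m-1)$ enough extra composition factors $D^{\psi_1},D^{\psi_2}$ appear, arranged so that the third copy of $D^\lambda$ in $f_0D^\mu$ cannot sit in the second socle layer, so that $\dim\Hom=0$. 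Hence $\lambda=(m+1,m-1)$ and $\mu=(m,m-1)$.

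Finally I would treat $\lambda=(m+1,m-1)$, $\mu=(m,m-1)$ uniformly, with $c$ the residue of the normal node ($c=1$ for $m$ odd, $c=0$ for $m$ even). Put $k=\phi_c(\mu)=\phi_c(\lambda)+1$ (so $k=2$ for $m$ odd, $k=3$ for $m$ even) and $\psi_0=\tilde f_c^{\,k-1}D^\lambda$; a direct signature computation gives $\psi_0=(m+1,m)$ for $m$ odd and $\psi_0=(m+1,m,1)$ for $m$ even, with $\epsilon_c(\psi_0)=k$, and Lemma \ref{l9} gives $f_cD^\mu\cong e_c^{(k-1)}D^{\psi_0}$. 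The key computation is that $e_cD^{(m+1,m)}\cong D^\lambda|D^\lambda$: removing a normal $c$-node from $(m+1,m)$ yields either the non-$2$-regular $(m,m)$ or $(m+1,m-1)=\lambda$, so by Lemma \ref{l56} and Lemma \ref{l39} (composition factors, and $[e_cD^{(m+1,m)}:D^\lambda]=\binom{\epsilon_c((m+1,m))}{1}=2$) the module $e_cD^{(m+1,m)}$ is indecomposable with both composition factors $\cong D^\lambda$. For $m$ odd, $k-1=1$ and $\psi_0=(m+1,m)$, so $f_cD^\mu\cong e_cD^{(m+1,m)}=D^\lambda|D^\lambda$; thus $\dim\Hom(S^{(n-1,1)},\End_F(D^\lambda))=1$, and as $[D^\lambda\otimes D^{(n-1,1)}:D^\lambda]=[f_cD^\mu:D^\lambda]-2=0$ we get $\Hom(D^{(n-1,1)},\End_F(D^\lambda))=0$, so the nonzero map $S^{(n-1,1)}\to\End_F(D^\lambda)$ is injective and $S^{(n-1,1)}\subseteq\End_F(D^\lambda)$. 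For $m$ even, $\tilde e_0\psi_0=(m+1,m)$ gives $\soc(e_0D^{\psi_0})\cong D^{(m+1,m)}$, so $D^\lambda|D^\lambda=e_0D^{(m+1,m)}\hookrightarrow e_0^2D^{\psi_0}\cong(e_0^{(2)}D^{\psi_0})^{\oplus2}\cong(f_cD^\mu)^{\oplus2}$ by Lemma \ref{l39}; projecting onto a summand (the socle of $f_cD^\mu$ being simple) yields $D^\lambda|D^\lambda\hookrightarrow f_cD^\mu$, so $\dim\Hom(S^{(n-1,1)},\End_F(D^\lambda))\geq1$; combining with the bound $\leq1$ from Lemma \ref{l8} and with the identity $\dim\Hom(S^{(n-1,1)},\End_F(D^\lambda))=[\soc(D^\lambda\otimes D^{(n-1,1)}):D^\lambda]=\dim\Hom(D^{(n-1,1)},\End_F(D^\lambda))$ — valid here because one checks $f_{1-c}D^\mu=0$, so $D^\lambda\otimes M^{(n-1,1)}=f_cD^\mu$ has simple head and $D^\lambda\otimes D^{(n-1,1)}$ is the unique maximal submodule of $f_cD^\mu/\soc(f_cD^\mu)$ — one gets that both numbers equal $1$, i.e. $D^{(n-1,1)}\subseteq\End_F(D^\lambda)$. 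The case $m=2$, $\lambda=(3,1)$, is covered by the same argument.

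I expect the classification step to be the main obstacle: excluding every JS-partition other than $(m+1,m-1)$ requires the explicit reduced-signature computations and a somewhat delicate case analysis (by the parity of the parts, by $h(\lambda)$, and by whether $\lambda_h=1$) to decide which one-box modifications of $\mu$ stay $2$-regular and how the extra composition factors they produce are positioned inside $f_cD^\mu$ — especially in the all-parts-odd case, where one must rule out $D^\lambda$ appearing in the second socle layer. By contrast, once $\lambda=(m+1,m-1)$ is isolated, using Lemma \ref{l9} to replace $f_cD^\mu$ by $e_c^{(k-1)}D^{\psi_0}$ makes the remaining structural statements, and the dichotomy between $S^{(n-1,1)}$ and $D^{(n-1,1)}$ embedding, comparatively clean.
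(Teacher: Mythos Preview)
Your setup and the even-parts classification are essentially the paper's argument: when $\phi_c(\mu)=2$ one has $f_cD^\mu\cong e_cD^\pi$ for $\pi=\tilde f_cD^\lambda$ by Lemma~\ref{l9}, and the extra composition factor you produce by adding the second conormal $c$-node of $\mu$ is exactly the $D^{\overline\pi}$ the paper produces by removing $(1,\lambda_1)$ from $\pi$. Two points you gloss over: the implication ``$D^\lambda|D^\lambda\subseteq X\Rightarrow X=D^\lambda|D^\lambda$'' (for $X$ self-dual with simple socle $D^\lambda$ and $[X:D^\lambda]=2$) is not the statement ``$X$ has two composition factors'' but needs the self-duality argument (if $X/K\cong D^\lambda|D^\lambda$ then $[K:D^\lambda]=0$ forces $K=0$); and $e_cD^{(m+1,m)}=D^\lambda|D^\lambda$ requires the dimension identity $\dim D^{(m+1,m)}=2\dim D^{(m+1,m-1)}$ from \cite{b1}, since Lemmas~\ref{l39} and \ref{l56} alone do not exclude further composition factors.

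The real gap is the odd-parts case. There $\phi_c(\mu)=3$, so $[f_cD^\mu:D^\lambda]=3$, and merely exhibiting extra composition factors $D^{\psi_1},D^{\psi_2}$ of $f_cD^\mu$ says nothing about whether the middle copy of $D^\lambda$ sits in the second socle layer --- the self-duality trick above does not apply to $f_cD^\mu$ with three $D^\lambda$'s. The paper's remedy is to pass to the submodule $e_cD^\pi\subseteq f_cD^\mu$ (Lemma~\ref{l9'}), which has only \emph{two} copies of $D^\lambda$: one argues that $D^\lambda|D^\lambda\subseteq f_cD^\mu$ forces $e_cD^\pi=D^\lambda|D^\lambda$, and then finds a third composition factor of $e_cD^\pi$ (namely $D^{\overline\pi}$ with $\overline\pi=(\lambda_1-1,\lambda_2,\dots,\lambda_{h-1},\lambda_h+1)$, via Lemma~\ref{l56}) for $\lambda\neq(m+1,m-1)$, giving a contradiction. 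Your $\psi_1$ coincides with $\overline\pi$, but you place it only in $f_cD^\mu$; without locating it inside $e_cD^\pi$ the socle-layer conclusion is unsupported.
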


\begin{proof}
Assume that $\dim\Hom_{\s_n}(S^{(n-1,1)},\End_F(D^\lambda))\geq 1$.  As $M^{(n-1,1)}$ and $D^{(n-1,1)}$ are self-dual, from Lemma \ref{l58},
\begin{align*}
\Hom_{\s_n}(S^{(n-1,1)},\End_F(D^\lambda))&\cong\Hom_{\s_n}(D^\lambda,D^\lambda\otimes (S^{(n-1,1)})^*),\\
\Hom_{\s_n}(M^{(n-1,1)},\End_F(D^\lambda))&\cong\Hom_{\s_n}(D^\lambda,D^\lambda\otimes M^{(n-1,1)}),\\
\Hom_{\s_n}(D^{(n-1,1)},\End_F(D^\lambda))&\cong\Hom_{\s_n}(D^\lambda,D^\lambda\otimes D^{(n-1,1)}).
\end{align*}
Since $\lambda$ is a JS-partition, it follows that
\begin{align*}
\dim\Hom_{\s_n}(D^\lambda,D^\lambda\otimes M^{(n-1,1)})&=\dim\Hom_{\s_n}(M^{(n-1,1)},\End_F(D^\lambda))\\
&=\dim\End_{\s_{n-1}}(D^\lambda\downarrow_{\s_{n-1}})\\
&=1.
\end{align*}
In particular $D^\lambda$ is contained exactly once in $D^\lambda\otimes M^{(n-1,1)}$. Further by assumption $D^\lambda$ is also contained in $D^\lambda\otimes (S^{(n-1,1)})^*$. So, by self-duality of $M^{(n-1,1)}$ and by Lemma \ref{l1},
\[D^\lambda\otimes M^{(n-1,1)}\sim D^\lambda\otimes (D^{(n)}|(S^{(n-1,1)})^*)\sim D^\lambda|(D^\lambda\otimes (S^{(n-1,1)})^*)\supseteq D^\lambda|D^\lambda.\]
As $D^\lambda$ is contained only once in $D^\lambda\otimes M^{(n-1,1)}$ there then exists $A\subseteq D^\lambda\otimes M^{(n-1,1)}$ with $A=D^\lambda|D^\lambda$.

As $\lambda$ is a JS-partition we have from Lemma \ref{l55} that $\epsilon_i(\lambda)=1$ and $\epsilon_j(\lambda)=0$ for some $i$ and $j=1-i$. So $e_jD^\lambda=0$ from Lemma \ref{l39} and then from Lemma \ref{l45'}
\[D^\lambda\otimes M^{(n-1,1)}\cong f_ie_iD^\lambda\oplus f_je_iD^\lambda.\]
From the block decomposition of $f_ie_iD^\lambda\oplus f_je_iD^\lambda$ we then have that, up to isomorphism, $A\subseteq f_ie_iD^\lambda$.

Let $\rho$ with $D^\rho\cong \tilde{e}_iD^\lambda\cong e_iD^\lambda$ (from Lemma \ref{l39} as $\epsilon_i(\lambda)=1$). Then $A\subseteq f_ie_iD^\lambda\cong f_iD^\rho$. In particular $\phi_i(\rho)\geq 2$ from Lemmas \hyperref[l40b]{\ref*{l40}\ref*{l40b}} and \ref{l47}, that is $\phi_i(\lambda)\geq 1$ from Lemma \ref{l47}. Let $\pi$ and $\nu$ with $D^\pi=\tilde{f}_iD^\lambda$ and $D^\nu=\tilde{f}_i^{\phi_i(\lambda)}D^\lambda$. From Lemmas \ref{l9'} and \ref{l9} we have
\[e_iD^\pi\subseteq f_iD^\rho\cong e_i^{(\epsilon_i(\nu)-1)}D^\nu.\]

As $\lambda$ is has only 1 normal node it has 2 conormal nodes from Lemma \ref{l52}. In particular $\phi_i(\lambda)\leq 2$ and so $\phi_i(\rho)\leq 3$ from Lemma \ref{l47}. So, from the previous part, $\rho$ has 2 or 3 conormal nodes of content $i$.

If $\phi_i(\rho)=2$ then $\phi_i(\lambda)=1$ and so $\pi=\nu$. In this case, from Lemma \ref{l40},
\[D^\lambda|D^\lambda\subseteq f_iD^\rho\cong e_iD^\pi\sim \overbrace{D^\lambda}^{\soc(e_iD^\pi)}|\overbrace{\rlap{$\phantom{D^\lambda}$}\ldots\rlap{$\phantom{D^\lambda}$}}^{\mbox{no }D^\lambda}|\overbrace{D^\lambda}^{\hd(e_iD^\pi)}.\]
So in this case $e_iD^\pi=D^\lambda|D^\lambda$.

Assume now instead that $\phi_i(\rho)=3$. As $\epsilon_i(\pi)=\epsilon_i(\lambda)+1=2$ from Lemma \ref{l47} and from Lemmas \ref{l39} and \ref{l40} we have in this case that
\[D^\lambda|D^\lambda\subseteq f_iD^\rho\sim\underbrace{\overbrace{D^\lambda}^{\soc(e_iD^\pi)=\soc(f_iD^\rho)}|\overbrace{\rlap{$\phantom{D^\lambda}$}\ldots\rlap{$\phantom{D^\lambda}$}}^{\mbox{no }D^\lambda}|\overbrace{D^\lambda}^{\hd(e_iD^\pi)}}_{e_iD^\pi}|\overbrace{\rlap{$\phantom{D^\lambda}$}\ldots\rlap{$\phantom{D^\lambda}$}}^{\mbox{no }D^\lambda}|\overbrace{D^\lambda}^{\hd(f_iD^\rho)}.\]
So also in this case $e_iD^\pi=D^\lambda|D^\lambda$.

On the other hand if $e_iD^\pi=D^\lambda|D^\lambda$ then
\[D^\lambda|D^\lambda=e_iD^\pi\subseteq f_iD^\rho\cong f_ie_iD^\lambda\subseteq D^\lambda\otimes M^{(n-1,1)}\sim D^\lambda|(D^\lambda\otimes (S^{(n-1,1)})^*).\]
So in this case $D^\lambda\subseteq D^\lambda\otimes (S^{(n-1,1)})^*$ and $\dim\Hom_{\s_n}(S^{(n-1,1)},\End_F(D^\lambda))\geq 1$.

As $n$ is even and $\lambda$ is a JS-partition we have one of the following:

\vspace{6pt}
\noindent
(i) the parts of $\lambda$ are even and $\lambda$ has an even number of parts,

\vspace{6pt}
\noindent
(ii) the parts of $\lambda$ are even and $\lambda$ has an odd number of parts,

\vspace{6pt}
\noindent
(iii) the parts of $\lambda$ are odd and $\lambda$ has an even number of parts.
\vspace{6pt}

We will now in each of the above cases check when $e_iD^\pi=D^\lambda|D^\lambda$ holds and study this cases more in details. Notice  that the conormal nodes of $\lambda$ are those corresponding to nodes  $(h(\lambda),\lambda_{h(\lambda)}+1)$ and $(h(\lambda)+1,1)$ (these nodes are always conormal and $\lambda$ has only 2 conormal nodes).

\vspace{6pt}
\noindent
(i) In this case the parts of $\lambda$ are even and $\lambda$ has an even number of parts,, so
\[\begin{tikzpicture}
\draw (-0.6,-0.7) node {$\lambda=$};

\draw (0.2,-0.2) node {0};
\draw (1.6,-0.2) node {$\ldots$};
\draw (3,-0.2) node {1};
\draw (0.2,-0.6) node {$\vdots$};
\draw (2.3,-0.6) node {$\iddots$};
\draw (0.2,-1.2) node {1};
\draw (0.6,-1.2) node {$\ldots$};
\draw (1,-1.2) node {0};
\draw (1.4,-1.2) node {1};
\draw (0.2,-1.6) node {0};

\draw (2,-1.0) -- (1.2,-1.0) -- (1.2,-1.4) -- (0,-1.4) -- (0,0) -- (3.2,0) -- (3.2,-0.4) -- (2.6,-0.4);
\end{tikzpicture}\]
So $i=1$ (it is the residue of $(1,\lambda_1)$, as this is the only normal node). As the only conormal nodes are $(h(\lambda),\lambda_{h(\lambda)}+1)$ and $(h(\lambda)+1,1)$ and only the first of them has residue 1, we have that $\pi=(\lambda_1,\ldots,\lambda_{h(\lambda)-1},\lambda_{h(\lambda)}+1)$ in this case.

First assume that $h(\lambda)\geq 4$ or $\lambda_1>\lambda_2+2$. In this case $\overline{\pi}=(\lambda_1-1,\lambda_2,\ldots,\lambda_{h(\lambda)-1},\lambda_{h(\lambda)}+1)$ is 2-regular, as $\pi$ is 2-regular, $\overline{\pi}=\pi\setminus (1,\lambda_1)$ and
\[\overline{\pi}_1=\lambda_1-1>\left\{\begin{array}{ll}
\lambda_2,&h(\lambda)\geq 4\\
\lambda_{h(\lambda)}+1,&h(\lambda)=2\mbox{ and }\lambda_1>\lambda_2+2
\end{array}\right. =\overline{\pi}_2\]
(since in this case the parts of $\lambda$ are all even and $\lambda$ has an even number of parts). As $\overline{\pi}$ is 2-regular and is obtained from $\pi$ by removing a normal node of residue 1 (the removable node in the first row is always normal), we have that $D^{\overline{\pi}}$ is a composition factor of $e_1D^\pi$ from Lemma \ref{l56}. In particular in this case $e_1D^\pi\not=D^\lambda|D^\lambda$.

If $h=2$ and $\lambda_1=\lambda_2+2$ then $\lambda=(m+1,m-1)$ and $\pi=(m+1,m)$. Also $m$ is odd. In this case $e_1D^\pi=D^\lambda|D^\lambda$ as $\deg(D^\pi)=2\deg(D^\lambda)$ (Theorem 5.1 of \cite{b1}) and $[e_1D^\pi:D^\lambda]=2$ from Lemma \hyperref[l39b]{\ref*{l39}\ref*{l39b}}. From the previous part we have that $D^\lambda\otimes M^{(n-1,1)}=f_1e_1D^\lambda\oplus f_0e_1D^\lambda$. Since $D^\lambda$ and $f_0e_1D^\lambda$ correspond to distinct blocks,
\[[D^\lambda\otimes M^{(n-1,1)}:D^\lambda]\hspace{-1pt}=\hspace{-1pt}[f_1e_1D^\lambda:D^\lambda]\hspace{-1pt}=\hspace{-1pt}[f_1D^\rho:D^\lambda]\hspace{-1pt}=\hspace{-1pt}\phi_1(\rho)\hspace{-1pt}=\hspace{-1pt}\phi_1(\lambda)+1\hspace{-1pt}=\hspace{-1pt}2\]
from Lemmas \hyperref[l40b]{\ref*{l40}\ref*{l40b}} and \ref{l47} and as $(h(\lambda),\lambda_{h(\lambda)}+1)$ is the only conormal node of $\lambda$ of residue 1. Further, from Lemma \ref{l1},
\[D^\lambda\otimes M^{(n-1,1)}\sim D^\lambda|(D^\lambda\otimes(S^{(n-1,1)})^*)\sim D^\lambda|(D^\lambda\otimes D^{(n-1,1)})|D^\lambda\]
it follows that $[D^\lambda\otimes(S^{(n-1,1)})^*]=1$ and $[D^\lambda\otimes D^{(n-1,1)}]=0$. In particular $D^{\lambda}\not\subseteq D^\lambda\otimes D^{(n-1,1)}$ and so $D^{(n-1,1)}\not\subseteq\End_F(D^\lambda)$ and
\begin{align*}
1&\leq\dim\Hom_{\s_n}(S^{(n-1,1)},\End_F(D^\lambda))\\
&=\dim\Hom_{\s_n}(D^\lambda,D^\lambda\otimes (S^{(n-1,1)})^*)\\
&\leq [D^\lambda\otimes(S^{(n-1,1)})^*]\\
&=1.
\end{align*}
The lemma then follows in the case where the parts of $\lambda$ are even and $\lambda$ has an even number of parts, as $S^{(n-1,1)}=D^{(n)}|D^{(n-1,1)}$ from Lemma \ref{l1}.

\vspace{6pt}
\noindent
(ii) We next consider the case where the parts of $\lambda$ are even and $\lambda$ has an odd number of parts. In this case $\lambda$ has at least 3 parts, since $\lambda\not=(n)$ and it has an odd number of parts. Also
\[\begin{tikzpicture}
\draw (-0.6,-0.7) node {$\lambda=$};

\draw (0.2,-0.2) node {0};
\draw (1.6,-0.2) node {$\ldots$};
\draw (3,-0.2) node {1};
\draw (0.2,-0.6) node {$\vdots$};
\draw (2.3,-0.6) node {$\iddots$};
\draw (0.2,-1.2) node {0};
\draw (0.6,-1.2) node {$\ldots$};
\draw (1,-1.2) node {1};
\draw (1.4,-1.2) node {0};
\draw (0.2,-1.6) node {1};

\draw (2,-1.0) -- (1.2,-1.0) -- (1.2,-1.4) -- (0,-1.4) -- (0,0) -- (3.2,0) -- (3.2,-0.4) -- (2.6,-0.4);
\end{tikzpicture}\]
In this case $i=1$ ($i$ is the residue of the node $(1,\lambda_1)$). As the conormal nodes of $\lambda$ are $(h(\lambda),\lambda_{h(\lambda)}+1)$ and $(h(\lambda)+1,1)$ and only the second has residue 1, we have that $\pi=(\lambda_1,\ldots,\lambda_{h(\lambda)},1)$ in this case. Let $\overline{\pi}=(\lambda_1-1,\lambda_2,\ldots,\lambda_{h(\lambda)},1)$. Then $\overline{\pi}=\pi\setminus (1,\lambda_1)$. As $\pi$ is 2-regular and $\lambda_1\geq \lambda_2+2$ (since $\lambda$ has at least 3 parts and all parts of $\lambda$ are even), so that $\overline{\pi}_1>\overline{\pi}_2$, we also have that $\overline{\pi}$ is 2-regular. As $\overline{\pi}=\pi\setminus (1,\lambda_1)$ and $(1,\lambda_1)$ is normal of residue residue 1 in $\pi$, we have that $D^{\overline{\pi}}$ is a composition factor of $e_1D^\pi$ from Lemma \ref{l56}. In particular $e_iD^\pi\not=D^\lambda|D^\lambda$.

\vspace{6pt}
\noindent
(iii) Last we consider the case where the parts of $\lambda$ are odd and $\lambda$ has an even number of parts. In this case
\[
\begin{tikzpicture}
\draw (-0.6,-0.7) node {$\lambda=$};

\draw (0.2,-0.2) node {0};
\draw (1.6,-0.2) node {$\ldots$};
\draw (3,-0.2) node {0};
\draw (0.2,-0.6) node {$\vdots$};
\draw (2.3,-0.6) node {$\iddots$};
\draw (0.2,-1.2) node {1};
\draw (0.6,-1.2) node {$\ldots$};
\draw (1,-1.2) node {1};
\draw (1.4,-1.2) node {0};
\draw (0.2,-1.6) node {0};

\draw (2,-1.0) -- (1.2,-1.0) -- (1.2,-1.4) -- (0,-1.4) -- (0,0) -- (3.2,0) -- (3.2,-0.4) -- (2.6,-0.4);
\end{tikzpicture}
\]
As in this case the residue of $(1,\lambda_1)$ is 0, we have that $i=0$. Further both conormal nodes of $\lambda$, $(h(\lambda),\lambda_{h(\lambda)}+1)$ and $(h(\lambda)+1,1)$, have residue 0. So $\pi=(\lambda_1,\ldots,\lambda_{h(\lambda)-1},\lambda_{h(\lambda)}+1)$. First assume that $h(\lambda)\geq 4$ or $\lambda_1>\lambda_2+2$. In this case $\overline{\pi}=(\lambda_1-1,\lambda_2,\ldots,\lambda_{h(\lambda)-1},\lambda_{h(\lambda)}+1)$ is 2-regular (the proof goes as in case (i), with the only difference that here all parts of $\lambda$ are odd instead of even). So $D^{\overline{\pi}}$ is a composition factor of $e_0D^\pi$ from Lemma \ref{l56} and then $e_0D^\pi\not=D^\lambda|D^\lambda$.

If $h(\lambda)=2$ and $\lambda_1=\lambda_2+2$ then $\lambda=(m+1,m-1)$, $\pi=(m+1,m)$ and $\rho=(m,m-1)$. In this case $m$ is even. Here $e_0D^\pi=D^\lambda|D^\lambda$, from Theorem 5.1 of \cite{b1} and as $[e_0D^\pi:D^\lambda]=2$ (Lemma \hyperref[l39b]{\ref*{l39}\ref*{l39b}}). In this case $\phi_0(\lambda)=2$, that is $\phi_0(\rho)=3$ and then
\[f_0e_0D^\lambda\cong f_0D^\rho\sim\underbrace{\overbrace{D^\lambda}^{\soc(e_0D^\pi)=\soc(f_0D^\rho)}|\overbrace{D^\lambda}^{\hd(e_0D^\pi)}}_{e_0D^\pi}|\overbrace{\rlap{$\phantom{D^\lambda}$}\ldots\rlap{$\phantom{D^\lambda}$}}^{\mbox{no }D^\lambda}|\overbrace{D^\lambda}^{\hd(f_0D^\rho)}\]
from the computations in the first part of the proof and as $e_0D^\pi=D^\lambda|D^\lambda$.

As 
\[D^\lambda\otimes M^{(n-1,1)}\sim D^\lambda|(D^\lambda\otimes(S^{(n-1,1)})^*)\sim D^\lambda|(D^\lambda\otimes D^{(n-1,1)})|D^\lambda\]
and $D^\lambda\otimes M^{(n-1,1)}\cong f_0e_0D^\lambda\oplus f_1e_0D^\lambda$, with $D^\lambda$ and $f_1e_0D^\lambda$ corresponding to different blocks, it follows that
\begin{align*}
D^\lambda\otimes D^{(n-1,1)}&\sim (D^\lambda|\overbrace{\rlap{$\phantom{D^\lambda}$}\ldots\rlap{$\phantom{D^\lambda}$}}^{\mbox{no }D^\lambda})\oplus f_1e_0D^\lambda,\\
D^\lambda\otimes(S^{(n-1,1)})^*&\sim(\underbrace{D^\lambda|\overbrace{\rlap{$\phantom{D^\lambda}$}\ldots\rlap{$\phantom{D^\lambda}$}}^{\mbox{no }D^\lambda}|\overbrace{D^\lambda}^{\hd(M)}}_M)\oplus f_1e_0D^\lambda.
\end{align*}
In particular $D^\lambda\subseteq D^\lambda\otimes D^{(n-1,1)}$ and so $D^{(n-1,1)}\subseteq\End_F(D^\lambda)$, as
\[\Hom_{\s_n}(D^{(n-1,1)},\End_F(D^\lambda))\cong\Hom_{\s_n}(D^\lambda,D^\lambda\otimes D^{(n-1,1)}).\]
Also
\[\dim\Hom_{\s_n}(S^{(n-1,1)},\End_F(D^\lambda))=\dim\Hom_{\s_n}(D^\lambda,D^\lambda\otimes (S^{(n-1,1)})^*)=1.\]
The lemma then follows also in this case as $S^{(n-1,1)}=D^{(n)}|D^{(n-1,1)}$.
\end{proof}

\begin{lemma}\label{l11}
Let $p=2$, $\lambda\not=(n)$ be a JS-partition and $\pi=(\lambda_1-1,\lambda_2,\lambda_3,\ldots)$. Then $D^\lambda\downarrow_{\s_{n-1}}\cong D^\pi$ and $\pi$ has exactly 2 normal nodes. The normal nodes of $\pi$ are the removable nodes in the first two rows of $\pi$ and they both have residue different than the normal node of $\lambda$.
\end{lemma}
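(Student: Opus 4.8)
The plan is to first identify $\pi$ via the branching rule, and then compare residue signatures. Since $\lambda \neq (n)$ is a JS-partition, Lemma \ref{l55} tells us that all parts of $\lambda$ are congruent modulo $2$; as $\lambda$ is $2$-regular its parts are in addition strictly decreasing, so $h(\lambda) \geq 2$ and $\lambda_1 \geq \lambda_2 + 2$, which in particular makes $\pi = (\lambda_1 - 1, \lambda_2, \lambda_3, \ldots)$ again $2$-regular. The removable node $(1, \lambda_1)$ of the first row is always normal, so by Lemma \ref{l55} it is the only normal node of $\lambda$; write $i$ for its residue and put $j = 1 - i$, so that $\epsilon_i(\lambda) = 1$ and $\epsilon_j(\lambda) = 0$. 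By Lemma \ref{l39} this gives $e_j D^\lambda = 0$, and $\pi$ is exactly $\lambda$ with its unique $i$-normal node $(1, \lambda_1)$ removed. Now $D^\lambda \downarrow_{\s_{n-1}}$ is irreducible by the definition of a JS-partition, it equals $e_i D^\lambda$ by Lemma \ref{l45}, and $\soc(e_i D^\lambda) \cong D^\pi$ by Lemma \hyperref[l39a]{\ref*{l39}\ref*{l39a}}; hence $D^\lambda \downarrow_{\s_{n-1}} \cong D^\pi$.

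It remains to determine the normal nodes of $\pi$. By Lemma \ref{l47} we have $\epsilon_i(\pi) = \epsilon_i(\lambda) - 1 = 0$, so all of them have residue $j$, and I would pin them down by comparing the $j$-signatures of $\lambda$ and of $\pi$. Since the residue of a node $(r, c)$ is $c - r$ modulo $2$ and the parts of $\lambda$ all have the same parity, one can list explicitly which rows carry an addable or removable $j$-node in $\lambda$, and likewise in $\pi$; comparing, the two lists agree on all rows $\geq 2$, and differ only in row $1$, where passing from $\lambda$ to $\pi$ deletes the addable $j$-node $(1, \lambda_1 + 1)$ and creates the removable $j$-node $(1, \lambda_1 - 1)$ (the node $(1, \lambda_1)$ which becomes addable has residue $i$, hence plays no role). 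Reading the $j$-signature from the top as a word in $+$ (addable) and $-$ (removable), we therefore get $s_\lambda = {+}\,w$ and $s_\pi = {-}\,w$ for one and the same word $w$, and moreover $w$ begins with a $-$, namely the removable $j$-node $(2, \lambda_2)$ of the second row (whose residue is $j$, the addable node of the second row having residue $i$).

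Finally I would apply the cancellation procedure from the proof of Lemma \ref{l52}. As $\epsilon_j(\lambda) = 0$ the reduced form of $s_\lambda = {+}\,w$ contains no $-$; since the leading $-$ of $w$ can never be cancelled, this forces $w$ to reduce to a word of the form ${-}\,{+}^{q}$ with $q \geq 0$, and hence $s_\pi = {-}\,w$ reduces to ${-}\,{-}\,{+}^{q}$. Thus $\epsilon_j(\pi) = 2$, and the two surviving $-$'s are precisely the removable nodes $(1, \lambda_1 - 1)$ and $(2, \lambda_2)$ in the first two rows of $\pi$, both of residue $j \neq i$ --- exactly the assertion of the lemma. (Once $\epsilon_0(\pi) + \epsilon_1(\pi) = 2$ is known one could alternatively read off the location of the normal nodes from Lemma \ref{l42}, since the removable nodes of the first two rows of $\pi$ share the residue $j$.) The one slightly delicate point is the signature bookkeeping of the second paragraph --- verifying exactly which addable and removable $j$-nodes lie in each row of $\lambda$ and of $\pi$, and in particular that the second row of $\pi$ contributes a leading $-$ --- but each such check is immediate from the parity hypothesis on the parts of $\lambda$.
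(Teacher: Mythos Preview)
Your argument is correct and follows essentially the same approach as the paper's own proof: both identify $D^\pi$ via the branching rule and then pin down the normal nodes of $\pi$ by a residue--signature comparison with $\lambda$, exploiting that the two signatures differ only in the first row. The paper phrases this row by row (for $k\geq 3$ the addable node on row $k-1$ of $\pi$ has the same residue as the removable node on row $k$, so the latter is not normal), while you phrase it via the reduction of the $j$-signature word together with the constraint $\epsilon_j(\lambda)=0$, but the underlying combinatorics is the same.
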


\begin{proof}
As $\lambda$ is a JS-partition it has only 1 normal node from Lemma \ref{l55}. So $(1,\lambda_1)$ is the only normal node of $\lambda$ (as this node is always normal) and then $D^\lambda\downarrow_{\s_{n-1}}\cong D^\pi$ from Lemmas \ref{l45} and \ref{l39}.

We will now prove that $\pi$ has exactly 2 normal nodes. Notice that $\pi$ has exactly as many parts as $\lambda$, as it is obtain from $\lambda$ by removing a node on the first row and $\lambda$ has more than one row, since $\lambda\not=(n)$. Let $(a_1,\ldots,a_h)$ be the residues of the nodes at the end of each row of $\lambda$ (which are all removal, as $\lambda$ is 2-regular). As $\lambda$ is a JS-partitions, so that its parts are either all even or either all odd from Lemma \ref{l55}, it follows that $(a_1,\ldots,a_{h(\lambda)})=(i,j,i,j,\ldots)$ with $j=1-i$. By definition the residues of the nodes at the end of each row of $\pi$ are given by $(b_1,\ldots,b_{h(\lambda)})=(1-a_1,a_2,\ldots,a_{h(\lambda)})=(j,j,i,j,i,j,\ldots)$. Let $(c_1,\ldots,c_{h(\lambda)+1})$ be the residues of the addable nodes of $\pi$ (with $c_k$ corresponding to the addable node on row $k$). Then $c_k=1-b_k$ for $k\leq h(\lambda)$. For $3\leq k\leq h(\lambda)$ we have that
\[b_k=a_k=1-a_{k-1}=1-b_{k-1}=c_{k-1}\]
and so the removable node on row $k$ of $\pi$ is not normal if $k\geq 3$. As
\[
\begin{tikzpicture}
\draw (0.6,-0.7) node {$\lambda=$};

\draw (3,-0.2) node {j};
\draw (3.3,-0.2) node {i};
\draw (2.4,-0.6) node {j};

\draw (1.9,-1) node {$\iddots$};

\draw (2.25,-0.8)--(2.55,-0.8)--(2.55,-0.4)--(3.45,-0.4)--(3.45,0)--(1.2,0)--(1.2,-1.4)--(1.5,-1.4);

\draw (4.8,-0.6) node {and\hspace{18pt}$ $};
\end{tikzpicture}
\begin{tikzpicture}
\draw (0.6,-0.7) node {$\pi=$};

\draw (3,-0.2) node {j};
\draw (3.3,-0.2) node {i};
\draw (2.4,-0.6) node {j};

\draw (1.9,-1) node {$\iddots$};

\draw (2.25,-0.8)--(2.55,-0.8)--(2.55,-0.4)--(3.15,-0.4)--(3.15,0)--(1.2,0)--(1.2,-1.4)--(1.5,-1.4);

\draw (3.65,-0.8) node{,};
\end{tikzpicture}
\]
the removable nodes on the first 2 rows of $\pi$ are normal of residue $j=1-i$, where $i$ is the residue of the normal node of $\lambda$. As $\pi$ does not have any further normal node, the lemma holds.
\end{proof}

\begin{cor}\label{c1}
Let $p=2$ and $\lambda\not=(n)$ be a JS-partition. Then
\[\dim\End_{\s_{n-2}}(D^\lambda\downarrow_{\s_{n-2}})=2.\]
\end{cor}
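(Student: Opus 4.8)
The plan is to reduce the statement to a single application of Lemma~\ref{l53} after replacing the double restriction by one restriction of an irreducible module. First I would invoke Lemma~\ref{l11}: since $\lambda\ne(n)$ is a JS-partition, $D^\lambda\downarrow_{\s_{n-1}}\cong D^\pi$ with $\pi=(\lambda_1-1,\lambda_2,\lambda_3,\ldots)$, and moreover $\pi$ has \emph{exactly} two normal nodes. Consequently
\[
D^\lambda\downarrow_{\s_{n-2}}=(D^\lambda\downarrow_{\s_{n-1}})\downarrow_{\s_{n-2}}\cong D^\pi\downarrow_{\s_{n-2}},
\]
so that $\dim\End_{\s_{n-2}}(D^\lambda\downarrow_{\s_{n-2}})=\dim\End_{\s_{n-2}}(D^\pi\downarrow_{\s_{n-2}})$.

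Next I would apply Lemma~\ref{l53} to the $2$-regular partition $\pi\vdash n-1$: this gives
\[
\dim\End_{\s_{n-2}}(D^\pi\downarrow_{\s_{n-2}})=\epsilon_0(\pi)+\epsilon_1(\pi).
\]
By the count of normal nodes furnished by Lemma~\ref{l11}, $\epsilon_0(\pi)+\epsilon_1(\pi)=2$ (indeed both normal nodes of $\pi$ have the same residue, namely the one opposite to the residue of the unique normal node of $\lambda$, but only the total count of $2$ is needed here). Combining the two displays yields $\dim\End_{\s_{n-2}}(D^\lambda\downarrow_{\s_{n-2}})=2$, which is the claim.

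I do not anticipate any real obstacle: the corollary is an immediate consequence of Lemmas~\ref{l11} and~\ref{l53}, with the only mild point being to observe that restricting $D^\lambda$ to $\s_{n-2}$ factors through the irreducible $D^\pi$ on $\s_{n-1}$, so that Lemma~\ref{l53} is applicable with $n$ replaced by $n-1$.
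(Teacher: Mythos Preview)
Your proof is correct and follows exactly the same approach as the paper, which simply cites Lemmas~\ref{l53} and~\ref{l11}. Your write-up just makes explicit the intermediate step that $D^\lambda\downarrow_{\s_{n-2}}\cong D^\pi\downarrow_{\s_{n-2}}$ so that Lemma~\ref{l53} applies with $n$ replaced by $n-1$.
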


\begin{proof}
It follows from Lemmas \ref{l53} and \ref{l11}.
\end{proof}

\begin{lemma}\label{l16}
Let $p=2$ and $n=2m\geq 6$ with $m$ odd and assume that $\lambda\not=(n),(m+1,m-1)$ is a JS-partition. Then $M\subseteq\End_F(D^\lambda)$, where $M$ is a quotient of $D^{(n-2,1)}\uparrow^{\s_n}$ of one of the following forms:
\begin{itemize}
\item
$D^{(n-2,2)}|D^{(n)}|D^{(n-1,1)}$,

\item
$D^{(n)}|D^{(n-2,2)}|D^{(n)}|D^{(n-1,1)}$.
\end{itemize}
\end{lemma}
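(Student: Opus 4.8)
The plan is to work with the structure of $D^{(n-2,1)}\!\uparrow^{\s_n}$ in the case $n\equiv 2\Md 4$ (since $n=2m$ with $m$ odd means $n\equiv 2\Md 4$), which by Lemma \ref{l5} is uniserial with composition series
\[D^{(n-1,1)}|D^{(n)}|D^{(n-2,2)}|D^{(n)}|D^{(n-1,1)}|D^{(n)}|D^{(n-2,2)}|D^{(n)}|D^{(n-1,1)},\]
and to invoke Lemma \ref{l17}, whose hypothesis is
\[\dim\End_{\s_{n-1}}(D^\lambda\downarrow_{\s_{n-1}})+\dim\Hom_{\s_n}(S^{(n-1,1)},\End_F(D^\lambda))+1<\dim\End_{\s_{n-2}}(D^\lambda\downarrow_{\s_{n-2}}).\]
First I would compute all three quantities. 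Since $\lambda$ is a JS-partition, $D^\lambda\downarrow_{\s_{n-1}}$ is irreducible, so $\dim\End_{\s_{n-1}}(D^\lambda\downarrow_{\s_{n-1}})=1$. By Corollary \ref{c1}, $\dim\End_{\s_{n-2}}(D^\lambda\downarrow_{\s_{n-2}})=2$. The key input is that $\lambda\neq(m+1,m-1)$, so by Lemma \ref{l10} we get $\dim\Hom_{\s_n}(S^{(n-1,1)},\End_F(D^\lambda))=0$ (since $\lambda$ is a JS-partition with $\lambda\ne(n)$ and is not the exceptional partition, the only possibility allowed by Lemma \ref{l10} for a nonzero value is excluded). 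Hence the left-hand side of the displayed inequality is $1+0+1=2$, which is \emph{not} strictly less than $2$, so Lemma \ref{l17} does not directly apply.

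So the real argument must produce the module $M$ by hand. The plan is to show directly that $\dim\Hom_{\s_n}(D^{(n-2,1)}\!\uparrow^{\s_n},\End_F(D^\lambda))\geq 1$ and that any such nonzero homomorphism produces an image that is one of the two listed quotients. The first assertion follows from the chain of identities in the proof of Lemma \ref{l17}:
\[\dim\Hom_{\s_n}(D^{(n-2,1)}\!\uparrow^{\s_n},\End_F(D^\lambda))=\dim\End_{\s_{n-2}}(D^\lambda\downarrow_{\s_{n-2}})-\dim\End_{\s_{n-1}}(D^\lambda\downarrow_{\s_{n-1}})=2-1=1.\]
Thus $\End_F(D^\lambda)$ contains a nonzero image of $D^{(n-2,1)}\!\uparrow^{\s_n}$. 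Since $D^{(n-2,1)}\!\uparrow^{\s_n}$ is uniserial, such an image is one of its quotients $D^{(n-2,1)}\!\uparrow^{\s_n}/N$ for $N$ a submodule; the nonzero ones are (reading from the top of the uniserial series) $D^{(n-1,1)}$, $D^{(n)}|D^{(n-1,1)}$, $D^{(n-2,2)}|D^{(n)}|D^{(n-1,1)}$, $D^{(n)}|D^{(n-2,2)}|D^{(n)}|D^{(n-1,1)}$, and longer ones. The second and fourth on this list are exactly the two forms named in the statement, while the first and third ($D^{(n-1,1)}$ and $D^{(n)}|D^{(n-1,1)}\cong S^{(n-1,1)}$) must be ruled out: if $S^{(n-1,1)}\subseteq\End_F(D^\lambda)$ then $\dim\Hom_{\s_n}(S^{(n-1,1)},\End_F(D^\lambda))\geq 1$, contradicting Lemma \ref{l10}; and $D^{(n-1,1)}\subseteq\End_F(D^\lambda)$ combined with self-duality of $D^{(n-2,1)}\!\uparrow^{\s_n}$ would similarly force a nonzero $\Hom$ from $S^{(n-1,1)}$ or would contradict the $\Hom$-dimension count just made.

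The main obstacle I anticipate is the bookkeeping in the last step: carefully identifying which quotients of the uniserial module $D^{(n-2,1)}\!\uparrow^{\s_n}$ can actually be realized inside $\End_F(D^\lambda)$, and excluding the small quotients $D^{(n-1,1)}$ and $S^{(n-1,1)}$. The exclusion of $S^{(n-1,1)}$ is clean via Lemma \ref{l10}. For $D^{(n-1,1)}$ and for the \emph{larger} quotients (those of length $\geq 5$), one needs the precise value $\dim\Hom_{\s_n}(D^{(n-2,1)}\!\uparrow^{\s_n},\End_F(D^\lambda))=1$: this says $\End_F(D^\lambda)$ contains a unique (up to scalar) copy of any such image, and in particular since the three longer quotients each surject onto one of the two target modules while themselves being ``too big'' to embed — one must check that a quotient of length $\ge 7$ embedding in $\End_F(D^\lambda)$ would force $\dim\Hom\ge 2$ by the presence of multiple composition factors $D^{(n-1,1)}$ in the socle layers — I would argue that the image is forced to be of length $3$ or $4$. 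Concretely: the image is a quotient of $D^{(n-2,1)}\!\uparrow^{\s_n}$ with simple socle $D^{(n-1,1)}$; if it had length $\ge 5$ it would contain the length-$5$ submodule whose own socle and a higher layer both involve $D^{(n-1,1)}$, and then $[\,\text{image}:D^{(n-1,1)}\,]\ge 2$ would give, via Lemma \ref{ll1} applied with $C=D^{(n-1,1)}\cong\soc(\End_F(D^\lambda)\text{-relevant part})$ and the uniseriality, a second independent homomorphism, contradicting the count $=1$. This pins the length to $3$ or $4$, and combined with the exclusion of $D^{(n-1,1)}$ (length $1$) and $S^{(n-1,1)}$ (length $2$) leaves precisely the two modules in the statement. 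Finally, in each case the module is by construction a quotient of $D^{(n-2,1)}\!\uparrow^{\s_n}$ and is contained in $\End_F(D^\lambda)$, completing the proof.
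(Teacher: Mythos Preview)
Your setup is correct: the identity $\dim\Hom_{\s_n}(D^{(n-2,1)}\!\uparrow^{\s_n},\End_F(D^\lambda))=2-1=1$ and the vanishing $\dim\Hom_{\s_n}(S^{(n-1,1)},\End_F(D^\lambda))=0$ from Lemma~\ref{l10} are exactly the inputs the paper uses. (Minor slip: the two target modules are the \emph{third} and fourth on your list, and the ones to rule out are the \emph{first} and \emph{second}.)

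The gap is in the exclusion of quotients of length $\geq 5$. Your invocation of Lemma~\ref{ll1} does not do what you want: that lemma requires $\soc(B)$ simple, and $\End_F(D^\lambda)$ has no such structure here. Concretely, the length-$6$, $7$, $8$ quotients of $U:=D^{(n-2,1)}\!\uparrow^{\s_n}$ do \emph{not} have $D^{(n-1,1)}$ in their socle, so the bare multiplicity $[\,\text{image}:D^{(n-1,1)}\,]\ge 2$ does not by itself produce a forbidden submodule $D^{(n-1,1)}\subseteq\End_F(D^\lambda)$ or a second independent homomorphism.

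The paper sidesteps this case analysis. Since $D^{(n-1,1)}\not\subseteq\End_F(D^\lambda)$ (from Lemma~\ref{l10}, as you note), every map from $U$ kills $\soc(U)\cong D^{(n-1,1)}$, so $\dim\Hom_{\s_n}(U/D^{(n-1,1)},\End_F(D^\lambda))=1$. The key structural fact (Lemma~\ref{l50}) is that $e_0D^{(n-1,2)}$ is both a submodule and a quotient of $U$; this gives a filtration $U/D^{(n-1,1)}\sim V\,|\,V$ with $V=(e_0D^{(n-1,2)})/D^{(n-1,1)}=D^{(n)}|D^{(n-2,2)}|D^{(n)}|D^{(n-1,1)}$. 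Lemma~\ref{l34} then forces $\dim\Hom_{\s_n}(V,\End_F(D^\lambda))\geq 1$, and since $V$ has length $4$ its nonzero quotients are exactly $V$, $V/D^{(n)}$, $S^{(n-1,1)}$, and $D^{(n-1,1)}$; Lemma~\ref{l10} kills the last two. No long quotients of $U$ ever enter.

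If you want to salvage your direct approach, you need the same isomorphism $U_5\cong U/U_4\cong e_0D^{(n-1,2)}$ from Lemma~\ref{l50}: it shows that the quotients of length $5$, $6$, $9$ contain $D^{(n-1,1)}$ or $S^{(n-1,1)}$ as a \emph{submodule} (hence are excluded), while those of length $7$, $8$ contain a submodule isomorphic to one of the two target quotients of $U$, yielding a second independent map $U\to\End_F(D^\lambda)$ and contradicting $\dim\Hom=1$. This works, but it is the same structural input packaged less efficiently than the paper's filtration argument.
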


\begin{proof}
From Lemmas \ref{l50} and \ref{l5}
\[D^{(n-2,1)}\hspace{-2pt}\uparrow^{\s_n}=\hspace{-2pt}\rlap{$\overbrace{\phantom{D^{(n-1,1)}|D^{(n)}|D^{(n-2,2)}|D^{(n)}|D^{(n-1,1)}}}^{e_0D^{(n-1,1)}}$}D^{(n-1,1)}|D^{(n)}|D^{(n-2,2)}|D^{(n)}|\underbrace{D^{(n-1,1)}|D^{(n)}|D^{(n-2,2)}|D^{(n)}|D^{(n-1,1)}}_{e_0D^{(n-1,1)}} \hspace{-2pt}.\]
From Lemmas \ref{l3} and \ref{l1} we have that $S^{(n-1,1)}=D^{(n)}|D^{(n-1,1)}$ is a quotient of $D^{(n-2,1)}\uparrow^{\s_n}$ and so also of $e_0D^{(n-1,2)}$. So
\[D^{(n-2,1)}\hspace{-2pt}\uparrow^{\s_n}=\hspace{-2pt}\rlap{$\overbrace{\phantom{D^{(n-1,1)}|D^{(n)}|D^{(n-2,2)}|}\overbrace{\phantom{D^{(n)}|D^{(n-1,1)}}}^{S^{(n-1,1)}}}^{e_0D^{(n-1,1)}}$}D^{(n-1,1)}|D^{(n)}|D^{(n-2,2)}|D^{(n)}|\underbrace{D^{(n-1,1)}|D^{(n)}|D^{(n-2,2)}|\underbrace{D^{(n)}|D^{(n-1,1)}}_{S^{(n-1,1)}}}_{e_0D^{(n-1,1)}} \hspace{-3.9pt}.\]

From Lemmas \ref{l1} and \ref{l10} we have that $D^{(n-1,1)}$ is not contained in $\End_F(D^\lambda)$. Also from Lemma \ref{l27}, $M^{(n-2,1)}=D^{(n-1)}\oplus D^{(n-2,1)}$, so that $M^{(n-2,1,1)}=D^{(n-2,1)}\uparrow^{\s_n}\oplus M^{(n-1,1)}$. So, from Corollary \ref{c1},
\begin{align*}
&\dim\Hom_{\s_n}(D^{(n-2,1)}\uparrow^{\s_n}/D^{(n-1,1)},\End_F(D^\lambda))\\
&\hspace{12pt}=\dim\Hom_{\s_n}(D^{(n-2,1)}\uparrow^{\s_n},\End_F(D^\lambda))\\
&\hspace{12pt}=\dim\Hom_{\s_n}(M^{(n-2,1,1)},\End_F(D^\lambda))-\dim\Hom_{\s_n}(M^{(n-1,1)},\End_F(D^\lambda))\\
&\hspace{12pt}=2-1\\
&\hspace{12pt}=1.
\end{align*}
As
\[D^{(n-2,1)}\uparrow^{\s_n}/D^{(n-1,1)}\sim ((e_0D^{(n-1,1)})/D^{(n-1,1)})|((e_0D^{(n-1,1)})/D^{(n-1,1)})\]
with
\[(e_0D^{(n-1,1)})/D^{(n-1,1)}=D^{(n)}|D^{(n-2,2)}|\overbrace{D^{(n)}|D^{(n-1,1)}}^{S^{(n-1,1)}},\]
we have from Lemma \ref{l34} that
\[\dim\Hom_{\s_n}((e_0D^{(n-1,2)})/D^{(n-1,1)},\End_F(D^\lambda))>0.\]
From Lemma \ref{l10} we then have that one of
\begin{align*}
(e_0D^{(n-1,1)})/D^{(n-1,1)}&=D^{(n)}|D^{(n-2,2)}|D^{(n)}|D^{(n-1,1)},\\
((e_0D^{(n-1,1)})/D^{(n-1,1)})/D^{(n)}&=D^{(n-2,2)}|D^{(n)}|D^{(n-1,1)}
\end{align*}
is contained in $\End_F(D^\lambda)$. The lemma now follows since $(e_0D^{(n-1,1)})/D^{(n-1,1)}$ is a quotient of $D^{(n-2,1)}\uparrow^{\s_n}$.
\end{proof}

\begin{lemma}\label{l38}
Let $p=2$ and $n=2m\geq 8$ with $m$ even. Assume that $\lambda\not=(n),(m+1,m-1)$ is a JS-partition. Then $M\subseteq\End_F(D^\lambda)$ with $M$ a quotient of $M^{(n-2,2)}$ of one of the following forms:
\begin{itemize}
\item
$M=D^{(n-2,2)}|D^{(n-1,1)}$,

\item
$M=(D^{(n)}\oplus D^{(n-2,2)})|D^{(n-1,1)}$.
\end{itemize}
\end{lemma}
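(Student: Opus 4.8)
\emph{Sketch of the argument.} Since $m$ is even, $n=2m\equiv 0\Md 4$, so Lemmas~\ref{l12} and~\ref{l59} apply to $M^{(n-2,2)}$. As $\lambda$ is a JS-partition, Lemma~\ref{l55} gives $\epsilon_i(\lambda)=1$, $\epsilon_j(\lambda)=0$ for some $\{i,j\}=\{0,1\}$, the unique normal node of $\lambda$ having residue $i$. By Lemma~\ref{l11}, $D^\lambda\downarrow_{\s_{n-1}}\cong D^\pi$ with $\pi=(\lambda_1-1,\lambda_2,\ldots)$, and $\pi$ has exactly two normal nodes, both of residue $j$, so $\epsilon_i(\pi)=0$ and $\epsilon_j(\pi)=2$. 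Since $\lambda\ne(n),(m+1,m-1)$, Lemma~\ref{l10} forces $\dim\Hom_{\s_n}(S^{(n-1,1)},\End_F(D^\lambda))=0$. As $S^{(n-1,1)}=D^{(n)}|D^{(n-1,1)}$ (Lemma~\ref{l1}) has head $D^{(n-1,1)}$, precomposition with $S^{(n-1,1)}\twoheadrightarrow D^{(n-1,1)}$ embeds $\Hom_{\s_n}(D^{(n-1,1)},\End_F(D^\lambda))$ into $\Hom_{\s_n}(S^{(n-1,1)},\End_F(D^\lambda))=0$; hence neither $D^{(n-1,1)}$ nor $S^{(n-1,1)}$ is isomorphic to a submodule of $\End_F(D^\lambda)$.

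\emph{Step 1: $\dim\Hom_{\s_n}(M^{(n-2,2)},\End_F(D^\lambda))\ge 2$.} Write $E:=D^\lambda\downarrow_{\s_{n-2}}$ and $\widetilde E:=D^\lambda\downarrow_{\s_{n-2,2}}$. Since $e_iD^\pi=0$ we get $E=D^\pi\downarrow_{\s_{n-2}}=e_jD^\pi=e_j^{(1)}D^\pi$, which by Lemma~\ref{l39} is indecomposable with simple socle $D^\mu$ (where $\mu$ is $\pi$ with its bottom normal $j$-node removed) and has $[E:D^\mu]=2$, so $E$ is not simple. The simple $F\s_{n-2,2}$-modules are the $D^\sigma\otimes D^{(2)}$ and restrict to the simple $F\s_{n-2}$-modules $D^\sigma$ (as $D^{(2)}=1_{\s_2}$), so $\soc(\widetilde E)\downarrow_{\s_{n-2}}$ is a nonzero semisimple submodule of $E$, hence equals $\soc(E)=D^\mu$; thus $\soc(\widetilde E)=D^\mu\otimes D^{(2)}$ is simple and self-dual. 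As $\widetilde E$ is self-dual (restriction of the self-dual $D^\lambda$), $\hd(\widetilde E)\cong\soc(\widetilde E)$ is simple, while $\widetilde E$ is not simple since $E$ is not. The composition of $\widetilde E\twoheadrightarrow\hd(\widetilde E)$, an isomorphism $\hd(\widetilde E)\cong\soc(\widetilde E)$, and $\soc(\widetilde E)\hookrightarrow\widetilde E$ is then a nonzero endomorphism with proper image, so $\dim\End_{\s_{n-2,2}}(\widetilde E)\ge 2$; by Frobenius reciprocity this dimension equals $\dim\Hom_{\s_n}(M^{(n-2,2)},\End_F(D^\lambda))$.

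\emph{Step 2: extracting $M$.} Let $C$ be the unique submodule of $M^{(n-2,2)}$ isomorphic to $D^{(n-1,1)}$ (it sits in $\soc(M^{(n-2,2)})$, by Lemma~\ref{l59}), and recall $M^{(n-2,2)}/C\cong D^{(n)}\oplus N$ with $N=(D^{(n)}\oplus D^{(n-2,2)})|D^{(n-1,1)}$ (Lemma~\ref{l59}). Every $f\in\Hom_{\s_n}(M^{(n-2,2)},\End_F(D^\lambda))$ kills $C$, since otherwise $f|_C$ would embed $D^{(n-1,1)}$ into $\End_F(D^\lambda)$. Hence $\Hom_{\s_n}(M^{(n-2,2)},\End_F(D^\lambda))\cong\Hom_{\s_n}(D^{(n)},\End_F(D^\lambda))\oplus\Hom_{\s_n}(N,\End_F(D^\lambda))$, and since the first summand is $1$-dimensional ($=\End_{\s_n}(D^\lambda)$), Step~1 gives a nonzero $g\in\Hom_{\s_n}(N,\End_F(D^\lambda))$. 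The submodules of $N$ are $0$, $D^{(n)}$, $D^{(n-2,2)}$, $D^{(n)}\oplus D^{(n-2,2)}$ and $N$, so $\Im(g)$ is one of $N$, $N/D^{(n)}=D^{(n-2,2)}|D^{(n-1,1)}$, $N/D^{(n-2,2)}=S^{(n-1,1)}$ or $D^{(n-1,1)}$; the last two are impossible, as they would embed $S^{(n-1,1)}$, resp.\ $D^{(n-1,1)}$, into $\End_F(D^\lambda)$. Thus $M:=\Im(g)$ is $(D^{(n)}\oplus D^{(n-2,2)})|D^{(n-1,1)}$ or $D^{(n-2,2)}|D^{(n-1,1)}$: in either case a submodule of $\End_F(D^\lambda)$ and a quotient of $M^{(n-2,2)}$ (via $M^{(n-2,2)}\twoheadrightarrow D^{(n)}\oplus N\twoheadrightarrow N\twoheadrightarrow N/D^{(n)}$), which is exactly what the lemma asserts.

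The delicate point is the estimate $\dim\End_{\s_{n-2,2}}(\widetilde E)\ge 2$: Lemma~\ref{l54} only yields $\ge 1$, and the improvement relies crucially on the two normal nodes of $\pi$ having the \emph{same} residue (Lemma~\ref{l11}), which is precisely what makes $\soc(D^\lambda\downarrow_{\s_{n-2}})$ simple; had the residues differed, this socle would have length $2$ and the head--socle endomorphism would not be available.
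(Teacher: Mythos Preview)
Your proof is correct and follows the same overall architecture as the paper's: use Lemma~\ref{l59} to write $M^{(n-2,2)}/D^{(n-1,1)}\cong D^{(n)}\oplus N$, establish $\dim\Hom_{\s_n}(M^{(n-2,2)},\End_F(D^\lambda))\ge 2$, kill the $D^{(n-1,1)}$ in the socle via Lemma~\ref{l10}, and then rule out the quotients $S^{(n-1,1)}$ and $D^{(n-1,1)}$ of $N$ by the same lemma. The one substantive difference is in Step~1: the paper obtains $\dim\End_{\s_{n-2,2}}(D^\lambda\downarrow_{\s_{n-2,2}})\ge 2$ by citing Theorem~3.6 of \cite{ks}, whereas you give a direct self-contained argument exploiting that $D^\lambda\downarrow_{\s_{n-2}}=e_jD^\pi$ has simple socle (since both normal nodes of $\pi$ share residue $j$, by Lemma~\ref{l11}), hence so does $D^\lambda\downarrow_{\s_{n-2,2}}$, and then using self-duality to produce a nontrivial nilpotent endomorphism. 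Your argument is more elementary and avoids the external reference; the paper's citation is shorter but less transparent about what is actually being used.
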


\begin{proof}
From Lemma \ref{l12} we have that
\[\xymat{
&D^{(n)}\ar@{-}[ddr]&D^{(n-1,1)}\ar@{-}[ddl]\ar@{-}[d]\\
M^{(n-2,2)}=&&D^{(n-2,2)}\ar@{-}[d]&.\\
&D^{(n)}&D^{(n-1,1)}
}\]
Also from Lemma \ref{l59} we have that $M^{(n-2,2)}/D^{(n-1,1)}\cong D^{(n)}\oplus N$, 
where $N=(D^{(n)}\oplus D^{(n-2,2)})|D^{(n-1,1)}\sim D^{(n-2,2)}|S^{(n-1,1)}$. From Theorem 3.6 of \cite{ks} we have that
\[\dim\End_{\s_{n-2,2}}(D^\lambda\downarrow_{\s_{n-2,2}})>\dim\End_{\s_{n-1}}(D^\lambda\downarrow_{\s_{n-1}}).\]
In particular $\dim\End_{\s_{n-2,2}}(D^\lambda\downarrow_{\s_{n-2,2}})\geq 2$. As $\lambda\not=(n),(m+1,m-1)$ is a JS-partition and $S^{(n-1,1)}=D^{(n)}|D^{(n-1,1)}$ (Lemma \ref{l1}), we have from Lemma \ref{l10} that $D^{(n-1,1)}\not\subseteq\End_F(D^\lambda)$. So
\begin{align*}
&\dim\Hom_{\s_n}(N,\End_F(D^\lambda))\\
&\hspace{12pt}=\dim\Hom_{\s_n}(M^{(n-2,2)}/D^{(n-1,1)},\End_F(D^\lambda))-\dim\End_{\s_n}(D^\lambda)\\
&\hspace{12pt}=\dim\Hom_{\s_n}(M^{(n-2,2)},\End_F(D^\lambda))-1\\
&\hspace{12pt}=\dim\End_{\s_{n-2,2}}(D^\lambda\downarrow_{\s_{n-2,2}})-1\\
&\hspace{12pt}\geq 1.
\end{align*}
The only non-zero quotients of $N$ (which are also quotients of $M^{(n-2,2)}$) are
\begin{align*}
N&=(D^{(n)}\oplus D^{(n-2,2)})|D^{(n-1,1)},\\
N/D^{(n)}&=D^{(n-2,2)}|D^{(n-1,1)},\\
N/D^{(n-2,2)}&=D^{(n)}|D^{(n-1,1)}\cong S^{(n-1,1)},\\
N/(D^{(n)}\oplus D^{(n-2,2)})&=D^{(n-1,1)}.
\end{align*}
From Lemma \ref{l10} it follows that $N$ or $N/D^{(n)}$ is a submodule of $\End_F(D^\lambda)$.
\end{proof}

We also need some results on the structure of $\End_F(D^\lambda\downarrow_{\s_{n-1}})$ for the case $n\equiv 0\Md 4$. This will be used to show that no non-trivial irreducible tensor products of the form $D^\lambda\otimes D^{(m+1,m-1)}$ exist when $m$ is even.

\begin{lemma}\label{l32}
Let $p=2$ and $n=2m\geq 4$ with $m$ even. Assume that $\lambda\not=(n)$ is a JS-partition. Let $\pi=(\lambda_1-1,\lambda_2,\lambda_3,\ldots)$. Then $D^{(n-2,1)}$ is contained exactly once in $\End_F(D^\pi)$ as a submodule.
\end{lemma}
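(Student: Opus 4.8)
The plan is to reduce the statement to Corollary \ref{c1} together with the elementary structure, as an $\s_{n-1}$-module, of the natural permutation module on $n-1$ points. Write $N:=1\uparrow^{\s_{n-1}}_{\s_{n-2}}$ for that module (the ``$M^{(n-2,1)}$'' of $\s_{n-1}$). First I would invoke Lemma \ref{l11}, which gives $D^\lambda\downarrow_{\s_{n-1}}\cong D^\pi$, so that $D^\pi\downarrow_{\s_{n-2}}\cong D^\lambda\downarrow_{\s_{n-2}}$. Since $\lambda\not=(n)$ is a JS-partition, Corollary \ref{c1} then yields $\dim\End_{\s_{n-2}}(D^\pi\downarrow_{\s_{n-2}})=2$. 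By Frobenius reciprocity this quantity equals $\dim\Hom_{\s_{n-1}}(N,\End_F(D^\pi))$, since $\Hom_{\s_{n-1}}(1\uparrow^{\s_{n-1}}_{\s_{n-2}},\End_F(D^\pi))\cong\Hom_{\s_{n-2}}(1_{\s_{n-2}},\End_F(D^\pi)\downarrow_{\s_{n-2}})=\End_{\s_{n-2}}(D^\pi\downarrow_{\s_{n-2}})$.

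Next I would pin down $N$ as an $\s_{n-1}$-module. Because $n$ is even, $n-1$ is odd, so $p\nmid n-1$; arguing as in the proof of Lemma \ref{l3} this gives $S^{(n-2,1)}\cong D^{(n-2,1)}$ as $\s_{n-1}$-modules. Combining this with the Specht filtration $N\sim S^{(n-2,1)}|S^{(n-1)}$ (Example 17.17 of \cite{j1}) and the self-duality of $N$, one obtains $N\cong D^{(n-1)}\oplus D^{(n-2,1)}$, exactly as in the proof of Lemma \ref{l27}. Plugging this decomposition into the Hom computation, and using that $D^\pi$ is irreducible so that $\dim\Hom_{\s_{n-1}}(D^{(n-1)},\End_F(D^\pi))=\dim\End_{\s_{n-1}}(D^\pi)=1$, gives
\[2=\dim\Hom_{\s_{n-1}}(N,\End_F(D^\pi))=1+\dim\Hom_{\s_{n-1}}(D^{(n-2,1)},\End_F(D^\pi)),\]
hence $\dim\Hom_{\s_{n-1}}(D^{(n-2,1)},\End_F(D^\pi))=1$.

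Finally I would translate this into the submodule statement: $\End_F(D^\pi)$ is self-dual and $D^{(n-2,1)}$ is simple, so $\dim\Hom_{\s_{n-1}}(D^{(n-2,1)},\End_F(D^\pi))$ is the multiplicity of $D^{(n-2,1)}$ in $\soc(\End_F(D^\pi))$; multiplicity $1$ means $\End_F(D^\pi)$ contains a unique submodule isomorphic to $D^{(n-2,1)}$. I do not anticipate a genuine obstacle here, since the substantive input --- that $\pi$ has exactly two normal nodes, via Lemma \ref{l11} and hence Corollary \ref{c1} --- is already available, and the rest is the standard dictionary between restriction, permutation modules and endomorphism spaces. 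The only point requiring care is that $N$ is the permutation module of $\s_{n-1}$ (odd degree) rather than of $\s_n$, so the relevant splitting is the one valid when $p\nmid n-1$; this uses only that $n$ is even, so the hypothesis that $m$ is even is not needed for the argument and merely records the context in which the lemma is applied.
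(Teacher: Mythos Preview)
Your proposal is correct and follows essentially the same route as the paper: compute $\dim\Hom_{\s_{n-1}}(D^{(n-2,1)},\End_F(D^\pi))$ by writing the permutation module $M^{(n-2,1)}\cong D^{(n-1)}\oplus D^{(n-2,1)}$ (Lemma \ref{l27}) and using that $\dim\End_{\s_{n-2}}(D^\pi\downarrow_{\s_{n-2}})=2$ since $\pi$ has two normal nodes (the paper invokes Lemmas \ref{l53} and \ref{l11} directly where you go via Corollary \ref{c1}, which is the same content). Your final paragraph spelling out why Hom-dimension $1$ gives a unique submodule is a helpful elaboration the paper leaves implicit.
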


\begin{proof}
From Lemmas \ref{l53} and \ref{l11} we have that $\dim\End_{\s_{n-2}}(D^\pi\downarrow_{\s_{n-2}})=2$, as $\pi$ has 2 normal nodes. From Lemma \ref{l27} it then follows that
\begin{align*}
&\dim\Hom_{\s_{n-1}}(D^{(n-2,1)},\End_F(D^\pi))\\
&\hspace{12pt}=\dim\Hom_{\s_{n-1}}(M^{(n-2,1)},\End_F(D^\pi))-\dim\Hom_{\s_{n-1}}(D^{(n-1)},\End_F(D^\pi))\\
&\hspace{12pt}=\dim\End_{\s_{n-2}}(D^\pi\downarrow_{\s_{n-2}})-\dim\End_{\s_{n-1}}(D^\pi)\\
&\hspace{12pt}=1.
\end{align*}
\end{proof}

\begin{lemma}\label{l30}
Let $p=2$ and $n=2m\geq 8$ with $m$ even. Assume that $\lambda\not=(n)$ is a JS-partition. Let $\pi=(\lambda_1-1,\lambda_2,\lambda_3,\ldots)$. Then $M\subseteq \End_F(D^\pi)$, where $M$ is a quotient of $M^{(n-3,1,1)}$ of one of the following forms:
\begin{itemize}
\item
$M\sim (D^{(n-1)}\oplus D^{(n-3,2)})|(D^{(n-1)}\oplus D^{(n-3,2)})$,

\item
$M=D^{(n-3,2)}|(D^{(n-1)}\oplus D^{(n-3,2)})$,

\item
$M=D^{(n-1)}|(D^{(n-1)}\oplus D^{(n-3,2)})$.
\end{itemize}
\end{lemma}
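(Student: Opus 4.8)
The plan is to compare $\End_F(D^\pi)$ with the permutation module $M^{(n-3,1,1)}$ of $\s_{n-1}$. Since $n\equiv 0\Md 4$ and $n\geq 8$, Lemma \ref{l25} gives $M^{(n-3,1,1)}\cong D^{(n-2,1)}\oplus D^{(n-2,1)}\oplus Y$, where $Y:=Y^{(n-3,1,1)}$ is self-dual and indecomposable with head and socle both isomorphic to $D^{(n-1)}\oplus D^{(n-3,2)}$, each of the two head constituents giving a non-split extension of each of the two socle constituents. The three modules $M$ in the statement are precisely $Y$ itself, $Y/D^{(n-1)}$ and $Y/D^{(n-3,2)}$ (quotients by one of the two simple socle summands), all of which are quotients of $M^{(n-3,1,1)}$; so it suffices to embed one of them into $\End_F(D^\pi)$.

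First I would evaluate $\dim\Hom_{\s_{n-1}}(M^{(n-3,1,1)},\End_F(D^\pi))=\dim\End_{\s_{n-3}}(D^\pi\downarrow_{\s_{n-3}})$ by Frobenius reciprocity (the two $\s_1$-factors of $\s_{n-3,1,1}$ being trivial). By Lemma \ref{l11}, $\pi$ has exactly two normal nodes, both of one residue $j$, so $\epsilon_j(\pi)=2$ and $\epsilon_{1-j}(\pi)=0$; hence $D^\pi\downarrow_{\s_{n-2}}=e_jD^\pi$ and $D^\pi\downarrow_{\s_{n-3}}=e_j^2D^\pi\oplus e_{1-j}e_jD^\pi$ as a sum of modules in distinct blocks, and by Lemma \ref{l39} $e_j^2D^\pi\cong(e_j^{(2)}D^\pi)^{\oplus 2}$ with $\dim\End_{\s_{n-3}}(e_j^{(2)}D^\pi)=\binom{2}{2}=1$, so $\dim\End_{\s_{n-3}}(e_j^2D^\pi)=4$. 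Thus $\dim\Hom_{\s_{n-1}}(M^{(n-3,1,1)},\End_F(D^\pi))\geq 4$, and since $\dim\Hom_{\s_{n-1}}(D^{(n-2,1)},\End_F(D^\pi))=1$ by Lemma \ref{l32}, we get $\dim\Hom_{\s_{n-1}}(Y,\End_F(D^\pi))\geq 2$. Writing $c:=\dim\Hom_{\s_{n-1}}(D^{(n-3,2)},\End_F(D^\pi))$ and noting $\dim\Hom_{\s_{n-1}}(D^{(n-1)},\End_F(D^\pi))=\dim\End_{\s_{n-1}}(D^\pi)=1$, pullback along $Y\twoheadrightarrow\hd(Y)$ embeds a subspace of dimension $1+c$ into $\Hom_{\s_{n-1}}(Y,\End_F(D^\pi))$. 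If this inclusion is proper, some homomorphism $Y\to\End_F(D^\pi)$ does not vanish on $\mathrm{rad}(Y)=\soc(Y)$; as the only submodules of $Y$ not containing $\soc(Y)$ are $0$, $D^{(n-1)}$ and $D^{(n-3,2)}$, its image is $Y$, $Y/D^{(n-1)}$ or $Y/D^{(n-3,2)}$, which is one of the required forms, and we are done.

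The main obstacle is the remaining case, where $\dim\Hom_{\s_{n-1}}(Y,\End_F(D^\pi))=1+c$ and every homomorphism $Y\to\End_F(D^\pi)$ factors through $\hd(Y)$ (so has semisimple image). I expect to exclude it as follows. The partition $\lambda=(m+1,m-1)$, for which $\pi=(m,m-1)$, is treated directly. For every other JS-partition a short combinatorial check with the explicit shape of $\pi$ from Lemma \ref{l11} (deleting a normal node of $\pi$ produces a partition having a normal node of residue $1-j$, so that $D^\psi$ with $\epsilon_{1-j}(\psi)\geq 1$ occurs in $e_jD^\pi$ by Lemma \ref{l56}) shows $e_{1-j}e_jD^\pi\neq 0$, and Lemma \ref{l23} then gives $\dim\End_{\s_{n-3}}(e_{1-j}e_jD^\pi)\geq 2$, so that $\dim\Hom_{\s_{n-1}}(Y,\End_F(D^\pi))\geq 4$ and hence $c\geq 3$ in the bad case. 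Applying $\Hom_{\s_{n-1}}(-,\End_F(D^\pi))$ to $0\to\soc(Y)\to Y\to\hd(Y)\to 0$ and using $\mathrm{Ext}^1_{\s_{n-1}}(D^{(n-1)},\End_F(D^\pi))\cong\mathrm{Ext}^1_{\s_{n-1}}(D^\pi,D^\pi)=0$, the bad case forces $\soc(Y)$ to embed into $\mathrm{Ext}^1_{\s_{n-1}}(D^{(n-3,2)},\End_F(D^\pi))$; bounding this space from above, using self-duality of $\End_F(D^\pi)$ and the multiplicities of $D^{(n-1)}$, $D^{(n-2,1)}$ and $D^{(n-3,2)}$ in its socle already computed, yields a contradiction. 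This final multiplicity bound is the technical heart of the argument; everything else is bookkeeping with Lemma \ref{l25} and the dimension counts above.
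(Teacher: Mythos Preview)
Your setup is correct and matches the paper: you correctly identify $Y=Y^{(n-3,1,1)}$, reduce to showing that some map $Y\to\End_F(D^\pi)$ does not factor through $\hd(Y)$, and you compute $\dim\Hom_{\s_{n-1}}(Y,\End_F(D^\pi))=\dim\End_{\s_{n-3}}(D^\pi\downarrow_{\s_{n-3}})-2$ using Lemmas \ref{l25} and \ref{l32}.

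The gap is in your treatment of the ``bad case''. Two problems. First, your appeal to Lemma \ref{l23} is not justified: that lemma requires $\epsilon_{1-j}(\nu)\geq 1$ for $D^\nu=\tilde{e}_jD^\pi$ specifically (the head of $e_jD^\pi$), not merely for some composition factor $D^\psi$ of $e_jD^\pi$; for instance when $\lambda$ has exactly two parts one checks that $\nu=(\lambda_1-1,\lambda_2-1)$ has $\epsilon_{1-j}(\nu)=0$, so your ``short combinatorial check'' fails there. Second, and more seriously, the promised upper bound on $\dim\mathrm{Ext}^1_{\s_{n-1}}(D^{(n-3,2)},\End_F(D^\pi))$ is never established, and knowing the socle multiplicities of $\End_F(D^\pi)$ does not by itself bound this $\mathrm{Ext}^1$; you would need control of a second Loewy layer, which you do not have.

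The paper avoids $\mathrm{Ext}$ entirely by computing $c=\dim\Hom_{\s_{n-1}}(D^{(n-3,2)},\End_F(D^\pi))$ directly. Since $n\equiv 0\Md 4$, Lemma \ref{l27} gives $M^{(n-3,2)}\cong D^{(n-1)}\oplus D^{(n-2,1)}\oplus D^{(n-3,2)}$, so by Frobenius reciprocity $1+1+c=\dim\End_{\s_{n-3,2}}(D^\pi\downarrow_{\s_{n-3,2}})$. The key estimate is then
\[
\dim\End_{\s_{n-3}}(D^\pi\downarrow_{\s_{n-3}})\geq \dim\End_{\s_{n-3,2}}(D^\pi\downarrow_{\s_{n-3,2}})+2,
\]
which follows because on the block summand $\overline{e}_j^2D^\pi$ the endomorphism ring drops from dimension $4$ to $2$ by Lemma \ref{l31} (since $\epsilon_j(\pi)=2$), while on the other block summand it cannot increase. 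Combining, $\dim\Hom_{\s_{n-1}}(Y,\End_F(D^\pi))\geq (1+c)+1>\dim\Hom_{\s_{n-1}}(\hd(Y),\End_F(D^\pi))$, which rules out the bad case uniformly, with no case split on $\lambda$ and no $\mathrm{Ext}$ computation.
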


\begin{proof}
From Lemma \ref{l55} there exist $0\leq i\leq 1$ and $j=1-i$ with $\epsilon_i(\lambda)=1$ and $\epsilon_j(\lambda)=0$. From Lemma \ref{l11} we have that $\epsilon_i(\pi)=0$ and $\epsilon_j(\pi)=2$. So, from Lemmas \ref{l45} and \ref{l39}, $D^\pi\downarrow_{\s_{n-3}}=e_je_jD^\pi\oplus e_ie_jD^\pi$. From the block decomposition of $D^\pi\downarrow_{\s_{n-3}}$ and $D^\pi\downarrow_{\s_{n-3,2}}$ it then also follows that $D^\pi\downarrow_{\s_{n-3,2}}=\overline{e}_j^2D^\pi\oplus N$ with $N\downarrow_{\s_{n-3}}=e_ie_jD^\pi$.

Using Lemmas \ref{l39} and \ref{l31} and considering the block decompositions of the modules we have
\begin{align*}
\dim\End_{\s_{n-3}}(D^\pi\downarrow_{\s_{n-3}})&=\dim\End_{\s_{n-3}}(e_je_jD^\pi)+\dim\End_{\s_{n-3}}(e_ie_jD^\pi)\\
&=4+\dim\End_{\s_{n-3}}(N\downarrow_{\s_{n-3}})\\
&\geq \dim\End_{\s_{n-3,2}}(\overline{e}_j^2D^\pi)+2+\dim\End_{\s_{n-3,2}}(N)\\
&=\dim\End_{\s_{n-3,2}}(\overline{e}_j^2D^\pi\oplus N)+2\\
&=\dim\End_{\s_{n-3,2}}(D^\pi\downarrow_{\s_{n-3,2}})+2.
\end{align*}
Let
\[\xymat{
\hspace{-24pt}&D^{(n-1)}\ar@{-}[dd]\ar@{-}[ddr]&D^{(n-3,2)}\ar@{-}[dd]\ar@{-}[ddl]\\
L=\hspace{-24pt}\\
\hspace{-24pt}&D^{(n-1)}&D^{(n-3,2)}
}\]
with $L\subseteq M^{(n-3,1,1)}$ (see Lemma \ref{l25}). Then, from Lemmas \ref{l27}, \ref{l25} and \ref{l32},
\begin{align*}
&\dim\Hom_{\s_{n-1}}(L,\End_F(D^\pi))\\
&\hspace{12pt}=\dim\End_{\s_{n-3}}(D^\pi\downarrow_{\s_{n-3}})-2\dim\Hom_{\s_{n-1}}(D^{(n-2,1)},\End_F(D^\pi))\\
&\hspace{12pt}\geq\dim\End_{\s_{n-3,2}}(D^\pi\downarrow_{\s_{n-3,2}})+2-2\\
&\hspace{12pt}=\dim\Hom_{\s_{n-1}}(M^{(n-3,2)},\End_F(D^\pi))\\
&\hspace{12pt}=\dim\Hom_{\s_{n-1}}(D^{(n-1)}\oplus D^{(n-2,1)}\oplus D^{(n-3,2)},\End_F(D^\pi))\\
&\hspace{12pt}=\dim\Hom_{\s_{n-1}}(D^{(n-1)}\oplus D^{(n-3,2)},\End_F(D^\pi))\\
&\hspace{36pt}+\dim\Hom_{\s_{n-1}}(D^{(n-2,1)},\End_F(D^\pi))\\
&\hspace{12pt}=\dim\Hom_{\s_{n-1}}(\hd(L),\End_F(D^\pi))+1.
\end{align*}
As the only quotients of $L$ but not of its head are
\begin{align*}
L&\sim(D^{(n-1)}\oplus D^{(n-3,2)})|(D^{(n-1)}\oplus D^{(n-3,2)}),\\
L/D^{(n-1)}&=D^{(n-3,2)}|(D^{(n-1)}\oplus D^{(n-3,2)}),\\
L/D^{(n-3,2)}&=D^{(n-1)}|(D^{(n-1)}\oplus D^{(n-3,2)}),
\end{align*}
the lemma follows.
\end{proof}

\begin{lemma}\label{l28}
Let $p=2$ and $n=2m\geq 8$ with $m$ even. Then there exists $M\subseteq \End_F(D^{(m,m-1)})$ with $M=D^{(n-1)}|(D^{(n-1)}\oplus D^{(n-3,2)})$ a quotient of $M^{(n-3,1,1)}$.
\end{lemma}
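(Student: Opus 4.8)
The plan is to apply Lemma~\ref{l30} to the JS-partition $\lambda=(m+1,m-1)$: since $m$ is even, $m+1$ and $m-1$ are both odd, so by Lemma~\ref{l55} $\lambda$ is a JS-partition (clearly $\lambda\not=(n)$), and with the notation of Lemmas~\ref{l30} and \ref{l32} we have $\pi=(\lambda_1-1,\lambda_2,\ldots)=(m,m-1)$ and $D^\pi=D^{(m,m-1)}$. Lemma~\ref{l30} then provides a submodule $M$ of $\End_F(D^{(m,m-1)})$ which is a quotient of $M^{(n-3,1,1)}$ and has one of the three forms listed there. In the first two forms $D^{(n-3,2)}\subseteq\soc(M)$, while in the third ($M=D^{(n-1)}|(D^{(n-1)}\oplus D^{(n-3,2)})$, which is exactly what we want) it does not; so it suffices to prove that $D^{(n-3,2)}\not\subseteq\soc(\End_F(D^{(m,m-1)}))$, i.e.\ that $\Hom_{\s_{n-1}}(D^{(n-3,2)},\End_F(D^{(m,m-1)}))=0$.

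For this I would use that $M^{(n-3,2)}=1\uparrow_{\s_{n-3,2}}^{\s_{n-1}}$ satisfies $\dim\Hom_{\s_{n-1}}(M^{(n-3,2)},\End_F(D^{(m,m-1)}))=\dim\End_{\s_{n-3,2}}(D^{(m,m-1)}\downarrow_{\s_{n-3,2}})$ by Frobenius reciprocity, and that $M^{(n-3,2)}=D^{(n-1)}\oplus D^{(n-2,1)}\oplus D^{(n-3,2)}$ by Lemma~\ref{l27} (valid since $n=2m\equiv0\Md4$ and $n\geq8$). Since $\dim\Hom_{\s_{n-1}}(D^{(n-1)},\End_F(D^{(m,m-1)}))=\dim\End_{\s_{n-1}}(D^{(m,m-1)})=1$ and $\dim\Hom_{\s_{n-1}}(D^{(n-2,1)},\End_F(D^{(m,m-1)}))=1$ by Lemma~\ref{l32}, this reduces the required vanishing to the assertion $\dim\End_{\s_{n-3,2}}(D^{(m,m-1)}\downarrow_{\s_{n-3,2}})=2$.

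To establish that, note that by Lemma~\ref{l11} the two normal nodes of $(m,m-1)$ lie on the first two rows and have residue $1$ (the unique normal node of $\lambda$ having residue $\equiv m\equiv0$), so $\epsilon_0((m,m-1))=0$ and $\epsilon_1((m,m-1))=2$; hence $e_0D^{(m,m-1)}=0$, and by Lemmas~\ref{l45} and \ref{l39} we get $D^{(m,m-1)}\downarrow_{\s_{n-3,2}}=\overline{e}_1^2D^{(m,m-1)}\oplus N$ with $N$ in a distinct block of $\s_{n-3,2}$ and $N\downarrow_{\s_{n-3}}=e_0e_1D^{(m,m-1)}$. The crux is to show $e_0e_1D^{(m,m-1)}=0$: by Theorem~5.1 of \cite{b1} (exactly as in the proof of Lemma~\ref{l10}) one has $\deg D^{(m,m-1)}=2\deg D^{(m,m-2)}$, while $e_1D^{(m,m-1)}=D^{(m,m-1)}\downarrow_{\s_{n-2}}$ is indecomposable with head and socle $D^{(m,m-2)}$ and $[e_1D^{(m,m-1)}:D^{(m,m-2)}]=\binom{2}{1}=2$ by Lemma~\ref{l39}; these two copies therefore exhaust the composition factors, so $e_1D^{(m,m-1)}=D^{(m,m-2)}|D^{(m,m-2)}$. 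As $(m,m-2)$ is a JS-partition (both parts even) whose unique normal node has residue $1$, we have $\epsilon_0((m,m-2))=0$, so $e_0D^{(m,m-2)}=0$, and by exactness of $e_0$ it follows that $e_0e_1D^{(m,m-1)}=0$, i.e.\ $N=0$. Then $D^{(m,m-1)}\downarrow_{\s_{n-3,2}}=\overline{e}_1^2D^{(m,m-1)}$, which by Lemma~\ref{l31} is the module $(D^{(m-1,m-2)}\otimes D^{(2)})|(D^{(m-1,m-2)}\otimes D^{(2)})$; since $D^{(m-1,m-2)}\otimes D^{(2)}$ is irreducible over $\s_{n-3,2}$ this is uniserial of length $2$, so its endomorphism algebra is $2$-dimensional, as needed. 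Feeding this back gives $\Hom_{\s_{n-1}}(D^{(n-3,2)},\End_F(D^{(m,m-1)}))=0$, which rules out the first two possibilities in Lemma~\ref{l30} and leaves $M=D^{(n-1)}|(D^{(n-1)}\oplus D^{(n-3,2)})$.

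I expect the main obstacle to be precisely the equality $e_0e_1D^{(m,m-1)}=0$, i.e.\ showing that $D^{(n-3,2)}$ does not appear in the socle of $\End_F(D^{(m,m-1)})$; this is where one must correctly invoke the degree-doubling result of \cite{b1} to pin down $e_1D^{(m,m-1)}=D^{(m,m-2)}|D^{(m,m-2)}$, after which the argument is routine bookkeeping with block decompositions and Lemmas~\ref{l27}, \ref{l31} and \ref{l32}.
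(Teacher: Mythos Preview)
Your proof is correct and follows essentially the same route as the paper's: both reduce via Lemma~\ref{l30} to showing $D^{(n-3,2)}\not\subseteq\End_F(D^{(m,m-1)})$, then use Lemma~\ref{l27} together with Lemma~\ref{l32} to convert this into the computation $\dim\End_{\s_{n-3,2}}(D^{(m,m-1)}\downarrow_{\s_{n-3,2}})=2$, and finally invoke Benson's dimension formula (Theorem~5.1 of \cite{b1}) together with Lemma~\ref{l31} to carry out that computation. The only cosmetic difference is that the paper compares $\dim D^{(m,m-1)}$ directly with $2\dim D^{(m-1,m-2)}$ to conclude $D^{(m,m-1)}\downarrow_{\s_{n-3}}=e_1^2D^{(m,m-1)}$ in one step, whereas you first establish $e_1D^{(m,m-1)}=D^{(m,m-2)}|D^{(m,m-2)}$ and then kill $e_0e_1D^{(m,m-1)}$ using that $(m,m-2)$ is a JS-partition; both arguments are equivalent applications of the same degree-doubling fact.
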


\begin{proof}
From Lemmas \ref{l55} and \ref{l30} it is enough to show that $D^{(n-3,2)}\not\subseteq\End_F(D^{(m,m-1)})$. From Lemma \ref{l27} it is then enough to prove that
\begin{align*}
&\dim\Hom_{\s_{n-1}}(M^{(n-2,1)},\End_F(D^{(m,m-1)}))\\
&\hspace{12pt}=\dim\Hom_{\s_{n-1}}(M^{(n-3,2)},\End_F(D^{(m,m-1)})).
\end{align*}
From Lemma \ref{l55} and \ref{l32} the nodes at the end of the 2 rows of $(m,m-1)$ are normal. Both of these nodes have residue 1, as $m$ is even. In particular, from Lemmas \ref{l45} and \ref{l39},
\[D^{(m-1,m-2)}\oplus D^{(m-1,m-2)}=e_1^2D^{(m,m-1)}\subseteq D^{(m,m-1)}\downarrow_{\s_{n-3}}.\]
Comparing degrees it follows from Theorem 5.1 of \cite{b1} that $D^{(m,m-1)}\downarrow_{\s_{n-3}}\cong e_1^2D^{(m,m-1)}$. Then, comparing the block decompositions of $D^{(m,m-1)}\downarrow_{\s_{n-3}}$ and of $D^{(m,m-1)}\downarrow_{\s_{n-3,2}}$, using Lemma \ref{l31} we have that
\[D^{(m,m-1)}\downarrow_{\s_{n-3,2}}\cong \overline{e}_1^2D^{(m,m-1)}=D^{(m-1,m-2)}|D^{(m-1,m-2)}.\]
As $\pi$ has 2 normal nodes, from Lemma \ref{l53} we have that
\begin{align*}
&\dim\Hom_{\s_{n-1}}(M^{(n-2,1)},\End_F(D^{(m,m-1)}))\\
&\hspace{12pt}=\dim\End_{\s_{n-2}}(D^{(m,m-1)}\downarrow_{\s_{n-2}})\\
&\hspace{12pt}=2\\
&\hspace{12pt}=\dim\End_{\s_{n-3,2}}(D^{(m,m-1)}\downarrow_{\s_{n-3,2}})\\
&\hspace{12pt}=\dim\Hom_{\s_{n-1}}(M^{(n-3,2)},\End_F(D^{(m,m-1)})),
\end{align*}
from which the lemma follows.
\end{proof}

\begin{lemma}\label{l29}
Let $p=2$ and $n=2m\geq 8$ with $m$ even. Assume that $\lambda\not=(n)$ is a JS-partition with $D^\lambda\downarrow_{A_n}$ irreducible. Let $\pi=(\lambda_1-1,\lambda_2,\lambda_3,\ldots)$. Then $M\subseteq \End_F(D^\pi)$, where $M\sim (D^{(n-1)}\oplus D^{(n-3,2)})|(D^{(n-1)}\oplus D^{(n-3,2)})$ or $M=D^{(n-3,2)}|(D^{(n-1)}\oplus D^{(n-3,2)})$ is a quotient of $M^{(n-3,1,1)}$.
\end{lemma}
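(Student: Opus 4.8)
The plan is to obtain the structure from Lemma~\ref{l30} and then use the hypothesis $D^\lambda\!\downarrow_{A_n}$ irreducible to eliminate one of the three possible shapes. Since $\lambda\neq(n)$ is a JS-partition and $n=2m\geq 8$ with $m$ even, Lemma~\ref{l30} applies and gives $M\subseteq\End_F(D^\pi)$ with $M$ a quotient of $M^{(n-3,1,1)}$ of one of the three forms listed there. I claim it suffices to exclude the third form $M=D^{(n-1)}|(D^{(n-1)}\oplus D^{(n-3,2)})$. Indeed, among the three forms only the third contains a submodule isomorphic to $D^{(n-1)}|D^{(n-1)}$: in the second form there is only one composition factor $D^{(n-1)}$ in total, and in the first form (the ``diamond'' $L$ of Lemma~\ref{l25}, whose socle is $D^{(n-1)}\oplus D^{(n-3,2)}$) the submodule generated by either head constituent already contains all of $\soc$, so no uniserial $D^{(n-1)}|D^{(n-1)}$ sits inside; whereas in the third form $D^{(n-1)}|D^{(n-1)}$ occurs precisely as the maximal submodule with quotient $D^{(n-3,2)}$ (it has simple socle $\subseteq\soc(M)$, hence is indecomposable). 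So if the $M$ handed to us is of the third form, then $D^{(n-1)}|D^{(n-1)}\subseteq\End_F(D^\pi)$, and otherwise $M$ is already of the first or second form and we are done.

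So assume $N\cong D^{(n-1)}|D^{(n-1)}$ is a submodule of $\End_F(D^\pi)=\End_F(D^\lambda)\!\downarrow_{\s_{n-1}}$. By Lemma~\ref{l26} applied with $n-1$ in place of $n$ (note $n-1\geq 7$ and $D^{(n-1)}$ is the trivial $\s_{n-1}$-module), $N\cong 1\uparrow_{A_{n-1}}^{\s_{n-1}}$. Since $N$ is uniserial with both composition factors trivial, $\End_{\s_{n-1}}(N)$ is $2$-dimensional, spanned by $\id_N$ and the nilpotent endomorphism $t$ with $\ker t=\operatorname{im}t=\soc N$; composing the inclusion $\iota\colon N\hookrightarrow\End_F(D^\pi)$ with $\id_N$ and with $t$ gives two homomorphisms $N\to\End_F(D^\pi)$ with different kernels, hence linearly independent. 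Therefore, using Frobenius reciprocity together with $D^\lambda\!\downarrow_{\s_{n-1}}\cong D^\pi$,
\[\dim\End_{A_{n-1}}(D^\lambda\!\downarrow_{A_{n-1}})=\dim\Hom_{\s_{n-1}}(1\uparrow_{A_{n-1}}^{\s_{n-1}},\End_F(D^\pi))\geq 2.\]
Since $D^\lambda\!\downarrow_{A_{n-1}}$ is semisimple by Clifford's theorem, it follows that $D^\pi\!\downarrow_{A_{n-1}}=D^\lambda\!\downarrow_{A_{n-1}}$ is decomposable, in particular not irreducible.

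On the other hand, by the classification of irreducible restrictions to alternating groups in characteristic $2$ (Theorem~1.1 of \cite{b1}), the hypothesis that $D^\lambda\!\downarrow_{A_n}$ is irreducible, together with the facts that $\lambda\neq(n)$ is a JS-partition (so by Lemma~\ref{l55} its parts all have the same parity) and that $D^\pi=D^\lambda\!\downarrow_{\s_{n-1}}$ is the partition $(\lambda_1-1,\lambda_2,\lambda_3,\ldots)$ with exactly two normal nodes (Lemma~\ref{l11}), forces $D^\pi\!\downarrow_{A_{n-1}}$ to be irreducible. This contradicts the previous paragraph, so $M$ cannot be of the third form, and the lemma follows.

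The main obstacle is the last step, namely transferring irreducibility of $D^\lambda\!\downarrow_{A_n}$ to $D^\pi\!\downarrow_{A_{n-1}}$: this is not a formal consequence of branching, since restriction can destroy indecomposability (a non-split self-extension of $D^\lambda$ can split on restriction to $\s_{n-1}$), and it is exactly here that the combinatorial description of JS-partitions and of their restrictions, and Benson's criterion from \cite{b1}, are used. Everything else is bookkeeping with the filtrations already supplied by Lemma~\ref{l30} and with the $2$-dimensional endomorphism ring of $1\uparrow_{A_{n-1}}^{\s_{n-1}}$.
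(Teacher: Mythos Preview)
Your approach is exactly the paper's: invoke Lemma~\ref{l30}, note that the third option contains a uniserial $D^{(n-1)}|D^{(n-1)}$, identify that submodule with $1\uparrow_{A_{n-1}}^{\s_{n-1}}$ via Lemma~\ref{l26}, and deduce that $D^\pi\downarrow_{A_{n-1}}$ would split. The side observation that forms one and two do \emph{not} contain $D^{(n-1)}|D^{(n-1)}$ is correct but unnecessary; all you need is that form three \emph{does}.

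The gap is the final step. You assert that Benson's criterion, together with $\lambda$ being JS and $D^\lambda\downarrow_{A_n}$ irreducible, ``forces $D^\pi\downarrow_{A_{n-1}}$ to be irreducible'', but you never verify this, and the reference to $\pi$ having two normal nodes is irrelevant to Benson's criterion. This verification is the actual content of the lemma, and the paper carries it out explicitly: supposing $D^\pi\downarrow_{A_{n-1}}$ splits, Theorem~1.1 of \cite{b1} gives $\pi_{2h+1}-\pi_{2h+2}\leq 2$ and $\pi_{2h+1}+\pi_{2h+2}\not\equiv 2\pmod 4$ for all $h\geq 0$. Since $\pi$ agrees with $\lambda$ from the second row on, these constraints transfer to $\lambda$ for $h\geq 1$; but $D^\lambda\downarrow_{A_n}$ irreducible means $\lambda$ must fail Benson's condition \emph{somewhere}, hence at $h=0$. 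Combining $\pi_1-\pi_2\leq 2$ with the JS parity condition forces $\lambda_1-\lambda_2=2$, and then failure at $h=0$ forces $\lambda_1+\lambda_2\equiv 2\pmod 4$, hence $\lambda_1$ even, hence all $\lambda_i$ even. If $h(\lambda)=2$ this gives $\lambda=(m+1,m-1)$ with $m$ odd, contradicting $m$ even; if $h(\lambda)\geq 3$ then $\lambda_3-\lambda_4=2$ (both even, difference at most $2$, $\lambda$ $2$-regular) and $\lambda_3+\lambda_4\equiv 2\pmod 4$, contradicting the splitting constraint for $\pi$ at $h=1$. You should include this argument rather than black-box it.
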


\begin{proof}
From Lemma \ref{l30} it is enough to prove that if $A=D^{(n-1)}|D^{(n-1)}$ then $A\not\subseteq\End_F(D^\pi)$. From Lemma \ref{l26} it is then enough to prove that $1\uparrow_{A_{n-1}}^{\s_{n-1}}\not\subseteq\End_F(D^\pi)$. If $1\uparrow_{A_{n-1}}^{\s_{n-1}}\subseteq\End_F(D^\pi)$ then
\begin{align*}
\dim\End_{A_{n-1}}(D^\pi\downarrow_{A_{n-1}})&=\dim\Hom_{\s_{n-1}}(1\uparrow_{A_{n-1}}^{\s_{n-1}},\End_F(D^\pi))\\
&\geq\dim\End_{\s_{n-1}}(1\uparrow_{A_{n-1}}^{\s_{n-1}})\\
&=2
\end{align*}
and so $D^\pi\downarrow_{A_{n-1}}$ splits. We will now show that this is impossible.

Assume instead that $D^\pi\downarrow_{A_{n-1}}$ splits. From Theorem 1.1 of \cite{b1} we would then have that $\pi_{2h+1}-\pi_{2h+2}\leq 2$ and $\pi_{2h+1}+\pi_{2h+2}\not\equiv 2\Md 4$ for $h\geq 0$. By definition of $\pi$ we also have that $\lambda_{2h+1}-\lambda_{2h+2}\leq 2$ and $\lambda_{2h+1}+\lambda_{2h+2}\not\equiv 2\Md 4$ for $h\geq 1$. As $\lambda$ is a JS-partition all its parts are even or all its parts are odd by Lemma \ref{l55}. By assumption $\pi_2=\lambda_2>0$ and $\pi_1=\lambda_1-1$. So $\pi_1-\pi_2=1$, that is $\lambda_1-\lambda_2=2$. As $D^\lambda\downarrow_{A_n}$ does not splits we have from Theorem 1.1 of \cite{b1} that $\lambda_1+\lambda_2\equiv 2\Md 4$. So $\lambda_1$ is even and then this holds for all parts of $\lambda$. If $\lambda_3=0$ then $\lambda=(m+1,m-1)$, contradicting $m$ and $\lambda_1$ both being even. If $\lambda_3>0$ then $\pi_4=\pi_3-2$ (as $\pi_3=\lambda_3>\lambda_4=\pi_4$ are both even and $\pi_3-\pi_4\leq 2$) and then $\pi_3+\pi_4=\lambda_3+\lambda_4\equiv 2\Md 4$, contradicting the assumption that $D^\pi\downarrow_{A_{n-1}}$ splits.
\end{proof}

\section{Proof of Theorem \ref{t3}}\label{s1}

We are now ready to prove Theorems \ref{t3} and \ref{t4}.

\begin{proof}[Proof of Theorem \ref{t3}]
The theorem holds for $n=2$ (since $\s_2$ is abelian). So we can assume that $n\geq 6$. We will first prove that if $\lambda,\mu\not=(n),(m+1,m-1)$ then $D^\lambda\otimes D^\mu$ is not irreducible. To do this let $\lambda,\mu\not=(n),(m+1,m-1)$. Then, from Lemmas \ref{l17}, \ref{l15}, \ref{l19} and \ref{l16}, there exist $M\subseteq\End_F(D^\lambda)$ and $N\subseteq \End_F(D^\mu)$, with $M$ and $N$ quotients of $D^{(n-2,1)}\uparrow^{\s_n}$ of one of the following forms:
\begin{itemize}
\item
$D^{(n-2,2)}|D^{(n)}|D^{(n-1,1)}=A$,

\item
$D^{(n)}|D^{(n-2,2)}|D^{(n)}|D^{(n-1,1)}=B$,

\item
$D^{(n-1,1)}|D^{(n)}|D^{(n-2,2)}|D^{(n)}|D^{(n-1,1)}=C$.
\end{itemize}
From Lemma \ref{l5} we have that
\[D^{(n-2,1)}\hspace{-1pt}\uparrow^{\s_n}=\hspace{-1pt}D^{(n-1,1)}|D^{(n)}|D^{(n-2,2)}|D^{(n)}|D^{(n-1,1)}|D^{(n)}|D^{(n-2,2)}|D^{(n)}|D^{(n-1,1)}.\]
From Lemma \ref{l50} it then follows that $C\cong e_0D^{(n-1,2)}$ (it is the only quotient of $D^{(n-2,1)}\uparrow^{\s_n}$ with socle $D^{(n-1,1)}$ and 2 composition factors isomorphic to $D^{(n-1,1)}$). From Lemma \hyperref[l39a]{\ref*{l39}\ref*{l39a}} we then have that $C$ is self-dual. Also $A$ and $B$ are quotients of $C$ (from their composition sequences as $D^{(n-2,1)}\uparrow^{\s_n}$ is uniserial). Also from the self-duality of $C$ it follows that $D^{(n)}|D^{(n-2,2)}|D^{(n)}\subseteq B$ is self-dual and this module contains $D^{(n)}|D^{(n-2,2)}=A^*/D^{(n-1,1)}$.

If $D^{(n)}\subseteq M$ let $\overline{M}:=M$, else let $\overline{M}:=M\oplus D^{(n)}$. Define $\overline{N}$ similarly. Since $\End_F(D^\lambda)$, $\End_F(D^\mu)$ and $D^{(n-2,1)}\uparrow^{\s_n}$ are self-dual, considering the possible combinations of $M$ and $N$ we obtain that
\begin{align*}
\dim\End_{\s_n}(D^\lambda\otimes D^\mu)&=\dim\Hom_{\s_n}(\End_F(D^\lambda),\End_F(D^\mu))\\
&\geq\dim\Hom_{\s_n}(\overline{M}^*,\overline{N})\\
&\geq 2.
\end{align*}
In particular $D^\lambda\otimes D^\mu$ is not irreducible.

We can now assume that $\lambda=(m+1,m-1)$. From Theorem 3.1(b) of \cite{gk} we know that there are at most $(m+1)/2$ partitions $\lambda$ for which $D^\lambda\otimes D^\mu$ is irreducible. One of these is $(n)$. As from Corollary 3.21 of \cite{gj} we have that $D^{(m+1,m-1)}\otimes D^{(n-2j+1,2j+1)}\cong D^{(m-j,m-j-1,j+1,j)}$ for $0\leq j<(m-1)/2$ the theorem follows.
\end{proof}

\begin{proof}[Proof of Theorem \ref{t4}]
The theorem is easily checked for $n=4$, as there is only one irreducible representation of $F\s_4$ of degree larger than 1. So we can assume that $n\geq 8$. Let $\lambda,\mu\not=(n)$ be 2-regular partitions. From \cite{ck} we can assume that $\lambda$ is a JS-partition.

Assume first that $\lambda,\mu\not=(m+1,m-1)$. We will now prove that in this case $D^\lambda\otimes D^\mu$ is not irreducible. From Lemma \ref{l38} there exists $M\subseteq \End_F(D^\lambda)$ with $M$ a quotient of $M^{(n-2,2)}$ such that $M=D^{(n-2,2)}|D^{(n-1,1)}$ or $M=(D^{(n)}\oplus D^{(n-2,2)})|D^{(n-1,1)}$. By Lemmas \ref{l36}, \ref{l41}, \ref{l43}, \ref{l46} and \ref{l38} we also have that $N\subseteq \End_F(D^\mu)$ with $N=D^{(n-2,2)}$ or with $N=D^{(n-1,1)}|(D^{(n)}\oplus D^{(n-2,2)})\subseteq M^{(n-2,2)}$.

First assume that $N=D^{(n-2,2)}$. Then, from self duality of $\End_F(D^\lambda)$ and $\End_F(D^\mu)$,
\begin{align*}
\dim\End_{\s_n}(D^\lambda\otimes D^\mu)&=\dim\Hom_{\s_n}(\End_F(D^\lambda),\End_F(D^\mu))\\
&\geq \dim\End_{\s_n}(D^{(n)}\oplus D^{(n-2,2)})\\
&=2.
\end{align*}
In particular $D^\lambda\otimes D^\mu$ is not irreducible.

Assume now that $N=D^{(n-1,1)}|(D^{(n)}\oplus D^{(n-2,2)})$. From the structure of $M^{(n-2,2)}$ (see Lemma \ref{l12}) and from its self-duality we have that $M^*\subseteq N$. If $M=D^{(n-2,2)}|D^{(n-1,1)}$ then
\begin{align*}
&\dim\End_{\s_n}(D^\lambda\!\otimes\! D^\mu)\\
&\hspace{6pt}\geq \dim\Hom_{\s_n}(D^{(n)}\!\oplus\! M^*,D^{(n)}\!\oplus\! N)\\
&\hspace{6pt}=\dim\Hom_{\s_n}(D^{(n)}\!\oplus\! (D^{(n-1,1)}|D^{(n-2,2)}),D^{(n)}\!\oplus\! (D^{(n-1,1)}|(D^{(n)}\!\oplus\! D^{(n-2,2)})))\\
&\hspace{6pt}=2.
\end{align*}
If $M=(D^{(n)}\oplus D^{(n-2,2)})|D^{(n-1,1)}$ then
\begin{align*}
&\dim\End_{\s_n}(D^\lambda\!\otimes\! D^\mu)\\
&\hspace{6pt}\geq \dim\Hom_{\s_n}(M^*,D^{(n)}\!\oplus\! N)\\
&\hspace{6pt}=\dim\Hom_{\s_n}(D^{(n-1,1)}|(D^{(n)}\!\oplus\! D^{(n-2,2)}),D^{(n)}\!\oplus\! (D^{(n-1,1)}|(D^{(n)}\!\oplus\! D^{(n-2,2)})))\\
&\hspace{6pt}=2.
\end{align*}
In either case $D^\lambda\otimes D^\mu$ is not irreducible.

Since $D^{(n)}$ is 1-dimensional, we can now assume that $\lambda=(m+1,m-1)$ and $\mu\not=(n)$ is a 2-regular partition. Assume that $D^\lambda\otimes D^\mu\cong D^\nu$ is irreducible. From Lemma \ref{l10} we have that $D^{(n-1,1)}\subseteq \End_F(D^\lambda)$. If $\mu$ is not a JS-partition, then $D^{(n-1,1)}\subseteq \End_F(D^\mu)$ from Lemmas \ref{l18} and \ref{l20}, so that
\[\dim\End_{\s_n}(D^\lambda\otimes D^\mu)\geq \dim\End_{\s_n}(D^{(n)}\oplus D^{(n-1,1)})=2,\]
contradicting $D^\lambda\otimes D^\mu$ being irreducible. So we can further assume that $\mu$ is a JS-partition.

For a 2-regular partition $\alpha$ let $\phi^\alpha$ be the Brauer character of $D^\alpha$. For any partition $\beta$ let $\xi^\beta$ be the Brauer character of $M^\beta$. For $\gamma$ consisting only of odd parts and $\chi$ a Brauer character of $\s_n$ let $\chi_\gamma$ be the value that $\chi$ takes on the conjugacy class labeled by $\gamma$.

Let $\gamma$ consists only of odd parts. From Theorem 5.1 of \cite{b1} and Theorem VII of \cite{s5} we have that $\phi^{(m+1,m-1)}_\gamma=\pm 2^{\lfloor (h(\gamma)-1)/2\rfloor}$. Assume now that $\phi^{(m+1,m-1)}_\gamma$ is odd. Then $h(\gamma)\leq 2$ and so, since $n=2m$ with $m$ even, it follows that $\gamma=(n-s,s)$ with $1\leq s<m$ odd. Also notice that $\xi^{(n-k,k)}_{(n-s,s)}=\delta_{s,k}$ for $1\leq s,k<m$. Assume that $D^\lambda\otimes D^\mu\cong D^\nu$ and let $1\leq s_1<\ldots<s_r<m$ odd with $\phi^\mu_{(n-s,s)}$ odd if and only if $s=s_l$ for some $l$. Define $\phi:=\phi^\mu+\xi^{(n-s_1,s_1)}+\ldots+\xi^{(n-s_r,s_r)}$. Then, for each $\gamma$ consisting only of odd parts, $\phi^{(m+1,m-1)}_\gamma\phi_\gamma$ is even. In particular $\phi^{(m+1,m-1)}\phi=2\chi$ for some Brauer character $\chi$, as irreducible Brauer characters are linearly independent modulo 2. Since $\phi^{(m+1,m-1)}\phi^\mu=\phi^\nu$ and the definition of $\phi$, we then have that
\begin{align*}
\phi^{(m+1,m-1)}\xi^{(n-s_1,s_1)}\hspace{-1.5pt}+\hspace{-1.5pt}\ldots\hspace{-1.5pt}+\hspace{-1.5pt}\phi^{(m+1,m-1)}\xi^{(n-s_r,s_r)}&=\phi^{(m+1,m-1)}\phi\hspace{-1.5pt}-\hspace{-1.5pt}\phi^{(m+1,m-1)}\phi^\mu\\
&=2\chi\hspace{-1.5pt}-\hspace{-1.5pt}\phi^\nu.
\end{align*}
In particular
\[[S^{(m+1,m-1)}\otimes M^{(n-s_l,s_l)}:D^\nu]\geq[D^{(m+1,m-1)}\otimes M^{(n-s_l,s_l)}:D^\nu]\geq 1\]
for some $1\leq l\leq h$. So $\nu$ has at most 4 parts, since $D^\nu$ is a component of $S^{(m+1,m-1)}\otimes M^{(n-s_l,s_l)}$.

Notice that $D^{(m+1,m-1)}$ splits when reduced to $A_n$ from Theorem 1.1 of \cite{b1}. As $D^{(m+1,m-1)}\otimes D^\mu\cong D^\nu$ we then have that $D^\nu$ also splits when reduced to $A_n$, while $D^\mu$ does not. Again from Theorem 1.1 of \cite{b1} and as $\nu$ at most 4 parts, $\nu=(a,a-b,c,c-d)$ with $1\leq b,d\leq 2$ and $2a-b,2c-d\not\equiv 2\Md 4$ or with $c=d=0$, $1\leq b\leq 2$ and $2a-b\not\equiv 2\Md 4$.

Assume first that $c,d=0$, $1\leq b\leq 2$ and $2a-b\not\equiv 2\Md 4$. Then $b=2$ as $n$ is even and so $\nu=(m+1,m-1)=\lambda$. As $\mu\not=(n)$ so that $D^\mu$ has degree at least 2 (as we are in characteristic 2), this gives a contradiction.

So $1\leq b,d\leq 2$ and $2a-b,2c-d\not\equiv 2\Md 4$. As $|\nu|=2a+2c-b-d$ is even it follows $b=d$. If $b=2$ then from $2a-2,2c-2\not\equiv 2\Md 4$ it follows that $a$ and $c$ are odd. This would mean that $\nu$ is a JS-partition from Lemma \ref{l55} and so that $D^{(m+1,m-1)}\downarrow_{\s_{n-1}}\otimes D^\mu\downarrow_{\s_{n-1}}$ is irreducible, which is impossible from \cite{ck} ($D^{(m+1,m-1)}\downarrow_{\s_{n-1}}$ and $D^\mu\downarrow_{\s_{n-1}}$ are both irreducible of degree at least 2). So $b=d=1$ and then $\nu=(m-s,m-s-1,s+1,s)$ for some $0\leq s<(m-2)/2$.

As $m$ is even, so that
\[\begin{tikzpicture}
\draw (-1.1,0.6) node {$\nu=$};

\draw (0,0) node {$j$};
\draw (0.4,0) node {$i$};
\draw (0.4,0.4) node {$j$};
\draw (0.8,0.4) node {$i$};
\draw (1.6,0.8) node {$i$};
\draw (2,0.8) node {$j$};
\draw (2,1.2) node {$i$};
\draw (2.4,1.2) node {$j$};

\draw (-0.6,-0.2)--(0.2,-0.2)--(0.2,0.2)--(0.6,0.2)--(0.6,0.6)--(1.8,0.6)--(1.8,1)--(2.2,1)--(2.2,1.4)--(-0.6,1.4)--(-0.6,-0.2);

\draw (4.2,0.6) node {or};

\draw (6.4,0.6) node {$\nu=$};

\draw (7.1,0.2) node {$j$};
\draw (7.5,0.2) node {$i$};
\draw (8.3,0.6) node {$i$};
\draw (8.7,0.6) node {$j$};
\draw (8.7,1) node {$i$};
\draw (9.1,1) node {$j$};

\draw (6.9,0)--(7.3,0)--(7.3,0.4)--(8.5,0.4)--(8.5,0.8)--(8.9,0.8)--(8.9,1.2)--(6.9,1.2)--(6.9,0);

\draw (9.5,0.4) node {,};
\end{tikzpicture}\]
we have that $\epsilon_i(\nu)=2$ and $\epsilon_j(\nu)=0$ and then $\dim\End_{\s_{n-1}}(D^\nu\downarrow_{\s_{n-1}})=2$ from Lemma \ref{l53}. Let $\pi=(\mu_1-1,\mu_2,\mu_3,\ldots)$. If $D^{(m+1,m-1)}\otimes D^\mu=D^\nu$ then
\[D^{(m,m-1)}\otimes D^\pi\cong D^{(m+1,m-1)}\downarrow_{\s_{n-1}}\otimes D^\mu\downarrow_{\s_{n-1}}\cong D^\nu\downarrow_{\s_{n-1}}\]
from Lemma \ref{l11} and so
\begin{align*}
\dim\Hom_{\s_{n-1}}(\End_F(D^\pi),\End_F(D^{(m,m-1)}))&=\dim\End_{\s_{n-1}}(D^{(m,m-1)}\otimes D^\pi)\\
&=2.
\end{align*}
We will show that this is impossible when $\mu$ is a JS-partition with $D^\mu\downarrow_{A_n}$ irreducible. 

From Lemma \ref{l32} we have that $D^{(n-2,1)}$ is contained in $\End_F(D^{(m,m-1)})$ and in $\End_F(D^\pi)$. From Lemmas \ref{l28} and \ref{l29} we have that there are modules $M\subseteq\End_F(D^{(m,m-1)})$ and $N\subseteq\End_F(D^\pi)$ which are quotients of $M^{(n-3,1,1)}$ such that $M=D^{(n-1)}|(D^{(n-1)}\oplus D^{(n-3,2)})$ and $N\sim(D^{(n-1)}\oplus D^{(n-3,2)})|(D^{(n-1)}\oplus D^{(n-3,2)})$ or $N=D^{(n-3,2)}|(D^{(n-1)}\oplus D^{(n-3,2)})$.

Assume first that $N\sim(D^{(n-1)}\oplus D^{(n-3,2)})|(D^{(n-1)}\oplus D^{(n-3,2)})$. Then, from Lemma \ref{l25} and the self-duality of $M^{(n-3,1,1)}$, we have that $M$ is a quotient of $N\cong N^*$. So, from the structure of $M$ and $N$,
\begin{align*}
&\dim\Hom_{\s_{n-1}}(\End_F(D^\pi),\End_F(D^{(m,m-1)}))\\
&\hspace{12pt}\geq \dim\Hom_{\s_{n-1}}(N\oplus D^{(n-2,1)},M\oplus D^{(n-2,1)})\\
&\hspace{12pt}=3.
\end{align*}

If $N=D^{(n-3,2)}|(D^{(n-1)}\oplus D^{(n-3,2)})$ then, again by Lemma \ref{l25} and the self-duality of $M^{(n-3,1,1)}$, we have that there exists $L=D^{(n-1)}|D^{(n-3,2)}$ with $L\subseteq M$ and $L$ a quotient of $N^*$. Then, from the structure of $L$,
\begin{align*}
&\dim\Hom_{\s_{n-1}}(\End_F(D^\pi),\End_F(D^{(m,m-1)}))\\
&\hspace{12pt}\geq \dim\Hom_{\s_{n-1}}(D^{(n-1)}\oplus N^*\oplus D^{(n-2,1)},M\oplus D^{(n-2,1)})\\
&\hspace{12pt}\geq \dim\Hom_{\s_{n-1}}(D^{(n-1)}\oplus L\oplus D^{(n-2,1)},L\oplus D^{(n-2,1)})\\
&\hspace{12pt}=3.
\end{align*}

In either case $\dim\Hom_{\s_{n-1}}(\End_F(D^\pi),\End_F(D^{(m,m-1)}))\geq 3$, leading to a contradiction.
\end{proof}

\section*{Acknowledgements}

The author thanks Christine Bessenrodt for suggesting the here studied question and for discussion and comments on the paper. The author also thanks the referee for comments and for pointing out reference \cite{mo}.

\end{document}